\newtheorem{theorem}{Theorem}[section]
\newtheorem{lemma}[theorem]{Lemma}
\newtheorem{proposition}[theorem]{Proposition}
\newtheorem{corollary}[theorem]{Corollary}
\theoremstyle{definition}
\newtheorem{definition}{Definition}[section]
\newtheorem{example}[definition]{Example}
\theoremstyle{remark}
\newtheorem{remark}[definition]{Remark}
\numberwithin{equation}{section}
\newcommand{\abs}[1]{\lvert#1\rvert}
\newcommand{\norm}[1]{\left\lVert#1\right\rVert}
\newcommand{\tr}[1]{\bigr\vert_{\scriptscriptstyle{#1}}}
\newcommand{\A}{\mathcal{A}}
\newcommand{\F}{\mathcal{F}}
\newcommand{\G}{\mathcal{G}}
\newcommand{\h}{\mathcal{H}}
\newcommand{\p}{\mathcal{P}}
\newcommand{\R}{\mathbb{R}}
\newcommand{\Sp}{\mathcal{S}}
\newcommand{\N}{\mathbb{N}}
\newcommand{\Z}{\mathbb{Z}}
\DeclareMathOperator{\grad}{grad}
\DeclareMathOperator{\Div}{div}
\DeclareMathOperator{\dist}{dist}
\title[boundary regularity]{Boundary regularity of Dirichlet minimizing $Q$-valued functions}
\author[J.Hirsch]{Jonas Hirsch}
\begin{document}

\begin{abstract}
We consider the H\"older continuity for the Dirichlet problem at the boundary. Almgren introduced the multivalued/ $Q$-valued functions for studying regularity of minimal surfaces in higher codimension. The H\"older continuity in the interior for Dirichlet minimizers is an outcome of Almgren's original theory \cite{Almgren}, to which C. De Lellis and E.N. Spadaro's work have given a simpler alternative approach \cite{Lellis}. We extend the H\"older regularity for Dirichlet minimizing $Q$-valued functions up to the boundary assuming $C^1$ regularity of the domain and $C^{0, \alpha}$ regularity of the boundary data with $\alpha > \frac{1}{2}$.
\end{abstract}
\maketitle

\section*{Introduction} 
\label{sec_B:introduction}
Multivalued maps with focus on Dirichlet integral minimizing maps have been introduced by F.~Almgren in his pioneering work \cite{Almgren}. He introduced them as $Q$-valued functions. $Q \in \N$, fixed, indicates the number of values the function takes, counting multiplicity. We will refer to them from now on as $Q$-valued functions. Their purpose had been the development of a proof of a regularity result on area minimizing rectifiable currents. The author recommends \cite{Lellis note} for a motivation of their definition, an overview of Almgrens program. Furthermore it compares different modern approachs to $Q$-valued functions inspired for instance by a metric analysis and surveys some recent contributions. A complete modern revision of Almgrens original theory and results can be found in \cite{Lellis}. We follow their notation, compare section \ref{sec_I:Q-valued functions}.\\ One introduces a Dirichlet energy for $Q$-valued maps. A function is Dirichlet minimizing if it is minimzing with respect to compact variations. \cite{Spadaro} gives a modern proof to a large class of examples arising from complex varieties. The H\"older continuity in the interior was already settled by Almgrens original theory and nicely presented in \cite{Lellis}.
Many results of Almgren have been extended in several directions; \cite{Lellis select}, \cite{Goblet09}, \cite{Zhu06Fre}, \cite{dePauw} consider $Q$-valued functions mapping into non-euclidean ambient spaces, \cite{Zhu06An}, \cite{Zhu06flow}, \cite{Zhu06reg}, \cite{GobletZhu08}, \cite{Goblet06}, focus on other objects in the $Q$-valued setting like differential inclusions, geometric flows and quasi minima. \cite{Mattila83}, \cite{Lellis09} extend some theorems to more general energy functionals.

 Nonetheless many regularity questions concerning these functions remain open. Some of them has been already proposed by Almgren himself and can be found in \cite{Brothers} and \cite{Lellis note}.\\

We address the following regularity question concerning Almgrens multivalued functions, posed for example by C.De Lellis in \cite[section 8, (7)]{Lellis note}:\\
Are Dirichlet minimizers continuous, or ever H\"older, up to the boundary if the boundary data are sufficient regular?\\

The following result gives a rather general first answer:

\begin{theorem}\label{theo_B:0.1}
	Let $\frac{1}{2}<s\le 1$ be given. There is a constant $\alpha=\alpha(N,Q,n,s)>0$ with the property that, if
	\begin{itemize}
		\item[(a1)] $\Omega \subset \R^N$ is a bounded $C^1$ regular domain;
		\item[(a2)] $u \in W^{1,2}(\Omega, \A_Q(\R^n))$ is Dirichlet minimizing;
		\item[(a3)] $u \tr{\partial \Omega} \in C^{0,s}(\partial \Omega)$; 
	\end{itemize}
	then $u \in C^{0,\alpha}(\overline{\Omega})$.
\end{theorem}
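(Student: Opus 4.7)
The plan is to prove a Morrey-type energy decay
\[
\int_{B_r(x_0)\cap\Omega}\abs{Du}^2\le C\,r^{N-2+2\alpha}
\]
for every $x_0\in\overline\Omega$ and every $r\le r_0$, with a uniform $\alpha=\alpha(N,Q,n,s)>0$. Given such a decay, Hölder continuity of $u$ up to the boundary follows by applying the classical Campanato characterization to $\xi\circ u$, where $\xi:\A_Q(\R^n)\to\R^D$ denotes Almgren's biLipschitz embedding and $\rho:\R^D\to\xi(\A_Q)$ its Lipschitz retraction. Interior points are handled by the interior decay proved in \cite{Lellis}, so the only new ingredient is the estimate at $x_0\in\partial\Omega$.

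Since $\partial\Omega$ is $C^1$, I would first flatten the boundary locally by a $C^1$ diffeomorphism, reducing to a half-ball $B_r^+=B_r\cap\{x_N>0\}$ with flat piece $B_r'=B_r\cap\{x_N=0\}$; the resulting Dirichlet functional is a $(1+o(1))$-perturbation of the Euclidean one and minimality is preserved up to negligible factors. On this half-ball I would construct a competitor by gluing (i) a harmonic extension of the boundary trace with (ii) the minimizer itself. Concretely, let $\tilde g:=\xi\circ u\tr{B_r'}\in C^{0,s}(B_r',\R^D)$ and let $\tilde h$ be its componentwise Poisson extension to $\{x_N>0\}$. A direct pointwise estimate (derivative of the Poisson kernel produces weight $x_N^{-1}$ against an oscillation $x_N^{s}$) gives
\[
\int_{B_r^+}\abs{D\tilde h}^2\le C\,[\tilde g]_{C^{0,s}}^{2}\,r^{N-2+2s},
\]
and it is here that $s>\tfrac12$ is decisive, since the computation reduces to $\int_0^r x_N^{2s-2}\,dx_N$, which is finite precisely when $s>\tfrac12$. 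I would then glue $\tilde h$ to $\xi\circ u$ on the annular shell $B_r^+\setminus B_{r/2}^+$ via a radial cutoff, producing a map $\tilde w$ with the correct boundary behaviour, and set $v:=\rho\circ\tilde w$. By construction $v$ equals $u$ on $\partial B_r\cap\{x_N>0\}$ and has the right trace on $B_r'$, so it is an admissible competitor.

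Minimality then gives $\int_{B_r^+}\abs{Du}^2\le\int_{B_r^+}\abs{Dv}^2$, which I would split into the good contribution $\lesssim r^{N-2+2s}$ on $B_{r/2}^+$ coming from the Poisson estimate, plus a transition term on the shell. The transition term can be controlled, through the Lipschitz retraction $\rho$ and a Poincaré inequality on half-annuli, by the energies of $u$ and $\tilde h$ on the shell plus an $r^{-2}$-weighted $L^2$-distance between them. Absorbing the shell contribution by Widman's hole filling produces an inequality
\[
\int_{B_{r/2}^+}\abs{Du}^2\le\theta\int_{B_r^+}\abs{Du}^2+C\,r^{N-2+2s}
\]
for some $\theta<1$; dyadic iteration then yields the desired Morrey decay with an exponent $\alpha>0$ depending only on $N,Q,n,s$.

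The hard part will be the transition shell. One has to compare, quantitatively and in $L^2$, the minimizer $u$ with the harmonic extension $\tilde h$ of its own boundary trace on $B_r^+\setminus B_{r/2}^+$, and the $\G$-distance between them is not trivially small: the bound has to be extracted from the $C^{0,s}$ modulus of $g$ on $B_r'$, combined with Caccioppoli/Poincaré inequalities for $u$ on half-annuli touching $\partial\Omega$. A secondary technical issue is that $\tilde h$ leaves $\xi(\A_Q)$ in the interior, so one must rely on the Lipschitz constant of $\rho$ and handle its lack of smoothness near the singular locus of $\xi(\A_Q)$ carefully. Once the hole filling closes, the remaining steps (iteration and the Campanato argument applied to $\xi\circ u$) are routine.
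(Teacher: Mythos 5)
Your overall architecture (Morrey decay at boundary points, Campanato applied to $\boldsymbol{\xi}\circ u$, boundary flattening, Poisson extension of the H\"older trace with the $\int_0^r x_N^{2s-2}\,dx_N$ computation forcing $s>\tfrac12$) is sound, and the transition term you flag as ``the hard part'' is in fact controllable: writing $\boldsymbol{\xi}\circ u-\tilde h=(\boldsymbol{\xi}\circ u-c_A)+(c_A-\tilde h)$ with $c_A$ the mean over the shell $A$, a Poincar\'e inequality on $A$ bounds the first piece by $\int_A\abs{Du}^2$, and a Poincar\'e--trace inequality on $A$ together with the $C^{0,s}$ oscillation of the boundary data bounds the second by $C\int_A\abs{Du}^2+C[g]_s^2\,r^{N-2+2s}$. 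So the hole-filling inequality you state can be derived with some constant $\theta<1$.

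The genuine gap is the final sentence. For $N\ge 3$, the inequality $\int_{B_{r/2}^+}\abs{Du}^2\le\theta\int_{B_r^+}\abs{Du}^2+Cr^{N-2+2s}$ with an unquantified $\theta<1$ does \emph{not} yield the Morrey decay $r^{N-2+2\alpha}$: dyadic iteration only gives $\int_{B_\rho^+}\abs{Du}^2\le C\rho^{\lambda}$ with $\lambda=\min\{\log_2(1/\theta),\,N-2+2s\}$, and H\"older continuity requires $\lambda>N-2$, i.e.\ $\theta<2^{2-N}$. Hole filling produces $\theta=1-1/C$, where $C$ aggregates $\mathrm{Lip}(\boldsymbol{\rho})^2$, the cutoff, and the Poincar\'e constants; this is close to $1$, far above $2^{2-N}$. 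Indeed, already a $0$-homogeneous function satisfies $\int_{B_{r/2}}\abs{Du}^2=2^{2-N}\int_{B_r}\abs{Du}^2$ exactly, so beating $2^{2-N}$ is equivalent to the sharp comparison $\int_{B_1^+}\abs{Du}^2\le\bigl(\tfrac{1}{N-2}-\delta\bigr)\int_{\Sp^{N-1}_+}\abs{D_\tau u}^2+C\llfloor u\rrfloor^2_{s,\Gamma_0}$, which cannot survive the Lipschitz-retraction and gluing losses of a crude competitor. This sharp estimate is exactly Proposition \ref{prop_B:2.3}, and the paper obtains it by contradiction: concentration compactness produces a limiting $0$-homogeneous minimizer with constant trace on the flat part and energy saturating $\tfrac{1}{N-2}$, which is then excluded by the inner-variation Liouville result of Proposition \ref{prop_B:2.4}. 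Your construction, suitably completed, would prove the theorem for $N=2$ (where any $\lambda>0$ exceeds $N-2$), but not for $N\ge 3$, which is the case the paper's machinery is built for (the two-dimensional case is then handled by lifting to a cylinder in $\R^3$).
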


To my knowledge, the only boundary regularity theorem proved in this context prior to theorem \ref{theo_B:0.1} is contained in \cite{Zhu} where, assuming the domain of the Dirichlet minimizer is a $2$-dimensional disk, the author proved that continuity holds up to the boundary if the boundary data are continuous. We will give a proof on different lines that continuity extends up the boundary of Lipschitz regular domains, cp. with section \ref{sub:continuity_up_to_boundary}.\\

In terms of notation, for single valued functions, Sobolev spaces are denoted by $W^{1,p}(\Omega, \R^n)$ and $ W^{1,p}(\Omega)$, fractional Soblev spaces by $W^{s,p}(\Omega)$. In the case of multivalued functions we will always mention the target explicitly i.e. $W^{1,p}(\Omega, \A_Q(\R^n))$ for Sobolev spaces and the fractional ones by $W^{s,p}(\Omega, \A_Q(\R^n))$. In the case of single valued function we will sometimes use as well $H^1(\Omega)$, $H^s(\Omega)$ for $W^{1,2}(\Omega)$ and $W^{s,2}(\Omega)$ ($p=2$). The trace for a Sobolev function is denoted by $u\tr{\partial \Omega}$. It will be clear from the context if it is the trace of a single valued or multivalued function. 

The equivalent "classical" statement of Theorem \ref{theo_B:0.1} for single valued harmonic functions states:\\
$f: \Omega \to \R^n$ harmonic, $f\tr{\partial \Omega} \in C^{0,\beta}(\partial \Omega)$ for some $0<\beta<1$ then $f \in C^{0,\beta}(\overline{\Omega})$.\\
	
Harmonic functions with finite energy belong to $H^1(\Omega, \R^n)$, but $u \in H^1(\Omega)$ if and only if $u\tr{\partial \Omega} \in H^{\frac{1}{2}}(\partial \Omega)$. $H^{\frac{1}{2}}(\partial \Omega)$ can be characterised using the Gagliardo semi-norm $\int_{\partial \Omega\times\partial \Omega} \frac{\abs{f(x)-f(y)}^2}{\abs{x-y}^{N}} \,dxdy$ that is controlled by the $C^{0,\beta}(\partial \Omega)$-norm for $\beta>\frac{1}{2}$. Nonetheless our result is suboptimal in the sense that for classical harmonic functions $u\tr{\partial \Omega} \in W^{\frac{1}{2},2}(\partial \Omega)\cap C^{0,\beta}(\partial \Omega)$ for any $0<\beta<1$ implies $u\in C^{0,\beta}(\overline{\Omega})$. In contrast, the H\"older exponent we claim in Theorem \ref{theo_B:0.1} is not explicit. For dimension three and higher that is not really surprising since the optimal (or even an explicit) exponent is not known in the interior so far.\\

The result for two dimensions is somewhat unsatisfactory. In two dimensions the optimal H\"older exponent for the interior regularity for $Q$-valued Dirichlet minimizers is known and explicit: it is $\frac{1}{Q}$.  We obtain the two dimensional case of theorem \ref{theo_B:0.1} by "lifting it" to three dimensions. So we get a "bad", not explicit exponent. Therefore we try to give some additional information. So we prove, as mentioned, that continuity extends up the boundary data on a $2$-dimensional Lipschitz regular domain if the boundary data is continuous. 
Concerning the optimal exponent we can give a partial first answer. At least on conical subsets of $\Omega$ the interior regularity extends up to the boundary for boundary data $u\tr{\partial \Omega} \in C^{0,\beta}(\partial \Omega)$, $\beta>\frac{1}{2}$. \\

The appendix contains a short introduction to fractional Sobolev spaces for single valued functions. It includes some perhaps less known results. Furthermore an interpolation lemma in the spirit of Luckhaus with boundaries functions in a fractional Sobolev space $W^{s,2}$ with $s>\frac{1}{2}$ is presented. Afterwards these results are extended to $Q$-valued functions. Additionally we present a concentration compactness result for $Q$-valued functions. It is along the same lines and indeed inspired  by C.~De Lellis and E.~Spadaro's version \cite[Lemma 3.2]{Lellis Lp}. Furthermore it contains a $W^{s,p}$ selection criterion, needed in the two dimensional setting.\\

Outline of this article: 
section \ref{sec_I:Q-valued functions} recalls the basic definition and results on $Q$-valued functions that are of interest in our context,  
section \ref{sec:general_assumptions_and_notation} fixes notation and general assumptions, section \ref{sec:h"older_continuity_for_nge_3_} contains the proof of theorem \ref{theo_B:0.1} for dimension three and higher, section \ref{sec:boundary_regularity_in_dimension_n_2_} considers the two dimensional setting. Finally the appendix with sections \ref{sec:fractional_sobolev_spaces}, \ref{sec:_q_valued_functions} and \ref{sec:construction_of_bilipschitz_maps_between_b_1_and_omega_fcap_b_1_} provides tools needed in the proof.
\section*{Acknowledgements}
My most humble and sincere thanks to my supervisors Camillo De Lellis and Emanuele Spadaro for introducing me to F. Almgren’s $Q$-valued functions. Reading their modern review of the theory gave me the idea to start this project. Their insights and stimulating discussions really helped my work. Their knowledge and expertise, on more topics than I can ever hope to know, was invaluable.

\tableofcontents

\section{Q-valued functions} 
\label{sec_I:Q-valued functions}
As announced this section recalls the basic definitions and results on $Q$-valued functions needed in here. The theory is presented omitting the actual proofs. They can be found for instance in C.~De Lellis and E.~Spadaro's work \cite{Lellis}. More refined results are presented in the appendix. In there a concentration compactness result is presented. It is along the same lines and indeed inspired  by C.~De Lellis and E.~Spadaro's version \cite[Lemma 3.2]{Lellis Lp}. Furthermore an interpolation lemma in the spirit of Luckhaus with boundary functions in a fractional Sobolev space and a $W^{s,p}, s> \frac{1}{2}$ selection criterion.\\

We follow mainly the notation and terminology introduced by C.~De Lellis and E.~Spadaro in \cite{Lellis}. It differs slightly from Almgren's original one. $Q, Q_1, Q_2, \dotsc$ are always natural numbers.\\
The space of unordered sets of $Q$ points in $\R^n$ can be made into a complete metric space.
\begin{definition}\label{def_I:1.101}
	$\left(\A_Q(\R^n), \G \right)$ denotes the metric space of unordered $Q$-tuples given by
	\begin{equation*}
	\mathcal{A}_Q(\R^n)= \left\{ T= \sum_{i=1}^Q \llbracket t_i \rrbracket \colon t_i \in \R^n, i = 1, \dotsc, Q \right\}
	\end{equation*}
	and if $\mathcal{P}_Q$ is the permutation group of $\{1, \dotsc, Q\}$ the metric is given by
	\begin{equation*}
		\G(S,T)^2= \min_{\sigma \in \mathcal{P}_Q} \sum_{i=1}^Q \abs{s_i - t_{\sigma(i)}}^2.
	\end{equation*}
\end{definition}
We use the convention $\llbracket t \rrbracket = \delta_t$ for a Dirac measure at a point $t \in \R^n$. Considering $T=\sum_{i=1}^Q \llbracket t_i \rrbracket$ as a sum of $Q$ Dirac measures one notice that $\A_Q(\R^n)$ corresponds to the set of  $0$-dimensional integral currents of mass $Q$ and positive orientation. Hence we will write 
\[
	spt(T)=\{ t_1, \dots, t_Q \colon T=\sum_{i=1}^Q \llbracket t_i \rrbracket \} \subset \R^n.
\]
 
Furthermore $\A_Q(\R^n)$ is endowed with an intrinsic addition:
\[
	+\colon \A_{Q_1}(\R^n) \times \A_{Q_2}(\R^n) \to \A_{Q_1+Q_2}(\R^n) \quad S+T = \sum_{i=1}^{Q_1} \llbracket s_i \rrbracket + \sum_{i=1}^{Q_2} \llbracket t_i \rrbracket.
\]
We define a translation operator
\[
	\oplus\colon A_Q(\R^n) \times \R^n \to \A_Q(\R^n) \quad T\oplus s = \sum_{i=1}^Q \llbracket t_i + s \rrbracket.
\]
The metric $\G$ defines continuity, modulus of continuity, H\"older and Lipschitz continuity and (Lebesgue) measurability for functions from a set $\Omega \subset \R^N$ into $\A_Q(\R^n)$, i.e.$u: \Omega \to \A_Q(\R^n)$. \\
As it has been shown in \cite[Proposition 0.4]{Lellis} for any measurable function $u: \Omega\to \A_Q(\R^n)$ we can find a measurable selection i.e.
\[
	v=(v_1, \dotsc , v_Q): \Omega \to (\R^n)^Q \text{ measurable s.t. } u(x)=[v](x)=\sum_{i=1}^Q \llbracket v_i(x) \rrbracket.
\]
Selections of higher regularity are considered in \cite{Lellis select}, \cite[Proposition 1.2]{Lellis} and in the appendix \ref{sub:selection}.\\
We will write $\abs{u(x)}=\sqrt{\sum_{i=1}^Q \abs{v_i(x)}^2}=\G(u(x),Q\llbracket 0 \rrbracket)$.
\begin{definition}\label{def_I:1.102}
	The Sobolev space $W^{1,2}(\Omega, \A_Q(\R^n))$ is defined as the set of measurable functions $u: \Omega \to \A_Q(\R^n)$ that satisfy 
	\begin{itemize}
		\item[(w1)] $x \mapsto \G(u(x),T) \in W^{1,2}(\Omega, \R_+)$ for every $T \in \A_Q(\R^n)$;
		\item[(w2)] $\exists \varphi_j \in L^2(\Omega, \R_+)$ for $j=1, \dots, N$ s.t. $\abs{D_j\G(u(x),T)} \le \varphi_j(x)$ for any $T \in \A_Q(\R^n)$ and a.e. $x \in \Omega$.
	\end{itemize}
\end{definition}
It is not difficult to show the existence of minimal functions $\tilde{\varphi}_j$, in the sense that $\tilde{\varphi}_j(x)\le \varphi_j(x)$ for a.e. $x$ and any $\varphi_j$ satisfying property (w2), \cite[Proposition 4.2]{Lellis}. Such a minimal bound is denoted by $\abs{D_ju}$ and is explicitly characterised by
\[
	\abs{D_ju}(x)= \sup\left\{ \abs{D_j \G(u(x), T_i)}\colon \{T_i\}_{i\in \N} \text{ dense in } \A_Q(\R^n)\right\}.
\]
The Sobolev "semi-norm", or Dirichlet energy, is defined by integrating the measurable function $\abs{Du}^2=\sum_{j=1}^N \abs{D_ju}^2$:
\begin{equation}\label{eq_I:1.101}
	\int_\Omega \abs{Du}^2 = \int_{\Omega} \sum_{j=1}^J \abs{D_ju}^2.
\end{equation} 
Strictly speaking it is not a "semi-norm". $W^{1,2}(\Omega, \A_Q(\R^n))$ is not a linear space since $\A_Q(\R^n)$ lacks this property.\\
A function $u \in W^{1,2}(\Omega, \R^n)$ is said to be Dirichlet minimizing if
\begin{equation}\label{eq_I:1.102}
	\int_{\Omega} \abs{Du}^2= \inf \left\{ \int_{\Omega} \abs{Dv}^2 \colon v \in W^{1,2}(\Omega, \A_Q(\R^n)), \G(u(x), v(x)) \in W^{1,2}_0(\Omega, \R_+) \right\}.
\end{equation}

On Lipschitz regular domains $\Omega \subset \R^N$ one has a continuous trace operator as for classical single valued Sobolev functions
\[
	\circ\tr{\partial \Omega}: W^{1,2}(\Omega, \A_Q(\R^n)) \to L^2(\partial \Omega, \A_Q(\R^n)).
\]
The definition of $W^{1,2}(\Omega, \A_Q(\R^n))$, definition \ref{def_I:1.102}, implies that on a Lipschitz regular domain $\Omega \subset \R^N$ one has that $\G(u(x),v(x)) \in W^{1,2}_0(\Omega)$ corresponds to $u\tr{\partial \Omega}=v\tr{\partial \Omega}$ for any $u,v \in W^{1,2}(\Omega, \A_Q(\R^n))$.\\

As a consequence of a Rademacher theorem for multivalued Lipschitz functions, \cite[section 1.3 \& Theorem 1.13]{Lellis} a Sobolev function $u \in W^{1,2}(\Omega, \A_Q(\R^n))$ is a.e. approximately differentiable in the sense
\begin{itemize}
	\item[(1)] $\exists \mathcal{U}_x: \Omega \to \A_Q(\R^n \times Hom(\R^N, \R^n))$, $x \mapsto \mathcal{U}_x= \sum_{i=1}^Q \llbracket (u_i(x), U_i(x)) \rrbracket$ measurable with $U_i(x)=U_j(x)$ whenever $u_i(x)=u_j(x)$;
	\item[(2)] $\mathcal{U}_x$ defines a 1-jet $J\mathcal{U}_x: \Omega \times \R^N \to \A_Q(\R^n)$ by $J\mathcal{U}_x(y)=\sum_{i=1}^Q \llbracket u_i(x) + U_i(x)(y-x) \rrbracket$, that has the additional property that $J\mathcal{U}_x(x)= u(x)$ for a.e. $x \in \Omega$;\\
	\item[(3)] for a.e. $x\in \Omega$,  $\exists E_x \subset \Omega$ having density $1$ in $x$ s.t.  $\G(u(y), J\mathcal{U}_x(y))=o(\abs{y-x})$ on $E_x$. 
\end{itemize}
As one may guess the 1-jet corresponds to a first order "Taylor expansion", that becomes apparent in the proof of Rademacher's theorem, \cite[Theorem 1.13]{Lellis}. 
One can show that $\abs{D_ju}(x)= \sum_{i=1}^Q \abs{U_i(x)e_j}^2$ for a.e. $x \in \Omega$, \cite[Proposition 2.17]{Lellis}. From now on we will write $Du_i(x)$ for $U_i(x)$ and $D_ju_i(x)$ for $U_i(x)e_j$.\\

A useful tool is Almgren's bi-Lipschitz embedding of $\A_Q(\R^n)$ into some $\R^N$. A remark of Brian White improved it, compare \cite[Theorem 2.1 \& Corollary 2.2]{Lellis}:

\begin{theorem}[bi-Lipschitz embedding]\label{theo_I:1.101}
	There exists $m=m(Q,n)$ and an injective map $\boldsymbol{\xi}: \A_Q(\R^n) \to \R^m$ with the properties
	\begin{itemize}
		\item[(i)] $Lip(\boldsymbol{\xi})\le 1$ and $Lip(\boldsymbol{\xi}^{-1}\vert_{\boldsymbol{\xi}(\A_Q(\R^n))})\le C(Q,n)$;
		\item[(ii)]  $\forall T \in \A_Q(\R^n)$ $\exists\delta=\delta(T) >0$ such that $\abs{\boldsymbol{\xi}(T)-\boldsymbol{\xi}(S)}=\G(T,S)$ for all $S \in B_{\delta}(T) \subset \A_Q(\R^n)$.
	\end{itemize}
There is a retraction $\boldsymbol{\rho}: \R^m \to \A_Q(\R^n)$ because of (i) and the Lipschitz extension Theorem, e.g. \cite[Theorem 1.7]{Lellis}.
\end{theorem}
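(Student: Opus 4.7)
The plan is to follow Almgren's classical construction combined with B.~White's refinement. The starting point is the isometric embedding $\sigma\colon\A_Q(\R)\hookrightarrow\R^Q$, $\sigma(S)=(s_{(1)},\dots,s_{(Q)})$ with $s_{(1)}\le\cdots\le s_{(Q)}$; this is an isometry because the sorted labelling automatically realises the optimal permutation in $\G$. For $v\in\R^n$ the push-forward $\pi_v(T)=\sum_i\llbracket\langle t_i,v\rangle\rrbracket$ is $|v|$-Lipschitz from $\A_Q(\R^n)$ to $\A_Q(\R)$. My proposed embedding is
$$
\boldsymbol{\xi}(T)\;=\;c\bigl(\sigma(\pi_{v_1}T),\ldots,\sigma(\pi_{v_M}T)\bigr)\in\R^{QM},
$$
with constant $c>0$ and directions $\{v_\alpha\}_{\alpha=1}^M\subset\R^n$ chosen so that (a) $\sum_\alpha v_\alpha\otimes v_\alpha=c^{-2}\,\mathrm{Id}_{\R^n}$ (a tight frame), and (b) the $v_\alpha$ sit in sufficiently general position that each separates the entries of any $T\in\A_Q(\R^n)$ with pairwise distinct points.

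For~(ii), fix $T=\sum\llbracket t_i\rrbracket$ with distinct entries; the coincident case is recovered by grouping equal points. For $S=\sum\llbracket s_i\rrbracket$ close enough to $T$, the natural pairing $s_i\leftrightarrow t_i$ is $\G$-optimal and every $\langle s_i,v_\alpha\rangle$ has the same order in $i$ as $\langle t_i,v_\alpha\rangle$, hence
$$
|\boldsymbol{\xi}(T)-\boldsymbol{\xi}(S)|^2\;=\;c^2\sum_{\alpha,i}\langle v_\alpha,t_i-s_i\rangle^2\;=\;\sum_i|t_i-s_i|^2\;=\;\G(T,S)^2
$$
by the tight-frame identity. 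Injectivity of $\boldsymbol{\xi}$ in~(i) then follows from~(b): two distinct elements $T\neq T'$ cannot have identical sorted projections in every direction of a generic finite family.

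The quantitative parts of~(i) are the main technical obstacle. The naive estimate summing $\alpha$-wise yields only $|\boldsymbol{\xi}(T)-\boldsymbol{\xi}(S)|^2\le c^2\sum_\alpha|v_\alpha|^2\,\G(T,S)^2=n\,\G(T,S)^2$, and tightening this to $Lip(\boldsymbol{\xi})\le 1$ while preserving the exact local isometry~(ii) is what Almgren's combinatorial / cellular construction (together with White's refinement) is designed to achieve; roughly, the domain is partitioned into closed cells on which the ordering behaviour of the sorted projections is constant and the embedding is adjusted cell-by-cell so that contributions from different cells combine non-expansively. The lower bound $Lip(\boldsymbol{\xi}^{-1}|_{\boldsymbol{\xi}(\A_Q(\R^n))})\le C(Q,n)$ I would obtain by a compactness/contradiction argument: the equivariances $\boldsymbol{\xi}(\lambda T)=\lambda\boldsymbol{\xi}(T)$ for $\lambda>0$ and $\boldsymbol{\xi}(T\oplus x)=\boldsymbol{\xi}(T)+(\text{affine in }x)$ reduce the problem to pairs $(T,S)$ with $\G(T,S)=1$, $T$ centred and of bounded diameter, and any sequence violating a uniform lower bound would subconverge to a pair contradicting injectivity. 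Finally, the retraction $\boldsymbol{\rho}$ is produced by applying the metric-space-valued Lipschitz extension theorem \cite[Theorem 1.7]{Lellis} to $\boldsymbol{\xi}^{-1}\colon\boldsymbol{\xi}(\A_Q(\R^n))\to\A_Q(\R^n)$, extending it Lipschitzly to all of $\R^m$.
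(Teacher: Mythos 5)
The paper does not prove this theorem: it is quoted as a known result from \cite[Theorem 2.1 \& Corollary 2.2]{Lellis}, so there is no internal proof to compare against and your sketch must stand on its own. Your construction is indeed the standard Almgren--White one (sorted one-dimensional projections along a finite, well-spread, tight frame of directions), and your treatment of (ii), of injectivity, and of the retraction via the Lipschitz extension theorem is correct in outline. The trouble is with the two quantitative halves of (i). For $Lip(\boldsymbol{\xi})\le 1$ you claim that improving the naive bound $\sqrt{n}$ to $1$ requires Almgren's cellular construction; in fact no such machinery is needed. Let $\sigma^*$ be an optimal permutation for $\G(T,S)$. Since the sorted pairing is optimal in one dimension, $\G(\pi_{v_\alpha}T,\pi_{v_\alpha}S)^2\le\sum_i\langle v_\alpha,\,t_i-s_{\sigma^*(i)}\rangle^2$ for every $\alpha$; summing over $\alpha$ \emph{before} estimating each direction separately and using the tight-frame identity $\sum_\alpha v_\alpha\otimes v_\alpha=c^{-2}\mathrm{Id}$ gives $\abs{\boldsymbol{\xi}(T)-\boldsymbol{\xi}(S)}^2\le c^2\sum_i c^{-2}\abs{t_i-s_{\sigma^*(i)}}^2=\G(T,S)^2$. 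The factor $n$ in your estimate appears only because you allow each direction to pick its own optimal matching.

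The genuinely hard part is the lower bound $Lip(\boldsymbol{\xi}^{-1}\vert_{\boldsymbol{\xi}(\A_Q(\R^n))})\le C(Q,n)$, and your compactness argument does not close it. Scaling and translation equivariance reduce to pairs with $\G(T,S)=1$ and $0\in spt(T)$, but nothing forces $\mathrm{diam}(spt(T)\cup spt(S))$ to remain bounded along a minimizing sequence, so the relevant configuration space is not compact; and the attempted reduction by splitting an unbounded configuration into distant clusters fails because $\abs{\boldsymbol{\xi}(T)-\boldsymbol{\xi}(S)}$ for a union of far-apart clusters is not controlled from below by the corresponding quantities for the pieces unless one already knows that a definite proportion of the directions separates the clusters --- which is exactly the combinatorial statement one is trying to prove. (Even on a compact slice, injectivity plus continuity yields only some positive lower bound there, with no control on the constant, and the near-diagonal regime must separately be handled via (ii).) The standard argument replaces all of this by a direct combinatorial lemma: the $v_\alpha$ are drawn from a sufficiently fine grid so that for \emph{every} pair $(T,S)$ a fixed fraction of them satisfies $\G(\pi_{v_\alpha}T,\pi_{v_\alpha}S)\ge c(Q,n)\,\G(T,S)$. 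Until that lemma (or a correct substitute) is supplied, part (i) of the theorem is not proved.
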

As a consequence $\abs{Du}(x)=\abs{D\boldsymbol{\xi}\circ u}(x)$ for a.e. $x \in \Omega$ for any $u \in W^{1,2}(\Omega, \mathcal{A}_Q(\R^n))$.\\
We want to remark that the image of $\A_Q(\R^n)$ under $\boldsymbol{\xi}$ in $\R^m$ is not convex neither a $C^2$ manifold. Thus there is no "nearest point" projection not even in a tubular neighborhood.

Two cornerstones in the context of Dirichlet minimizers that are of interest for us in the following are (c.p. with \cite[Theorem 0.8 \& Theorem 0.9]{Lellis}):
.
\begin{theorem}[Existence of Dirichlet minimizers]\label{theo_I:1.102}
	Let $v\in W^{1,2}(\Omega, \A_Q(\R^n))$ be given, then there exists a (not necessarily unique) Dirichlet minimizing $u\in W^{1,2}(\Omega, \A_Q(\R^n))$ with $\G(u(x),v(x)) \in W^{1,2}_0(\Omega,\R_+)$.  
\end{theorem}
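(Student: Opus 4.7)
The plan is to apply the direct method of the calculus of variations, transported to the classical single-valued Sobolev setting via the bi-Lipschitz embedding $\boldsymbol{\xi}$ of Theorem \ref{theo_I:1.101}. Let $\mathcal{C}=\{w \in W^{1,2}(\Omega, \A_Q(\R^n)) : \G(w,v) \in W^{1,2}_0(\Omega,\R_+)\}$; this set is non-empty as $v\in \mathcal{C}$, so the infimum $m := \inf_{w\in \mathcal{C}}\int_\Omega \abs{Dw}^2$ is finite. I would pick a minimizing sequence $u_k\in \mathcal{C}$ and set $\tilde{u}_k := \boldsymbol{\xi}\circ u_k\colon \Omega \to \R^m$. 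Because $\operatorname{Lip}(\boldsymbol{\xi})\le 1$ and $\abs{Du_k}=\abs{D\tilde u_k}$ a.e.\ (the remark following Theorem \ref{theo_I:1.101}), the Dirichlet energies coincide and $\{D\tilde u_k\}$ is uniformly bounded in $L^2(\Omega, \R^{m\times N})$.

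To upgrade this to a uniform bound in $W^{1,2}(\Omega,\R^m)$, I would exploit the boundary constraint: since $u_k$ and $v$ share their trace, the single-valued Sobolev function $\tilde u_k - \boldsymbol{\xi}\circ v$ lies in $W^{1,2}_0(\Omega, \R^m)$, and by Poincar\'e's inequality its $L^2$ norm is controlled by the $L^2$ norm of its gradient, which is in turn bounded by $\norm{Du_k}_{L^2}+\norm{Dv}_{L^2}$. Hence $\tilde u_k$ is bounded in $W^{1,2}(\Omega, \R^m)$. Passing to a subsequence, $\tilde u_k \rightharpoonup \tilde u$ weakly in $W^{1,2}$, strongly in $L^2$ by Rellich--Kondrachov, and pointwise a.e.\ after one further extraction.

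The key structural point is that the weak limit $\tilde u$ actually takes its values in the embedded set $\boldsymbol{\xi}(\A_Q(\R^n))$ almost everywhere. This will follow from a.e.\ pointwise convergence together with the fact that $\boldsymbol{\xi}(\A_Q(\R^n))\subset \R^m$ is closed: since $(\A_Q(\R^n),\G)$ is a complete metric space and $\boldsymbol{\xi}^{-1}$ restricted to the image is Lipschitz, the image inherits completeness and is therefore closed in $\R^m$. Setting $u:=\boldsymbol{\xi}^{-1}\circ \tilde u$ pointwise a.e., the identity $\abs{Du}=\abs{D\tilde u}$ shows $u\in W^{1,2}(\Omega, \A_Q(\R^n))$.

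It remains to check admissibility and minimality. For admissibility, $\G(u_k,v)$ is bounded in $W^{1,2}_0(\Omega, \R_+)$ (its weak gradient is pointwise majorized by $\abs{Du_k}+\abs{Dv}$ via the triangle inequality for $\G$) and converges a.e.\ to $\G(u,v)$, so $\G(u,v)$ inherits the $W^{1,2}_0$ property as the weak limit of functions in the (weakly closed) space $W^{1,2}_0$. Standard lower semicontinuity of the Dirichlet integral under weak $W^{1,2}$ convergence then gives
\[
\int_\Omega \abs{Du}^2 = \int_\Omega \abs{D\tilde u}^2 \le \liminf_{k\to\infty}\int_\Omega \abs{D\tilde u_k}^2 = \liminf_{k\to\infty}\int_\Omega \abs{Du_k}^2 = m,
\]
so $u$ is a Dirichlet minimizer. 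The only genuinely non-classical point I expect to need some care on is the identification of the weak limit as a $Q$-valued function, which hinges on the closedness of $\boldsymbol{\xi}(\A_Q(\R^n))$; the remaining steps are a routine implementation of the direct method in $W^{1,2}(\Omega,\R^m)$.
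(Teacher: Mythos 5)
Your argument is correct and is essentially the standard proof: the paper itself does not prove Theorem \ref{theo_I:1.102} but recalls it from \cite{Lellis} (Theorem 0.8 there), whose proof is exactly this direct method transported through Almgren's bi-Lipschitz embedding $\boldsymbol{\xi}$ — minimizing sequence, Poincar\'e via the fixed boundary datum, weak $W^{1,2}$ compactness, closedness of $\boldsymbol{\xi}(\A_Q(\R^n))$, and lower semicontinuity. The only points you leave implicit — that $\abs{w}\le g$ a.e.\ with $g\in W^{1,2}_0(\Omega)$ forces $w\in W^{1,2}_0(\Omega)$, and that a map $\tilde u\in W^{1,2}(\Omega,\R^m)$ taking values a.e.\ in the closed set $\boldsymbol{\xi}(\A_Q(\R^n))$ pulls back to an element of $W^{1,2}(\Omega,\A_Q(\R^n))$ in the sense of Definition \ref{def_I:1.102} — are standard ingredients of the theory in \cite{Lellis} and do not constitute gaps.
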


\begin{theorem}[interior H\"older continuity]\label{theo_I:1.103}
There is a constant $\alpha_0=\alpha_0(N,Q)>0$ with the property that if $u \in W^{1,2}(\Omega, \A_Q(\R^n))$ is Dirichlet minimizing, then $u\in C^{0,\alpha_0}(K, \A_Q(\R^n))$ for any $K\subset \Omega\subset \R^N$ compact. Indeed, $\abs{Du}$ is an element of the Morrey space $L^{2,N-2-2\alpha_0}$ with the estimate
\begin{equation}\label{eq_I:1.103}
	r^{2-N-2\alpha_0} \int_{B_r(x)} \abs{Du}^2 \le R^{2-N-2\alpha_0} \int_{B_R(x)} \abs{Du}^2 \text{ for } r\le R, B_R(x) \subset \Omega.
\end{equation}
For two-dimensional domains $\alpha_0(2,Q)=\frac{1}{Q}$ is explicit and optimal.
\end{theorem}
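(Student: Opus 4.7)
The plan is to establish the Morrey-type decay estimate \eqref{eq_I:1.103} first; H\"older continuity on compact $K \subset \Omega$ then follows from the classical Campanato characterization applied to $w := \boldsymbol{\xi}\circ u$, using $\abs{Dw}=\abs{Du}$ a.e.\ and the bi-Lipschitz property of $\boldsymbol{\xi}$ provided by Theorem \ref{theo_I:1.101}.

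Fix $x_0\in\Omega$ with $B_R(x_0)\Subset\Omega$ and set $I(r) := \int_{B_r(x_0)}\abs{Du}^2$. For a.e.\ $r\in(0,R)$ the slice $u\tr{\partial B_r}$ lies in $W^{1,2}(\partial B_r, \A_Q(\R^n))$ and $I'(r) = \int_{\partial B_r}\abs{Du}^2$. The main step is to construct, for a.e.\ such $r$, a competitor $v_r \in W^{1,2}(B_r, \A_Q(\R^n))$ with $v_r\tr{\partial B_r} = u\tr{\partial B_r}$ and
\[
\int_{B_r}\abs{Dv_r}^2 \le \gamma\, r \int_{\partial B_r}\abs{Du}^2
\]
for some $\gamma = \gamma(N,Q,n) < 1/(N-2)$. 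Minimality then yields $I(r)\le \gamma\, r\, I'(r)$, which integrates to $I(r) \le (r/R)^{1/\gamma} I(R)$; setting $2\alpha_0 := 1/\gamma - (N-2)$ gives \eqref{eq_I:1.103}. The naive $0$-homogeneous extension $v_r(\rho\theta) := u\tr{\partial B_r}(r\theta)$ achieves only the borderline value $\gamma = 1/(N-2)$, since in polar coordinates $\int_{B_r}\abs{Dv_r}^2 = \tfrac{r}{N-2}\int_{\partial B_r}\abs{D_\tau u}^2$.

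To strictly beat this borderline, I would argue by compactness and contradiction. If no uniform $\gamma < 1/(N-2)$ worked, rescaling produces Dirichlet minimizers $u_k$ on $B_1$ with $I_k(1)=1$ and $I_k(1/2) \to (1/2)^{N-2}$. The strong $W^{1,2}$-closedness of Dirichlet minimizers under weak convergence (a cornerstone of \cite{Lellis}, proved via a Luckhaus-type interpolation that supplies admissible competitors stitched from converging boundary data) gives a subsequential limit $u_\infty$ that is itself a Dirichlet minimizer and saturates the borderline on every concentric ball. Saturation forces $u_\infty$ to coincide with its $0$-homogeneous radial extension, hence to be $0$-homogeneous about $0$. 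The Euler--Lagrange equations for Dirichlet minimizers (each branch is harmonic at regular points) together with $0$-homogeneity force every branch to be constant, so $u_\infty$ is constant, contradicting $I_\infty(1)=1$. This produces $\alpha_0 = \alpha_0(N,Q) > 0$.

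For the two-dimensional optimal statement $\alpha_0(2,Q)=1/Q$, I would couple the above scheme with a classification of planar homogeneous $Q$-valued Dirichlet minimizers: via a $W^{s,2}$-selection on punctured neighborhoods (see the appendix), any homogeneous minimizer $u_\infty:\R^2\to\A_Q(\R^n)$ of degree $\alpha>0$ decomposes into branches of the form $z\mapsto \llbracket z^{\alpha_k}\zeta_k(\arg z)\rrbracket$, and the requirement that the $Q$ branches close up into a single $\A_Q(\R^n)$-valued function after a full rotation by $2\pi$ forces each $\alpha_k$ to be a positive integer multiple of $1/Q$. The minimum $1/Q$ is attained by $z\mapsto \sum_{k=0}^{Q-1}\llbracket z^{1/Q} e^{2\pi ik/Q}\rrbracket$, so the scale-invariant frequency at every interior point is at least $1/Q$, yielding the sharp decay. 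The principal obstacle in both parts is the non-linearity of the target $\A_Q(\R^n)$, which obstructs the classical competitor constructions (harmonic extension, averaging); the Luckhaus-type interpolation on thin annuli provides a substitute, at the cost of making $\alpha_0$ non-explicit when $N\ge 3$.
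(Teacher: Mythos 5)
The paper does not actually prove Theorem \ref{theo_I:1.103}: it is quoted from Almgren and from \cite{Lellis}, where the key decay $\int_{B_r}\abs{Du}^2\le\gamma\,r\int_{\partial B_r}\abs{D_\tau u}^2$, $\gamma<\tfrac{1}{N-2}$, is obtained by a \emph{direct} competitor construction (for $N\ge 3$, an induction on $Q$ combined with a Poincar\'e inequality on the sphere and an almost-homogeneous rescaled extension; for $N=2$, a decomposition of $u\tr{\partial B_r}$ into irreducible pieces lifted through $k$-fold covers of $\Sp^1$ and extended harmonically, which is exactly what makes $\alpha_0(2,Q)=\tfrac1Q$ explicit). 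Your compactness-and-contradiction scheme is instead the interior analogue of the paper's own Proposition \ref{prop_B:2.3} and Corollary \ref{cor_B:2.5}, and its skeleton is sound; the genuine gap is the step on which the whole contradiction hinges, namely that the limiting $0$-homogeneous minimizer is constant. The justification ``each branch is harmonic at regular points, together with $0$-homogeneity, forces every branch to be constant'' does not work: away from the (a priori unknown) singular set the branches are only \emph{locally} defined harmonic sheets, and a $0$-homogeneous function harmonic on an open subcone is just a Laplace--Beltrami harmonic function on an open subset of $\Sp^{N-1}$, which need not be constant; globalizing the sheets would require the structure of the singular set, i.e.\ precisely the regularity theory downstream of the theorem you are proving. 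The correct elementary substitute is the \emph{outer} variation $\Psi(x,u)=\varphi(\abs{x})\,(u-y)$, valid for any $W^{1,2}$ minimizer, which yields $\int_{B_r}\abs{Du}^2=\int_{\partial B_r}\sum_i\langle u_i-y,\partial_\nu u_i\rangle$ for a.e.\ $r$; for a $0$-homogeneous map $\partial_\nu u_i=0$ a.e., so $Du\equiv 0$. Be aware that the inner variation used in Proposition \ref{prop_B:2.4} does \emph{not} transfer to the interior: without the one-sided constraint $X_N\ge 0$ on a boundary face it only gives $\int_{\partial B_r}\abs{Du}^2x_j=0$, which does not force $Du=0$.

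Two further points. First, the compactness step must be set up so that no energy is lost: you need energy convergence and minimality of the limit, which requires stitching competitors on a good sphere $\partial B_\rho$ where the traces have uniformly bounded $W^{1,2}$ norm (chosen by Fatou) via the Luckhaus-type interpolation, and a concentration--compactness splitting in the target; this is exactly the role of Lemma \ref{lem_I:A1.1} and Lemma \ref{lem:A2.102} in the boundary argument, so it is available, but it should be stated rather than subsumed under ``closedness of minimizers''. Second, for $N=2$ the classification of homogeneous minimizers you invoke is correct, but it does not by itself give \eqref{eq_I:1.103} with $\alpha_0=\tfrac1Q$ for an \emph{arbitrary} minimizer: passing from ``every homogeneous minimizer has degree in $\tfrac1Q\N$'' to a uniform Morrey decay needs either Almgren's frequency function plus a blow-up analysis (which presupposes some H\"older continuity to begin with), or the explicit competitor through $k$-fold covers, whose energy exceeds that of the single-valued harmonic extension upstairs by a factor controlled by $k\le Q$ and which delivers the sharp differential inequality directly. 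As written, the two-dimensional part is closer to a consistency check on the value $\tfrac1Q$ than to a proof of the decay.
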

Both results had been proven first by Almgren in \cite{Almgren} and nicely reviewed by C. De Lellis and E. Spadaro in \cite{Lellis}.\\

J.~Almgren presents in  \cite[Theorem 2.16]{Almgren} an example of non-uniqueness: there are two Dirichlet minimizers $f \neq h\in W^{1,2}(B_1, \A_2(\R^2))$, $B_1 \subset \R^2$, with $f = h$ on $\partial B_1$. Given any other minimzer that agrees with $f$ or $h$ at the boundary must be either $f$ or $h$.

\section{General assumptions and further notation} 
\label{sec:general_assumptions_and_notation}
From now on, if not indicated differently, we will consider the following setting:
$\Omega \subset \R^N$ is a bounded $C^1-$regular domain i.e. to every $z\in \partial \Omega$ there exists $R=R(z)>0$, $F=F_z\in C^{1}(\R^{N-1}, \R)$ s.t. ( up to a rotation )
\[
	\Omega \cap B_{R}(z)=\{z+(x',x_N) \colon \abs{x}<R, \, x_N > F(x') \}.
\]
In particular for $F \in C^{1}(\R^{N-1},R)$ we set
\[
	\Omega_F= \{(x',x_N) \colon \, x_N > F(x') \}.
\]
Since $\partial \Omega$ is compact, the $C^1$ regularity implies that
\begin{itemize}\label{A1}
	\item[(A1)] for any given $\epsilon_F>0$, $\exists R=R(\Omega,\epsilon_F)>0$ with the property that for any $z\in \Omega$ there is $F\in C^{1}(\R^{N-1}, \R)$ with $F(0)=0$, $\grad{F}(0)=0$, $\norm{\grad{F}}_\infty < \epsilon_F$ and (up to a rotation): 
\[
	\Omega \cap B_{R}(z)=\{z+(x',x_N) \colon \abs{x}<R, \, x_N > F(x') \}= \Omega_F\cap B_R.
\]
\end{itemize} 
In other words $\partial \Omega$ is locally the graph of a $C^1$ function with small gradient over the tangent space $T_z\partial \Omega$.

Let $0<r\le R$ and  $z\in \partial \Omega$. We define the following scaled (and translated) $\Omega$:
\[
	\Omega_{z,r}= \{ x\in \R^{N}\colon z+rx \in \Omega \}.
\]
Boundary regularity is a local question so we will often consider
\[
\Omega_{z,r}\cap B_1 = \{(x',x_N) \colon \abs{x}<1, x_N > F_{0,r}(x') \}=\Omega_{F_{0,r}}\cap B_1
\]
with $F_{0,r}(x')= r^{-1} F(r x')$ ( observe that $\norm{\grad( F_{0,r})}_{\infty,B_1} = \norm{\grad F}_{\infty,B_r}$ ).\\
Frequently we will study such a special domain $\Omega_F$ defined by
\begin{itemize}\label{A2}
	\item[(A2)]
	\[
		\Omega_F=\{(x',x_N) \colon \, x_N > F(x') \}
	\]
	with $F\in C^{1}(\R^{N-1}, \R)$ with $F(0)=0$, $\grad{F}(0)=0$, $\norm{\grad{F}}_\infty < \epsilon_F$. Moreover we set
	\[
		\Gamma_F=\partial \Omega_F\cap B_1 = \{ (x',x_N)\colon \abs{x}< 1, x_N=F(x') \}.
	\]
	$\Gamma_F$ denotes a boundary portion of the boundary to such a special domain.
\end{itemize}

The upper half space $\R^N_+$ is a particular case of such a domain i.e. $\Omega_0=\R^N_+$ for $F=0$. The boundary  of the upper half ball $B_{1+}=\R^N_+\cap B_1$ is the union of $\Gamma_0=B_1\cap \{x_N=0\}$ and the upper half of the sphere $\Sp^{N-1}_+= \Sp^{N-1} \cap \{x_N>0\}$.\\

Fractional Soblev spaces, named $W^{s,2}$, occur naturally, when dealing with boundary regularity for elliptic problems. A short introduction is given in the appendix \ref{sec:fractional_sobolev_spaces}. We define the Gagliardo semi-norms for $0<s<1$ and $m$ dimensional submanifolds $\Sigma \subset \R^N$
\begin{align*}
	\llfloor f \rrfloor^2_{s,\Sigma} &= \int_{\Sigma\times\Sigma} \frac{\abs{f(x)-f(y)}^2}{\abs{x-y}^{m+2s}} \,dxdy, \quad f\in L^2(\Sigma)\\
	\llfloor u \rrfloor^2_{s,\Sigma} &= \int_{\Sigma\times\Sigma} \frac{\G(u(x),u(y))^2}{\abs{x-y}^{m+2s}} \,dxdy, \quad u\in L^2(\Sigma,\A_Q(\R^n)).
\end{align*}
The notation $\llfloor \cdot \rrfloor_{s, \Sigma}$ has been chosen in similarity to the classical notation $[\cdot]_{\alpha, \Sigma}$ for the H\"older semi-norm with exponent $\alpha$.
We extend it to $s=1$ by (abusing the notation a little):
\begin{align*}
	\llfloor f \rrfloor^2_{1,\Sigma} &= \int_{\Sigma} \abs{D_\tau f}^2, \quad f\in W^{1,2}(\Sigma)\\
	\llfloor u \rrfloor^2_{1,\Sigma} &= \int_{\Sigma} \abs{D_\tau u}^2, \quad u\in W^{1,2}(\Sigma,\A_Q(\R^n))
\end{align*}
where $D_\tau$ denotes the total tangential derivative on $\Sigma$. For single a valued functions $f \in W^{1,2}(\Sigma)$ and an orthonormal frame $\tau_1, \dotsc, \tau_m$ of $T_x\Sigma$ we have $\abs{D_\tau f(x)}^2= \sum_{j=1}^Q \abs{\frac{\partial f}{\partial \tau_j}}^2$. In the case of multivalued function $u$ we make use of the approximately differentiability of Sobolev functions: for a.e. $x\in \Sigma$ we have $\abs{D_\tau u}^2(x)= \sum_{j=1}^m\sum_{i=1}^Q \abs{U_i(x)\tau_j}^2$ where $U_i(x)$ are the elements of the 1-jet $J\mathcal{U}_x$, c.f. the the discussion below definition \ref{def_I:1.102} for precise statement to the approximate differentiability and the definition of the 1-jet.

\section{H\"older continuity for $N\ge 3$} 
\label{sec:h"older_continuity_for_nge_3_}
A more precise version of theorem \ref{theo_B:0.1} is:

\begin{theorem}\label{theo_B:2.1}
For any $\frac{1}{2}<s \le 1$, there are constants $C>0$ and $\alpha_1>0$ depending on $N,n,Q,s$, $N\ge 3$ with the property that, if
\begin{itemize}
	\item[(a1)] $u \in W^{1,2}(\Omega, \A_Q(\R^n))$ is Dirichlet minimizing;
	\item[(a2)] $u\tr{\partial \Omega} \in W^{s,2}(\partial \Omega, \A_Q(\R^n))$ and for some $0<\beta$ there is a constant $M_u>0$ s.t. 
	\[
		r^{2(s-\beta)-(N-1)} \llfloor u \rrfloor_{s,B_r(z)\cap \partial \Omega}^2 \le M_u^2 \text{ for all } z\in \partial \Omega, r>0;
	\]
\end{itemize}
then the following holds
\begin{itemize}
	\item[(i)] $\abs{Du}$ is an element of the Morrey space $L^{2,N-2+2\alpha}$ for any $0< \alpha < \min\{\alpha_1, \beta\}$, more precisely the following estimate holds
	\begin{equation}\label{eq_B:2.1}
		r^{2-N-2\alpha} \int_{B_r(x)\cap \Omega} \abs{Du}^2 \le 2^N R_0^{2-N-2\alpha} \int_{B_{2R_0}(x)\cap \Omega} \abs{Du}^2 + C \frac{R_0^{2(\beta-\alpha)}}{\beta-\alpha} M_u^2
	\end{equation}
	for any $r<\frac{R_0}{2}$. The positive constant $R_0$ depends only on $N,n,Q,s,\Omega$ but not on the specific $u$;\\
	\item[(ii)] $u \in C^{0,\alpha}(\overline{\Omega})$.
\end{itemize}
\end{theorem}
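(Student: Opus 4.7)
The strategy is the classical Morrey-decay scheme for boundary regularity adapted to the $Q$-valued setting. The plan breaks into three parts: (a) flatten and rescale near a boundary point to reduce to model domains $\Omega_F\cap B_1$ of type (A2); (b) establish a one-scale energy decay by comparison with a suitably constructed competitor; (c) iterate to obtain the Morrey bound (i), from which (ii) follows via Campanato's embedding together with the interior regularity \eqref{eq_I:1.103}.

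By (A1), after translation, rotation and rescaling by an $R_0=R_0(\Omega,\epsilon_F)$ chosen small, the minimizer becomes a Dirichlet minimizer on some $\Omega_F\cap B_1$ with $\norm{\grad F}_\infty<\epsilon_F$, and the Dirichlet energy and the Gagliardo-type seminorm of (a2) scale in a mutually compatible way. The core step is then the following one-scale decay: there exist $\theta_0\in(0,1)$, $\alpha=\alpha(N,n,Q,s)>0$ and $C>0$ such that every Dirichlet minimizer $u$ on $\Omega_F\cap B_1$ with $\llfloor u\rrfloor_{s,\Gamma_F}\le M_u$ satisfies
\[
    \int_{B_{\theta_0}\cap\Omega_F}\abs{Du}^2\;\le\;\theta_0^{N-2+2\alpha}\int_{B_1\cap\Omega_F}\abs{Du}^2\;+\;C\,\theta_0^{N-2+2\beta}M_u^2.
\]
To prove this I would construct a competitor $v$ on $B_{\theta_0}\cap\Omega_F$ coinciding with $u$ on $\partial(B_{\theta_0}\cap\Omega_F)$ and estimate its Dirichlet energy, using the minimality of $u$ to transfer the bound back to $u$. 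A Fubini-slicing in the radial variable selects a good radius $\rho\approx\theta_0$ at which the trace of $u$ on $\partial B_\rho\cap\Omega_F$ lies in $H^{1/2}$ with quantitative control coming from \eqref{eq_I:1.103}, giving an extra power $\theta_0^{2\alpha_0}$ beyond the naive $\theta_0^{N-2}$ scaling. On $\Gamma_F\cap B_\rho$ the trace of $u$ is the prescribed boundary datum, whose Gagliardo seminorm is $\lesssim M_u^2\rho^{N-1-2(s-\beta)}$ by (a2). Composing with Almgren's embedding $\boldsymbol{\xi}$ produces a single-valued boundary datum on $\partial(B_\rho\cap\Omega_F)\subset\R^m$, which I would extend harmonically into the region and then compose with the Lipschitz retraction $\boldsymbol{\rho}$; a Luckhaus-type interpolation in a thin annular shell (Appendix, in the spirit of \cite[Lemma 3.2]{Lellis Lp}) glues this construction to $u$ without incurring uncontrolled energy. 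The standard $H^{1/2}$-trace estimate in $\R^m$ then produces the two terms above, where the hypothesis $s>\tfrac12$ is used exactly so that the boundary Gagliardo seminorm dominates an $H^{1/2}$-extension with a strict positive gain in $\theta_0$. Dyadic iteration of the one-scale bound yields the Morrey estimate (i), and Campanato's embedding $L^{2,N-2+2\alpha}\Rightarrow C^{0,\alpha}$ combined with the interior estimate gives (ii).

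The principal obstacle is the competitor construction itself, since $\A_Q(\R^n)$ is neither linear nor a $C^2$-manifold (Theorem \ref{theo_I:1.101}): one cannot directly harmonically extend $Q$-valued boundary data, and the retraction $\boldsymbol{\rho}$ costs a multiplicative factor $\mathrm{Lip}(\boldsymbol{\rho})^2$, which is the ultimate reason that the exponent $\alpha_1$ is not explicit. Confining this loss to a thin annular shell via the Luckhaus-type interpolation, while simultaneously matching the prescribed traces on the spherical portion $\partial B_{\theta_0}\cap\Omega_F$ and the flat portion $\Gamma_F\cap B_{\theta_0}$ across their common interface, is the delicate technical core of the argument and motivates the additional tools developed in the appendix. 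The threshold $s>\tfrac12$ is sharp for the scheme precisely because it is the smallest fractional exponent for which the boundary Gagliardo seminorm controls an $H^{1/2}$-extension with a genuine gain in the radius.
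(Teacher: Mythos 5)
Your overall architecture (flatten to a model domain, prove a one-scale decay, iterate, conclude via Campanato) matches the shape of the paper's argument, and your instinct to confine the $\boldsymbol{\xi}/\boldsymbol{\rho}$ loss to a thin interpolation shell is indeed how the paper handles the competitor construction. But there is a genuine gap at the heart of the proposal: you never actually produce the strict gain over the trivial scaling. For the dyadic iteration to yield a positive Morrey exponent, the coefficient of $\int_{B_1\cap\Omega_F}\abs{Du}^2$ in your one-scale inequality must be $\theta_0^{N-2+2\alpha}$ with $\alpha>0$, i.e.\ \emph{strictly} better than the borderline $\theta_0^{N-2}$. The harmonic-extension competitor, even for single-valued harmonic functions, only gives the sharp constant $\int_{B_1}\abs{Dh}^2\le\frac{1}{N-2}\int_{\Sp^{N-1}}\abs{D_\tau h}^2$, which is exactly the constant for which $r^{2-N}\int_{B_r}\abs{Du}^2$ is merely monotone and no H\"older exponent results. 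You claim the extra power $\theta_0^{2\alpha_0}$ comes from \eqref{eq_I:1.103} applied to the trace on a good half-sphere $\partial B_\rho\cap\Omega_F$, but that estimate requires $B_R(x)\subset\Omega$; at a boundary point the half-sphere reaches $\Gamma_F$, where no interior decay is available --- obtaining decay there is precisely what the theorem asserts, so this step is circular.

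The paper supplies the gain in Proposition \ref{prop_B:2.3}: for minimizers on $\Omega_F\cap B_1$ one has $\int_{\Omega_F\cap B_1}\abs{Du}^2\le\left(\frac{1}{N-2}-\delta\right)\int_{\Sp^{N-1}\cap\Omega_F}\abs{D_\tau u}^2+C\llfloor u\rrfloor^2_{s,\Gamma_F}$ with $\delta>0$. This strict improvement is \emph{not} obtained by exhibiting a better competitor; it is proven by contradiction via concentration compactness (Lemma \ref{lem_I:A1.1}), reducing a failing sequence to a $0$-homogeneous Dirichlet minimizer on $B_{1+}$ with constant data on $\Gamma_0$ saturating the constant $\frac{1}{N-2}$, which is then shown to be trivial by a Liouville-type theorem (Proposition \ref{prop_B:2.4}) proved with the inner variation generated by $X=\eta(\abs{x})e_N$ --- a vector field permitted to push inward at the flat boundary precisely because the boundary value there is constant. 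This Liouville-plus-compactness mechanism (which, incidentally, is the real source of the non-explicitness of $\alpha_1$, rather than $Lip(\boldsymbol{\rho})$) is the missing idea in your proposal; without it your one-scale inequality closes only with coefficient at best $\theta_0^{N-2}$ and the iteration produces no H\"older exponent. Once Proposition \ref{prop_B:2.3} is granted, your iteration step is essentially correct: the paper integrates the corresponding differential inequality in $r$, which is the continuous analogue of your dyadic scheme, and the role of $s>\frac{1}{2}$ is to make the boundary seminorm both scale with a positive power $r^{2\beta}$ and survive the compactness argument (Example \ref{ex_B:2.1} shows $s<\frac12$ fails), not merely to control an $H^{1/2}$-extension.
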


\begin{lemma}\label{lem_B:2.101}
	There is a relation between assumption (a2) and the H\"older continuity of $u\tr{\partial \Omega}$:
	\begin{itemize}
		\item[(i)] (a2) is satisfied if $u\tr{\partial \Omega}\in C^{0,\beta}(\partial \Omega)$ for $\beta>\frac{1}{2}$ i.e. there is a dimensional constant $C>0$ s.t. for $0<s<\beta$
		\[
			r^{2(s-\beta)-(N-1)} \llfloor u \rrfloor_{s,B_r(z)\cap \partial \Omega}^2 \le \frac{C}{\beta-s} [u]^2_{\beta,\partial \Omega} \quad\forall z\in \partial \Omega, 0<r<R(\Omega,1);
		\]
		\item[(ii)] if (a2) holds then $u\tr{\partial \Omega}\in C^{0,\beta}(\partial \Omega)$ i.e. there is a dimensional constant s.t.
		\[
			\G(u(x),u(y)) \le C M \abs{x-y}^{\beta} \quad \forall x,y \in \partial \Omega, \abs{x-y} \le \frac{R(\Omega,1)}{2}.
		\]
	\end{itemize} 
\end{lemma}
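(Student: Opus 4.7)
The plan is to treat (i) and (ii) separately. Both are standard Morrey--Campanato type facts for fractional Gagliardo seminorms, and I will transport them to the $Q$-valued setting by post-composing with Almgren's bi-Lipschitz embedding $\boldsymbol{\xi}$ of Theorem~\ref{theo_I:1.101}. Throughout, (A1) with $\epsilon_F=1$ ensures that for $z\in\partial\Omega$ and $0<r<R(\Omega,1)$ the piece $\partial\Omega\cap B_r(z)$ is, after a rigid motion, the graph of $F\in C^1$ with $\norm{\nabla F}_\infty\le 1$; the induced surface measure is then comparable to the flat $(N-1)$-dimensional Lebesgue measure and intrinsic/Euclidean distances are equivalent, with constants depending only on $N$.

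For part (i) I plan a direct computation. Plugging the Hölder bound $\G(u(x),u(y))\le [u]_{\beta,\partial\Omega}\abs{x-y}^\beta$ into the definition of $\llfloor\cdot\rrfloor_{s,\cdot}$ gives
\[
\llfloor u\rrfloor_{s,B_r(z)\cap\partial\Omega}^{2}\le[u]_{\beta,\partial\Omega}^{2}\int_{B_r(z)\cap\partial\Omega}\int_{B_r(z)\cap\partial\Omega}\abs{x-y}^{2\beta-2s-(N-1)}\,dx\,dy.
\]
Freezing $x$ and switching to polar coordinates centred at $x$ on the (nearly flat) surface produces the radial integral $\int_{0}^{2r}\rho^{2\beta-2s-1}\,d\rho=(2r)^{2(\beta-s)}/(2(\beta-s))$; this is exactly where the factor $(\beta-s)^{-1}$ and the requirement $s<\beta$ appear. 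Integrating in $x$ contributes a further factor $Cr^{N-1}$, and multiplying through by $r^{2(s-\beta)-(N-1)}$ cancels every power of $r$ and yields the claimed estimate.

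For part (ii) I reduce to the scalar fractional Morrey--Campanato embedding and then transfer back via $\boldsymbol{\xi}$. Set $f:=\boldsymbol{\xi}\circ(u\tr{\partial\Omega})$: since $\operatorname{Lip}(\boldsymbol{\xi})\le 1$ by Theorem~\ref{theo_I:1.101}(i), one has $\abs{f(x)-f(y)}\le\G(u(x),u(y))$ pointwise, and hence $f$ satisfies the same Morrey-type bound with constant $M_u$. The classical scalar Campanato criterion --- the implication ``$\llfloor f\rrfloor_{s,B_r\cap\partial\Omega}^{2}\le M^{2}r^{(N-1)-2s+2\beta}$ uniformly in $(z,r)$ implies $f\in C^{0,\beta}$'' --- which I intend to invoke from the appendix on fractional Sobolev spaces, then yields $[f]_{\beta,\partial\Omega}\le C(N,s,\beta)M_u$. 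Finally $\operatorname{Lip}(\boldsymbol{\xi}^{-1}|_{\boldsymbol{\xi}(\A_Q(\R^n))})\le C(Q,n)$ transports the estimate to $u$:
\[
\G(u(x),u(y))\le C(Q,n)\abs{f(x)-f(y)}\le C\,M_u\abs{x-y}^{\beta}.
\]
The only mildly technical point is applying the scalar Campanato statement on the manifold $\partial\Omega$ rather than on $\R^{N-1}$; this is settled by covering $\partial\Omega$ with finitely many $C^1$ graph charts of radius $R(\Omega,1)$ and again using the equivalence of intrinsic and Euclidean distances from (A1). The only genuine obstacle is having the correct scalar fractional Morrey--Campanato embedding on a $C^1$ hypersurface at hand; once that ingredient is in place, the bi-Lipschitz embedding $\boldsymbol{\xi}$ makes the $Q$-valued passage automatic and part (i) is a routine Gagliardo computation.
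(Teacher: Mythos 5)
Your proposal is correct and follows essentially the same route as the paper: part (i) is the same graph-chart computation in which the radial integral $\int_0^{2r}\rho^{2(\beta-s)-1}\,d\rho$ produces the $(\beta-s)^{-1}$ factor, and part (ii) is the same reduction to a single-valued function via $\boldsymbol{\xi}$ and local $C^1$ graphs, followed by the fractional Campanato--Morrey embedding. The only point to repair is that the scalar criterion you propose to ``invoke from the appendix'' is not actually stated there --- the paper proves it inline by bounding the mean oscillation $\fint_{B_r(z)}\abs{f-f(z,r)}$ by $C r^{\beta} M_f$ with two applications of Cauchy--Schwarz and then running the standard dyadic telescoping argument on the means $f(z,2^{-k}r)$ --- so you should either supply that short argument or cite an external reference for it.
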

\begin{proof}
	To prove (i) let $z \in \partial \Omega$, $0<r<R(\Omega, 1)$ be given and $F\in C^1(\R^{N-1},\R)$ the function of (A1), then
	\begin{align*}
		&\int_{B_r(z)\cap \partial \Omega \times B_r(z)\cap \partial \Omega} \frac{\G(u(x), u(y))^2}{\abs{x-y}^{N-1+2s}}\, dx dy \\
		&\le [u]^2_{\beta, \partial \Omega} \int_{B_r(z)\cap \partial \Omega \times B_r(z)\cap \partial \Omega} \abs{x-y}^{2(\beta-s)-(N-1)} \,dx dy\\
		&\le [u]^2_{\beta, \partial \Omega} (1+ \norm{\grad(F)}_\infty^2)^2 \int_{B_r\times B_r} \abs{x'-y'}^{2(\beta-s)-(N-1)} \,dx' dy'\\
		&\le \frac{4 (N-1)\omega_{N-1}^2}{2(\beta-s)} [u]^2_{\beta, \partial \Omega}\, r^{2(\beta-s)+(N-1)}.
	\end{align*}
	To prove (ii) we observe that using the function $F$ of (A1) to write $\partial \Omega$ locally as a graph we can transform it to a local question on $\R^{N-1}$. Furthermore making use of Almgren's bilipschitz embedding, Theorem \ref{theo_I:1.101}, it is sufficient to check it for single valued functions. Hence (ii) is equivalent to check that\\
	\emph{ There is a dimensional constant $C>0$ s.t. if $f\in W^{s,2}(\R^N, \R^n)$ and $M_f>0$ be given with the property that 
	\begin{equation}\label{eq:141}
		r^{2(s-\beta)-N} \llfloor f \rrfloor^2_{s,B_r(z)} \le M_f^2 \quad \forall B_r(z) \subset \R^N, 0<r<R_0
	\end{equation}
	then $f\in C^{0,\beta}(\R^N, \R^n)$ with
	\begin{equation}\label{eq:142}
		\abs{f(x)-f(y)} \le C M_f \abs{x-y}^\beta \quad \forall \abs{x-y}<R_0.
	\end{equation}}
	Let us write $f(z,r)=\fint_{B_r(z)} f$ for any $B_r(z)\subset \R^N$, then using twice Cauchy's inequality we have
	\begin{align*}
		&\fint_{B_r(z)} \abs{f-f(z,r)} \le \abs{B_r(z)}^{-2} \int_{B_r(z)\times B_r(z)} \abs{f(x)-f(y)}\, dx dy \\
		&\le \abs{B_r(z)}^{-2} \int_{B_r(z)} \left( \int_{B_r(z)} \abs{x-y}^{N+2s}\,dy \right)^{\frac{1}{2}} \left(\int_{B_r(z)} \frac{\abs{f(x)-f(y)}^2}{\abs{x-y}^{N+2s}}\,dy\right)^{\frac{1}{2}} dx\\
		&\le \left(\frac{4^N}{\omega_N^2} r^{2s-N} \llfloor f\rrfloor_{s,B_r(z)}^2\right)^{\frac{1}{2}} \le C r^{\beta} \, M_f. 
	\end{align*} 
	Hence for any $r<R_0$ and $k \in \N$
	\[
		\abs{f(z,2^{-k-1}r)- f(z,2^{-k}r)} \le 2^N \fint_{B_{2^{-k}r}(z)} \abs{f- f(z,2^{-k}r)} \le C M_f\, r^{\beta}\, 2^{-\beta k};
	\]
	i.e. $k\mapsto f(z,2^{-k}r)$ is a Cauchy sequence because $\sum_{k=0}^\infty \abs{f(z,2^{-k-1}r)- f(z,2^{-k}r)} \le \frac{C M_f}{1-2^{-\beta}} r^\beta$. Furthermore for any $z_1,z_2 \in \R^N$ with $\abs{z_1-z_2}=r<R_0$ we finf
	\begin{align*}
		\abs{f(z_1)-f(z_2)} &\le\sum_{i=1}^2 \abs{f(z_i)-f(z_i,r)} + \fint_{B_r(z_i)\cap B_r(z_2)} \abs{f(x)-f(z_i)} \,dx\\
		&\le \sum_{i=1}^2 \frac{C M_f}{1-2^{-\beta}} r^\beta + \frac{C M_f}{1-2^{-\beta}} r^\beta \le 4 \frac{C M_f}{1-2^{-\beta}} r^\beta;
	\end{align*}
	this shows that $f \in C^{0,\beta}$.	
\end{proof}

The core of the proof of theorem \ref{theo_B:2.1} is the estimate stated in proposition \ref{prop_B:2.3} below. To make its proof more accessible it is  presented in the next subsection and split into several lemmas. 

\begin{proposition}\label{prop_B:2.3}
For any $\frac{1}{2}< s \le 1$ there are constants $\epsilon_0>0$, $0<\delta<\frac{1}{N-2}$ and $C>0$ depending on $N, n, Q,s$ with the property that, if (A2) holds with $\epsilon_F\le \epsilon_0$, then
\begin{equation}\label{eq_B:2.2}
	\int_{\Omega_F \cap B_1} \abs{Du}^2 \le \left(\frac{1}{N-2} - \delta \right) \int_{\Sp^{N-1}\cap \Omega_F} \abs{D_\tau u}^2 + C \llfloor u \rrfloor^2_{s,\Gamma_F}.
\end{equation}
for any Dirchilet minimizer $u \in W^{1,2}(B_1\cap \Omega_F, \A_Q(\R^n))$.
\end{proposition}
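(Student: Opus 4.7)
The strategy is to construct an admissible competitor $v \in W^{1,2}(\Omega_F \cap B_1, \A_Q(\R^n))$ with $v = u$ on $\partial(\Omega_F \cap B_1)$ and then invoke the minimality $\int |Du|^2 \le \int |Dv|^2$.

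\emph{Flattening.} First I would reduce to the flat half-ball by the bilipschitz diffeomorphism $\Phi(x',x_N) := (x', x_N + F(x'))$, which sends $\R^N_+ \cap B_1$ onto (a set close to) $\Omega_F \cap B_1$ and is $O(\epsilon_F)$-close to the identity in $C^1$. Both the Dirichlet energy and the Gagliardo seminorm get distorted by factors $1+O(\epsilon_F)$, so it suffices to prove the estimate with improved constant $\frac{1}{N-2} - 2\delta$ on the flat half-ball $B_1 \cap \R^N_+$, and then choose $\epsilon_0$ small enough to absorb the distortion.

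\emph{The competitor.} Fix a small $\eta > 0$ and first translate $u \to u \oplus (-s_0)$, where $s_0 \in \R^n$ is the sphere-barycenter $\frac{1}{Q}\fint_{\Sp^{N-1}_+} \sum_i u_i$; this preserves both $|Du|$ and $\llfloor u \rrfloor_s$ and normalizes the barycenter to zero. On the bulk $\{x_N > \eta\} \cap B_1$, take the $\lambda$-homogeneous extension $\tilde u(r\omega) := \sum_i \llbracket r^\lambda u_i(\omega) \rrbracket$ (using a measurable selection on $\Sp^{N-1}_+$), which is a bona fide $\A_Q$-valued Sobolev function and agrees with $u$ on $\Sp^{N-1}_+$. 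Direct spherical integration of $r^{N+2\lambda-3}$ yields
\[
\int_{\text{bulk}} |D\tilde u|^2 \le \frac{\lambda^2 \int_{\Sp^{N-1}_+}\G(u, Q\llbracket 0 \rrbracket)^2 + \int_{\Sp^{N-1}_+}|D_\tau u|^2}{N-2+2\lambda}.
\]
By an $\A_Q$-valued Poincaré inequality on the hemisphere, applicable after the barycenter normalization, one has $\int \G(u, Q\llbracket 0 \rrbracket)^2 \le C_P \int |D_\tau u|^2$, so the bulk energy is bounded by $\frac{1+\lambda^2 C_P}{N-2+2\lambda} \int |D_\tau u|^2$. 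In the slab $\{0 < x_N < \eta\} \cap B_1$, apply the Luckhaus-type interpolation lemma from the appendix (valid precisely because $s > \frac{1}{2}$): interpolate linearly in $\R^m$ between $\boldsymbol\xi\circ u|_{\Gamma_0}$ at $x_N = 0$ and $\boldsymbol\xi\circ\tilde u|_{\{x_N = \eta\}}$ at $x_N = \eta$, and retract by $\boldsymbol\rho$; the slab energy is then controlled by $C \eta^{2s-1} \llfloor u \rrfloor^2_{s,\Gamma_F}$ plus lower-order ``tangential corner'' terms. Matching along the common corner $\Sp^{N-2} \times \{0\}$ is automatic because the two traces of $u$ (from $\Gamma_F$ and from $\Sp^{N-1}_+$) coincide there; matching on the lateral boundary of the slab is arranged by a secondary small interpolation.

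\emph{Extracting $\delta$ and main obstacle.} The coefficient $\frac{1+\lambda^2 C_P}{N-2+2\lambda}$ is strictly less than $\frac{1}{N-2}$ precisely for $\lambda \in (0, \frac{2}{C_P(N-2)})$; optimizing $\lambda$ therein produces a quantitative gain $\delta = \delta(N,n,Q) > 0$. Combining the bulk and slab estimates with the minimality of $u$ gives the claim. The principal technical point is obtaining the $\A_Q$-valued Poincaré inequality on $\Sp^{N-1}_+$ with an \emph{absolute} constant $C_P = C_P(N,n,Q)$: this is what forces $\delta$ to be independent of the particular minimizer $u$ (a naive ``excision'' improvement would depend on the distribution of $|D_\tau u|^2$ near the equator and cannot yield a uniform $\delta$). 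Secondary difficulties are the Luckhaus-type interpolation in the $W^{s,2}(\A_Q(\R^n))$ setting (which is the content of the appendix lemma) and the careful gluing of the bulk, slab and lateral pieces into a single admissible $W^{1,2}$ competitor with consistent traces on both $\Gamma_F$ and $\Sp^{N-1} \cap \Omega_F$.
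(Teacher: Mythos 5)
Your route is genuinely different from the paper's, which argues by contradiction: a failing sequence is flattened to $B_{1+}$, concentration compactness (Lemma \ref{lem_I:A1.1}) produces limit minimizers $b_j$ with \emph{constant} trace on $\Gamma_0$ satisfying $\int_{B_{1+}}\abs{Db_j}^2=\frac{1}{N-2}\int_{\Sp^{N-1}_+}\abs{D_\tau b_j}^2$, and such maps are excluded by a one-sided inner variation (Proposition \ref{prop_B:2.4}, Corollary \ref{cor_B:2.5}); the gain $\delta$ is therefore obtained non-constructively. Your direct construction, which would yield an explicit $\delta$, has two gaps that I believe are fatal. First, the Poincar\'e inequality you invoke is false: subtracting the barycenter $s_0$ normalizes $\sum_i u_i$ but not $\G(u,Q\llbracket 0\rrbracket)^2=\sum_i\abs{u_i}^2$ --- the constant map $u\equiv\llbracket p\rrbracket+\llbracket -p\rrbracket$ has zero barycenter and $D_\tau u=0$, yet $\G(u,Q\llbracket 0\rrbracket)^2=2\abs{p}^2$. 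The genuine $\A_Q$-valued Poincar\'e inequality controls $\G(u,\overline{u})$ for an $\A_Q$-valued mean $\overline{u}$, and the corresponding ``radial contraction towards $\overline{u}$'' cannot be realized as a $Q$-valued competitor except by passing through $\boldsymbol{\xi}$ and retracting with $\boldsymbol{\rho}$, which multiplies the leading term by $Lip(\boldsymbol{\rho})^2>1$ and destroys the sharp constant $\frac{1}{N-2}$. So the bulk coefficient $\frac{1+\lambda^2 C_P}{N-2+2\lambda}$ is not attainable, and your $\lambda$-homogeneous extension (which contracts towards $Q\llbracket 0\rrbracket$) has a radial energy term that is simply not controlled by $\int_{\Sp^{N-1}_+}\abs{D_\tau u}^2$.

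Second, the slab cost is not $C\eta^{2s-1}\llfloor u\rrfloor^2_{s,\Gamma_F}$. Any Luckhaus-type interpolation between $u\vert_{\Gamma_0}$ and $\tilde u\vert_{\{x_N=\eta\}}$ carries an unavoidable term $\frac{C}{\eta}\norm{\G(u\vert_{\Gamma_0},\tilde u\vert_{\{x_N=\eta\}})}^2_{L^2}$ (this is the $C\norm{\G(u,v)}^2$ term in Lemma \ref{lem:A2.102}, which cannot be removed). But $\tilde u\vert_{\{x_N=\eta\}}$ is essentially a reparametrization of $u\vert_{\Sp^{N-1}_+}$ over the whole hemisphere, so this $L^2$ distance is of the order of the oscillation of $u$ on $\Sp^{N-1}_+$ and is not controlled by $\llfloor u\rrfloor_{s,\Gamma_0}$: already for a single-valued harmonic $f$ with $f\vert_{\Gamma_0}=0$ and $f\vert_{\Sp^{N-1}_+}$ large one has $\llfloor f\rrfloor_{s,\Gamma_0}=0$ while the slab costs of order $\frac{1}{\eta}\norm{f}^2_{L^2(\Sp^{N-1}_+)}$, i.e. a contribution $\frac{C}{\eta}\int_{\Sp^{N-1}_+}\abs{D_\tau f}^2$ at best. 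This extra $C/\eta$ on the coefficient of $\int_{\Sp^{N-1}_+}\abs{D_\tau u}^2$ wipes out the gain $\delta$ entirely. This is not a technicality: the interaction between the two boundary pieces is the whole difficulty of the proposition, no thin-collar construction is known to decouple them at the sharp constant, and this is precisely why the H\"older exponent in Theorem \ref{theo_B:0.1} is not explicit.
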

Let us take the previous proposition, i.e. the estimate \eqref{eq_B:2.2}, for granted and close the argument in the proof of theorem \ref{theo_B:2.1}.

\begin{proof}[Proof of Theorem \ref{theo_B:2.1}]
Let $\epsilon_0, \delta$ be the constants of proposition \ref{prop_B:2.3}. Fix $\alpha_1\le \alpha_0$ ( $\alpha_0$ being the H\"older exponent of theorem \ref{theo_I:1.102} ) s.t. $(N-2+2\alpha_1)\left(\frac{1}{N-2}-\delta\right)\le 1$. Let $R_0=R_0(\Omega, \epsilon_0)$ be the radius defined of (A1) for $\epsilon_F=\epsilon_0$\\
	
Due to the choice of $R_0$, for any $0<r\le R_0$, $z \in \partial \Omega$ the rescaled map
\[
	u_{z,r}(x)=u(z+rx) \quad \text{ for } x \in B_1\cap \Omega_{z,r}
\]	
belongs to $W^{1,2}(\Omega_{z,r}\cap B_1, \A_Q(\R^n))$ and satisfies the assumptions of the proposition \ref{prop_B:2.3}. One readily checks that for $\frac{1}{2}<s\le 1$
\[
\llfloor u_{z,r} \rrfloor^2_{s,B_1\cap \partial \Omega_{z,r}}= r^{2s-(N-1)}\llfloor u \rrfloor^2_{s,B_r(z)\cap \partial \Omega}.
\]
Applying \eqref{eq_B:2.2} and assumption (a2) we get
\begin{align*}
	&r^{2-N} \int_{B_r(z)\cap \Omega} \abs{Du}^2 = \int_{B_1 \cap \Omega_{z,r}} \abs{Du_{z,r}}^2 \\
	&\le \left(\frac{1}{N-2}-\delta\right) \int_{\Sp^{N-1}\cap \Omega_{z,r}} \abs{D_\tau u_{z,r}}^2 + C \llfloor u_{z,r} \rrfloor^2_{s, B_1 \cap \partial \Omega_{z,r}}\\
	&\le \frac{1}{N-2+2\alpha_1} r^{3-N}\int_{\partial B_r(z)\cap \Omega} \abs{D_\tau u}^2 + C r^{2\beta} M_u^2.
\end{align*}
Hence for a.e. $0<r<R_0$ and $0<\alpha<\min\{\alpha_1, \beta\}$
\begin{align*}
	&-\frac{\partial }{\partial r} \left( r^{2-N-2\alpha} \int_{B_r(z)\cap \Omega} \abs{Du}^2\right)\\& = - r^{2-N-2\alpha} \int_{\partial B_r(z)\cap \Omega} \abs{Du}^2 + (N-2+2\alpha) r^{-1-2\alpha} r^{2-N} \int_{B_r(z)\cap \Omega} \abs{Du}^2\\
	&\le r^{2-N-2\alpha} \int_{\partial B_r(z)\cap \Omega} \abs{D_\tau u}^2 - \abs{Du}^2 + (N-2+2\alpha) C r^{2(\beta-\alpha)-1} M_u^2\\
	&\le (N-2+2\alpha) C r^{2(\beta-\alpha)-1} M_u^2.
\end{align*}
Integrating in $r$ we achieve the following inequality for any $z \in \partial \Omega$ and $0< r \le R_0$:
\begin{equation}\label{eq_B:2.3}
	r^{2-N-2\alpha} \int_{B_r(z)\cap \Omega} \abs{Du}^2  - R_0^{2-N-2\alpha} \int_{B_{R_0}(z)\cap \Omega} \abs{Du}^2 \le \frac{C}{\beta-\alpha} R_0^{2(\beta-\alpha)} \, M_u^2.
\end{equation}
Now we can conclude \eqref{eq_B:2.1}. If $x\in \overline{\Omega}$ satisfies $\operatorname{dist}(x, \partial \Omega)> \frac{R_0}{2}$, then $B_r(x)\subset B_{\frac{R_0}{2}}(x)\subset \Omega$ for any $0<r<\frac{R_0}{2}$ and so, by \eqref{eq_I:1.103} in Theorem \ref{theo_I:1.103}
\begin{align}\label{eq_B:2.4}
	r^{2-N-2\alpha}\int_{B_r(x)} \abs{Du}^2 &\le \left(\frac{R_0}{2}\right)^{2-N-2\alpha} \int_{B_{\frac{R_0}{2}}(x)} \abs{Du}^2\\ \nonumber &\le 2^N R_0^{2-N-2\alpha} \int_{B_{2R_0}(x)\cap \Omega} \abs{Du}^2.
\end{align}
Assume therefore $x \in \overline{\Omega}$ has $\operatorname{dist}(x,\partial \Omega)\le \frac{R_0}{2}$. Fix $z\in \partial \Omega$ s.t. $\operatorname{dist}(x,\partial \Omega)=\abs{x-z}$, and for $0<r\le \frac{R_0}{2}$ set $r_1=\max\{r, \abs{x-z}\}$, $r_2=r_1+\abs{x-z}\le 2r_1\le R_0$. Then 
\begin{align}\label{eq_B:2.5}
	&r^{2-N-2\alpha} \int_{B_r(x)\cap \Omega} \abs{Du}^2 \le {r_1}^{2-N-2\alpha} \int_{B_{r_1}(x)\cap \Omega} \abs{Du}^2\\ \nonumber
	&\le \left(\frac{r_2}{r_1}\right)^{N-2+2\alpha} r_2^{2-N-2\alpha} \int_{B_{r_2}(z)\cap \Omega} \abs{Du}^2 \\\nonumber &\le 2^{N} \left(R_0^{2-N-2\alpha} \int_{B_{R_0}(z)\cap \Omega} \abs{Du}^2 + \frac{C}{\beta-\alpha} R_0^{2(\beta-\alpha)} \, M_u^2 \right) \\ \nonumber
	&\le 2^{N} \left( R_0^{2-N-2\alpha} \int_{B_{2 R_0}(x)\cap \Omega} \abs{Du}^2 + \frac{C}{\beta-\alpha} R_0^{2(\beta-\alpha)} \, M_u^2 \right).
\end{align}

The fact (ii) i.e. $u \in C^{0,\alpha}(\overline{\Omega})$ follows now classically. We established that $\abs{Du}$ is an element of the Morrey space $L^{2,N-2+2\alpha}(\Omega)$. $\Omega$ is $C^1$ regular and therefore by Poincar\'es inequality this implies that $\boldsymbol{\xi} \circ u$ is an element of the Campanato space $\mathcal{L}^{2,N+2\alpha}(\Omega)$, see for instance \cite[Proposition 3.7]{Giusti}. Furthermore  $\mathcal{L}^{2,N+2\alpha}(\Omega)= C^{0,\alpha}(\overline{\Omega})$, \cite[Theorem 2.9]{Giusti}.
\end{proof}

\subsection{Proof of Proposition \ref{prop_B:2.3}} 
\label{sub:proof_of_proposition}

The proof can be subdivided into two parts:\\
\emph{paragraph \ref{ssub:non_existence_of_certain_non_trivial_minimizers}:}\\
We show that it is necessary and sufficient for a Dirichlet minimizer on the upper half ball $B_1 \cap \{ x_N > 0 \}$ to be trivial that it has constant boundary data on $B_1 \cap \{x_N = 0 \}$. \\
\emph{paragraph \ref{ssub:contradiction_argument}:}\\
We show that if proposition would fail we could construct a non-trivial Dirichlet minimizer on the upper half ball $B_1\cap \{ x_N > 0 \}$ with constant boundary data contradicting the previous step.

\subsubsection{Non-existence of certain non-trivial minimizers} 
\label{ssub:non_existence_of_certain_non_trivial_minimizers}
This paragraph is devoted to establish the following two results for certain Dirichlet minimizers on the upper half ball $B_{1+}=B_1\cap \{x_N >0 \}$ , recalling that $\Sp^{N-1}_+ = \Sp^{N-1}\cap \{ x_N>0 \}$ and $\Gamma_0 =B_1\cap \{x_N=0\}$.

\begin{proposition}\label{prop_B:2.4}
Every 0-homogeneous Dirichlet minimizer in $B_{1+}$ with $u\tr{\Gamma_0}= const.$ is trivial i.e. constant.
\end{proposition}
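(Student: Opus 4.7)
The plan is to exploit the $0$-homogeneity of $u$ together with a one-parameter family of radial rescaling competitors, and to let the scaling parameter degenerate in order to force the angular derivative of $u$ to vanish. Writing $u(r\omega)=v(\omega)$ with $v\colon\Sp^{N-1}_+\to\A_Q(\R^n)$, the hypothesis $u\tr{\Gamma_0}\equiv T_0$ becomes $v\equiv T_0$ on the equator $\partial\Sp^{N-1}_+$. A translation in $\R^n$ via the operator $\oplus$ reduces the situation to $T_0=Q\llbracket 0\rrbracket$ whenever the constant has that form; the case of a $T_0$ with distinct support points reduces to this via the local splitting of $\A_Q$ near $T_0$ provided by $\boldsymbol{\xi}$ (Theorem \ref{theo_I:1.101}) combined with interior H\"older continuity (Theorem \ref{theo_I:1.103}) and induction on $Q$. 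A direct polar-coordinate calculation using $N\ge 3$ and $0$-homogeneity yields
\begin{equation*}
\int_{B_{\rho+}}\abs{Du}^2\;=\;\frac{\rho^{N-2}}{N-2}\int_{\Sp^{N-1}_+}\abs{D_\tau v}^2\qquad \forall\,0<\rho\le 1.
\end{equation*}

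For $\alpha>0$ and $\rho\in(0,1)$ I introduce the competitor
\begin{equation*}
u^{(\rho,\alpha)}(x)\;:=\;\begin{cases}(\abs{x}/\rho)^\alpha\cdot u(x), & \abs{x}\le\rho,\\ u(x), & \abs{x}>\rho,\end{cases}
\end{equation*}
where $\lambda\cdot T=\sum_i\llbracket \lambda\,t_i\rrbracket$ is the intrinsic scalar multiplication on $\A_Q(\R^n)$. This is an admissible $W^{1,2}$ competitor: the two branches match continuously at $\abs{x}=\rho$; on $\Sp^{N-1}_+$ one has $u^{(\rho,\alpha)}=u$; and on $\Gamma_0\cap B_\rho$ one has $u\equiv Q\llbracket 0\rrbracket$, hence $u^{(\rho,\alpha)}\equiv\lambda\cdot Q\llbracket 0\rrbracket = Q\llbracket 0\rrbracket$. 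Picking a measurable selection $v=\sum_i\llbracket v_i\rrbracket$ locally and differentiating sheet-by-sheet in polar coordinates, the radial and tangential contributions decouple and summing over the sheets produces
\begin{equation*}
\int_{B_{\rho+}}\abs{Du^{(\rho,\alpha)}}^2\;=\;\frac{\rho^{N-2}}{N-2+2\alpha}\Bigl(\alpha^2\,\norm{v}_{L^2(\Sp^{N-1}_+)}^2+\norm{D_\tau v}_{L^2(\Sp^{N-1}_+)}^2\Bigr),
\end{equation*}
writing $\abs{v}(\omega)=\G(v(\omega),Q\llbracket 0\rrbracket)$.

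The minimality $\int_{B_{\rho+}}\abs{Du}^2\le\int_{B_{\rho+}}\abs{Du^{(\rho,\alpha)}}^2$ then reduces, after cancelling the $\rho^{N-2}$ factors, to
\begin{equation*}
2\alpha\,\norm{D_\tau v}_{L^2(\Sp^{N-1}_+)}^2\;\le\;(N-2)\,\alpha^2\,\norm{v}_{L^2(\Sp^{N-1}_+)}^2\qquad \forall\alpha>0.
\end{equation*}
Dividing by $\alpha$ and letting $\alpha\to 0^+$ forces $\norm{D_\tau v}_{L^2(\Sp^{N-1}_+)}=0$, so $v$ is a.e.\ a constant element of $\A_Q(\R^n)$; combined with the trace condition $v\equiv Q\llbracket 0\rrbracket$ on $\partial\Sp^{N-1}_+$, this gives $v\equiv Q\llbracket 0\rrbracket$, whence $u$ is trivial.

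The principal technical point is the clean bookkeeping of the scaled competitor: fortunately $\lambda\cdot T$ is intrinsic to $\A_Q(\R^n)$ (independent of the selection) and the radial weight $(\abs{x}/\rho)^\alpha$ depends only on $r$, so both the admissibility of $u^{(\rho,\alpha)}$ and the sheet-wise polar energy computation proceed unambiguously; the sharp energy identity is what lets one cancel the leading order and isolate the $O(\alpha)$ term. The secondary delicacy is the reduction to $T_0=Q\llbracket 0\rrbracket$ in the multi-point case, handled by the local bi-Lipschitz splitting provided by $\boldsymbol{\xi}$ and induction on $Q$.
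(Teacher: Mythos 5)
Your scaling competitor $u^{(\rho,\alpha)}$ and the inequality it produces,
\begin{equation*}
2\alpha\,\norm{D_\tau v}_{L^2(\Sp^{N-1}_+)}^2\le (N-2)\,\alpha^2\,\norm{v}_{L^2(\Sp^{N-1}_+)}^2 ,
\end{equation*}
are correct, and in the case $u\tr{\Gamma_0}=Q\llbracket t_0\rrbracket$ (after translating by $\oplus(-t_0)$) this is a valid proof by a genuinely different route: the paper instead uses the one-sided inner variation $\Phi_t(x)=x+t\,\eta(\abs{x})e_N$ of Lemma \ref{lem_B:2.6}, which only reparametrizes the domain and fills the vacated strip along $\Gamma_0$ with the constant trace, so its admissibility is completely insensitive to what the constant $T_0$ actually is. Your competitor is an outer variation contracting the values of $u$ toward $0\in\R^n$, and its admissibility hinges on $\lambda\cdot T_0=T_0$, i.e.\ on $T_0=Q\llbracket 0\rrbracket$.

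That is exactly where the gap lies: the reduction of a general constant $T_0=\sum_{j}Q_j\llbracket p_j\rrbracket$ with at least two distinct support points to the single-point case does not go through as sketched. To split $u=\sum_j u_j$ on $B_{\rho+}$ — and you need the splitting on all of $B_{\rho+}$, since that is where the competitor differs from $u$ — you must know that $u$ takes values in a $\G$-ball around $T_0$ of radius smaller than a fraction of $sep(T_0)$ there, or at least in a neighborhood of $\Gamma_0$. Interior H\"older continuity (Theorem \ref{theo_I:1.103}) gives nothing near $\Gamma_0$, and the hypothesis $u\tr{\Gamma_0}=T_0$ is only an $L^2$-trace statement: a $W^{1,2}$ map with constant trace need not be uniformly close to that constant anywhere near the boundary. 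Producing such closeness is precisely the boundary-regularity problem the whole section is devoted to, so invoking it here is circular; and even granting it near $\Gamma_0$, the sheets may merge in the interior of $B_{\rho+}$, where your competitor is also modified. The paper's inner-variation argument avoids the issue because its competitor takes only values of $u$ together with the constant $T_0$ itself. Either adopt that argument for the multi-point case, or replace $\lambda\cdot{}$ by a boundary-preserving contraction toward a general $T_0$ — which does not exist globally on $\A_Q(\R^n)$.
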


\begin{corollary}\label{cor_B:2.5}
A Dirichlet minimizer on $B_{1+}$ with $u\tr{\Gamma_0}= const. $ satisfying
\begin{equation}\label{eq_B:2.6}
\int_{B_{1+}} \abs{Du}^{2}=\frac{1}{N-2} \int_{\Sp^{N-1}_+} \abs{D_{\tau}u}^{2}
\end{equation}
needs to be constant.
\end{corollary}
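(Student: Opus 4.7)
The plan is to exploit the equality \eqref{eq_B:2.6} by comparing $u$ with its $0$-homogeneous extension and then invoking Proposition \ref{prop_B:2.4}. Concretely, I set $w := u\tr{\Sp^{N-1}_+}$ and introduce the competitor $v(x) := w(x/\abs{x})$ on $B_{1+}$. Using the standard identity $\abs{Dv(x)}^2 = \abs{x}^{-2}\abs{D_\tau w(x/\abs{x})}^2$ and integrating in polar coordinates,
\[
	\int_{B_{1+}} \abs{Dv}^2 \,=\, \int_0^1 r^{N-3}\, dr \int_{\Sp^{N-1}_+} \abs{D_\tau w}^2\, d\omega \,=\, \frac{1}{N-2} \int_{\Sp^{N-1}_+} \abs{D_\tau u}^2,
\]
which by \eqref{eq_B:2.6} is finite and equal to $\int_{B_{1+}} \abs{Du}^2$. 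Hence $v \in W^{1,2}(B_{1+},\A_Q(\R^n))$; by construction its trace on $\Sp^{N-1}_+$ coincides with $w$, while on $\Gamma_0$ its trace is the radial extension of $w\tr{\Sp^{N-2}}$, which must coincide a.e. with the constant $u\tr{\Gamma_0}$ by the standard compatibility of traces of a $W^{1,2}$ function across the Lipschitz edge $\Sp^{N-2}$. Thus $v$ is an admissible competitor for $u$.

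Since $\int_{B_{1+}} \abs{Du}^2 = \int_{B_{1+}} \abs{Dv}^2$, the map $v$ is itself Dirichlet minimizing: every competitor for $v$ is automatically a competitor for $u$, and their energies already match. Now $v$ is $0$-homogeneous by construction and has constant trace on $\Gamma_0$, so Proposition \ref{prop_B:2.4} forces $v \equiv T$ for some $T \in \A_Q(\R^n)$. In particular $u\tr{\Sp^{N-1}_+} = v\tr{\Sp^{N-1}_+} \equiv T$, and the same edge compatibility identifies this constant with the one on $\Gamma_0$, so $u\tr{\partial B_{1+}} \equiv T$. Comparing $u$ with the trivial competitor $\widetilde u \equiv T$, whose Dirichlet energy vanishes, then yields $\int_{B_{1+}} \abs{Du}^2 = 0$ and hence $u \equiv T$.

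The step I expect to be the most delicate is the trace compatibility at the Lipschitz edge $\Sp^{N-2}$, i.e.\ the a.e.\ equality of the edge trace of $w$ with the constant $T$. The good news is that \eqref{eq_B:2.6} upgrades $w$ from $H^{1/2}(\Sp^{N-1}_+)$ to $H^1(\Sp^{N-1}_+, \A_Q(\R^n))$, so its restriction to $\Sp^{N-2}$ is well-defined in $H^{1/2}$. I would then reduce to the single-valued statement by composing with Almgren's bi-Lipschitz embedding $\boldsymbol\xi$ from Theorem \ref{theo_I:1.101}, for which the edge-trace compatibility is classical and can be obtained by smooth approximation of $\boldsymbol\xi\circ u$ on the Lipschitz domain $B_{1+}$.
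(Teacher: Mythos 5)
Your proposal is correct and follows essentially the same route as the paper: build the $0$-homogeneous competitor $v(x)=u(x/\abs{x})$, use \eqref{eq_B:2.6} to show its energy equals that of $u$, conclude $v$ is minimizing, and invoke Proposition \ref{prop_B:2.4}. The only differences are cosmetic — you spell out the edge-trace compatibility on $\Sp^{N-2}$ that the paper leaves implicit, and you close by comparing with the constant competitor rather than directly reading off $\int_{B_{1+}}\abs{Du}^2=\int_{B_{1+}}\abs{Dv}^2=0$.
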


They are both consequence of an appropriately chosen inner variation:

\begin{lemma}[a special kind of inner variation]\label{lem_B:2.6}
Given a Dirichlet minimizer $u\in W^{1,2}(B_{1+},\A_Q(\R^n))$ with $u\tr{\Gamma_0}=const.$ and a vector field $ X=(X_1, \dotsc, X_N) \in C^{1}_{c}(B_1, \R^N)$ with ${e_{N}\cdot X(x',0)= X_N(x',0)\ge 0}$ on $\Gamma_0$, then
\begin{equation}\label{eq_B:2.7}
0\le \int_{B_{1+}} \abs{Du}^{2} \Div (X) - 2\sum^{Q}_{i=1} \langle Du_{i} : Du_{i}\,DX \rangle.
\end{equation}
\end{lemma}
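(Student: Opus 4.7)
\medskip

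\noindent\textbf{Proof plan for Lemma \ref{lem_B:2.6}.}

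The plan is to build a one-parameter family of admissible competitors $v_{t}$, $t\ge 0$, by pushing $u$ along the flow of $X$ and filling the uncovered portion of $B_{1+}$ with the constant boundary value. Minimality then yields $\frac{d}{dt}\big|_{t=0^{+}}\int_{B_{1+}}|Dv_{t}|^{2}\ge 0$, and a standard change-of-variables computation identifies this derivative with the right-hand side of \eqref{eq_B:2.7}.

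First I set $\phi_{t}(x)=x+tX(x)$ for $x\in\overline{B_{1+}}$. Since $X\in C^{1}_{c}(B_{1},\R^{N})$, there is $t_{0}>0$ such that for all $0\le t\le t_{0}$ the map $\phi_{t}$ is a $C^{1}$-diffeomorphism of $\overline{B_{1+}}$ onto its image, fixes a neighborhood of $\Sp^{N-1}_{+}$, and satisfies $\phi_{t}(\overline{B_{1+}})\subset \overline{B_{1+}}$. The inclusion uses exactly the hypothesis $X_{N}(x',0)\ge 0$: for $(x',0)\in\Gamma_{0}$ the $N$-th coordinate of $\phi_{t}(x',0)$ equals $tX_{N}(x',0)\ge 0$, so boundary points of $\Gamma_{0}$ are pushed into (or along) $\overline{B_{1+}}$. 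In general the inclusion can be strict, so $\Omega_{t}:=\phi_{t}(B_{1+})$ is a proper subdomain of $B_{1+}$.

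Next I define the competitor on all of $B_{1+}$ by
\[
v_{t}(y)=\begin{cases} u\circ\phi_{t}^{-1}(y) & y\in \Omega_{t},\\ T & y\in B_{1+}\setminus\Omega_{t},\end{cases}
\]
where $T\in\A_{Q}(\R^{n})$ is the constant boundary value with $u\tr{\Gamma_0}=T$. The two pieces glue to a $W^{1,2}$ map into $\A_{Q}(\R^{n})$ because the interior trace of $u\circ\phi_{t}^{-1}$ on $\phi_{t}(\Gamma_{0})$ equals $u\tr{\Gamma_0}\circ\phi_{t}^{-1}=T$, matching the constant extension; on $\Gamma_{0}$ (whenever it is not in $\Omega_{t}$) the trace is again $T$, and on $\Sp^{N-1}_{+}$ the map agrees with $u$ since $\phi_{t}=\mathrm{id}$ there. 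Hence $v_{t}$ is an admissible competitor to $u$, and by minimality
\[
\int_{B_{1+}}|Du|^{2}\le \int_{B_{1+}}|Dv_{t}|^{2}\qquad\text{for every }0\le t\le t_{0}.
\]

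Finally I differentiate at $t=0$. The uncovered part contributes zero energy since $v_{t}\equiv T$ there, so the change of variables $y=\phi_{t}(x)$ gives
\[
\int_{B_{1+}}|Dv_{t}|^{2}=\int_{B_{1+}} \sum_{i=1}^{Q}\bigl|Du_{i}(x)\,(D\phi_{t}(x))^{-1}\bigr|^{2}\,|\!\det D\phi_{t}(x)|\,dx,
\]
where I use that the Dirichlet energy of a $Q$-valued map is well-defined a.e.\ through the $1$-jet (cf.\ the discussion after Definition \ref{def_I:1.102}) and is invariant under a.e.\ selection. Using the elementary identities $\frac{d}{dt}\big|_{t=0}|\!\det D\phi_{t}|=\Div X$ and $\frac{d}{dt}\big|_{t=0}(D\phi_{t})^{-1}=-DX$, I compute
\[
\frac{d}{dt}\Big|_{t=0}\int_{B_{1+}}|Dv_{t}|^{2}=\int_{B_{1+}}\left(|Du|^{2}\Div(X)-2\sum_{i=1}^{Q}\langle Du_{i}:Du_{i}\,DX\rangle\right)dx,
\]
and combining this with the one-sided inequality from minimality proves \eqref{eq_B:2.7}.

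The only delicate point is the construction of the admissible competitor $v_{t}$: the constraint $X_{N}(x',0)\ge 0$ is precisely what makes $\Omega_{t}\subset B_{1+}$, and the hypothesis $u\tr{\Gamma_0}=\mathrm{const}$ is precisely what makes the constant extension on $B_{1+}\setminus\Omega_{t}$ glue back to a $W^{1,2}$-function with trace $T$ on $\partial B_{1+}$. Without the first we would lose admissibility; without the second the gluing across $\phi_{t}(\Gamma_{0})$ would create a jump and $v_{t}$ would fail to lie in $W^{1,2}(B_{1+},\A_{Q}(\R^{n}))$. The one-sidedness ($t\ge 0$ only) is exactly what converts the usual inner-variation equality into the inequality \eqref{eq_B:2.7}.
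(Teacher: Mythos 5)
Your proposal is correct and follows essentially the same route as the paper's proof: the one-sided inner variation $\Phi_t=\mathrm{id}+tX$, the inclusion $\Phi_t(B_{1+})\subset B_{1+}$ forced by $X_N\ge 0$ on $\Gamma_0$, the constant filling of the uncovered region made admissible by $u\tr{\Gamma_0}=const.$, and the first-order expansion of the pulled-back energy. The one point you only gesture at is that the inclusion must be verified at interior points too, not just on $\Gamma_0$; this follows from $x_N+tX_N(x',x_N)\ge (1-t\norm{DX_N}_\infty)x_N+tX_N(x',0)\ge 0$ for small $t>0$, which is exactly the estimate the paper records.
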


\begin{proof}
Let $u$ and $X$ be given and set $T= u\tr{\Gamma_0}(x)$ for $x\in \Gamma_0$.  Observe that $x_N + tX_N(x',x_N) = x_N + t \left( X_N(x', x_N) - X_N(x',0) \right) + t X_N(x',0)\ge (1- t \norm{DX_N}_\infty) x_N + t X_N(x',0) \ge 0$ for $x_N >0$ and sufficient small $ 0<t<t_0$. Then for $t_0>0$ small  
\begin{equation*}\label{eq:2.1021}
\Phi_{t}(x)= x + t X(x)
\end{equation*}
defines a 1-parameter family of $C^{1}$-diffeomorphism that satisfy
\begin{equation*}
A_t= \Phi_{t}(B_{1+}) \subset B_{1+} \text{ for }  0\le t \le t_{0}.
\end{equation*}
So
\begin{equation*}
v_{t}(x)=
\begin{cases}u\circ \Phi_{t}^{-1}(x) & \text{ for } x \in A_t\\
T &\text{ for } x \in B_1^{+}\setminus A_{t}
\end{cases}
\end{equation*}
defines a $C^1$ family of competitors to $u$. Standard calculations give
\begin{align*}
D\Phi^{-1}_t \circ \Phi_t &= \left(D\Phi_t\right)^{-1}= \sum_{k=0}^\infty (-t)^k \left(DX\right)^k = 1 - t DX + o(t)\\
\det \left(D\Phi_t\right) &= 1 + t \Div(X) + o(t)
\end{align*}
so that
\begin{align*}
\abs{Dv_t}^2\circ \Phi_t &= \sum_{i=1}^Q \abs{Du_i D\Phi_t^{-1}\circ \Phi_t}^2 = \sum_{i=1}^Q \abs{Du_i \left(1 - t DX + o(t) \right)}^2\\
&= \sum_{i=1}^Q \abs{Du_i}^2 - 2 t \sum_{i=1}^Q \langle Du_i : Du_i DX \rangle + o(t).
\end{align*}
In total we found that for all $0 \le t \le t_0$
\begin{align*}
\int_{B_{1+}} \abs{Dv_t}^2 &= \int_{A_t} \abs{Dv_t}^2 = \int_{B_{1+}} \abs{Dv_t}^2 \circ \Phi_t \,\abs{\det D\Phi_t} \\
&= \int_{B_{1+}} \abs{Du}^2 + t \int_{B_{1+}} \abs{Du}^2 \Div(X) - 2 \sum_{i=1}^Q \langle Du_i : Du_i DX \rangle + o(t).
\end{align*}
Since $\int_{B_{1+}} \abs{Dv_t}^2 \ge \int_{B_{1+}} \abs{Du}^2$, we necessarily have
\begin{equation*}
0 \le \int_{B_{1+}} \abs{Du}^2 \Div(X) - 2 \sum_{i=1}^Q \langle Du_i : Du_i DX \rangle.
\end{equation*}
\end{proof}

\begin{proof}[Proof of Proposition \ref{prop_B:2.4}]
$u$ being $0-$homogeneous implies that $u(x)= u(\frac{x}{\abs{x}})$ for a.e. $x$. Thus $\frac{\partial u}{\partial r}(x) =0$ for a.e. $x\in B_{1+}$, which corresponds to 
\begin{equation}\label{eq_B:2.8}
0=\frac{\partial u}{\partial r} (x)= \sum_{i=1}^Q \Big\llbracket \sum_{j=1}^N D_ju_i(x) \frac{x_j}{\abs{x}} \Big\rrbracket.
\end{equation}
Fix $0< R < 1$ and consider the vector field $X(x)=\eta(\abs{x}) e_N=(0, \dotsc, \eta(\abs{x}))$ with
\begin{equation*}
\eta(r)=
\begin{cases}1 - \frac{r}{R} & r \le R\\ 0 &r \ge R.
\end{cases}
\end{equation*}
Thus we have $X_N(x) \ge 0$ and $DX(x)= \eta'(\abs{x}) e_N \otimes \frac{x}{\abs{x}}$. This gives $\Div(X)(x) = \eta'(\abs{x}) \frac{x_N}{\abs{x}}$ and due to \eqref{eq_B:2.8}
\begin{equation*}
\langle Du_i : Du_i DX \rangle 
= \sum_{j=1}^N \left\langle \frac{x_j}{\abs{x}} D_j u_i, D_N u_i\right\rangle \eta'({\abs{x}}) =0 \text{ for a.e. } x.
\end{equation*} 
Using $\eta'(\abs{x})=-\frac{1}{R} \textbf{1}_{B_R}(x)$ and applying Lemma \ref{lem_B:2.6} we get
\begin{equation*}
0 \le -\frac{1}{R} \int_{B_{R+}}\abs{Du}^2 \frac{x_N}{\abs{x}}.
\end{equation*}
This is only possible for $\abs{Du}=0$ on $B_{R+}$ and so $\abs{Du}=0$ on $B_{1+}$.
\end{proof}

\begin{proof}[Proof of corollary \ref{cor_B:2.5}]
Let $u \in W^{1,2}(B_{1+}, \A_Q(\R^n))$ be as assumed. Observe that \eqref{eq_B:2.6} implies that $u \in W^{1,2}(\Sp^{N-1}_+,\A_Q(\R^n))$. Hence $v(x)=u\left(\frac{x}{\abs{x}}\right)$ defines a $0$-homogeneous competitor using $u\tr{\Gamma_0}= const.$. 
\[
\int_{B_{1+}} \abs{Dv}^{2} =\frac{1}{N-2} \int_{\Sp^{N-1}_+} \abs{D_{\tau}v}^{2}=\frac{1}{N-2} \int_{\Sp^{N-1}_+} \abs{D_{\tau}u}^{2}=  \int_{B_{1+}} \abs{Du}^{2}.	
\]
where we used firstly the $0-$homogeneity of $v$, then $u\tr{\Sp^{N-1}_+}=v\tr{\Sp^{N-1}_+}$ and finally \eqref{eq_B:2.6}.
Therefore $v$ has to be minimizing as well, and moreover $Dv=0$ as a consequence of proposition \ref{prop_B:2.4}. This proves the corollary since then $Du=0$ as well. 
\end{proof}


\subsubsection{contradiction argument} 
\label{ssub:contradiction_argument}
In this section we want to establish by contradiction the estimate of Proposition \ref{prop_B:2.3}
\[
	\int_{\Omega_F \cap B_1} \abs{Du}^2 \le \left(\frac{1}{N-2} - \delta \right) \int_{\Sp^{N-1}\cap \Omega_F} \abs{D_\tau u}^2 + C \llfloor u \rrfloor^2_{s,\Gamma_F}.
\]
To prove Theorem \ref{theo_B:2.1} from such an estimate we only needed the scaling property $\llfloor u_{z,r} \rrfloor^2_{s,B_1\cap \partial \Omega_{z,r}}= r^{2s-(N-1)}\llfloor u \rrfloor^2_{s,B_r(z)\cap \partial \Omega}$ and the existence of positive constants $\beta, M_u>0$ both depending possibly on $u$ s.t. in combination $\llfloor u_{z,r} \rrfloor_{s,B_1\cap \partial \Omega_{z,r}} \le r^\beta M_u$.\\
Before coming to the proof we discuass some subtleties in the strategy.A $C^{0,\beta}$-H\"older norm, $[u]_{\beta,\Sigma}= \sup_{x,y \in \Sigma} \frac{\G(u(x),u(y))}{\abs{x-y}^\beta}$, for any $0<\beta<1$ shares this property since
\[
	[u_{r,z}]_{\beta, \partial \Omega_{z,r}\cap B_1} = r^\beta [u]_{\beta, \partial \Omega \cap B_1(z)} \le r^\beta [u]_{\beta, \partial \Omega}.
\]
Replacing the $W^{s,2}(\partial \Omega)$-norm, ($s>\frac{1}{2}$) by a H\"older-norm with exponent $\beta<\frac{1}{2}$ would be desirable since it would get us closer to the already mentioned classical result: $u \in W^{1,2}(\Omega)$ harmonic with $u\tr{\partial \Omega}\in C^{0,\beta}(\partial \Omega)$ for some $\beta>0$ implies $u \in C^{0,\beta}{\overline{\Omega}}$. \\
Nonetheless we cannot hope to prove an estimate like \eqref{eq_B:2.2} by contradiction if the fractional Sobolev norm ($s>\frac{1}{2}$) is replaced by an $C^{0,\beta}$-H\"older norm, $\beta< \frac{1}{2}$ because vanishing of energy through the boundary needs to be excluded. Bounds on $W^{s,2}(\partial \Omega)$-, or $C^{0,s}(\partial \Omega)$-norms with $s<\frac{1}{2}$ are insufficient. This is demonstrated by the following two dimensional example on the disc $B_1\subset \R^2$. It uses polar coordinates $x=\bigl( \begin{smallmatrix} r \cos(\theta)\\ r\sin(\theta) \end{smallmatrix} \bigr)=re^{i\theta}$. 

\begin{example}\label{ex_B:2.1}
	For any $\epsilon>0$ there is a sequence of harmonic functions $f_k \in W^{1,2}(B_1, \R)$ a positive constant $c>0$ with the following properties: for all $k$ we have $\int_{B_1} \abs{Df_k}^2 >c$, $f_k(e^{i\theta})=0$ for $\abs{\theta}>\epsilon$. Furthermore $f_k \to 0$ uniformly on $B_1$ and $\norm{f_k}_{s, \Sp^1}, [f_k]_{s,\Sp^1} \to 0$ for every $s<\frac{1}{2}$.
\end{example}
\begin{proof}[Proof of example \ref{ex_B:2.1}]
To a given $0 < \epsilon < \frac{\pi}{2}$, fix a smooth, symmetric, non-negative bump function $\eta$ with $\eta(0)>0$ and $\eta(\theta)=0$ for $\abs{\theta}\ge \epsilon$. Let $\sum_{l=0}^\infty a_l \cos(l\theta)$ be the Fourier series of $\eta(\theta)$. It is converging uniformly to $\eta$ in the $C^\infty$ topology since $\eta$ is smooth and $\sum_{l=0}^\infty l^m \abs{a_l} < \infty$ for all $m\in \N$. Fix $k_0 \in \N $ sufficient large s.t. $2 \abs{a_k} < a_0=\eta(0) $ for $k \ge k_0$ and set $A= \sum_{l=0}^\infty (l+1) \abs{a_l}\ge \left(\sum_{l=0}^\infty(l+1)a_l^2\right)^{\frac{1}{2}}$. The addition theorem $2\cos(l\theta)\cos(k\theta)= \cos((l+k) \theta) + \cos((l-k)\theta)$ shows that the harmonic extension of $2\eta(\theta)\cos(k\theta)$ in $B_1$ is
\begin{align*}
	g_k(r \, e^{i \theta}) &= \sum_{l=0}^\infty a_l \left(r^{l+k} \cos((l+k)\theta)+ r^{\abs{l-k}} \cos((l-k)\theta)\right)\\
	&=\sum_{m=0}^\infty (a_{m-k}+ a_{m+k}) r^m \cos(m\theta) \quad \text{ with } a_{m-k}=0 \text{ for } m<k.
\end{align*}
For $k \ge k_0$
\begin{align*}
	\frac{1}{\pi}\int_{B_1} \abs{Dg_k}^2 &= \sum_{m=1}^\infty m (a_{m-k}+ a_{m+k} )^2\\
	&\ge k (a_0 + a_{2k})^2 \ge \frac{1}{4} k a_0^2\\
	&\le 2 \sum_{l=0}^\infty (l+k) a_l^2 + \abs{l-k} a_l^2 \le 4k A^2.
\end{align*}
We consider now the sequence of harmonic functions on $B_1$ given by $f_k(x)=\frac{g_k(x)}{k^{\frac{1}{2}}} \in W^{1,2}(B_1)$. $f_k$ has the desired properties: using the equivalence  
\begin{itemize}
	\item[(i)] $\frac{1}{4}a_0^2 \le \frac{1}{\pi}\int_{B_1} \abs{Df_k}^2 = \norm{f_k}^2_{\frac{1}{2},S^1} \le 4 A^2$ for all $k\ge k_0$;
	\item[(ii)] $f_k(e^{i\theta})=0$ for $\abs{\theta}>\epsilon$ and all $k$;
	\item[(iii)] $\norm{f_k}_{\infty} \le \frac{2 \norm{\eta}_\infty}{k^{\frac{1}{2}}} \to 0 $ as $k \to \infty$;
	\item[(iv)] for any $0<s<\frac{1}{2}$
	\begin{align*}
		\norm{f_k}^2_{s,\Sp^1} &= \sum_{m=0}^\infty \frac{m^{2s}}{k} (a_{m-k}+a_{m+k})^2 \le 8 k^{2s-1} A^2\\
		[f_k]_{s,\Sp^1} &\le \sum_{m=0}^\infty \frac{m^{s}}{k^{\frac{1}{2}}} \abs{a_{m-k}+a_{m+k}} \le 2 k^{s-\frac{1}{2}} \sum_{l=0}^\infty (l+1)\abs{a_l}
	\end{align*}
	converging to $0$ as $k \to \infty$.
\end{itemize}
(iii) follows from the maximum principle on harmonic functions. The fact that the $W^{s,2}$-norm on $\Sp^1$ corresponds to the sum in (iii), i.e. the equivalence $H^s(\Sp^1)=W^{s,2}(\Sp^1)$, is the content of corollary \ref{cor:A2.5}. It is straightforward to check that one has $[\varphi]_{\beta, \Sp^1}\le \sum_{l=0}^\infty l^\beta \abs{c_l}$ for a converging Fourier series $\varphi(\theta)=\sum_{l=0}^\infty c_l \cos(l\theta)$. 
\end{proof}

\begin{proof}[Proof of proposition \ref{prop_B:2.3}]
If $u \notin W^{1,2}(\Sp^{N-1}\cap \Omega_F, \A_Q(\R^n))\cap W^{s,2}(\Gamma_F, \A_Q(\R^n))$ the LHS of \eqref{eq_B:2.2} is infinite and so there is nothing to prove. Hence assuming that the proposition would not hold, we can find sequences $F(k) \in C^1(\R^{N-1},\R)$ satisfying (A2) with $\epsilon_F< \frac{1}{k}$ and associated $u(k)\in W^{1,2}(B_1\cap \Omega_{F(k)}, \A_Q(\R^n))$ failing \eqref{eq_B:2.2} i.e.
\begin{equation}\label{eq_B:2.9}
		\int_{\Omega_{F(k)} \cap B_1} \abs{Du(k)}^2 > \left(\frac{1}{N-2} - \frac{1}{k} \right) \int_{\Sp^{N-1}\cap \Omega_{F(k)}} \abs{D_\tau u(k)}^2 + k \llfloor u(k) \rrfloor^2_{s,\Gamma_{F(k)}}.
\end{equation}
We may assume that the LHS of \eqref{eq_B:2.9} is $1$ by dividing each $u(k)$ by its Dirichlet energy $\left( \int_{\Omega_{F(k)} \cap B_1} \abs{Du(k)}^2 \right)^{-\frac{1}{2}}$. We also assume, w.l.o.g., $k>k_0>4$. \\

To every $k$ we may fix a $C^1$-diffeomorphism $G(k): \overline{B_{1+}} \to \overline{\Omega_{F(k)} \cap B_1}$, arguing for example on the base of Lemma \ref{lem:A4.2}. $F(k) \to F_0=0$ in $C^1$ as $k \to \infty$ and therefore $G(k),G(k)^{-1} \to \mathbf{1}$ in $C^1$ ($\textbf{1}$ deontes the indentiy map on $\R^N$).\\
We consider now instead of the sequence $u(k)$ itself the sequence $v(k)=u(k) \circ G(k) \in W^{1,2}(B_{1+}, \A_Q(\R^n))$. $v(k)$ has up to order $o(1)$ the same properties as $u(k)$ since $G(k),G(k)^{-1} \to \mathbf{1}$ in $C^1$ i.e.

\begin{align}\label{eq_B:2.10}
	\int_{B_1^+} \abs{Dv(k)}^2 &= (1+o(1)) \int_{\Omega_{F(k)}} \abs{Du(k)}^2 &\le& 1+ o(1); \nonumber\\
	\int_{\Sp^{N-1}_+} \abs{D_\tau v(k)}^2 &=(1+o(1)) \int_{\Sp^{N-1}\cap \Omega_{F(k)}} \abs{D_\tau u(k)}^2 &<& \frac{1+o(1)}{\frac{1}{N-2}- \frac{1}{k}}< 2N; \\
	 \llfloor v(k)\rrfloor^2_{s,\Gamma_0} &=  (1+o(1)) \llfloor u(k) \rrfloor^2_{s, \Gamma_{F(k)}}  &\le& \frac{1+o(1)}{k}\le \frac{1}{2k}\nonumber.
\end{align}
\eqref{eq_B:2.9} with LHS$=1$ provides the upper bounds. The second and third show that $v(k)\tr{\partial B_{1+}} \in W^{1,2}(\Sp^{N-1}_+, \A_Q(\R^n)) \cap W^{s,2}(\Gamma_0, \A_Q(\R^n))$.\\
To every $k$ fix a mean $T(k)\in \A_Q(\R^n)$ and apply the concentration compactness Lemma \ref{lem_I:A1.1} to the sequences $v(k)$, $T(k)$. For a subsequence $v(k')$ we can find maps $b_j \in W^{1,2}(B_{1+}, \A_{Q_j}(\R^n))$, sequences $t_j(k') \in spt(T(k'))$ and a splitting $T(k')=T_1(k') + \dotsb + T_J(k')$. We will prove now that the $b_j$ satisfy also the following:
\begin{itemize}
	\item[(i)] $b_j\tr{\Sp^{N-1}_+} \in W^{1,2}(\Sp^{N-1}_+, \A_{Q_j}(\R^n))$ and $b_j\tr{\Gamma_0}=const.$;
	\item[(ii)] $\int_{B_{1+}} \abs{Db_j}^2 \le \frac{1}{N-2} \int_{\Sp^{N-1}_+} \abs{D_\tau b_j}^2$ for all $j$;
	\item[(iii)] $b_j \in W^{1,2}(B_{1+}, \A_{Q_j}(\R^n))$ is Dirichlet minimizing and
	\[
		\sum_{j=1}^J \int_{B_{1+}} \abs{Db_j}^2= \lim_{k'\to \infty} \int_{B_{1+}} \abs{Dv(k')}^2 = \lim_{k' \to \infty} \int_{\Omega_{F_{k'}}\cap B_1} \abs{Du(k')}^2=1.
	\]
\end{itemize}
From now on we use $b(k')=\sum_{j=1}^J (b_j \oplus t_j(k'))$ as in the proof of the concentration compactness result. \\

\emph{Proof of (i):} The concentration compactness lemma states that $\boldsymbol{\xi} \circ v(k') \rightharpoonup \boldsymbol{\xi}\circ b(k')$ in $W^{1,2}(B_{1+}, \R^m)$ and $\boldsymbol{\xi}\circ v(k') \to \boldsymbol{\xi} \circ b(k')$ in $L^2(B_{1+}, \R^m)$. This implies that $\boldsymbol{\xi} \circ v(k') \rightharpoonup \boldsymbol{\xi}\circ b(k)$ in $W^{1,2}(\Sp^{N-1}_+, \R^m)$ and $\boldsymbol{\xi} \circ v(k') \to \boldsymbol{\xi}\circ b(k')$ in $L^2(\Sp^{N-1}_+, \R^m)$, because $\boldsymbol{\xi} \circ v(k') \in W^{1,2}(\Sp^{N-1}_+, \R^m)$ is uniformly bounded as seen in \eqref{eq_B:2.10}. The lower semicontinuity of energy together with \eqref{eq_B:2.10} then states 
\begin{align}\label{eq_B:2.11}
	&\frac{1}{N-2}\sum_{j=1}^J \int_{\Sp^{N-1}_+} \abs{D_\tau b_j}^2= \frac{1}{N-2} \int_{\Sp^{N-1}_+} \sum_{j=1}^J\abs{D_\tau \boldsymbol{\xi}\circ b_j}^2\\\nonumber
	  &\le \liminf_{k'\to \infty} \left(\frac{1}{N-2}- \frac{1}{k'} \int_{\Sp^{N-1}_+} \abs{D_\tau \boldsymbol{\xi} \circ v(k')}^2 \right)\le 1.
\end{align}
$\G(v\tr{\Gamma_0}(k'),b\tr{\Gamma_0}(k')) \to 0$ in $L^2(\Gamma_0)$ due to the weak convergence in the interior. Hence due to dominated convergence for any $\delta >0$ and \eqref{eq_B:2.10}
\begin{align*}
	&\sum_{j=1}^J \int_{\substack{\Gamma_0 \times \Gamma_0\\ \abs{x-y} \ge \delta}} \frac{\G(b_j\tr{\Gamma_0}(x), b_j\tr{\Gamma_0}(y))^2}{\abs{x-y}^{N-1+2s}} \\
	&= \lim_{k' \to \infty} \int_{\substack{\Gamma_0 \times \Gamma_0\\ \abs{x-y} \ge \delta}} \frac{\G(v\tr{\Gamma_0}(k')(x), v\tr{\Gamma_0}(k')(y))^2}{\abs{x-y}^{N-1+2s}} \le \lim_{k' \to \infty} \frac{2}{k'} = 0;
\end{align*}
consequently $b_j\tr{\Gamma_0} = const.$ for all $j$.\\

\emph{Proof of (ii):} Having established (i), $a_j(x)= b_j\left(\frac{x}{\abs{x}}\right) \in W^{1,2}(B_{1+}, \A_{Q_j}(\R^n))$ is well-defined and an admissible competitor. 
\[
	\int_{B_{1+}}\abs{Db_j}^2 \le \int_{B_{1+}} \abs{Da_j}^2 = \frac{1}{N-2} \int_{\Sp^{N-1}_+} \abs{D_\tau a_j}^2 = \frac{1}{N-2} \int_{\Sp^{N-1}_+} \abs{D_\tau b_j}^2
\]
for every $j$ due to the $0$-homogeneity of $a_j$ and $a_j\tr{\Sp^{N-1}_+}=b_j\tr{\Sp^{N-1}_+}$.

\emph{Proof of (iii):} Let $G: \overline{B_1} \to \overline{B_{1+}}$ be the bilipschitz map constructed in Lemma \ref{lem:A4.1}. $\llfloor v(k')\circ G\rrfloor_{s, \Sp^{N-1}}$ is uniformly bounded: Firstly apply Corollary \ref{cor:A2.101} to estimate
\[
	\llfloor v(k')\circ G \rrfloor_{s,\Sp^{N-1}} \le C \left(\llfloor v(k')\circ G \rrfloor_{s, \scriptscriptstyle{\Sp^{N-1}\cap \{ x_N > \frac{-1}{\sqrt{5}}\}}} + \llfloor v(k')\circ G \rrfloor_{s,\scriptscriptstyle{ \Sp^{N-1}\cap \{ x_N < \frac{-1}{\sqrt{5}}\}}} \right);
\] 
secondly $G$ is bilipschitz and $G(\Sp^{N-1}\cap \{x_N > \frac{-1}{\sqrt{5}}\})=\Sp^{N-1}_+$ and $G(\Sp^{N-1}\cap \{x_N < \frac{-1}{\sqrt{5}}\})=\Gamma_0$, so that
\begin{align*}
	\llfloor v(k')\circ G \rrfloor_{s, \Sp^{N-1}\cap \{ x_N > \frac{-1}{\sqrt{5}}\}} &\le C \llfloor v(k') \rrfloor_{s, S_+^{N-1}}\\
	\llfloor v(k')\circ G \rrfloor_{s, \Sp^{N-1}\cap \{ x_N < \frac{-1}{\sqrt{5}}\}} &\le C \llfloor v(k') \rrfloor_{s, \Gamma_0};
\end{align*}
thirdly the interpolation property $\llfloor f \rrfloor^2_{s, \Sp^{N-1}_+} \le C \int_{\Sp^{N-1}_+} \abs{Df}^2 $ gives
\[
	\llfloor v(k') \rrfloor_{s, S_+^{N-1}} \le \norm{\abs{Dv(k')}}_{L^2(\Sp^{N-1}_+)};
\]
finally we combine all of them and use \eqref{eq_B:2.10} to conclude
\[
	\llfloor v(k')\circ G \rrfloor_{s,\Sp^{N-1}} \le C \left( \norm{\abs{Dv(k')}}_{L^2(\Sp^{N-1}_+)} + \llfloor v(k') \rrfloor_{s, \Gamma_0} \right) \le C\left( 2N \right).
\] 
The same bound holds for $b(k')\circ G \in W^{s,2}(\Sp^{N-1}, \A_Q(\R^n))$ because of the lower semicontinuity of energy established in \eqref{eq_B:2.11}. Furthermore in the proof of (i) we showed that $\G(v(k'), b(k')) \to 0$ in $L^2(\Sp^{N-1}_+)$ and $L^2(\Gamma_0)$, so that
\[
	\norm{\G(v(k')\circ G, b(k')\circ G)}_{L^2(\Sp^{N-1})}= o(1).
\]
Fix any small $\epsilon >0 $ and $R_\epsilon>0$ determined by the  interpolation Lemma \ref{lem:A2.102}. So to every $k'$ we can find $w(k') \in W^{s,2}(A_{1,R_\epsilon}, \A_Q(\R^n))$ on the annulus $A_{1,R_\epsilon}=B_1\setminus B_{R_\epsilon}$ interpolating between $v(k')\circ G$ and $b(k')\circ G$. Hence $w(k')(x)=v(k')\circ G(x)$, $w(k')(R_\epsilon x) =b(k')\circ G(x)$ for all $x \in \Sp^{N-1}$ and 
\begin{align*}
	&\int_{A_{1,R_\epsilon}} \abs{Dw(k')}^2\\ &\le \epsilon \left(\llfloor v(k')\circ G \rrfloor^2_{s, \Sp^{N-1}} + \llfloor b(k') \circ G \rrfloor^2_{s, \Sp^{N-1}}\right) + C \norm{\G(v(k')\circ G, b(k')\circ G)}^2_{L^2(\Sp^{N-1})}\\
	&\le \epsilon \,C \,4N + C o(1).
\end{align*}
To check the minimizing property let $c_j \in W^{1,2}(B_{1+},\A_{Q_j}(\R^n))$ be an arbitrary competitor to $b_j$ for $j=1, \dotsc, J$. Set $c(k')=\sum_{j=1}^J \left( c_j \oplus t_j(k')\right)$. For $0<R\le 1$ we denote the map $G\circ \frac{1}{R} \circ G^{-1}(x)=\frac{e_N}{2}+ \frac{1}{R}\left(x-\frac{e_N}{2}\right)$ by $\psi_R$. So we found  
\[
\int_{C_R} \abs{D c(k')\circ \psi_{R}}^2 = R^{N-2} \int_{B_{1+}} \abs{Dc(k')}^2 \le \int_{B_{1+}} \abs{Dc(k')}^2
\]
with $C_R=\psi_{R}^{-1}(B_{1+})\subset B_{1+}$. We define $C(k') \in W^{1,2}(B_{1+}, \A_Q(\R^n))$ considering $G(B_R)=C_R$ by
\[
	C(k')=\begin{cases}
		w(k')\circ G^{-1}, &\text{ if } x\in B_{1+}\setminus C_{R_\epsilon} = G(A_{1,R_\epsilon})\\
		c(k')\circ \psi_{R_\epsilon}. &\text{ if } x \in C_{R_\epsilon}.
	\end{cases}
\]
$C(k')\circ G(k') \in W^{1,2}(\Omega_{F(k)}\cap B_1, \A_{Q}(\R^n))$ is now an admissible competitor to $u(k')$ and therefore
\begin{align*}
	(1-o(1)) \int_{B_{1+}} \abs{Dv(k')}^2 &\le \int_{\Omega_{F(k)}\cap B_1} \abs{Du(k)}^2 \le (1+o(1)) \int_{B_{1+}} \abs{DC(k')}^2\\
	&\le (1+o(1)) C \int_{A_{1,R_\epsilon}} \abs{Dw(k')}^2 + (1+o(1)) \int_{B_{1+}} \abs{Dc(k')}^2\\& \le C\left(\epsilon + C o(1)\right) + (1+o(1)) \sum_{j=1}^J \int_{B_{1+}} \abs{Dc_j}^2.
\end{align*}
Pass to the $\liminf$ and apply the lower semicontinuity ensured by the concentration compactness Lemma \ref{lem_I:A1.1} to conclude
\[
	\sum_{j=1}^J \int_{B_{1+}} \abs{Db_j}^2 \le \liminf_{k'\to \infty} (1-o(1)) \int_{B_{1+}} \abs{Dv(k')}^2 \le C\epsilon + \sum_{j=1}^J \int_{B_{1+}} \abs{Dc_j}^2.
\]
$\epsilon$ can be chosen arbitrary small and $C$ is a dimensional constant so that $b_j$ has to be Dirichlet minimizing for every $j=1, \dotsc, J$. The strong convergence in energy follows choosing $c_j=b_j$ for every $j$ in the inequality above.\\

The maps $b_j$ constructed above with the properties (i),(ii),(iii) contradict corollary \ref{cor_B:2.5}. Firstly we found due to (iii), that
\begin{align*}
	&\sum_{j=1}^J \int_{B_{1+}} \abs{Db_j}^2 = \lim_{k' \to \infty} \int_{\Omega_{F(k')}\cap B_1} \abs{Du(k')}^2 \\
	 &\ge \lim_{k' \to \infty} \left( \frac{1}{N-1} - \frac{1}{k'}\right) \int_{\Omega_{F(k)}\cap \Sp^{N-1}} \abs{D_\tau u(k')}^2 \\
	&= \lim_{k' \to \infty} \left( \frac{1}{N-1} - \frac{1}{k'}\right) \int_{S_+^{N-1}} \abs{D_\tau v(k')}^2\\
	&\ge \frac{1}{N-2} \sum_{j=1}^J \int_{\Sp^{N-1}_+} \abs{D_\tau b_j}^2.
\end{align*}
Combining this with (ii) gives, for $j=1, \dotsc, J$
\[
	\int_{B_{1+}} \abs{Db_j}^2 = \frac{1}{N-2} \int_{\Sp^{N-1}_+} \abs{D_\tau b_j}^2.
\]
Corollary \ref{cor_B:2.5} states now that $Db_j=0$ on $B_{1+}$ because $b_j\tr{\Gamma_0}=const.$ by (i). This contradicts (iii), because $1= \int_{\Omega_{F(k')} \cap B_1} \abs{Du(k')}^2$ for all $k'$.\\
This contradiction proves that the proposition must hold.
\end{proof}


%

%
%
%

%



\section{Boundary regularity in dimension $N=2$} 
\label{sec:boundary_regularity_in_dimension_n_2_}

\subsection{Global H\"older regularity} 
\label{sub:global_h"older_regularity}
In this section we will show that Theorem \ref{theo_B:2.1} extends directly to two dimensions. We can consider the two dimensional case as a special case of a certain minimizer on a three dimensional domain.

\begin{lemma}\label{lem_B:3.1} 
Let $u \in W^{1,2}(\Omega, \A_Q(\R^n))$ be a minimizer on a domain $\Omega \subset \R^N$, $N\ge 1$, then $U(x,t)=u(x)$ is an element of $W^{1,2}(\Omega \times I, \A_Q(\R^n))$ for any bounded open interval $I\subset \R$. $U$ is Dirichlet minimizing. 
\end{lemma}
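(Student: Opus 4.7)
The plan is to verify the two claims separately: first that $U \in W^{1,2}(\Omega \times I, \A_Q(\R^n))$, which is essentially a direct check from the definition, and second that $U$ is Dirichlet minimizing, which I would handle by a slicing/Fubini argument combined with the minimality of $u$.

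For membership in $W^{1,2}(\Omega \times I, \A_Q(\R^n))$ I would check (w1) and (w2) of Definition \ref{def_I:1.102} directly. Since $\G(U(x,t), T) = \G(u(x), T)$ is independent of $t$, property (w1) on $\Omega \times I$ follows from (w1) on $\Omega$ via Fubini. For (w2), if $\tilde{\varphi}_j$ for $j = 1, \dots, N$ are the minimal bounds associated to $u$, then setting $\varphi_j(x,t) = \tilde{\varphi}_j(x)$ for $j \le N$ and $\varphi_{N+1} \equiv 0$ gives the required bounds for $U$. In particular $\abs{D_j U}(x,t) = \abs{D_j u}(x)$ for $j \le N$ and $\abs{D_{N+1} U} = 0$, so by Fubini
\begin{equation*}
\int_{\Omega \times I} \abs{DU}^2 = \abs{I} \int_\Omega \abs{Du}^2 < \infty.
\end{equation*}

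For minimality, let $V \in W^{1,2}(\Omega \times I, \A_Q(\R^n))$ be any competitor, i.e.\ $\G(V, U) \in W^{1,2}_0(\Omega \times I, \R_+)$. The strategy is to slice: by a Fubini-type argument (applied to the single-valued Sobolev function $\boldsymbol{\xi} \circ V \in W^{1,2}(\Omega \times I, \R^m)$ via the bi-Lipschitz embedding of Theorem \ref{theo_I:1.101}), for almost every $t \in I$ the slice $V_t(x) := V(x,t)$ lies in $W^{1,2}(\Omega, \A_Q(\R^n))$ and
\begin{equation*}
\int_{\Omega \times I} \abs{D_x V}^2 \, dx\,dt = \int_I \int_\Omega \abs{D V_t}^2 \, dx\,dt.
\end{equation*}
Similarly, applying Fubini to the scalar Sobolev function $\G(V, U)$ on $\partial \Omega \times I$ (together with the analogous slicing on the end caps $\Omega \times \{t_0\}, \Omega \times \{t_1\}$), we obtain that for almost every $t \in I$ the trace of $\G(V_t, u)$ on $\partial \Omega$ vanishes, i.e.\ $\G(V_t, u) \in W^{1,2}_0(\Omega, \R_+)$. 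Thus $V_t$ is an admissible competitor to $u$ and by the minimality of $u$,
\begin{equation*}
\int_\Omega \abs{DV_t}^2 \ge \int_\Omega \abs{Du}^2 \quad \text{for a.e. } t \in I.
\end{equation*}
Integrating over $I$ and using $\abs{DV}^2 \ge \abs{D_x V}^2$ together with the formula for $\int \abs{DU}^2$ from the first step gives
\begin{equation*}
\int_{\Omega \times I} \abs{DV}^2 \ge \int_{\Omega \times I} \abs{D_x V}^2 \ge \abs{I}\int_\Omega \abs{Du}^2 = \int_{\Omega \times I} \abs{DU}^2,
\end{equation*}
which proves $U$ is Dirichlet minimizing.

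The only real subtlety is the slicing/trace compatibility for multivalued functions, but this reduces to the corresponding classical statements for $\R^m$-valued Sobolev maps via the bi-Lipschitz embedding $\boldsymbol{\xi}$; no genuinely new ingredient is needed. Everything else is essentially the observation that the added coordinate contributes zero to $\abs{DU}$ but only non-negative quantities to $\abs{DV}$.
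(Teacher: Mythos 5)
Your proof is correct and is essentially the paper's argument: the paper also verifies $\int_{\Omega\times I}\abs{DU}^2=\abs{I}\int_\Omega\abs{Du}^2$ and then uses Fubini to show that almost every slice $V_t$ of a competitor is an admissible competitor to $u$, merely phrasing the final step as a contradiction (a better competitor $V$ would force some slice to beat $u$) rather than your direct integration of the slicewise inequality. The slicing/trace compatibility you flag is exactly the point the paper also relies on without further elaboration.
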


\begin{proof} Assuming the contrary there exists $V \in W^{1,2}(\Omega\times I, \A_Q(\R^n))$ with $V=U$ on the boundary of $\Omega \times I$ i.e. $(x,t) \mapsto \G(U(x,t),V(x,t)) \in W^{1,2}_0(\Omega\times I)$ and
\begin{equation}\label{eq_B:3.1}
	\int_{\Omega \times I} \abs{DV}^2 < \int_{\Omega \times I} \abs{DU}^2= \abs{I} \int_{\Omega} \abs{Du}^2;
\end{equation}
the second equality actually shows that $U\in W^{1,2}(\Omega \times I, \A_Q(\R^n))$.\\
Consider the subset $J\subset I$ 
\[
	J=\{ t \in I \colon x \mapsto v_t(x)=V(x,t) \in W^{1,2}(\Omega, \A_Q(\R^n))\text{ and } v_t\tr{\partial \Omega}= u\tr{\partial \Omega} \};
\] 
then by Fubini's theorem $\abs{I\setminus J}=0$.\\

Furthermore there must be a $t \in J$ with 
\begin{equation}\label{eq_B:3.2}
	\int_{\Omega} \abs{Dv_t}^2 dx < \int_\Omega \abs{Du}^2;
\end{equation}
non existence would contradict \eqref{eq_B:3.1} because then
\begin{align*}
	\abs{I} \int_{\Omega} \abs{Du}^2 = \int_{J} \int_{\Omega} \abs{Du}^2 \, dt \le \int_J \int_\Omega \abs{Dv_t}^2 \, dx\, dt = \int_{\Omega \times I} \abs{DV}^2.
\end{align*}

$v_t$ for $t\in J$ satisfying \eqref{eq_B:3.2} is an admissible competitor to $u$, but \eqref{eq_B:3.2} violates the minimality of $u$.
\end{proof}

\begin{remark}\label{rem_B:3.1}
The converse of this lemma holds as well in the following sense, if $u(x)\in W^{1,2}(\Omega, \A_Q(\R^n))$ and $U(x,t)=u(x)$ is Dirichlet minimizing on $\Omega \times \R$ then $u$ itself is minimizing in $\Omega$, in the sense of compact perturbations:
\[
\int_{\{U \neq V \}} \abs{DU}^2 \le \int_{\{ U \neq V \} } \abs{DV}^2
\]
for all $V\in W^{1,2}(\Omega \times \R, \A_Q(\R^n))$ with $\overline{\{U \neq V\}}$ compact.\\
This had been proven in \cite{Lellis}, but for the sake of completeness we recall their proof in the appendix, Lemma \ref{lem:A6.1}. 
\end{remark}

From now on $\Omega$ denotes a $C^1$ regular domain in $\R^2$.

\begin{theorem}\label{theo_B:3.2}
For any $\frac{1}{2}<s \le 1$, there are constants $C>0$ and $\alpha_1>0$ depending on $n,Q,s$ with the property that,
\begin{itemize}
	\item[(a1)] $u \in W^{1,2}(\Omega, \A_Q(\R^n))$ Dirichlet minimizing;
	\item[(a2)] $u\tr{\partial \Omega} \in W^{s,2}(\partial \Omega, \A_Q(\R^n))$;
\end{itemize}
then the following holds
\begin{itemize}
	\item[(i)] $\abs{Du}$ is an element of the Morrey space $L^{2,2\alpha}$ for any $0< \alpha < \min\{\alpha_1, s-\frac{1}{2}\}$, more precisely the following estimate holds
	\begin{equation}\label{eq_B:3.3}
		r^{-2\alpha} \int_{B_r(x)\cap \Omega} \abs{Du}^2 \le 2^7 R_0^{-2\alpha} \int_{B_{2R_0}(x)\cap \Omega} \abs{Du}^2 + C \frac{R_0^{2s-1-2\alpha}}{2s-1-2\alpha} \llfloor u \rrfloor_{\partial \Omega}^2
	\end{equation}
	for any $r<\frac{R_0}{4}$. The positive $R_0$ depends only on $n,Q,s,\Omega$ but not on the specific $u$;\\
	\item[(ii)] $u \in C^{0,\alpha}(\overline{\Omega})$.
\end{itemize}
\end{theorem}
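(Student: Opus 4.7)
The plan is to reduce the two-dimensional case to the already-proved Theorem \ref{theo_B:2.1} in dimension $N=3$ by ``lifting'' $u$ via Lemma \ref{lem_B:3.1}. First I would fix a bounded $C^1$-regular domain $\tilde{\Omega}\subset\R^3$ that coincides with $\Omega\times(-1,1)$ in a neighborhood of $\Omega\times\{0\}$ (obtained, for instance, by smoothly rounding the corners $\partial\Omega\times\{\pm1\}$). Setting $U(x,t):=u(x)$ for $(x,t)\in\tilde{\Omega}$, Lemma \ref{lem_B:3.1} yields that $U$ is Dirichlet minimizing on $\Omega\times(-1,1)$, and since a minimizer restricts to a minimizer on any subdomain with the induced trace (extending any better competitor on $\tilde{\Omega}$ by $U$ outside produces a better competitor on $\Omega\times(-1,1)$), $U\in W^{1,2}(\tilde{\Omega},\A_Q(\R^n))$ is itself Dirichlet minimizing. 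Note $\abs{DU(x,t)}^2=\abs{Du(x)}^2$.

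Next I would verify hypothesis (a2) of Theorem \ref{theo_B:2.1} for $U$. Near a point $(z_0,0)$ with $z_0\in\partial\Omega$ the trace $U\tr{\partial\tilde{\Omega}}(x,t)=u\tr{\partial\Omega}(x)$ is independent of $t$. Computing the Gagliardo seminorm over $B_r((z_0,0))\cap\partial\tilde{\Omega}$, the inner one-dimensional integrals
\begin{equation*}
\int_{-r}^{r}\!\!\int_{-r}^{r}\frac{d\tau\,d\sigma}{(\abs{x-y}^2+(\tau-\sigma)^2)^{1+s}}\le\frac{C\,r}{\abs{x-y}^{1+2s}}
\end{equation*}
produce an extra factor $r$, giving
\begin{equation*}
\llfloor U\rrfloor^2_{s,B_r((z_0,0))\cap\partial\tilde{\Omega}}\le C\,r\,\llfloor u\rrfloor^2_{s,B_r(z_0)\cap\partial\Omega}\le C\,r\,\llfloor u\rrfloor^2_{s,\partial\Omega}.
\end{equation*}
Hence (a2) is satisfied with $\beta:=s-\tfrac12>0$ and $M_U^2:=C\llfloor u\rrfloor^2_{s,\partial\Omega}$.

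Applying Theorem \ref{theo_B:2.1} to $U$ on $\tilde{\Omega}$ produces constants $\alpha_1(n,Q,s)>0$ and $R_0(n,Q,s,\Omega)>0$ such that the estimate \eqref{eq_B:2.1} with $N=3$ holds at every $y_0\in\overline{\tilde{\Omega}}$ for $0<\alpha<\min\{\alpha_1,s-\tfrac12\}$. Specializing $y_0=(x_0,0)$ for $x_0\in\overline{\Omega}$ and invoking Fubini,
\begin{equation*}
\int_{B_r(y_0)\cap\tilde{\Omega}}\abs{DU}^2=\int_{B_r(x_0)\cap\Omega}2\sqrt{r^2-\abs{x-x_0}^2}\,\abs{Du(x)}^2\,dx,
\end{equation*}
which lies between $\sqrt{3}\,r\int_{B_{r/2}(x_0)\cap\Omega}\abs{Du}^2$ and $2r\int_{B_r(x_0)\cap\Omega}\abs{Du}^2$. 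Inserting the lower sandwich bound on the left-hand side and the upper on the right-hand side of \eqref{eq_B:2.1}, then rescaling $r\mapsto 2r$, produces the claimed inequality \eqref{eq_B:3.3} for $r<R_0/4$. Assertion (ii) then follows exactly as at the end of the proof of Theorem \ref{theo_B:2.1}, via Poincar\'e's inequality and the Campanato identification $\mathcal{L}^{2,2+2\alpha}(\Omega)=C^{0,\alpha}(\overline{\Omega})$.

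The one slightly delicate point I expect is verifying the Morrey scaling of the lifted Gagliardo seminorm with the correct gain $\beta=s-\tfrac12$ (which is precisely why the assumption $s>\tfrac12$ enters); the construction of the smoothed cylinder $\tilde{\Omega}$ and the Fubini descent from three back to two dimensions are routine.
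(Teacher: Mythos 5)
Your proposal is correct and follows essentially the same route as the paper: lift $u$ to $U(x,t)=u(x)$ on a three-dimensional cylinder over $\Omega$ (the paper uses $\Omega\times]-2L,2L[$ directly rather than rounding the corners, which is harmless since the estimate of Theorem \ref{theo_B:2.1} is only invoked at points $(x,0)$ near the $C^1$ lateral boundary), verify hypothesis (a2) with $\beta=s-\tfrac12$ and $M_U\sim\llfloor u\rrfloor_{s,\partial\Omega}$ via exactly the same one-dimensional integral estimate, and descend by Fubini. The constants in your sandwich differ slightly from the paper's cruder cylinder comparison, but both land within the stated factor $2^7$.
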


\begin{proof}
Set $\Omega_I= \Omega\times ]-2L, 2L[ \subset \R^3$ for some large $L>0$. The boundary portion $\partial \Omega \times ]-L,L[$ is $C^1$ regular by assumption on the regularity of $\partial \Omega$. $U(x,t)=u(x)$ is an element of $W^{1,2}(\Omega_I, \A_Q(\R^n))$ and Dirichlet minimizing as seen in lemma \ref{lem_B:3.1}.
For any $(z,t_0) \in \partial \Omega \times ]-L,L[$ and $0<r<L$ we found
\begin{align*}
	&r^{2(s-\beta)-2} \llfloor U \rrfloor^2_{s, B_r(z,t_0)\cap \partial \Omega_I} \le r^{2(s-\beta)-2} \llfloor U \rrfloor^2_{s, (B_r(z)\cap \partial \Omega)\times]t_0-r, t_0+r[}\\
	&= r^{2(s-\beta)-2} \int_{B_r(z)\cap \partial \Omega \times B_r(z)\cap \partial \Omega} \int_{t_0-r}^{t_0+r} \frac{\G(u(x),u(y))^2}{(\abs{x-y}^2 +(t_1-t_2)^2)^{\frac{2+2s}{2}}}\,dt_1dt_2 dxdy  \\
	&\le C 2r^{2(s-\beta)-1} \int_{B_r(z)\cap \partial \Omega\times B_r(z)\cap \partial \Omega} \frac{\G(u(x),u(y))^2}{\abs{x-y}^{1+2s}} dx dy \le 2C \,r^{2(s-\beta)-1} \llfloor u \rrfloor^2_{s,\partial \Omega}.
\end{align*}
(We have applied above the following auxiliary calculation. Let $\alpha>0$ and $J=[a, a+\delta]$.  After the change of variables $t_1=a+rx$, $t_2=a+ry$, we have
\begin{align*}
	&\int_{J\times J} \frac{1}{(r^2+(t_1-t_2)^2)^{\frac{\alpha+1}{2}}} \, dt_1dt_2 = 2 r^{1-\alpha} \int_{\substack{[0,\frac{\delta}{r}]\times[0,\frac{\delta}{r}]\\ x \ge y}} \frac{1}{(1+ (x-y)^2)^{\frac{\alpha+1}{2}}}\, dx dy\\
	&= 2 r^{1-\alpha} \int_{0}^\frac{\delta}{r} \int_{0}^{\frac{\delta}{r}-y} \frac{1}{(1+z^2)^{\frac{\alpha+1}{2}}} \,dz dy \le 2r^{-\alpha} \delta \int_{0}^\infty \frac{1}{(1+z^2)^{\frac{\alpha+1}{2}}} \\
	&= C \abs{J} r^{-\alpha}.
\end{align*}
The dimensional constant $C=2 \int_{0}^\infty \frac{1}{(1+z^2)^{\frac{\alpha+1}{2}}} \le \frac{\alpha+1}{\alpha}$ is therefore finite.)\\
Combining all obtained estimates we found that $U$ satisfies the assumption of theorem \ref{theo_B:2.1} with $\beta= s-\frac{1}{2}$ and $M_U = \llfloor u \rrfloor_{s, \partial \Omega}$ in (a2).

Apply Theorem \ref{theo_B:2.1}, in particular \eqref{eq_B:2.1}, to $U$ on a point $(x,0) \in \Omega\times]-L,L[$ with $r<\frac{R_0}{4}<L$. This gives the desired \eqref{eq_B:3.3}, because
\begin{align*}
	&r^{-2\alpha} \int_{B_r(x)\cap\Omega} \abs{Du}^2 = \frac{r^{-2\alpha}}{2r} \int_{-r}^r \int_{B_r(x)\cap \Omega} \abs{DU}^2 \le 2^2 (2r)^{-1-2\alpha} \int_{B_{2r}((x,0))\cap \Omega_I} \abs{DU}^2\\
	&\le 2^5 \left( R_0^{-1-2\alpha}\int_{B_{2R_0}((x,0))\cap \Omega_I} \abs{DU}^2 + C\frac{R_0^{2(\beta-\alpha)}}{\beta-\alpha} M_U^2 \right)\\
	&\le 2^7 R_0^{-2\alpha} \int_{B_{2R_0}(x)\cap \Omega} \abs{Du}^2 +  C\frac{R_0^{2s-1-2\alpha}}{2s-1-2\alpha} \llfloor u \rrfloor^2_{s,\partial \Omega}.
\end{align*}
(ii) i.e. $u \in C^{0,\alpha}(\overline{\Omega})$ now follows as outlined in the proof to theorem \ref{theo_B:2.1}.
\end{proof}


\subsection{Continuity up to boundary} 
\label{sub:continuity_up_to_boundary}
That continuity extends up to the boundary for 2-dimensional ball has been proven by W.Zhu in \cite{Zhu}. His idea is based on the Courant-Lebesgue lemma and can be modified to work on Lipschitz regular domains as well. We will give here a different proof, that on a first glimpse doesn't seem to be so restricted to the 2-dimensional setting as it is for Zhu's proof due to the Courant-Lebesgue lemma. Our proof uses an interplay of classical trace estimates and energy decay. We shortly recall the classical trace estimates and their proof. The proof here is taken from \cite[Lemma 13.5]{Tartar}. As introduced in the general assumptions, section \ref{sec:general_assumptions_and_notation}, we use the notation $\Omega_F=\{(x',x_N) \colon x_N > F(x') \}$ for $F: \R^{N-1} \to \R$.

\begin{lemma}\label{lem_B:3.3}
For $F$ Lipschitz continuous and $1<p<\infty$, one has
\begin{equation}\label{eq_B:3.4}
\norm{\frac{f(x',x_N)- f\tr{\partial\Omega_F}(x')}{x_N- F(x')}}_{L^p(\tilde{\Omega}} \leq \frac{p}{p-1} \norm{\frac{\partial f}{\partial x_N}}_{L^p(\tilde{\Omega})} \quad \forall f\in W^{1,p}(\Omega_F,\R);
\end{equation}
and any subset $\tilde{\Omega}\subset \Omega_F$ of the following type:
\[\tilde{\Omega}=\{ (x',x_N)\colon x' \in \Omega', F(x')<x_N< G(x')\}\]
$\tilde{\Omega} \subset \R^{N-1}$ and $G \ge F$ continuous.\\
Equivalently one has 
\begin{equation}\label{eq_B:3.5}
\norm{\frac{\G(u(x',x_N), u\tr{\partial\Omega_F}(x'))}{x_N- F(x')}}_{L^p(\tilde{\Omega})} \leq \frac{p}{p-1} \norm{\abs{D_N u}}_{L^p(\tilde{\Omega})} \quad \forall u \in W^{1,p}(\Omega_F, \A_Q(\R^n)).
\end{equation}
\end{lemma}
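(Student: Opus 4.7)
The plan is to reduce both estimates to the classical one-dimensional Hardy inequality applied along vertical lines. I would first prove the scalar statement \eqref{eq_B:3.4} for $f \in C^1(\overline{\tilde\Omega})$, then obtain the general $W^{1,p}$ case by density, and finally derive the multivalued version \eqref{eq_B:3.5} by slicing.

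For smooth $f$, fix $x' \in \Omega'$ and apply the fundamental theorem of calculus along the vertical segment $\{x'\} \times (F(x'), G(x'))$:
\[
f(x', x_N) - f\tr{\partial\Omega_F}(x') = \int_{F(x')}^{x_N} \frac{\partial f}{\partial x_N}(x', s)\, ds.
\]
After the substitution $t = x_N - F(x')$, the quotient on the left-hand side of \eqref{eq_B:3.4} is exactly the classical Hardy averaging operator $t \mapsto t^{-1}\int_0^t g(\sigma)\, d\sigma$ applied to $g(\sigma) = \partial_N f(x', F(x')+\sigma)$. The sharp one-dimensional Hardy inequality gives, slice by slice,
\[
\int_0^{G(x')-F(x')} \left|\frac{f(x', F(x')+t) - f(x', F(x'))}{t}\right|^p dt \le \left(\frac{p}{p-1}\right)^p \int_0^{G(x')-F(x')} |g(\sigma)|^p\, d\sigma.
\]
Integration over $x' \in \Omega'$ together with Fubini produces \eqref{eq_B:3.4} for smooth $f$. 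The extension to arbitrary $f \in W^{1,p}(\Omega_F)$ is a routine density/ACL argument: approximate $f$ by $f_k \in C^\infty(\overline{\Omega_F})$ with $f_k \to f$ in $W^{1,p}$ (and $f_k\tr{\partial\Omega_F} \to f\tr{\partial\Omega_F}$ in $L^p(\partial\Omega_F)$ by continuity of the trace), pass to an a.e. convergent subsequence, apply Fatou on the left, and take the limit on the right.

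For the multivalued estimate \eqref{eq_B:3.5}, I slice in $x'$. Given $u \in W^{1,p}(\Omega_F, \A_Q(\R^n))$, set, for a.e. $x' \in \Omega'$,
\[
h_{x'}(t) = \G\bigl(u(x', F(x') + t),\, u\tr{\partial\Omega_F}(x')\bigr), \qquad 0 < t < G(x') - F(x').
\]
Properties (w1) and (w2) of Definition \ref{def_I:1.102}, combined with the characterisation of $|D_N u|$ recalled below that definition and Fubini, imply that for a.e. $x'$ the function $h_{x'}$ lies in $W^{1,p}$ on its interval of definition, satisfies $|h_{x'}'(t)| \le |D_N u|(x', F(x') + t)$ a.e., and has $h_{x'}(0^+) = 0$ via the trace identification. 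Applying the scalar Hardy inequality to $h_{x'}$ and integrating in $x'$ then yields \eqref{eq_B:3.5}.

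The only delicate point is the slice-wise setup in the multivalued case: one must verify that $h_{x'}$ is absolutely continuous along a.e. vertical line with the stated derivative bound, and that $u\tr{\partial\Omega_F}(x')$ is really the one-sided limit of $t \mapsto u(x', F(x')+t)$ as $t \downarrow 0$. Both facts follow from (w1)--(w2) via the ACL characterisation applied to the scalar Sobolev function $h_{x'}$; beyond this, only Hardy's inequality and Fubini are required.
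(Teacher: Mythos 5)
Your proof is correct and its core --- the sharp one-dimensional Hardy inequality applied along the vertical segments $\{x'\}\times(F(x'),G(x'))$, followed by Fubini --- is exactly the paper's argument; the scalar part, including the density step, matches the paper's proof essentially line by line. Where you diverge is the multivalued case: the paper first proves \eqref{eq_B:3.5} for \emph{Lipschitz} $u$, where $u\tr{\partial\Omega_F}(x')=u(x',F(x'))$ pointwise and $k(t)=\G\bigl(u(x',F(x')+t),u(x',F(x'))\bigr)$ is Lipschitz with $\abs{k'(t)}\le\abs{D_Nu}(x',F(x')+t)$, and then extends to all of $W^{1,p}(\Omega_F,\A_Q(\R^n))$ by density of Lipschitz maps; you instead slice a general $W^{1,p}$ map directly using (w1)--(w2) and the ACL characterisation. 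Your route avoids invoking Lipschitz density for $\A_Q$-valued Sobolev maps, but it carries the subtlety you partly flag: (w1) gives absolute continuity of $t\mapsto\G(u(x',F(x')+t),T)$ along a.e.\ vertical line only for each \emph{fixed} $T$, with the exceptional set of slices depending on $T$, whereas your $T_{x'}=u\tr{\partial\Omega_F}(x')$ varies with the slice. To close this, take a countable dense family $\{T_i\}\subset\A_Q(\R^n)$, note that for a.e.\ $x'$ all maps $t\mapsto\G(u(x',F(x')+t),T_i)$ are simultaneously absolutely continuous with derivative dominated by $\abs{D_Nu}$, and use $\abs{\G(S,T_i)-\G(S,T_{x'})}\le\G(T_i,T_{x'})$ to pass to the uniform limit, which preserves absolute continuity under the common $L^p$ derivative bound; the identification $h_{x'}(0^+)=0$ for a.e.\ $x'$ then follows from the same slice-wise trace characterisation. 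With that supplement your argument is complete; the paper's Lipschitz-first route trades this bookkeeping for a (standard but unproved here) density statement.
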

\begin{proof}
For $p>1$ Hardy's inequality, compare for instance with \cite[Lemma 13.4]{Tartar}, states that, if $h \in L^p(\R_+)$, $g(t):= \frac{1}{t} \int_0^t h(s) ds \in L^p(\R_+)$ satisfies 
\begin{equation}\label{eq_B:3.6}
	\norm{g}_p \le \frac{p}{p-1} \norm{f}_p.
\end{equation}
For $f \in C_c^1(\overline{\Omega_F})$ set
\[
	h(t):= \textbf{1}_{[0,G(x')-F(x')]}(t)\, \frac{\partial f}{\partial x_N} (x', F(x')+t).
\]
Apply Hardy's inequality to it and observe that for $0<t<G(x')-F(x')$ and $t=x_N-F(x')$
\[
	g(t)= \frac{f(x', F(x')+t)- f(x',F(x'))}{t}= \frac{f(x',x_N)-f\tr{\partial \Omega_F}(x')}{x_N-F(x')}.
\]
Hence take the power $p$ and integrate in $x' \in \Omega'$ to conclude \eqref{eq_B:3.5}. By a density argument the inequality extends to all of $W^{1,p}(\Omega_F)$.\\
For a Lipschitz continuous $u \in W^{1,p}(\Omega_F)$, we have $u\tr{\partial\Omega_F}(x')= u(x',F(x'))$. $k(t):= \G(u(x'), F(x')+t)$ is Lipschitz continuous in $t$. Furthermore $k'(t) \le \abs{D_Nu}(x',F(x')+t)$ for a.e. $x'$. Apply Hardy's inequality this time to $h(t)= \textbf{1}_{[0,G(x')-F(x')]}(t)\, k'(t)$, take the power $p$ and integrate in $x' \in \Omega'$. This shows \eqref{eq_B:3.5} under the additional assumption that $u$ is Lipschitz. It extends by density to all of $W^{1,p}(\Omega_F)$. 
\end{proof}

\begin{proposition}\label{prop_B:3.4}
Given a Dirichlet minimizer $u \in W^{1,2}(\Omega, \A_Q(\R^n))$ on a Lipschitz regular domain $\Omega \subset \R^N$ that satisfies
\begin{itemize}
	\item[(a1)] $u\tr{\partial \Omega}$ is continuous;\\
	\item[(a2)] $N=2$ or \begin{equation}\label{eq_B:3.7}
		r^{2-N} \int_{B_r(z)\cap \Omega} \abs{Du}^2 \to 0 \text{ as } r \to 0 \text{ uniformly for all } z\in \partial \Omega; 
	\end{equation}
\end{itemize}	
then $u$ is continuous on $\overline{\Omega}$.
\end{proposition}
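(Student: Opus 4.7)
The plan is to extend $u$ to $\overline{\Omega}$ by declaring $u(z) := u\tr{\partial \Omega}(z)$ on $\partial \Omega$, and argue continuity at a boundary point $z$ by contradiction: assume there exist $\epsilon > 0$ and $x_k \in \Omega$ with $x_k \to z$ but $\G(u(x_k), u\tr{\partial \Omega}(z)) \geq \epsilon$. Interior continuity is Theorem \ref{theo_I:1.103}, continuity along $\partial \Omega$ is assumption (a1), so this is the only remaining case.

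The key move is to blow up around a closest boundary point $\tilde z_k \in \partial \Omega$ to $x_k$, at the scale $\rho_k = 2 \dist(x_k, \partial \Omega) = 2 |x_k - \tilde z_k|$. Setting
\[
u_k(y) := u(\tilde z_k + \rho_k y), \quad \Omega_k := \rho_k^{-1}(\Omega - \tilde z_k), \quad y_k := \rho_k^{-1}(x_k - \tilde z_k),
\]
the base point $y_k$ has $|y_k| = \dist(y_k, \partial \Omega_k) = 1/2$, so that $B_{1/2}(y_k) \subset \Omega_k$. Rescaling the Dirichlet integral gives $\int_{B_R \cap \Omega_k} |Du_k|^2 = \rho_k^{2-N} \int_{B_{R\rho_k}(\tilde z_k) \cap \Omega} |Du|^2$, which vanishes for every fixed $R > 0$: when $N=2$ the exponent is zero and one uses absolute continuity of the integral, while when $N \geq 3$ this is exactly \eqref{eq_B:3.7}. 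Simultaneously, uniform continuity of $u\tr{\partial \Omega}$ on the compact set $\partial \Omega$ makes the traces $u_k\tr{\partial \Omega_k}$ converge uniformly on $\partial \Omega_k \cap B_R$ to the constant $c := u\tr{\partial \Omega}(z)$.

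Working in a rotated coordinate patch at $\tilde z_k$ where $\partial \Omega_k \cap B_R$ is the graph of a Lipschitz function with uniform Lipschitz constant, Hardy's inequality (Lemma \ref{lem_B:3.3} with $p=2$) gives
\[
\int_{B_R \cap \Omega_k} \G\bigl(u_k(y), u_k\tr{\partial \Omega_k}(\pi(y))\bigr)^2\, dy \leq C R^2 \int_{B_R \cap \Omega_k} |Du_k|^2 \longrightarrow 0,
\]
with $\pi$ the vertical projection in the chart. Combined with the uniform smallness of $\G(u_k\tr{\partial \Omega_k}, c)$ this yields $u_k \to c$ in $L^2(B_R \cap \Omega_k)$. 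Since $B_{1/2}(y_k) \subset \Omega_k$, the interior Morrey bound of Theorem \ref{theo_I:1.103}, together with Poincar\'e's inequality and Campanato's isomorphism $\mathcal{L}^{2, N + 2\alpha_0} \cong C^{0, \alpha_0}$, produces
\[
\operatorname{osc}(u_k, B_{1/8}(y_k))^2 \leq C \int_{B_{1/4}(y_k)} |Du_k|^2 \longrightarrow 0.
\]
Picking $y^* \in B_{1/8}(y_k)$ with $\G(u_k(y^*), c)^2$ no larger than the $L^2$-average (which tends to $0$) then gives $\G(u_k(y_k), c) \leq \operatorname{osc}(u_k, B_{1/8}(y_k)) + \G(u_k(y^*), c) \to 0$, contradicting the identity $\G(u_k(y_k), c) = \G(u(x_k), u\tr{\partial \Omega}(z)) \geq \epsilon$.

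The main technical obstacle I expect is the rescaled Hardy step: the direction in which $\partial \Omega$ is realised as a graph depends on $\tilde z_k$ and must be chosen so that the Lipschitz constants of the associated graphs $F_k$ are uniformly bounded. Compactness of $\partial \Omega$ together with the scale-invariance of Lipschitz regularity is precisely what allows a finite atlas of coordinate patches to do the job and keeps the constant in Lemma \ref{lem_B:3.3} independent of $k$, which is what drives the entire argument.
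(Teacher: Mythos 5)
Your proof is correct and is essentially the paper's argument in rescaled clothing: the blow-up at scale $\rho_k=2\dist(x_k,\partial\Omega)$ reproduces the paper's direct estimate at the scale $r_k=\dist(x_k,\partial\Omega)$, with the same three ingredients — the interior Morrey/H\"older bound controlling the oscillation of $u$ near $x_k$, the Hardy-type trace inequality of Lemma \ref{lem_B:3.3} comparing $u$ with its trace, and the continuity of $u\tr{\partial\Omega}$ — all driven by \eqref{eq_B:3.7} (automatic for $N=2$ by absolute continuity of the integral). The contradiction framing is unnecessary but harmless, since your estimates already yield $\G(u(x_k),u\tr{\partial\Omega}(z))=o(1)$ directly.
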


\begin{proof}
Observe that in case of $N=2$, $r^{2-N} \int_{B_r(z)\cap \Omega} \abs{Du}^2=\int_{B_r(z)\cap \Omega} \abs{Du}^2 \to 0$ uniformly due to the absolute continuity of the integral and $\abs{Du}^2 \in L^1(\Omega)$. Hence it is sufficient to prove the proposition under the assumption that \eqref{eq_B:3.7} holds. $u$ is H\"older continuous in the interior (theorem \ref{theo_I:1.103}) and so it remains to check that continuity extends up to the boundary. This is a local question so we assume that $\Omega=\Omega_F$ for some Lipschitz continuous $F$, with Lipschitz norm $Lip(F)<L$. Furthermore let $z_0=(z', F(z')) \in \partial \Omega_F$  be fixed. \\
Consider a generic sequence $x_k=(x_k',x_{N,k})$ converging to $z_0$ from the interior. Set $r_k = x_{N,k}-F(x_k')>0$ and $\epsilon= \frac{1}{2 \sqrt{1+L^2}}$. Then $B_{2\epsilon r_k}(x_k) \subset \Omega_F$ for all $k$ and 
\begin{equation}\label{eq_B:3.8}
	r_k^2\le 2 (x_{N,k}- z_N)^2 + 2 (F(z')-F(x'_k))^2\le \frac{1}{2\epsilon^2} \abs{x_k -z_0}^2.
\end{equation}
To show continuity we have to check that $\G(u(x_k),u\tr{\partial\Omega_F}(z_0))$ is of order $o(1)$. The triangle inequality and convexity gives
\begin{align*}
	&\frac{1}{3}\G(u(x_k), u\tr{\partial\Omega_F}(z_0))^2 \le \G(u(x_k), u(x))^2  \\&+ \G(u(x), u\tr{\partial\Omega_F}(x'))^2 + \G(u\tr{\partial\Omega_F}(x'),u\tr{\partial\Omega_F}(z_0))^2.
\end{align*}
Integration in $x \in B_{\epsilon r_k}(x_k)$ gives
\begin{align*}
	&\frac{1}{3}\G(u(x_k), u\tr{\partial\Omega_F}(z_0))^2 \le \fint_{B_{\epsilon r_k}(x_k)}\G(u(x_k), u(x))^2 \\
	& + \fint_{B_{\epsilon r_k}(x_k)} \G(u(x), u\tr{\partial\Omega_F}(x'))^2 + \fint_{B_{\epsilon r_k}(x_k)} \G(u\tr{\partial\Omega_F}(x'),u\tr{\partial\Omega_F}(z_0))^2.
\end{align*}
It is sufficient to check that all integrals are of order $o(1)$.
\[
 \fint_{B_{\epsilon r_k}(x_k)} \G(u\tr{\partial\Omega_F}(x'),u\tr{\partial\Omega_F}(z_0))^2 \le \sup_{x \in B_{\abs{x_k-z_0}}(z_0)} \G(u\tr{\partial\Omega_F}(x'),u\tr{\partial\Omega_F}(z_0))^2=o(1)
\]
where we used \eqref{eq_B:3.8} and assumption (a1).\\
For a fixed $k$ set $\tilde{\Omega}=\{ (x',x_N)\colon x' \in \Omega', F(x')< x_N < G(x')\}$ with $\Omega' = B_{\epsilon r_k}(x_k')\subset \R^{N-1}$, $G(x')=x_{N,k}+\epsilon r_k$. The trace estimate, Lemma \ref{lem_B:3.3} states
\[
 \frac{1}{r_k^2} \int_{\tilde{\Omega}} \G(u(x), u\tr{\partial\Omega_F}(x'))^2 \le 4 \int_{\tilde{\Omega}} \frac{ \G(u(x), u\tr{\partial\Omega_F}(x') )^2 }{\abs{x_N-F(x')}^2} \le 16 \int_{\tilde{\Omega}} \abs{Du}^2;
\]
where we used $\frac{1}{r_k} \le \frac{2}{x_N- F(x')}$ because of $x_N-F(x')= x_N-x_{N,k} + r_k + F(x_k')- F(x') \le \epsilon r_k + r_k + L \epsilon r_k \le 2 r_k$. We may combine it with $B_{\epsilon r_k}(x_k) \subset \tilde{\Omega} \subset B_{2r_k}(z_k) \cap \Omega_F$ and assumption (a2) to deduce
\[
	\fint_{B_{\epsilon r_k}(x_k)} \G(u(x), u\tr{\partial\Omega_F}(x'))^2 \le \frac{16}{\omega_N \epsilon^N} r_k^{2-N} \int_{B_{2r_k}(z_k)\cap \Omega_F} \abs{Du}^2=o(1).
\]
Finally the first integral is estimated using the internal H\"older continuity result: since $B_{2\epsilon r_k}(x_k) \subset \Omega_F$ for positive $C, \beta$ 
\[
	\G( u(x), u(x_k))^2 \le C \left( \frac{\abs{x-x_k}}{\epsilon r_k}\right)^{2\beta} \, (\epsilon r_k)^{2-N} \int_{B_{2\epsilon r_k}(x_k)} \abs{Du}^2 \text{ for all } x\in B_{\epsilon r_k}(x_k).
\]
Integration in $x$ and $B_{2\epsilon r_k}(x_k)\subset B_{2r_k}(z_k)$ gives
\[
	\fint_{B_{\epsilon r_k}(x_k)} \G(u(x),u(x_k))^2 \le \frac{C}{(\epsilon r_k)^{N-2}} \int_{B_{2\epsilon r_k}(x_k)} \abs{Du}^2 \le \frac{C}{\epsilon^{N-2}} r_k^{2-N} \int_{B_{2r_k}(z_k)} \abs{Du}^2;
\]
that is of order $o(1)$ by assumption (a2).
\end{proof}

\begin{remark}\label{rem_B:3.2}
	$u \in W^{1,2}(\Omega,\A_Q(\R^n))$ implies that $u\tr{\partial \Omega} \in W^{\frac{1}{2},2}(\partial \Omega, \A_Q(\R^n))$ but this is just not sufficient to ensure continuity. $W^{\frac{1}{2},2}(\R)=H^{\frac{1}{2}}(\R)$ does not embed into $L^\infty(\R)$ but only the slightly smaller space $(H^1(\R),L^2(\R))_{\frac{1}{2},1}$ embeds into $C^0(\R)$, compare for instance \cite[chapter 25]{Tartar}.
\end{remark}

\subsection{Partial improvement of the H\"older exponent} 
\label{sub:partial_improvement_of_the_h"older_exponent}
In the introduction we mentioned already that it would be desirable to extend the optimal H\"older exponent $\frac{1}{Q}$ in the interior up to the boundary. We want to present in this subsection a partial improvement of theorem \ref{theo_B:3.2}:\\
Let $\Omega \subset \R^2$ be a $C^1$-regular domain the following holds:\\
$u\in W^{1,2}(\Omega, \A_Q(\R^n))$ Dirichlet minimizing with $u\tr{\partial \Omega} \in C^{0, \beta}(\partial \Omega)$ for some $\beta>\frac{1}{2}$ then $u \in C^{0, \alpha}(K)$, $\alpha = \frac{1}{Q}$ for $Q>2$ and any $0<\alpha < \frac{1}{2}$. Furthermore $K\subset \overline{\Omega}$ closed and touches $\partial \Omega$ in at most 1 point $z$ non-tangential.\\
To every closed set $K$ of this type there is a cone $C_{z,\theta}=\{ x \in \R^2 \colon \abs{x}\cos(\theta) < - \langle \nu_{\partial \Omega}(z), x \rangle \}$, $0<\theta < \frac{\pi}{2}$ ( $\nu_{\partial \Omega}(z)$ denotes the outward pointing normal to $\partial \Omega$ at $z$ ) and a radius $0<R$ s.t. $K\cap \overline{B_R(z)} \subset \overline{ C_{z,\theta}\cap B_R(z)}$. Shrinking $R>0$ if necessary we may even assume w.l.o.g. that $C_{z,\theta}\cap B_R(z) \subset \Omega$.  This is sketched in the figure.
\begin{wrapfigure}{r}{0.5\textwidth}\label{fig}
    \includegraphics[width=0.5\textwidth]{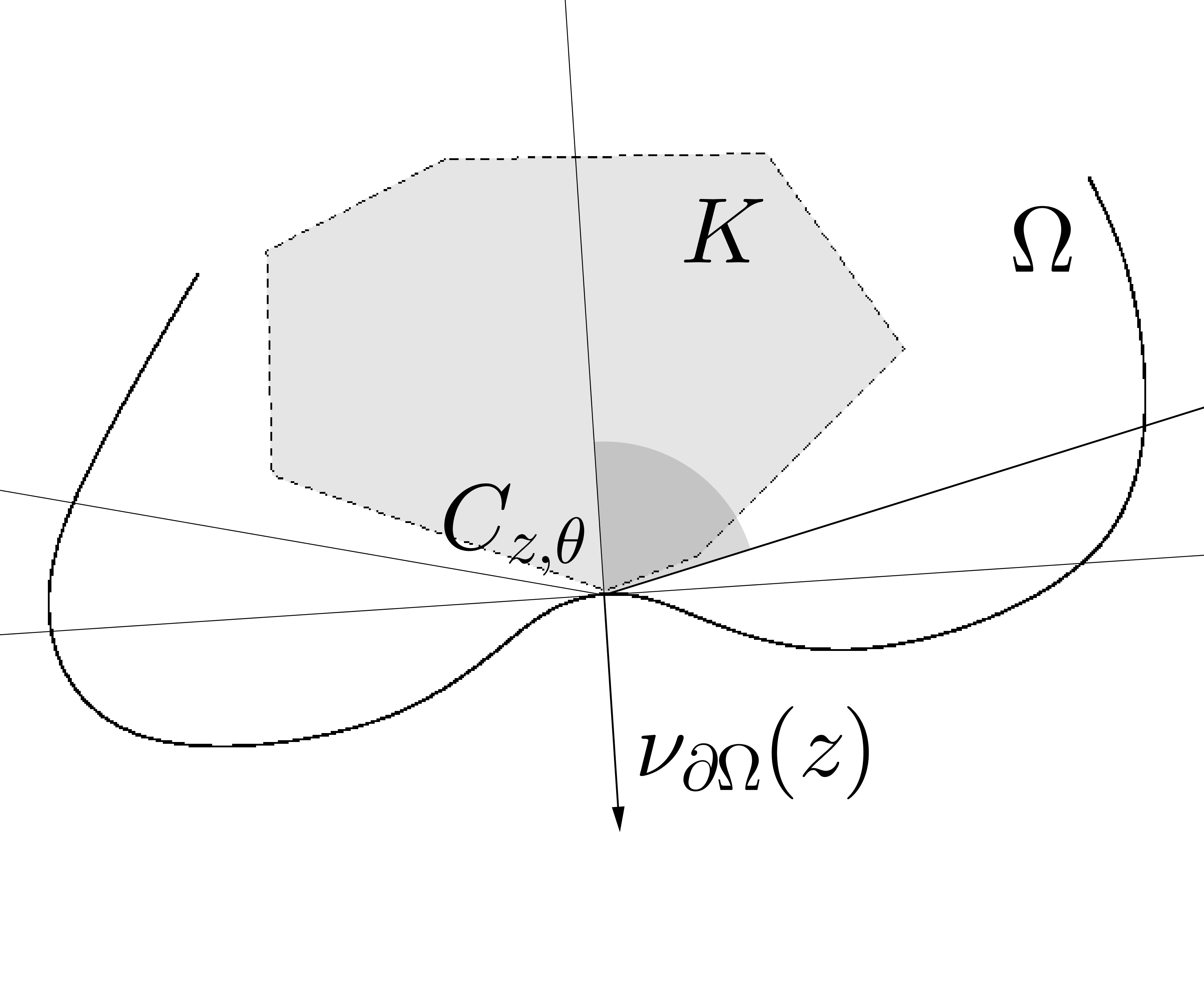}
\end{wrapfigure}
 $K\setminus B_R(z)$ is a compact subset of $\Omega$ hence the interior regularity theory holds. It remains to prove regularity for conical subsets $C_{z,\theta}\cap B_R(z)$. The precise statement is: 
\begin{corollary}\label{cor_B:5.8}
Let $\frac{1}{2}< s \le 1$ and $C_\theta=\{x =(x_1,x_2)\colon \abs{x} \cos(\theta)\le x_2 \}$ with $0<\theta <\frac{\pi}{2}$ (a cone). Under the assumptions
\begin{itemize}
	\item[(a1)] $u \in W^{1,2}(\Omega_F\cap B_1, \A_Q(\R^n))$ Dirichlet minimizing
	\item[(a2)] $ u\tr{\partial\Omega_F} \in W^{s,2}(\Gamma_F, \A_Q(\R^n))$ and for some $0<\gamma$ there is a constant $M_u>0$ s.t. 
	\begin{equation*}
		r^{2(s-\gamma)-1} \llfloor u \rrfloor^2_{s, B_r\cap \Gamma_F} \le M_u^2,
	\end{equation*}
\end{itemize}
then there exists $0<R<1$ depending on $u(0)$ and $\theta$ s.t., for any $\alpha<\min\{\gamma, \frac{1}{2}\}$ and $\alpha \le \frac{1}{Q}$ the following holds
\begin{itemize}
	\item[(i)] $\abs{Du}$ is an element of the Morrey space $L^{2,2\alpha}(\Omega_F\cap B_{\frac{R}{2}}\cap C_\theta)$, more precisely 
	\begin{equation}\label{eq_B:3.12}
		r^{-2\alpha} \int_{B_r(x)\cap \Omega_F} \abs{Du}^2 \le \frac{4}{\delta^{2\alpha}} \left(\int_{B_R\cap \Omega_F} \abs{Du}^2 + \frac{CR^{2(\gamma-\alpha)}}{\gamma-\alpha} M_u^2\right)
	\end{equation}
	where $\delta= \cos(\theta)- \cos(\frac{2\theta+ \pi}{4})$;
	\item[(ii)] $u\in C^{0,\alpha}(\Omega_F\cap B_{\frac{R}{2}}\cap C_\theta)$. 
\end{itemize}
\end{corollary}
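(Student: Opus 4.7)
The plan is to exploit the non-tangential clearance afforded by the cone hypothesis: after shrinking $R$ so that $\grad F$ is small, every $x \in C_\theta \cap B_{R/2}$ satisfies $\dist(x, \partial \Omega_F) \ge \delta\,|x|$, with $\delta$ as in the statement. Balls of radius $\le \delta|x|$ around $x$ thus lie inside $\Omega_F$, and the \emph{explicit} two-dimensional interior Morrey estimate of Theorem \ref{theo_I:1.103} (with sharp exponent $\alpha_0 = 1/Q$) is available on them. I will combine this with the boundary Morrey estimate \eqref{eq_B:3.3} applied at the touching point $0 \in \partial \Omega_F$.

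\textbf{Two scales.} Control of $r^{-2\alpha} \int_{B_r(x) \cap \Omega_F} \abs{Du}^2$ splits into two regimes. For $r \le \delta|x|$, Theorem \ref{theo_I:1.103} with $\alpha \le 1/Q$ yields
\[
r^{-2\alpha} \int_{B_r(x)} \abs{Du}^2 \le (\delta|x|)^{-2\alpha} \int_{B_{\delta|x|}(x)} \abs{Du}^2 \le (\delta|x|)^{-2\alpha} \int_{B_{2|x|}(0) \cap \Omega_F} \abs{Du}^2,
\]
via $B_{\delta|x|}(x) \subset B_{(1+\delta)|x|}(0) \subset B_{2|x|}(0)$. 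For $r \in (\delta|x|, R/2]$, the inclusion $B_r(x) \subset B_{(1+1/\delta)r}(0)$ together with $r^{-2\alpha} \le (\delta|x|)^{-2\alpha}$ again reduces to a control of $\rho^{-2\alpha} \int_{B_\rho(0) \cap \Omega_F} \abs{Du}^2$ for some $\rho \in (0, R]$. This latter quantity is supplied by applying \eqref{eq_B:3.3} at the boundary point $0$, with the (a2)-decay taking the role of $\beta = \gamma$ in the three-dimensional lift used to prove Theorem \ref{theo_B:3.2}:
\[
\rho^{-2\alpha} \int_{B_\rho(0) \cap \Omega_F} \abs{Du}^2 \le C\!\left( \int_{B_R \cap \Omega_F} \abs{Du}^2 + \frac{R^{2(\gamma-\alpha)}}{\gamma-\alpha} M_u^2 \right).
\]
Assembling the two regimes yields \eqref{eq_B:3.12} with prefactor $4/\delta^{2\alpha}$, and (ii) follows from (i) via the Campanato--Morrey correspondence applied to $\boldsymbol{\xi}\circ u$, as at the end of the proof of Theorem \ref{theo_B:2.1}.

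\textbf{Main obstacle.} The delicate point is pairing the interior $1/Q$-exponent on the cone with the boundary decay at $0$: the factor $|x|^{-2\alpha}$ generated by the interior Morrey must be cancelled by an $|x|^{2\alpha}$ decay at $0$ with the \emph{same} $\alpha$. This forces the boundary Morrey at $0$ to be valid for every $\alpha$ up to $\min\{1/Q,\gamma^-,1/2^-\}$, potentially exceeding the abstract threshold $\alpha_1$ coming from Theorem \ref{theo_B:3.2}. I expect this to be handled either by re-running the three-dimensional lifting argument at $0$ with $\beta = \gamma$ directly (so that the limiting exponent is tied to $\gamma$ rather than to the universal $\alpha_1$), or by a dyadic bootstrap on annular shells within $C_\theta$ that progressively raises the decay rate at $0$ up to the explicit interior rate $1/Q$; the cone clearance is scale-invariant, so such an iteration is natural.
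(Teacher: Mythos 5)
Your two-scale decomposition (interior Morrey with the explicit exponent $\tfrac1Q$ on balls $B_{\delta|x|}(x)\subset\Omega_F$, reduction to a boundary Morrey decay at the touching point $0$ for larger radii) is exactly how the paper closes the argument, down to the choice $r_1=\max\{r,\delta|x|\}$, $r_2=r_1+|x|$. The genuine gap is the step you yourself flag as the ``main obstacle'', and neither of your proposed fixes resolves it. Fix (a) fails because the cap $\alpha<\alpha_1$ in Theorem \ref{theo_B:2.1} does not come from the boundary-data exponent $\beta$: it comes from the requirement $(N-2+2\alpha_1)\bigl(\frac{1}{N-2}-\delta\bigr)\le 1$, where $\delta$ is the non-explicit constant produced by the compactness argument in Proposition \ref{prop_B:2.3}. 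Re-running the three-dimensional lift with $\beta=\gamma$ only replaces $s-\tfrac12$ by $\gamma$ in the minimum; the structural cap $\alpha_1$ remains and may be far below $\min\{\gamma,\tfrac12,\tfrac1Q\}$. Fix (b) fails because the quantity you must control for $r>\delta|x|$ is $\rho^{-2\alpha}\int_{B_\rho(0)\cap\Omega_F}\abs{Du}^2$ over the \emph{full} half-disc at $0$, which contains boundary regions outside $C_\theta$; a bootstrap on annular shells inside the cone cannot see, let alone improve, the energy there.

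What the paper actually supplies is a new, purely two-dimensional boundary energy inequality (Lemma \ref{lem_B:3.7}): for Dirichlet minimizers and $0<r<R_0(u(0))$,
\begin{equation*}
\int_{B_r\cap\Omega_F}\abs{Du}^2\le(1+\epsilon)\,r\int_{\partial B_r\cap\Omega_F}\abs{D_\tau u}^2+\frac{C}{\epsilon}\,r^{2s-1}\llfloor u\rrfloor^2_{s,B_r\cap\Gamma_F},
\end{equation*}
with constant $1+\epsilon$ in place of the compactness constant $\frac{1}{N-2}-\delta$. It is proved by selecting single-valued $W^{1,2}$- and $W^{s,2}$-representatives of $u$ on $\partial B_r\cap\Omega_F$ and $B_r\cap\Gamma_F$ (the selection criteria of \cite[Proposition 1.2]{Lellis} and Lemma \ref{lem:A2.401}), gluing them, taking the harmonic extension as competitor, and invoking the sharp harmonic estimate of Lemma \ref{lem_B:3.5} / Corollary \ref{cor_B:3.6}; when $u(0)$ is not a single point with multiplicity one splits $u$ near $0$ according to its separation, which is why $R$ depends on $u(0)$. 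With the constant $1+\epsilon\le\frac{1}{2\alpha}$ the differential inequality for $r^{-2\alpha}\int_{B_r\cap\Omega_F}\abs{Du}^2$ closes for every $\alpha<\min\{\gamma,\tfrac12\}$, which is precisely the range your argument needs at the point $0$. Without an ingredient of this kind your proof only reaches $\alpha<\min\{\alpha_1,\gamma\}$.
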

Concerning the optimality of the achieved H\"older exponent and assumption (a2) consider the following:
\begin{remark}\label{rem_B:5.3}
(a2) is obviously always satisfied for $\gamma=s-\frac{1}{2}$.\\
(a2) is satisfied for $\gamma>\frac{1}{2}$ and any $s <\gamma$ if $u\tr{\Gamma_F} \in C^{0,\gamma}(\Gamma_F)$ as we have seen in lemma \ref{lem_B:2.101}. Furthermore this implies that
\[
	u \in C^{0,\alpha}(\overline{\Omega_F\cap B_R\cap C_\theta}) \text{ with $\alpha=\frac{1}{Q}$ for $Q>2$ and any $\alpha< \frac{1}{2}$ for $Q=2$};
\]
i.e. the optimal exponent extends on cones up to the boundary.
\end{remark}
The proof of the corollary follows similar lines as in the higer dimeinsional case. We will prove an improve estimate in the spirit of proposition \ref{prop_B:2.3}, that will lead eventually to corollary \ref{cor_B:5.8}. Before we present this final argument we prove the preliminary lemmas.
As in the previous sections: $B_{1+} = B_1 \cap \{ x_2>0\}$, $\Sp^1=\partial B_1, \Sp^{1}_{+}=\Sp^1 \cap \{x_2>0\}$, and $\Gamma_0= B_1\cap \{ x_2 = 0 \}$.

\begin{lemma}\label{lem_B:3.5}
	Let $\frac{1}{2}<s\le 1$ be given, then there is a constant $C=C(s)$ s.t. any single valued harmonic function $f \in W^{1,2}(B_{1+})$ satisfies
	\begin{equation}\label{eq_B:3.9}
		\int_{B_{1+}} \abs{Df}^2 \le (1+\epsilon) \int_{\Sp^1_+} \abs{D_\tau f}^2 + \frac{C}{\epsilon} \int_{\Gamma_0} \llfloor f \rrfloor_{s,\Gamma_0}^2 \quad \forall \epsilon >0.
	\end{equation}
\end{lemma}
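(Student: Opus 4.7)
My plan is a compactness/contradiction argument in the spirit of the proof of Proposition~\ref{prop_B:2.3}. Suppose the stated inequality fails; then for some $\epsilon_0>0$ there is a sequence of harmonic functions $f_k\in W^{1,2}(B_{1+})$ with
\[
\int_{B_{1+}}|Df_k|^2 > (1+\epsilon_0)\int_{\Sp^1_+}|D_\tau f_k|^2 + k\,\llfloor f_k\rrfloor_{s,\Gamma_0}^2 .
\]
After subtracting a constant and rescaling we may assume $\int_{\Sp^1_+}|D_\tau f_k|^2=1$, $\int_{B_{1+}}|Df_k|^2\ge 1+\epsilon_0$, and $\llfloor f_k\rrfloor_{s,\Gamma_0}^2\le 1/k$.

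By Poincar\'e and Rellich, a subsequence satisfies $f_k\rightharpoonup f_\infty$ weakly in $W^{1,2}(B_{1+})$, with strong $L^2$--convergence of traces on $\partial B_{1+}$. The limit $f_\infty$ is harmonic, $\int_{\Sp^1_+}|D_\tau f_\infty|^2\le 1$ by lower semicontinuity, and Fatou applied to the Gagliardo integrand (justified by pointwise a.e.~convergence of traces on $\Gamma_0$ up to a further subsequence) gives $\llfloor f_\infty\rrfloor_{s,\Gamma_0}=0$, so $f_\infty|_{\Gamma_0}$ is a constant which we take to be $0$. Odd reflection across $\Gamma_0$ extends $f_\infty$ to a harmonic function on $B_1$ which is odd in $x_2$, hence has the sine-only Fourier expansion $f_\infty(r,\theta)=\sum_{j\ge 1}A_j r^j\sin(j\theta)$. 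A direct calculation yields
\[
\int_{B_{1+}}|Df_\infty|^2 = \tfrac{\pi}{2}\sum_{j\ge 1}jA_j^2 \;\le\; \tfrac{\pi}{2}\sum_{j\ge 1}j^2A_j^2 = \int_{\Sp^1_+}|D_\tau f_\infty|^2 \;\le\; 1.
\]

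To close the contradiction it suffices to establish $\limsup_k\int|Df_k|^2\le\int|Df_\infty|^2$. For each small $\rho>0$ and each $k$, I would build a competitor $v_{k,\rho}\in W^{1,2}(B_{1+})$ that equals $f_\infty$ on $B_{(1-\rho)+}$, equals $f_k$ on $\partial B_{1+}$, and on the thin half-shell $B_{1+}\setminus B_{(1-\rho)+}$ is produced by the single-valued analogue of the Luckhaus interpolation lemma stated in the appendix. The shell energy is controlled by an expression of the form
\[
\rho\int_{\partial B_{1+}}|D_\tau(f_k-f_\infty)|^2 + \rho^{-1}\|f_k-f_\infty\|_{L^2(\partial B_{1+})}^2 + C(s)\rho^{1-2s}\llfloor f_k-f_\infty\rrfloor_{s,\Gamma_0}^2 ;
\]
letting $k\to\infty$ first (the last two summands vanish thanks to strong $L^2$--convergence of traces and $\llfloor f_k\rrfloor_{s,\Gamma_0}\to 0$) and then $\rho\to 0$, a diagonal extraction combined with Dirichlet minimality of the harmonic $f_k$ in its own trace class yields $\int|Df_k|^2\le\int|Dv_{k,\rho}|^2\to\int|Df_\infty|^2\le 1$, contradicting $\int|Df_k|^2\ge 1+\epsilon_0$.

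The hard part will be the construction of $v_{k,\rho}$ with mixed boundary regularity ($W^{1,2}$ on $\Sp^1_+$ and $W^{s,2}$ on $\Gamma_0$) compatible with the two corners $(\pm1,0)$ of $B_{1+}$; the admissibility of the power $\rho^{1-2s}$ in the interpolation estimate is exactly where the hypothesis $s>\tfrac{1}{2}$ becomes essential, in accordance with Example~\ref{ex_B:2.1}, which rules out any analogous estimate for $s<\tfrac{1}{2}$.
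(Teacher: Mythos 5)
Your strategy --- a concentration--compactness argument modelled on the proof of Proposition \ref{prop_B:2.3} --- is genuinely different from the paper's, which is direct and constructive: the paper first proves the crude bound $\int_{B_{1+}}\abs{Df}^2\le C\bigl(\int_{\Sp^1_+}\abs{D_\tau f}^2+\llfloor f\rrfloor^2_{s,\Gamma_0}\bigr)$ by transplanting to the disc via the bilipschitz map $G$ of Lemma \ref{lem:A4.1} and expanding in Fourier series, and then decomposes $f=g+(h+l)$, where $l$ is the affine function matching the corner values $f(\pm1,0)$, $h$ is harmonic with data $f-l$ on $\Gamma_0$ and $0$ on $\Sp^1_+$, and $g$ vanishes on $\Gamma_0$; the odd reflection you apply to $f_\infty$ is applied there to $g$, and Young's inequality with parameter $\epsilon$ produces the $(1+\epsilon)$ versus $C/\epsilon$ split explicitly. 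Your limit analysis (Fatou for the Gagliardo seminorm, Schwarz reflection, the sine--series computation) is correct, and the competitor construction you defer can be carried out with the tools already in the appendix (Lemma \ref{lem:A4.1}, Corollary \ref{cor:A1.8}, Lemma \ref{lem:A2.1}), exactly as in the proof of Proposition \ref{prop_B:2.3}; note however that the plain dilation $x\mapsto(1-\rho)x$ does not map $B_{1+}$ into itself compatibly with its two boundary pieces, so the shell must be $G(B_1\setminus B_R)$ rather than $B_{1+}\setminus B_{(1-\rho)+}$.

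There are two genuine gaps. First, your contradiction hypothesis is not the negation of the lemma. The lemma quantifies $C$ before $\epsilon$: one constant must work for every $\epsilon$, which (optimizing in $\epsilon$) amounts to controlling $\int_{B_{1+}}\abs{Df}^2-\int_{\Sp^1_+}\abs{D_\tau f}^2$ by $2\sqrt{C}\,\bigl(\int_{\Sp^1_+}\abs{D_\tau f}^2\bigr)^{1/2}\llfloor f\rrfloor_{s,\Gamma_0}$. Fixing $\epsilon_0$ and letting only the coefficient of the Gagliardo term blow up negates merely the statement with reversed quantifiers, ``for each $\epsilon$ there exists some $C(\epsilon)$'', and a compactness argument cannot recover the uniform $1/\epsilon$ dependence (nor any explicit constant). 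The weaker statement does suffice for Corollary \ref{cor_B:3.6}, Lemma \ref{lem_B:3.7} and Corollary \ref{cor_B:5.8}, where $\epsilon$ is eventually fixed, but it is not the lemma as stated. Second, the normalization is inconsistent: from $\int_{\Sp^1_+}\abs{D_\tau f_k}^2=1$ and the failed inequality you may only conclude $\llfloor f_k\rrfloor^2_{s,\Gamma_0}\le k^{-1}\bigl(\int_{B_{1+}}\abs{Df_k}^2-1-\epsilon_0\bigr)$, and an a priori bound on $\int_{B_{1+}}\abs{Df_k}^2$ in terms of the boundary quantities is essentially the lemma itself, so $\llfloor f_k\rrfloor^2_{s,\Gamma_0}\le 1/k$ is unjustified. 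The fix is to normalize $\int_{B_{1+}}\abs{Df_k}^2=1$ instead; then $\int_{\Sp^1_+}\abs{D_\tau f_k}^2<(1+\epsilon_0)^{-1}$ and $\llfloor f_k\rrfloor^2_{s,\Gamma_0}<1/k$ follow at once, and the final contradiction reads $1=\int_{B_{1+}}\abs{Df_\infty}^2\le\int_{\Sp^1_+}\abs{D_\tau f_\infty}^2\le(1+\epsilon_0)^{-1}<1$.
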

\begin{proof}
In a first step we show the existence of $C=C(s)$ s.t. any classical single-valued harmonic $h \in W^{1,2}(B_{1+})$ satisfies
\begin{equation}\label{eq_B:3.10}
	\int_{B_{1+}} \abs{Dh}^2 \le C \left( \int_{\Sp^1_+} \abs{D_\tau h}^2 + \llfloor h \rrfloor_{s, \Gamma_0}^2 \right).
\end{equation}
If $h \notin W^{s,2}(\Gamma_0)$ the RHS is $+\infty$ so there is nothing to check. $G: \overline{B_1} \to \overline{B_{1+}}$ denotes the bilipschitz map of Lemma \ref{lem:A4.1}. Let $\sum_{k \in \Z} a_k e^{ik\theta}$ be the Fourier series of $h\circ G\tr{\Sp^1}= h\tr{\Sp^1}\circ G$. Its harmonic extension is then
\[
	\tilde{h}(r\, e^{i\theta}) = \sum_{k \in \Z} a_k r^k e^{ik\theta}.
\]
$h$ is harmonic, hence minimizing the Dirichlet energy, and $\tilde{h}\circ G^{-1}$ is an admissible competitor, so that 
\[
	\int_{B_{1+}} \abs{Dh}^2 \le \int_{B_{1+}} \abs{D(\tilde{h}\circ G^{-1})}^2 \le C \int_{B_1}\abs{D\tilde{h}}^2 = C 2\pi \sum_{k \in \Z} \abs{k} \abs{a_k}^2.
\]
For $s=1$ we estimate (the constant $C$ depends only on the Lipschitz norms of $G, G^{-1}$)
\begin{align*}
	2\pi \sum_{k \in \Z} \abs{k} \abs{a_k}^2 &\le 2\pi \sum_{k\in \Z} k^2 \abs{a_k}^2 = \int_{S^1_+} \abs{D_\tau \tilde{h}}^2 + \int_{S^1_-} \abs{D_\tau \tilde{h}}^2 \\&\le C \left(\int_{S^1_+} \abs{D_\tau h}^2 + \int_{\Gamma_0} \abs{D_\tau h}^2\right).
\end{align*}
for $\frac{1}{2} <s<1$:\\
(A short auxiliary argument: Lemma \ref{lem:A2.6} implies the equivalence of the norms $\abs{b_0}+\sum_{k\in \Z} \abs{k}^{2s}\abs{b_k}^2$ and $\norm{f}^2_{L^2(\Sp^1)}+ \llfloor f \rrfloor^2_{s,\Sp^1}$ for a function $f(\theta)=\sum_{k\in \Z} b_k e^{ik\theta}$. In the case of $\Sp^1$ this follows more directly. $f(\theta+\tau)-f(\theta)=\sum_{k\in \Z} (e^{ik\tau}-1)a_k e^{ik\theta}$ and therefore
\[
	\int_{0}^{2\pi} \abs{f(\theta+\tau)-f(\theta)}^2 \;d\theta= \sum_{k \in \Z} 4 \sin^2(\frac{k}{2}\tau) \abs{a_k}^2.
\] 
This implies 
\begin{align*}
	&\int_{[0,2\pi]^2} \frac{\abs{f(\theta)-f(\varphi)}^2}{\abs{\theta-\varphi}^{1+2s}} \, d\theta d\varphi = \int_{0}^{2\pi} \frac{1}{\tau^{1+2s}} \int_{0}^{2\pi} \abs{f(\theta+\tau)-f(\theta)}^2 \,d\theta d\tau\\
	 &= \sum_{k \in \Z} \abs{a_k}^2 \left(4 \int_{0}^{2\pi} \frac{\sin^2(\frac{k}{2}\tau)}{\tau^{1+2s}}\,d\tau\right)= \sum_{k \in \Z} c_k \abs{k}^{2s} \abs{a_k}^2;
\end{align*}
where
\[
	\int_{0}^{2\pi} \frac{\sin^2(\frac{k}{2}\tau)}{\tau^{1+2s}} \,d\tau = \abs{k}^{2s} \, 4^{1-s} \int_0^{k\pi} \frac{\sin^2(\tau)}{\tau^{1+2s}} \,d\tau = \abs{k}^{2s} c_k
\]
and $0<c_1\le c_k \le c_\infty <\infty$.)\\
Firstly the auxiliary argument gives
\[
	2\pi \sum_{k \in \Z} \abs{k} \abs{a_k}^2 \le 2\pi \sum_{k \in \Z} \abs{k}^2s \abs{a_k}^2 \le C \llfloor \tilde{h} \rrfloor^2_{s, \Sp^1};
\]
secondly Corollary  \ref{cor:A1.8} gives
\[
	\llfloor \tilde{h} \rrfloor_{s,S^1}^2 \le C \left( \llfloor \tilde{h}\rrfloor_{s, \Sp^1\cap\{x_2 > \frac{1}{5}\}}^2 + \llfloor  \tilde{h}\rrfloor_{s, \Sp^1\cap\{x_2 < \frac{1}{5}\}}^2\right);
\]
thirdly $G$ is Lipschitz continuous and $G(\Sp^1\cap\{x_2 > \frac{1}{5}\})=\Sp^1_+, G(\Sp^1\cap\{x_2 < \frac{1}{5}\})=\Gamma_0$ so that
\[
\llfloor \tilde{h}\rrfloor_{s, \Sp^1\cap\{x_2 > \frac{1}{5}\}}^2 + \llfloor  \tilde{h}\rrfloor_{s, \Sp^1\cap\{x_2 < \frac{1}{5}\}}^2 \le C \left(\llfloor h\rrfloor_{s, \Sp^1_+}^2 + \llfloor  h\rrfloor_{s, \Gamma_0}^2\right);
\]
finally combining these with the interpolation property $\llfloor \cdot \rrfloor_{s, \Sp^1_+} \le C \llfloor \cdot \rrfloor_{s, \Sp^1_+}$ we estimate
\[
	2\pi \sum_{k \in \Z} \abs{k} \abs{a_k}^2 \le C \left(\int_{\Sp^1_+} \abs{D_\tau h}^2  + \llfloor  h\rrfloor_{s, \Gamma_0}^2\right).
\]
Hence \eqref{eq_B:3.10} holds.\\

Now we are able to improve \eqref{eq_B:3.10} to \eqref{eq_B:3.9}. Let $f$ be the harmonic function as assumed. We may assume $f \in W^{s,2}(\Gamma_0)$ otherwise the RHS is $+\infty$ and \eqref{eq_B:3.9} holds trivially. Define the linear function 
\[
	l(x_1,x_2)= \frac{f(1,0)-f(-1,0)}{2} x_1 + \frac{f(1,0)+f(-1,0)}{2}.
\]
The same calculations as in lemma \ref{lem_B:2.101} give a constant $C=C(s)$ with
\[
	\llfloor l \rrfloor_{s,\Gamma_0}^2 \le C \norm{ \grad{l}}_\infty = C \abs{f(1,0)-f(-1,0)}.
\]
We achieved that $f(1,0)-l(1,0)=0=f(-1,0)-l(-1,0)$ and hence the glueing lemma \ref{lem:A1.7} provides that 
\[
	\tilde{h}(x)= \begin{cases}
		0, &\text{ if } x\in \Sp^1_+\\
		f(x)-l(x), &\text{ if } x\in \Gamma_0
	\end{cases}
\]
is an element of $W^{s,2}(\Sp^1_+ \cup \Gamma_0)$. Hence there is a unique harmonic $h \in W^{1,2}(B_{1+})$ with $h\tr{\Sp^1_+ \cup \Gamma_0} = \tilde{h}$. $g=f-(h+l)$ is harmonic in $B_{1+}$ and satisfies $g(x)=0$ on $\Gamma_0$. The antisymmetric reflexion 
\[
	\tilde{g}(x_1,x_2)=\begin{cases}
		g(x_1,x_2), &\text{ if } x_2 \ge 0\\
		-g(x_1,-x_2), &\text{ if } x_2 \le 0
	\end{cases}
\]
is by means of the Schwarz reflexion principle harmonic in $B_1$ with
\[
	2 \int_{B_{1+}} \abs{Dg}^2 = \int_{B_1} \abs{D\tilde{g}}^2 \le \int_{S^1} \abs{D_\tau \tilde{g}}^2 = 2 \int_{S^1_+} \abs{D_\tau g}^2.
\]
Young's inequality for $2 \langle D_\tau f, D_\tau l\rangle \le \epsilon \abs{D_\tau f}^2 + \frac{1}{\epsilon} \norm{\grad{l}}_\infty^2$ gives
\begin{align*}
	\int_{S^1_+} \abs{D_\tau g}^2 &\le (1+\epsilon) \int_{S^1_+} \abs{D_\tau f}^2 + (1+\frac{1}{\epsilon})\pi \norm{\grad{l}}^2_\infty\\
	&(1+\epsilon) \int_{S^1_+} \abs{D_\tau f}^2 + \frac{C}{\epsilon} \llfloor f \rrfloor_{s,\Gamma_0}^2
\end{align*}
where we used $\grad{l}=\frac{f(1,0)-f(-1,0)}{2}$ and $W^{s,2}(\Gamma_0) \subset C^{0,s-\frac{1}{2}}(\Gamma_0)$.
Young's inequality for $2 \langle D_if, D_i(h+l)\rangle \ge - \epsilon \abs{D_if}^2 - \frac{1}{\epsilon} \abs{D_i (h+l)}^2$ gives 
\[
	\int_{B_{1+}} \abs{Dg}^2 \ge (1-\epsilon) \int_{B_{1+}} \abs{Df}^2 - \frac{1}{\epsilon} \int_{B_{1+}}\abs{D(h+l)}^2;
\]
applying \eqref{eq_B:3.10} we may conclude
\begin{align*}
	\int_{B_{1+}} \abs{D(h+l)}^2 \le C \left( \int_{S^1_+} \abs{D_\tau (h+l)}^2 + \llfloor h+l \rrfloor_{s,\Gamma_0}^2\right)\\
	\le C \left( \pi \norm{\grad{l}}_\infty^2 + \llfloor f \rrfloor _{s, \Gamma_0}^2 \right) \le C \llfloor f \rrfloor_{s, \Gamma_0}^2.
\end{align*}
\end{proof}

Lemma \ref{lem_B:3.5} behaves well under perturbations of $B_{1+}$, as made quantitive in the following corollary.

\begin{corollary}\label{cor_B:3.6}
Let $\frac{1}{2}<s \le 1$. There is a constant $C>0$ s.t. to any $\epsilon>0$ there is $\epsilon_F=\epsilon_F(\epsilon)>0$ s.t. any single valued harmonic function $f \in W^{1,2}(\Omega_F\cap B_1)$ satisfies
\[
	\int_{\Omega_F \cap B_1} \abs{Df}^2 \le (1+\epsilon) \int_{\Omega_F \cap S^1} \abs{D_\tau f}^2 + \frac{C}{\epsilon} \llfloor f \rrfloor_{s, \Gamma_F}^2.
\]
\end{corollary}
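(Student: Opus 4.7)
The plan is to reduce Corollary~\ref{cor_B:3.6} to Lemma~\ref{lem_B:3.5} by a near-identity change of variables.  Given $F$ with $\norm{\grad F}_\infty \le \epsilon_F$, I would first construct (e.g.\ via Lemma~\ref{lem:A4.2}, or by interpolating between the graph of $F$ and the flat half-space with a cutoff in the $x_N$-direction) a $C^1$ diffeomorphism $G:\overline{B_{1+}} \to \overline{\Omega_F \cap B_1}$ sending $\Sp^1_+$ to $\Omega_F \cap \Sp^1$ and $\Gamma_0$ to $\Gamma_F$, and satisfying $\norm{DG - \mathbf{1}}_\infty + \norm{DG^{-1} - \mathbf{1}}_\infty \le c(\epsilon_F)$ with $c(\epsilon_F) \to 0$ as $\epsilon_F \to 0$.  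Setting $g := f \circ G \in W^{1,2}(B_{1+})$, the area formula gives multiplicative factors of size $1 + c(\epsilon_F)$ between $\int_{\Omega_F \cap B_1} \abs{Df}^2$ and $\int_{B_{1+}} \abs{Dg}^2$, between $\int_{\Omega_F \cap \Sp^1} \abs{D_\tau f}^2$ and $\int_{\Sp^1_+} \abs{D_\tau g}^2$, and between $\llfloor f \rrfloor_{s,\Gamma_F}^2$ and $\llfloor g \rrfloor_{s,\Gamma_0}^2$.

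Next let $h \in W^{1,2}(B_{1+})$ be the harmonic extension of $g\tr{\partial B_{1+}}$, and apply Lemma~\ref{lem_B:3.5} to $h$ with parameter $\epsilon/2$.  Since $f$ is harmonic it is Dirichlet-minimizing among functions with its own boundary trace, and $h \circ G^{-1}$ agrees with $f$ on $\partial(\Omega_F \cap B_1)$ by construction of $G$; therefore
\begin{equation*}
\int_{\Omega_F \cap B_1} \abs{Df}^2 \;\le\; \int_{\Omega_F \cap B_1} \abs{D(h \circ G^{-1})}^2 \;\le\; (1 + c(\epsilon_F)) \int_{B_{1+}} \abs{Dh}^2,
\end{equation*}
and Lemma~\ref{lem_B:3.5}, together with the fact that $h$ and $g$ have identical boundary data, gives
\begin{equation*}
\int_{B_{1+}} \abs{Dh}^2 \;\le\; (1 + \tfrac{\epsilon}{2}) \int_{\Sp^1_+} \abs{D_\tau g}^2 + \tfrac{2C}{\epsilon} \llfloor g \rrfloor_{s, \Gamma_0}^2.
\end{equation*}

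Chaining the two bounds and re-expressing the right-hand side in terms of $f$ on $\Omega_F$ yields
\begin{equation*}
\int_{\Omega_F \cap B_1} \abs{Df}^2 \;\le\; (1+c(\epsilon_F))^2 (1+\tfrac{\epsilon}{2}) \int_{\Omega_F \cap \Sp^1} \abs{D_\tau f}^2 + \tfrac{2C(1+c(\epsilon_F))^2}{\epsilon} \llfloor f \rrfloor_{s, \Gamma_F}^2,
\end{equation*}
and choosing $\epsilon_F$ so small that $(1+c(\epsilon_F))^2(1+\epsilon/2) \le 1 + \epsilon$ closes the argument, absorbing the perturbation into the prefactor $1+\epsilon$ while leaving the $\epsilon^{-1}$ dependence of the second term untouched.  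The main technical burden is quantifying the perturbation estimates: producing $G$ with the stated quantitative smallness of $DG - \mathbf{1}$, and verifying that the tangential energy on $\Sp^1_+$ and the Gagliardo seminorm on $\Gamma_0$ really transform with multiplicative factors $1 + c(\epsilon_F)$.  Neither step is deep, but care is required so that the multiplicative errors remain truly $o(1)$ and do not spoil the critical constant $1 + \epsilon/2$ in front of the tangential-energy term.
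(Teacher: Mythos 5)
Your proposal is correct and follows essentially the same route as the paper: the paper also pulls $f$ back via the bilipschitz diffeomorphism $G_F$ of Lemma \ref{lem:A4.2}, replaces $f\circ G_F$ by the harmonic function $\tilde f$ on $B_{1+}$ with the same boundary trace, applies Lemma \ref{lem_B:3.5} to $\tilde f$, and absorbs the factors $(1+10\epsilon_F)^k$ into $1+\epsilon$ by choosing $\epsilon_1=\epsilon/2$ and $\epsilon_F$ small. No substantive difference.
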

\begin{proof}
This follows as a perturbation of the previous lemma making use of the bilipschitz equivalence of $\Omega_F\cap B_1$ and $B_{1+}$ i.e. fix
\[
	G_F: \overline{B_{1+}} \to \overline{\Omega_F \cap B_1}
\]
as given by lemma \ref{lem:A4.2}. Hence $\norm{DG_F - \textbf{1}}_\infty, \norm{DG_{F}^{-1}- \textbf{1}}_\infty < 10 \norm{\grad{F}}_\infty \le 10 \epsilon_F$. Let $f$ as assumed with finite RHS, otherwise there is nothing to prove. $f\circ G_F \in W^{1,2}(B_{1+})$ hence there is an unique harmonic $\tilde{f} \in W^{1,2}(B_{1+})$ with $\tilde{f}\tr{\Sp^1\cup\Gamma_0}=  f\circ G_F\tr{\Sp^1\cup\Gamma_0}$. $f,\tilde{f}$ are Dirichlet minimizer on their domains so that 
\[
	\int_{\Omega_F\cap B_1} \abs{Df}^2 \le \int_{\Omega_F\cap B_1} \abs{D(\tilde{f}\circ G_F^{-1})}^2 \le (1+10\epsilon_F)^4 \int_{B_{1+}} \abs{D\tilde{f}}^2.
\]
The previous lemma showed that, for some constant $C>0$,
\begin{align*}
	\int_{B_{1+}} \abs{D\tilde{f}}^2 &\le (1+\epsilon_1) \int_{S^1_+} \abs{D_\tau \tilde{f}}^2 + \frac{C}{\epsilon_1} \llfloor \tilde{f} \rrfloor_{s,\Gamma_0}^2\\
	&\le (1+\epsilon_1)(1+10 \epsilon_F)^3 \int_{S^1\cap \Omega_F} \abs{D_\tau f}^2 + \frac{C}{\epsilon_1} (1+10 \epsilon_F)^5 \llfloor f \rrfloor_{s, \Gamma_F}^2.
\end{align*}
We conclude choosing $\epsilon_1= \frac{\epsilon}{2}$ and then $\epsilon_F>0$ sufficient small for $(1+\frac{\epsilon}{2})(1+ 10 \epsilon_F)^7 \le 1+\epsilon$.
\end{proof}

We can use the obtained results to get an estimate for Dirichlet minimizers in the spirit of proposition \ref{prop_B:2.3}.

\begin{lemma}\label{lem_B:3.7}
	For $\frac{1}{2}<s\le 1$ and $\epsilon>0$, there is a constant $C=C(s)>0$ with the property that if (A2) holds with $\epsilon_F=\epsilon_F(\epsilon)>0$ then
	\[
		\int_{B_r\cap \Omega_F} \abs{Du}^2 \le (1+\epsilon ) \int_{\partial B_r \cap \Omega_F} \abs{D_\tau u}^2 + \frac{C}{\epsilon} r^{2s-1} \llfloor u \rrfloor_{s, B_r\cap \Omega_F}^2 \forall 0<r<R_0
	\]
	for any Dirichlet minimizing $u \in W^{1,2}(\Omega_F \cap B_1, \A_Q(\R^n))$ and $R_0=R_0(u(0))>0$.
\end{lemma}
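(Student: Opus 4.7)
The plan is to argue by contradiction using the concentration compactness Lemma~\ref{lem_I:A1.1}, in direct analogy with the proof of Proposition~\ref{prop_B:2.3}; the single-valued estimate of Corollary~\ref{cor_B:3.6} will appear in the rigidity step that replaces Corollary~\ref{cor_B:2.5}.

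First I would localize around the boundary point $0$. Since $s > \tfrac{1}{2}$, Lemma~\ref{lem_B:2.101} yields continuity of the trace $u\tr{\Gamma_F}$, so the boundary value $u(0) \in \A_Q(\R^n)$ is well defined. Writing $u(0) = \sum_{\ell=1}^L Q_\ell\llbracket s_\ell\rrbracket$ with distinct $s_\ell$, I would combine the interior H\"older continuity (Theorem~\ref{theo_I:1.103}) with continuity of the trace to select $R_0 = R_0(u(0)) > 0$ so small that on $B_{R_0}\cap\overline{\Omega_F}$ the minimizer splits as $u = \sum_\ell u_\ell$, where each $u_\ell \in W^{1,2}(B_{R_0}\cap\Omega_F, \A_{Q_\ell}(\R^n))$ is Dirichlet minimizing and takes values in a small $\G$-neighborhood of $s_\ell$. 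All three quantities appearing in the inequality are additive over this decomposition, so it suffices to treat a single cluster, and after an $\oplus$-translation we may assume $u(0) = Q\llbracket 0\rrbracket$.

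Assuming the estimate fails for some fixed $\epsilon > 0$, I would extract Dirichlet minimizers $u_k$ on $B_{r_k}\cap\Omega_{F_k}$ with $\epsilon_{F_k}\le\epsilon_F(\epsilon)$, $r_k\to 0$, $u_k(0) = Q\llbracket 0\rrbracket$, violating the inequality with $C$ replaced by $k$. After rescaling so $r_k = 1$ and normalizing the Dirichlet energy to $1$, the concentration compactness Lemma~\ref{lem_I:A1.1}, applied verbatim as in the proof of Proposition~\ref{prop_B:2.3}, produces along a subsequence limiting Dirichlet minimizers $b_j \in W^{1,2}(\Omega_{F_\infty}\cap B_1, \A_{Q_j}(\R^n))$ with $\sum_j \int\abs{Db_j}^2 = 1$, each trace $b_j\tr{\Gamma_{F_\infty}}$ constant (forced by the divergent coefficient $k$ in front of the Gagliardo norm), and
\begin{equation*}
1 \;=\; \sum_j\int_{\Omega_{F_\infty}\cap B_1}\abs{Db_j}^2 \;\ge\; (1+\epsilon)\sum_j\int_{\partial B_1\cap\Omega_{F_\infty}}\abs{D_\tau b_j}^2.
\end{equation*}

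The remaining step, and the main expected obstacle, is a 2D rigidity statement contradicting the above: for every Dirichlet minimizer $b$ on $\Omega_{F_\infty}\cap B_1$ (with $\epsilon_{F_\infty}$ small) such that $b\tr{\Gamma_{F_\infty}}$ is constant, one needs $\int\abs{Db}^2 \le \int_{\partial B_1\cap\Omega_{F_\infty}}\abs{D_\tau b}^2$. Unlike in dimension $N\ge 3$, the $0$-homogeneous competitor used in Proposition~\ref{prop_B:2.4} is inadmissible in two dimensions since $\int_0^1 r^{-1}\,dr$ diverges, so a different construction is required. I would further reduce to the boundary-collapse case $b\tr{\Gamma_{F_\infty}} = Q'\llbracket 0\rrbracket$ via another cluster splitting, reflect $b$ entry-wise through $0$ across $\Gamma_{F_\infty}$ (a $Q$-valued Schwarz reflection, well defined and preserving Dirichlet minimality because the boundary data is a single collapse point), and invoke the interior 2D bound $\int_{B_1}\abs{D\tilde b}^2 \le \int_{\Sp^1}\abs{D_\tau\tilde b}^2$ for $Q$-valued Dirichlet minimizers on the full disk. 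This last bound is obtained by lifting $\tilde b$ to a single-valued harmonic function via the $Q$-fold branched cover $z\mapsto z^Q$ of the disk (the covering technique underlying the optimal $1/Q$ interior regularity of Theorem~\ref{theo_I:1.103}) and then applying the Fourier inequality $\sum_k |k|\abs{a_k}^2 \le \sum_k k^2\abs{a_k}^2$ on $\Sp^1$.
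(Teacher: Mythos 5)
Your architecture (cluster splitting at $u(0)$, blow-up by contradiction via concentration compactness, rigidity for the limit maps) is coherent and the first two stages do mirror the paper's $N\ge 3$ argument, but the rigidity step --- which you correctly flag as the main obstacle --- fails as you argue it, and this is a genuine gap. There are two problems. First, the ``$Q$-valued Schwarz reflection'' is unjustified: minimality is a variational property, not a local PDE, and a competitor for the reflected map $\tilde b$ on $B_1$ has an arbitrary trace on $\Gamma_0$, so it does not decompose into admissible competitors on the two half-disks; minimality of $\tilde b$ therefore cannot be read off from minimality of $b$, even for collapsed boundary data $Q'\llbracket 0\rrbracket$. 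Second, and fatally, the disk inequality you want to conclude with, $\int_{B_1}\abs{D\tilde b}^2\le\int_{\Sp^1}\abs{D_\tau\tilde b}^2$ for two-dimensional $Q$-valued Dirichlet minimizers, is false. Take $Q=2$ and $u(z)=\sum_{w^2=z}\llbracket w\rrbracket$, which is Dirichlet minimizing (cf. \cite{Spadaro}): then $\abs{Du}^2(z)=\abs{z}^{-1}$, so $\int_{B_1}\abs{Du}^2=2\pi$, while $\abs{D_\tau u}^2\equiv\tfrac12$ on $\Sp^1$, so $\int_{\Sp^1}\abs{D_\tau u}^2=\pi$. The failure comes precisely from the branched cover you invoke: unrolling through $z\mapsto z^Q$ puts the boundary data at fractional frequencies $k/Q$, and for $0<k<Q$ one has $k/Q>(k/Q)^2$, so the inequality $\sum\abs{k}\abs{a_k}^2\le\sum k^2\abs{a_k}^2$ goes the wrong way on exactly the branching modes. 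The sharp two-dimensional estimate unavoidably involves the normal derivative on $\partial B_1$ as well, which your blow-up does not control.

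The paper avoids any $Q$-valued rigidity on the half-disk altogether. Its proof of Lemma \ref{lem_B:3.7} is a direct competitor construction: because $\partial(B_r\cap\Omega_F)$ is one-dimensional, the $W^{1,2}$- and $W^{s,2}$-selection criteria (\cite[Proposition 1.2]{Lellis} and Lemma \ref{lem:A2.401}) together with the gluing lemma produce a single-valued selection $g=(g_1,\dotsc,g_Q)$ of $u$ on the whole boundary of the rescaled domain; the componentwise harmonic extension $[h]$ is then an admissible competitor, and the $(1+\epsilon)$-factor comes from applying the single-valued estimate of Lemma \ref{lem_B:3.5}/Corollary \ref{cor_B:3.6} to each sheet $h_i$ --- there the Schwarz reflection is used only for genuine single-valued harmonic functions, where it is legitimate. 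Your cluster-splitting reduction at $u(0)$ coincides with the second half of the paper's proof, but to rescue the compactness route you would need an independent proof of the half-disk rigidity with constant $\Gamma_0$-data; reflection plus the disk inequality does not provide one.
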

\begin{proof}
As usual we may assume that the RHS is finite. Let $\epsilon_F>0$ be the constant of the previous corollary \ref{cor_B:3.6} and $\norm{\grad{F}}_{\infty, B_1}< \epsilon_F$.\\
Suppose $s(u(0))=0$ i.e. $u(0)=Q\llbracket p \rrbracket$ for some $p \in \R^n$. Since we assumed the RHS is finite $u \in W^{1,2}(\partial B_r\cap \Omega_F, \A_Q(\R^n))$. Fix for such a radius $t_-<0<t_+$ and $- \frac{\pi}{2} < \theta_+ < \theta_- < \frac{3\pi}{2}$ s.t.
\[
	\partial B_r \cap \Omega_F=\{ x_+= (rt_+, F(rt_+))= re^{i\theta_+}, x_-= (rt_-, F(rt_-))= re^{i\theta_-} \}.
\]
There is $b=(b_1, \dotsc , b_Q) \in W^{1,2}([\theta_+, \theta_-], \R^{nQ})$ s.t. $[b(\theta)]=u_{0,r}(e^{i\theta})=u(re^{-\theta})$ for $\theta_+ \le \theta \le \theta_-$ due to the 1-dim. $W^{1,2}$-selection criterion \cite[proposition 1.2]{Lellis}. There are $a(t)=(a_1, \dotsc, a_Q) \in W^{s',2}([0,t_+], \R^{nQ})$ and $b(t)= (b_1, \dotsc, b_Q) \in W^{s',2}([t_-, 0], \R^{nQ})$ for any $s'<s$ with $[a(t)]=u(rt, F(rt))$, $[b(t)]=u(rt, F(rt))$ respectively due to the $W^{s,2}$-selection, lemma \ref{lem:A2.401}. Permuting $a$ and $c$ if necessary we may assume that $a(t_+)=b(\theta_+)$, $c(t_-)=b(\theta_-)$. We may define
\[
	g(x)= \begin{cases}
		a(x_1), &\text{ if } rx \in B_r\cap \Gamma_F, x_1 \ge 0\\
		b(\theta), &\text{ if } rx=re^{i\theta} \in \partial B_r \cap \Omega_F\\
		c(x_1), &\text{ if } rx \in B_r\cap \Gamma_F, x_1 \le 0.
	\end{cases}
\]
$g=(g_1, \dotsc, g_Q) \in W^{s',2}(\partial (B_1, (\Omega_F)_{0,r}), \R^{nQ})$ as a consequence of the glueing lemma \ref{lem:A1.7}. $[g(x)]=\sum_{i=1}^Q \llbracket g_i(x) \rrbracket = u_{0,r}(x)$ for all $x\in \partial (B_1\cap (\Omega_F)_{0,r})$. Hence there is $h=(h_1, \dotsc h_Q) \in W^{1,2}(B_1\cap (\Omega_F)_{0,r}, \R^{nQ})$ harmonic with $g$ as boundary values. $[h]=\sum_{i=1}^Q \llbracket h_i \rrbracket $ is a competitor to $u_{0,r}$ so that
\[
	\int_{B_r\cap \Omega_F} \abs{Du}^2 = \int_{B_1 \cap (\Omega_F)_{0,r}} \abs{Du_{0,r}}^2 \le \int_{B_1 \cap (\Omega_F)_{0,r}} \abs{D[h]}^2= \int_{B_1\cap (\Omega_F)_{0,r}} \abs{Dh}^2.
\]
The previous corollary \ref{cor_B:3.6} applies to $h$ since $\norm{\grad{F_{0,r}}}_{\infty, B_1}= \norm{\grad F}_{\infty, B_r}< \epsilon_F$. So, we find for a fixed $\frac{1}{2}<s'<s$, e.g. $s' =\frac{1+2s}{4}$,
\begin{align*}
	\int_{B_1\cap(\Omega_F)_{0,r}} \abs{Dh}^2 &\le (1+\epsilon) \int_{S^1 \cap (\Omega_F)_{0,r}} \abs{D_\tau h}^2 + \frac{C}{\epsilon} \llfloor h \rrfloor_{s',(\Gamma_F)_{0,r}}^2\\ &\le (1+\epsilon) r\int_{\partial B_r \cap \Omega_F} \abs{D_\tau u}^2 + \frac{C}{\epsilon} r^{2s-1}\llfloor u \rrfloor^2_{s, \Omega_F \cap B_r}
\end{align*}
considering in the last line $[h(x)]=[g(x)]=u_{0,r}(x)$ for $x\in \partial (B_1\cap (\Omega_F)_{0,r})$ and $\llfloor h \rrfloor_{s',(\Gamma_F)_{0,r}}\le C \llfloor u_{0,r} \rrfloor_{s,(\Gamma_F)_{0,r}}=C r^{2s-1}\llfloor u \rrfloor^2_{s, \Omega_F \cap B_r}$ from the $W^{s,2}$-selection,  lemma \ref{lem:A2.401}.\\
If $s(u(0))>0$, i.e. $u(0)=\sum_{j=1}^J Q_j \llbracket p_j \rrbracket$, $\abs{p_i-p_j}\ge s(u(0))$ for $i \neq j$. Fix $R_0>0$ s.t.
\[
	R_0^{\tilde{\alpha}} [u]_{\tilde{\alpha}, \Omega_F \cap B_{R_0}} < \frac{1}{3} s(u(0))
\]
where $[\cdot]_{\tilde{\alpha}, \Omega_F \cap B_{R_0}}$ denotes the H\"older semi-norm on $\Omega_F\cap B_{R_0}$ with exponent $\tilde{\alpha}>0$ provided by theorem \ref{theo_B:3.2}. Hence there are Dirichlet minimizing $u_j\in W^{1,2}(\Omega_F\cap B_{R_0}, \A_{Q_j}(\R^n))$ with 
\begin{equation}\label{eq_B:3.11}
	\G(u_j(x), Q_j \llbracket p_j \rrbracket )< \frac{1}{3} s(u(0)) \text{ for all } x\in \Omega_F\cap B_{R_0}.
\end{equation} 
To each $u_j$ the assumption $s(u_j(0))=0$ is satisfied. So, by the previous considerations for a.e. $0<r\le R_0$
\begin{align*}
	&\int_{B_r\cap \Omega_F} \abs{Du}^2 = \sum_{j=1}^J \int_{B_r\cap \Omega_F} \abs{Du_j}^2 \\
	&\le \sum_{j=1}^J (1+\epsilon) r\int_{\partial B_r \cap \Omega_F} \abs{D_\tau u_j}^2 + \frac{C}{\epsilon} r^{2s-1}\llfloor u_j \rrfloor^2_{s, \Omega_F \cap B_r}\\
	&= (1+\epsilon) r\int_{\partial B_r \cap \Omega_F} \abs{D_\tau u}^2 + \frac{C}{\epsilon} r^{2s-1}\llfloor u \rrfloor^2_{s, \Omega_F \cap B_r}
\end{align*}
where we used in the last step that $\G(u(x), u(y))^2 =\sum_{j=1}^J \G(u_j(x),u_j(y))^2 $ to to \eqref{eq_B:3.11}.
\end{proof}

As theorem \ref{theo_B:2.1} follows from proposition \ref{prop_B:2.3}, we can now use lemma \ref{lem_B:3.7} to give the final argument leading to the H\"older estimate of corollary \ref{cor_B:5.8}.

\begin{proof}[Proof of corollary \ref{cor_B:5.8}]
Let $\alpha>0$ be given as stated. Fix $\epsilon>0$ s.t. $1+\epsilon \le \frac{1}{2\alpha}$ and $0<R<1$ sufficient small s.t. 
\begin{itemize}
	\item[(1)] $R\le R_0$ when $R_0$ is the radius of the previous lemma, \ref{lem_B:3.7};
	\item[(2)] $\norm{\grad F}_{\infty, B_R\cap \Omega_F} < \cos(\frac{2\theta+\pi}{4})$.
\end{itemize}
(2) ensures that $C_\theta \cap B_R \subset C_{\frac{2\theta+\pi}{4}}\cap B_R \subset \Omega_F\cap B_1$. Following the steps in the proof of theorem \ref{theo_B:2.1} for a.e. $0<r\le R$
\begin{align*}
	- \frac{\partial}{\partial r} r^{-2\alpha} \int_{B_r\cap \Omega_F}\abs{Du}^2 &= - r^{-2\alpha} \int_{\partial B_r\cap \Omega_F} \abs{Du}^2 + 2\alpha r^{-2\alpha -1} \int_{B_r\cap \Omega_F} \abs{Du}^2\\
	&\le \frac{C}{\epsilon} r^{(2s-1-2\alpha)-1} \llfloor u  \rrfloor_{s, B_r\cap \Gamma_F}^2 \le \frac{C}{\epsilon} r^{2(\gamma-\alpha)-1} M_u^2.
\end{align*}
Integration in $0<r\le R$ gives 
\begin{equation}\label{eq_B:3.13}
	r^{-2\alpha} \int_{B_r\cap \Omega_F} \abs{Du}^2 \le R^{-2\alpha} \int_{B_R\cap \Omega_F} \abs{Du}^2 + \frac{CR^{2(\gamma-\alpha)}}{\gamma-\alpha} M_u^2.
\end{equation}
By definition of $\delta= \cos(\theta)- \cos(\frac{2\theta+ \pi}{4})$, for all $x\in B_{\frac{R}{2}}\cap C_\theta$ we have $B_{\delta\abs{x}}(x)\subset C_{\frac{2\theta+\pi}{4}}\cap B_R$. Let $x\in B_{\frac{R}{2}}\cap C_\theta$ and $0<r<\frac{R}{2}$ be given, set $r_1=\max\{r,\delta\abs{x}\}$ and $r_2=r_1+\abs{x}\le \frac{2}{\delta}r_1$. We found
\begin{align*}
	&r^{-2\alpha} \int_{B_r(x)\cap \Omega_F} \abs{Du}^2 \le r_1^{-2\alpha} \int_{B_{r_1}(x)\cap \Omega_F} \abs{Du}^2 \le \frac{2^{2\alpha}}{\delta^{2\alpha}} r_2^{-2\alpha}\int_{B_{r_2}(x)\cap\Omega_F} \abs{Du}^2\\
	& \le \frac{4}{\delta^{2\alpha}} \left(\int_{B_R\cap \Omega_F} \abs{Du}^2 + \frac{CR^{2(\gamma-\alpha)}}{\gamma-\alpha} M_u^2\right).
\end{align*}
where we applied at first the internal estimate since $\alpha \le \frac{1}{Q}$ and finally the just established \eqref{eq_B:3.13}. Having established (i), (ii) follows as indicated in the proof of theorem \ref{theo_B:2.1}.
\end{proof}



\newpage

\begin{appendix}
\section{Fractional Sobolev spaces} 
\label{sec:fractional_sobolev_spaces}
We will restrict our overview to the special case of $W^{s,2}=H^s$ for $0<s<1$.

\subsection{General facts} 
\label{sub:general_facts}
At first let us consider the spaces on $\R^N$, there are several ways to define them:
\begin{itemize}
	\item[(a)] using Fourier transform: 
	\[
		H^s(\R^N)=\{ u \in L^2(\R^N) \:\, \abs{\xi}^s\F u(\xi) \in L^2(\R^N)\};
	\]
	\item[(b)] using real interpolation:
	\[
		W^{s,2}(\R^N)=\left(W^{1,2}(\R^N), L^2(\R^N)\right)_{1-s,2};
	\]
	\item[(c)] using the the Gagliardo semi-norm $\llfloor \cdot \rrfloor_{s,\R^N}$
	\[
		W^{s,2}(\R^N)=\left\{ u \in L^2(\R^N) \colon \llfloor u \rrfloor_{s,\R^N}^2=\int_{\R^N \times \R^N} \frac{\abs{u(x)-u(y)}^2}{\abs{x-y}^{N+2s}} \, dxdy;< \infty \right\};
	\]
\end{itemize}
All of these definitions define the same Banach space as can found for instance in \cite{Tartar}: (a)=(c) corresponds to Lemma 16.3 or Lemma 35.2, (a)=(b) can be found in Lemma 23.1.\\

We will be mostly interested in the case of an open domain $\Omega \subset \R^N$.
In this case several definitions are possible, compare \cite[section 34 and section 36]{Tartar}:
\begin{itemize}
	\item[(a)] as restriction 
	\[
		W^{s,2}(\Omega)= \text{ space of restrictions of functions in } W^{s,2}(\R^N);
	\]
	\item[(b)] using interpolation
	\[
		W^{s,2}(\Omega)=\left(W^{1,2}(\Omega), L^2(\Omega)\right)_{1-s,2};
	\]
	\item[(c)] using the Gagliardo norm
	\[
		W^{s,2}(\Omega)=\{ u \in L^2(\Omega) \colon \llfloor u \rrfloor^2_{s,\Omega}=\int_{\Omega \times \Omega} \frac{\abs{u(x)-u(y)}^2}{\abs{x-y}^{N+2s}} \, dxdy < \infty \};
	\]
\end{itemize}

For $\Omega$ with Lipschitz boundary one has the existence of an extension operator that is linear and continuous:
\[
	E: W^{1,2}(\Omega) \to W^{1,2}(\R^N);
\]
$E$ extends to a continuous linear operator mapping $\left(W^{1,2}(\Omega), L^2(\Omega)\right)_{1-s,2}$ into $\left(W^{1,2}(\R^N), L^2(\R^N)\right)_{1-s,2}$; therefore $(a)$ and $(b)$ agree in these cases, compare \cite[section 34]{Tartar}.\\
For Lipschitz domains one can show the existence of a linear continuous extension operator $\tilde{E}: L^2(\Omega) \to L^2(\R^N)$ with $\llfloor\tilde{E}u\rrfloor_{s, \R^N} \le \llfloor u\rrfloor_{s, \Omega}$, so that all definitions agree; compare \cite[Lemma 36.1]{Tartar}.\\

$W^{1,2}(\R^N)$ is dense in $W^{s,2}(\R^N)$ and $W^{1,2}(\Omega)$ in $W^{s,2}(\Omega)$. Since $C_0^\infty(\R^N)$ is dense in $W^{1,2}(\R^N)$ and $C^\infty(\overline{\Omega})$ in  $W^{1,2}(\Omega)$, if $\Omega$ is Lipschitz regular, the same holds true for the interpolation spaces $W^{s,2}(\R^N)$ and $W^{s,2}(\Omega)$.\\

The trace spaces are our main concern. Using the characterisation via the Fourier transform one finds the following, \cite[Lemma 16.1]{Tartar}:\\
For $s>\frac{1}{2}$ functions in $H^s(\R^N)$ have a trace on the hyperplane $x_N=0$ belonging to $H^{s-\frac{1}{2}}(\R^{N-1})$ and this mapping is surjective.\\
But our concern is the trace on $\partial \Omega$ which will be a $C^1$ or Lipschitz manifold. We would like to have a statement as follows:
For $s>\frac{1}{2}$ functions in $W^{s,2}(\Omega)$ have a trace $u\tr{\partial \Omega}$ belonging to $W^{s-\frac{1}{2},2}(\partial \Omega)$ and this mapping is surjective.\\

How can we best describe $W^{s,2}(\partial \Omega)$? The definitions (a),(b),(c) for $W^{s,2}(\Omega)$, $\Omega \subset \R^N$ an open Lipschitz regular domain are all non-local. One can check that all definitions share the following property: Let $U_1, U_2 \subset \Omega$ be an open cover of $\Omega$ and $u \in L^2(\Omega)$ satisfies $u\tr{U_i} \in W^{s,2}(U_i)$ for $i=1,2$ then $u \in W^{s,2}(\Omega)$. We are looking now for an general approach to localize that works for all three definitions. This is desirable to define $W^{s,2}(\partial \Omega)$ for a $C^1$- or Lipschitz regular domain $\Omega \subset \R^N$ since  $\Omega$ has the defining property that locally $\Omega$ looks like $\Omega_F=\{ x \in \R^N \, :\, x_N > F(x')\}$, for a $C^1$ or Lipschitz continuous function $F$, where $x'=(x_1, \dotsc, x_{N-1})$. We would like to reduce our analysis to such a local description.\\
For this aim the following two observations are useful:
\begin{itemize}
	\item[(i)] equivalence under bilipschitz transformations;\\
	\item[(ii)] one can "localise" and a "local" description controls the global one.
\end{itemize}

Concerning (i): let $\psi: \Omega' \to \Omega$ be bilipschitz, $\Omega$ $N$-dimensional; then we may define a linear operator $u \mapsto \psi^\sharp u = u\circ \psi$ with 
\begin{align*}
	\norm{\psi^\sharp u}_{L^2(\Omega')} &\le Lip(\psi^{-1})^{\frac{N}{2}} \norm{u}_{L^2(\Omega)}\\
	\norm{\grad(\psi^\sharp u)}_{L^2(\Omega')} &= \norm{D\psi^t \grad(u)\circ \psi}_{L^2(\Omega')} \le Lip(\psi) Lip(\psi^{-1})^{\frac{N}{2}} \norm{\grad u}_{L^2(\Omega)};
\end{align*}
therefore $\psi^\sharp$ extends to a continuous linear operator on the interpolation spaces $\left(W^{1,2}(\Omega'), L^2(\Omega')\right)_{1-s,2} \to \left(W^{1,2}(\Omega), L^2(\Omega)\right)_{1-s,2}$.\\
For the Gagliardo semi-norm,we define the constant $C_\psi= Lip(\psi^{-1})^{2N} Lip(\psi)^{N+2s}$ and use $\abs{x-y}\le Lip(\psi) \abs{\psi^{-1}(x)-\psi^{-1}(y)}$ with a change of variables to conclude that,
\[
	\int_{\Omega' \times \Omega'} \frac{\abs{\psi^\sharp u (x)- \psi^\sharp u(y)}^2}{\abs{x-y}^{N+2s}} dydx \le C_\psi \int_{\Omega \times \Omega} \frac{\abs{u(x)-u(y)}^2}{\abs{x-y}^{N+2s}} dydx.
\]

Concerning (ii): Interpolation behaves well for finite tensor products in the sense that
\begin{equation}\label{eq:A1.1}
	\left(\bigotimes_{i=1}^L E_{0,i},\bigotimes_{i=1}^L E_{1,i}\right)_{\theta,p}= \bigotimes_{i=1}^L (E_{0,i},E_{1,i})_{\theta,p}.
\end{equation}
We will show that below. Assuming \eqref{eq:A1.1} holds true we can check (ii). Given any finite open cover $\{U_i\}_{i=1, \dotsc, L}$ of $\Omega$ with subordinate partition of unity $(\theta_i)_{i=1, \dotsc, L}$ we define
\[
	R: W^{1,2}(\Omega) \to \bigotimes_{i=1}^L W^{1,2}(U_i) \quad Ru=(u_1,\dotsc ,u_L), 
\]
where $u_i$ is the restriction of $u$ to $U_i$, and
\[ 
 T: \bigotimes_{i=1}^L W^{1,2}(U_i) \to W^{1,2}(\Omega) \quad T(u_1, \dotsc, u_L) = \sum_{i=1}^L \theta_i u_i.
\]
Both operators are linear and continuous, because
\begin{align*}
	\sum_{i=1}^L \norm{u_i}_{L^{2}(U_i)} &\le L \norm{u}_{L^2(\Omega)} \\
	\sum_{i=1}^L \norm{\grad(u_i)}_{L^2(U_i)} &\le L \norm{\grad(u)}_{L^2(\Omega)}\\
	\norm{\sum_{i=1}^L \theta_i u_i}_{L^2(\Omega)} &\le \sum_{i=1}^L \norm{u_i}_{L^2(U_i)}\\
	\norm{\sum_{i=1}^L \grad(\theta_i u_i)}_{L^2(\Omega)} &\le \sum_{i=1}^L \abs{\grad(\theta_i)}_\infty \norm{u_i}_{L^2(U_I} + \abs{\theta_i}_{\infty}\norm{\grad(u_i)}_{L^2(U_i)}. 
\end{align*}
Using \eqref{eq:A1.1} they extend to linear continuous operators
\begin{align*}
	R: W^{s,2}(\Omega) &\to \bigotimes_{i=1}^L W^{s,2}(U_i)\\
	T: \bigotimes_{i=1}^L W^{s,2}(U_i) &\to W^{s,2}(\Omega).
\end{align*}
By definition $T\circ R = \mathbf{1}_{W^{s,2}(\Omega)}$ since the equality is obvious on $W^{1,2}(\Omega)$. This shows (ii) in the interpolation case.\\

It remains to check \eqref{eq:A1.1}. Let $\{(E_{0,i}, E_{1,i})\}_{i=1,\dotsc, L}$ be finitely many tuples of Banach spaces admissible for interpolation. We can consider the interpolation of their tensor product:
\begin{align*}
	E_0= \bigotimes_{i=1}^L E_{0,i} \text{ equipped with the norm } \norm{a}_0= \sum_{i} \norm{a_i}_{0,i}\\
	E_1= \bigotimes_{i=1}^L E_{1,i} \text{ equipped with the norm } \norm{a}_1= \sum_{i} \norm{a_i}_{1,i}
\end{align*}
Hence for the $K$ functional in real interpolation we have
\[
	K(t,a)=\inf_{\substack{a=a_0+a_1 \Leftrightarrow \\
	a_i=a_{0,i}+a_{1,i}; i=1, \dotsc, L}} \norm{a_0}_0 + t \norm{a_1}_1 = \sum_{i=1}^L K_i(t,a_i) \ge K_j(t,a_j)
\]
and this establishes \eqref{eq:A1.1} because
\begin{align*}
	\frac{1}{L} \sum_{i=1}^L \norm{t^{-\theta} K_i(t,a_i)}_{L^p(\R_+; \frac{dt}{t})} &\le \norm{t^{-\theta} K(t,a)}_{L^p(\R_+; \frac{dt}{t})}\\
	& \le \sum_{i=1}^L \norm{t^{-\theta} K_i(t,a_i)}_{L^p(\R_+; \frac{dt}{t})}.
\end{align*}

To check (ii) in the case of the Gagliardo semi-norm we have for the restrictions
\[
	\sum_{i=1}^L \llfloor u_i \rrfloor_{s,U_i} \le L \llfloor u \rrfloor_{s,\Omega}.
\]
For an arbitrary Lipschitz function $f$ and $\Omega_1= \Omega \cap supp(f)$ write
\begin{align*}
	&\int_{\Omega \times \Omega} \frac{\abs{(fu)(x)-(fu)(y)}^2}{\abs{x-y}^{N+2s}} dy dx = \int_{\Omega_1 \times \Omega_1} \frac{\abs{(fu)(x)-(fu)(y)}^2}{\abs{x-y}^{N+2s}} dy dx\\
	&= \int_{\substack{\Omega_1\times \Omega_1\\\abs{x-y} <1 }} \frac{\abs{(fu)(x)-(fu)(y)}^2}{\abs{x-y}^{N+2s}} dy dx +  \int_{\substack{\Omega_1\times \Omega_1\\\abs{x-y} \ge 1 }} \frac{\abs{(fu)(x)-(fu)(y)}^2}{\abs{x-y}^{N+2s}} dy dx;
\end{align*}
for the second integral we have 
\[
	\int_{\substack{\Omega_1\times \Omega_1\\\abs{x-y} \ge 1 }} \frac{\abs{(fu)(x)-(fu)(y)}^2}{\abs{x-y}^{N+2s}} dy dx \le 4 \abs{f}^2_\infty \frac{N\omega_N}{2s} \int_{\Omega_1} \abs{u}^2
\]
where we used symmetry in $x,y$ and 
\[
	\int_{\Omega\setminus B_1(x)} \frac{1}{\abs{x-y}^{N+2s}} dy \le N\omega_N \int_1^{\infty} r^{-1-2s} dr = \frac{N\omega_N}{2s};
\]
for the first integral we have
\begin{align*}
	&\int_{\substack{\Omega_1\times \Omega_1\\\abs{x-y} <1 }} \frac{\abs{(fu)(x)-(fu)(y)}^2}{\abs{x-y}^{N+2s}} dy dx \\
	&\le 2\abs{f}^2_\infty \int_{\Omega_1 \times \Omega_1} \frac{\abs{u(x)-u(y)}^2}{\abs{x-y}^{N+2s}} dy dx + Lip(f)^2 \frac{2N\omega_N}{2-2s} \int_{\Omega_1} \abs{u}^2
\end{align*}
where we used $\abs{(fu)(x)-(fu)(y)} \le \abs{f}_\infty \abs{u(x)-u(y)} + \abs{f(x)-f(y)} \abs{u(x)} \le \abs{f}_\infty \abs{u(x)-u(y)} + \abs{u(x)} Lip(f) \abs{x-y}$ and
\[
	\int_{\Omega\cap B_1(x)} \frac{\abs{x-y}^2}{\abs{x-y}^{N+2s}} \le N \omega_N \int_{0}^{1} r^{2-2s-1} dr  = \frac{N\omega_N}{2-2s}.	
\]
Hence we got the desired estimate with the constant $C_f= 2 \abs{f}^2_\infty + \frac{2N\omega_N}{s(1-s)}(Lip(f)^2 +\abs{f}_\infty^2)$
\[
	\int_{\Omega\times \Omega} \frac{\abs{(fu)(x)-(fu)(y)}^2}{\abs{x-y}^{N+2s}} dy dx \le C_f \left( \llfloor u \rrfloor_{s,\Omega_1}^2 + \norm{u}^2_{L^2(\Omega_1)}\right).
\]
Using this estimate we can conclude (ii) in case of using the Gagliardo semi-norm since
\[
	\llfloor \sum_{i=1}^L \theta_i u_i \rrfloor_{s,\Omega} \le \sum_{i=1}^L \llfloor \theta_i u_i \rrfloor_{s,\Omega} \le C \left( \sum_{i=1}^L \llfloor u_i \rrfloor_{s,U_i} + \norm{u_i}_{L^2(U_i)}\right).
\]

Due to (ii) it is sufficient to consider the case $\Omega_F$, Furthermore using (i) with the bilipschitz mapping $(x',x_N) \mapsto (x', x_N + F(x'))$ between $\R^N_+$ and $\Omega_F$, it is sufficient to understand $\R^N_+$.
Hence as definition for the spaces on the boundary we may use
\[
	W^{s,2}(\partial \Omega_F) = \{ u(x', x_N - F(x')) \, : \, u \in W^{s,2}(\R^N_+)\};
\]
for the Gagliardo seminorm we may use as well the global version
\[
	\llfloor u \rrfloor_{s, \partial \Omega}^2 = \int_{\partial \Omega \times \partial \Omega} \frac{\abs{u(x)-u(y)}^2}{\abs{x-y}^{N-1+2s}} dydx.
\]

\begin{corollary}\label{cor:A1.1}
For $s>\frac{1}{2}$ functions of $W^{s,2}(\R_+^N)$ have a trace on the hyperplane $x_N=0$ belonging to $W^{s-\frac{1}{2},2}(\R^{N-1})$ and this linear continuos mapping $\tr{\partial \R^{N}_+}$ is surjective.
\end{corollary}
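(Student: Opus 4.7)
The plan is to reduce the half-space statement to the already known trace result on $\R^N$ (Lemma 16.1 of \cite{Tartar}, recalled just above the corollary). Two ingredients are needed: a bounded linear extension operator $E\colon W^{s,2}(\R^N_+)\to W^{s,2}(\R^N)$, and the continuous surjective trace $\tau\colon W^{s,2}(\R^N)\to W^{s-\frac{1}{2},2}(\R^{N-1})$ on the hyperplane $\{x_N=0\}$.

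For the extension I would take the even reflection $Eu(x',x_N):=u(x',\abs{x_N})$. A direct computation yields $\norm{Eu}_{L^2(\R^N)}^2=2\norm{u}_{L^2(\R^N_+)}^2$, and because $Eu$ is even in $x_N$ its weak derivatives exist across $\{x_N=0\}$ and coincide (up to a sign in the last component) with those of $u$ on each half-space, so that $\norm{Eu}_{W^{1,2}(\R^N)}^2=2\norm{u}_{W^{1,2}(\R^N_+)}^2$. Using the interpolation definition $W^{s,2}=\left(W^{1,2},L^2\right)_{1-s,2}$ (valid on both $\R^N$ and $\R^N_+$, as discussed above), these two bounds upgrade $E$ to a bounded operator $E\colon W^{s,2}(\R^N_+)\to W^{s,2}(\R^N)$.

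With $E$ in hand I would define $u\tr{\partial \R^N_+}:=\tau(Eu)$, so that continuity into $W^{s-\frac{1}{2},2}(\R^{N-1})$ is immediate by composition of bounded linear operators. Well-definedness (independence of the specific extension) is the one delicate point: I would verify it on the dense subspace $C^\infty(\overline{\R^N_+})\cap W^{s,2}(\R^N_+)$ (density of smooth functions on Lipschitz domains was noted earlier in the section), where $\tau(Eu)$ is literally the pointwise restriction $u(\cdot,0)$ and therefore depends only on $u$; density then extends the conclusion to all of $W^{s,2}(\R^N_+)$.

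Surjectivity is essentially free: given $g\in W^{s-\frac{1}{2},2}(\R^{N-1})$, surjectivity of $\tau$ on $\R^N$ produces some $v\in W^{s,2}(\R^N)$ with $\tau(v)=g$, and then $u:=v\bigr|_{\R^N_+}$ belongs to $W^{s,2}(\R^N_+)$ by definition (a) of this space and satisfies $u\tr{\partial \R^N_+}=g$ by construction. The main obstacle is therefore the well-definedness step, together with the (routine but not entirely trivial) check that the even reflection really does interpolate as claimed; everything else is a formal composition of continuous operators.
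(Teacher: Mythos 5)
Your proposal is correct and follows essentially the same route as the paper: the paper's proof also extends $u$ by even reflection $Eu(x',x_N)=u(x',-x_N)$ for $x_N<0$ and then composes with the continuous surjective trace operator on $W^{s,2}(\R^N)=H^s(\R^N)$ given by the Fourier-transform characterisation. The details you add (the interpolation argument showing $E$ is bounded on $W^{s,2}$, and the density check for well-definedness) are exactly the routine verifications the paper leaves implicit.
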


\begin{proof}
	$u \in W^{s,2}(\R_+^N)$ if and only if the extension
	\[
		Eu(x)=
		\begin{cases}
			u(x',x_N), &\text{ if } x_N > 0 \\
			u(x',-x_N), &\text{ if } x_N < 0
		\end{cases}
	\]
	is an element of $W^{s,2}(\R^N)=H^s(\R^N)$. Composing this operator with the continuous linear trace operator defined on the whole space using the Fourier transform shows existence. Furthermore it inherits all its properties and hence concludes the proof. 
\end{proof}

The following characterisation for the trace of a function provides a tool to check that a function $u \in W^{s,2}(\Omega)$ can be patched together with a function $v \in W^{s,2}(\R^N \setminus \Omega)$ to a function $U \in W^{s,2}(\R^N)$ if their traces coincide. 
As introduced before: $\Omega_F=\{x \in \R^N \colon x_N > F(x')\}$,$F$ Lipschitz continuous
\begin{lemma}\label{lem:A1.2} For $u \in W^{s,2}(\Omega_F)$, one has
\begin{equation}\label{eq:A1.2}
	\norm{\frac{u(x',x_N)- u\tr{\partial\Omega_F}(x')}{\abs{x_N- F(x')}^s}}_{L^2(\Omega_F)} \le C \llfloor u \rrfloor_{s,\Omega_F}
\end{equation}
\end{lemma}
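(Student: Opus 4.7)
The plan is to reduce to the model half-space by a bilipschitz change of variables, and then establish a fractional Hardy-type estimate there via averaging plus a dyadic telescoping.

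\emph{Reduction to $F = 0$.} The map $\Psi\colon \R^N_+ \to \Omega_F$, $\Psi(y', y_N) = (y', y_N + F(y'))$, is a bilipschitz bijection (with constants depending only on $\mathrm{Lip}(F)$), sends $\{y_N = 0\}$ onto $\partial \Omega_F$, and satisfies $y_N = x_N - F(x')$ whenever $x = \Psi(y)$, with Jacobian $1$. As reviewed in point (i) at the beginning of this appendix, bilipschitz changes of variables distort both $\llfloor \cdot \rrfloor_{s,\cdot}$ and the Lebesgue integral only by bounded multiplicative constants. Setting $v = u \circ \Psi$ therefore reduces the claim to proving
\[
\int_{\R^N_+} \frac{\abs{v(y', y_N) - v(y', 0)}^2}{y_N^{2s}} \, dy \le C \llfloor v \rrfloor^2_{s, \R^N_+}.
\]

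\emph{Averaging and splitting.} By density I may assume $v$ is smooth. For each $y = (y', y_N) \in \R^N_+$ I choose the test ball $A(y) = B_{y_N/8}((y', y_N/2)) \subset \R^N_+$, of volume $\sim y_N^N$, with the property that $\abs{y - z} \sim y_N$ and $\abs{z - (y', 0)} \sim y_N$ for every $z \in A(y)$. The triangle inequality followed by Jensen yields
\[
\abs{v(y) - v(y', 0)}^2 \le \frac{2}{\abs{A(y)}} \int_{A(y)} \abs{v(y) - v(z)}^2 \, dz + \frac{2}{\abs{A(y)}} \int_{A(y)} \abs{v(z) - v(y', 0)}^2 \, dz.
\]
After division by $y_N^{2s}$ and integration in $y$, the first contribution is dominated directly by $\llfloor v \rrfloor^2_{s, \R^N_+}$ via Fubini, using the pointwise estimate $y_N^{-N-2s} \lesssim \abs{y-z}^{-N-2s}$ on the set $\{z \in A(y)\}$.

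\emph{Trace term and main obstacle.} The second contribution is the real difficulty, since $v(y', 0)$ is only defined almost everywhere as a trace (which, incidentally, forces the implicit restriction $s > \tfrac{1}{2}$). The strategy is: by Lebesgue differentiation, write $v(y', 0) = \lim_{k \to \infty} \fint_{B^+_{r_k}((y', 0))} v$ along dyadic radii $r_k = 2^{-k} y_N$; telescope the difference $\bar{v}_{A(y)} - v(y', 0)$ as a sum of differences of averages on successive nested half-balls; bound each telescoping increment by the fractional Poincaré--Wirtinger inequality $\int_{B^+_r} \abs{v - \bar{v}}^2 \le C r^{2s} \llfloor v \rrfloor^2_{s, B^+_r}$, which is immediate from Jensen and the definition of the Gagliardo seminorm. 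A geometric series in $k$, convergent thanks to $s > 0$, followed by a final application of Fubini, then controls the trace term again by $\llfloor v \rrfloor^2_{s, \R^N_+}$. The main technical hurdle is precisely here: one must verify that, after Fubini, the dyadic weights assemble into a finite double integral rather than producing a logarithmic divergence, so that the telescoped sum is genuinely absorbed by a single copy of the Gagliardo seminorm.
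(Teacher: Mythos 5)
Your reduction to $F=0$ and the treatment of the first term of your splitting are fine (and the reduction coincides with the paper's first step). The gap is in the trace term, exactly where you flag ``the main technical hurdle'': the hurdle is not a bookkeeping issue, it is fatal for the decomposition you chose. With nested half-balls $B^+_{r_k}((y',0))$, $r_k=2^{-k}y_N$, the fractional Poincar\'e bound gives $\abs{m_{k+1}-m_k}^2\le C\,r_k^{2s-N}\llfloor v\rrfloor^2_{s,B^+_{r_k}((y',0))}$, and since $2s-N\le 0$ for $N\ge 2$ this is \emph{not} a geometric series pointwise (your claim ``convergent thanks to $s>0$'' already fails here). Trying to salvage it in the integrated sense, a Cauchy--Schwarz with weights $2^{k\delta}$, $0<\delta<2s-1$, reduces everything to bounding $\int_{\R^{N-1}}\int_0^\infty\rho^{-N}\llfloor v\rrfloor^2_{s,B^+_{\rho}((y',0))}\,d\rho\,dy'$. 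After Tonelli this equals $\int\!\!\int\frac{\abs{v(z)-v(w)}^2}{\abs{z-w}^{N+2s}}K(z,w)\,dz\,dw$ with $K(z,w)=\frac{1}{N-1}\int_{\R^{N-1}}\max\bigl(\abs{z-(y',0)},\abs{w-(y',0)}\bigr)^{1-N}dy'$, and $K\equiv+\infty$: for $\abs{y'}$ large the integrand is $\gtrsim\abs{y'}^{1-N}$, whose integral over $\R^{N-1}$ diverges logarithmically. The root cause is that a half-ball centered at a boundary point contains any fixed pair $(z,w)$ at \emph{every} scale $\rho\gtrsim\max(\abs{z-(y',0)},\abs{w-(y',0)})$, so each pair is counted at infinitely many scales with weight $\int\rho^{-N}\cdot\rho^{N-1}\,d\rho=\infty$. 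To make a real-variable telescoping work you must pin the scale of each region to its distance from the boundary (e.g.\ Whitney boxes $B'_{r_k}(y')\times(r_k/2,r_k)$, whose unions over consecutive $k$ have bounded overlap, so each pair $(z,w)$ is only seen at the $O(1)$ scales comparable to $z_N\sim w_N$), or use single-scale directional differences and the Besov-type characterisation of the Gagliardo seminorm. As written, your argument does not close.

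For comparison, the paper sidesteps all of this: it reflects $u$ evenly to $W^{s,2}(\R^N)=H^s(\R^N)$ and runs a Fourier-side computation, writing $\F v_{x_N}(\xi')-\F v_0(\xi')=\int(e^{2i\pi\xi_Nx_N}-1)\F u\,d\xi_N$, applying Cauchy--Schwarz with the weight $\abs{\xi_Nx_N}^{-\alpha}$, $\alpha=s+\tfrac12$, and integrating in $x_N$; the finiteness of $\int\abs{\sin(\pi t)}\abs{t}^{-\alpha}dt$ is where $s>\tfrac12$ enters there, matching the restriction you correctly identified.
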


\begin{proof}
Using the bilipschitz mapping $(x',x_N) \mapsto (x', x_N - F(x'))$ and $v(x',x_N)=u(x', F(x')+x_N) \in W^{s,2}(\R^N_+)$ together with
\[
	\int_{\Omega_F} \frac{\abs{u(x',x_N)- u\tr{\partial\Omega_F}(x')}^2}{\abs{x_N- F(x')}^{2s}} dx = \int_{R^N_+} \frac{\abs{u(x',x_N+F(x'))- u\tr{\partial\Omega_F}(x')}^2}{\abs{x_N}^{2s}} dx;
\]
one has only to consider the case $F=0$, i.e. $\R^N_+$.\\
We may extend $u$ by $u(x',-x_N)$ for $x_N<0$ to obtain $u \in W^{s,2}(\R^N)=H^s(\R^N)$. We define $v_{x_N}(x')=u(x',x_N)$, then $\F v_{x_N}(\xi')= \int_{\R} e^{2i\pi \xi_Nx_N} \F u(\xi',\xi_N) d\xi_N$ and $\F 'u\tr{\partial \R^{N}_+}(\xi')=\F v_0(\xi')= \int_{\R} \F u(\xi',\xi_N) d\xi_N$; hence by Cauchy inequality
\begin{align*}
	&\abs{\F v_{x_N}(\xi')- \F v_0(\xi')}^2 = \left( \int_\R (e^{2i\pi \xi_N x_N}-1) \F u (\xi',\xi_N) d\xi_N\right)^2\\
	&\le 4 \left( \int_\R \frac{\abs{\sin( \pi\xi_N x_N)}}{\abs{\xi_N x_N}^\alpha } x_N d\xi_N \right) x_N^{\alpha-1} \left( \int_\R \abs{\sin(\pi \xi_N x_N)} \abs{\xi_N}^\alpha \abs{\F u }^2(\xi',\xi_N) d\xi_N\right);
\end{align*}
Multiply this by $\abs{x_N}^{-2s}$ and integrate in $x_N$ to conclude
\begin{align*}
	&\int_{\R} \abs{x_N}^{-2s} \abs{\F v_{x_N}(\xi')- \F v_0(\xi')}^2 dx_N\\
	 &\le 4 C(\alpha) \int_\R \left(\int_\R \frac{\abs{\sin( \pi\xi_N x_N)}}{\abs{\xi_N x_N}^{1+2s-\alpha }} \abs{\xi_N} dx_N \right) \abs{\xi_N}^{2s} \abs{\F u }^2(\xi',\xi_N) d\xi_N\\
	&= 4C(\alpha)^2 \int_\R \abs{\xi_N}^{2s} \abs{\F u }^2(\xi',\xi_N) d\xi_N\
\end{align*}
where $C(\alpha)= \int_{\R} \frac{\sin(\pi t)}{\abs{t}^\alpha} dt< \infty$ for $\alpha=1+2s-\alpha$ (note that $1< \frac{1}{2} + s = \alpha < 2$). This gives the desired result by integrating in $\xi'$, since
\begin{equation*}
	\int_{\R^N} \frac{\abs{u(x',x_N)- u\tr{\partial\Omega_F}(x')}^2}{\abs{x_N}^{2s}} dx = \int_{\R} \abs{x_N}^{-2s}\int_{\R^{N-1}} \abs{\F v_{x_N}(\xi') - \F v_0(\xi')}^2 d\xi' dx_N.
\end{equation*}
\end{proof}

For $s=1$ compare lemma \ref{lem_B:3.3}, that corresponds to \cite[Lemma 13.5]{Tartar}.
We can conclude the following corollary

\begin{corollary}\label{cor:A1.3}
$v \in L^2(\R^{N-1})$ is the trace of $u$ 
(and so in $W^{s-\frac{1}{2},2}(\R^{N-1})$) if 
\begin{equation}\label{eq:A1.3}
	\norm{\frac{u(x',x_N)- v(x')}{\abs{x_N- F(x')}^s}}_{L^2(\Omega_F)} < \infty
\end{equation}
\end{corollary}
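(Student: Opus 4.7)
The plan is to reduce the corollary to showing that any $v$ satisfying \eqref{eq:A1.3} must coincide a.e.\ with $u\tr{\partial \Omega_F}$; once this identification is established, the remaining claim $v \in W^{s-\frac{1}{2},2}(\R^{N-1})$ follows immediately from the trace theorem (Corollary \ref{cor:A1.1}) after the standard bilipschitz straightening $(x',x_N) \mapsto (x', x_N - F(x'))$ that turns $\Omega_F$ into $\R^N_+$.

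First I would apply Lemma \ref{lem:A1.2} to $u$, which guarantees that
\[
\norm{\frac{u(x',x_N) - u\tr{\partial \Omega_F}(x')}{\abs{x_N - F(x')}^s}}_{L^2(\Omega_F)} \le C \llfloor u \rrfloor_{s,\Omega_F} < \infty.
\]
Combining this bound with the hypothesis \eqref{eq:A1.3} through the triangle inequality in $L^2(\Omega_F)$, and setting $w(x') := v(x') - u\tr{\partial \Omega_F}(x')$ (which depends only on $x'$), one obtains
\[
\int_{\Omega_F} \frac{\abs{w(x')}^2}{\abs{x_N - F(x')}^{2s}}\,dx < \infty.
\]
By Fubini together with the substitution $t = x_N - F(x')$, this rewrites as
\[
\int_{\R^{N-1}} \abs{w(x')}^2 \left( \int_0^\infty \frac{dt}{t^{2s}} \right) dx' < \infty.
\]
The inner integral diverges at $t=0$ because $2s > 1$, so the only way the product can be finite is $w(x') = 0$ for a.e.\ $x' \in \R^{N-1}$. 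Hence $v = u\tr{\partial \Omega_F}$ in $L^2(\R^{N-1})$, and Corollary \ref{cor:A1.1} then places $v$ in $W^{s-\frac{1}{2},2}(\R^{N-1})$.

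There is no real obstacle in the argument: the entire content is the divergence of $\int_0^1 t^{-2s}\,dt$ for $s>\tfrac{1}{2}$, which forces a ``$v$--independent'' numerator to vanish. The only point worth watching is that $u\tr{\partial \Omega_F}$ has to be a well-defined element of $L^2_{\text{loc}}(\R^{N-1})$ in order to form the difference $w$; this is supplied by Corollary \ref{cor:A1.1} applied to the straightened function $u(x', F(x') + x_N) \in W^{s,2}(\R^N_+)$, using that $F$ is Lipschitz so that this change of variables preserves the Sobolev class.
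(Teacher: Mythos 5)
Your argument is correct and follows essentially the same route as the paper: both proofs combine hypothesis \eqref{eq:A1.3} with Lemma \ref{lem:A1.2} via the triangle inequality and then exploit $2s-1>0$. The only (cosmetic) difference is in the last step, where the paper averages the pointwise bound over a thin slab $0<x_N<\epsilon$ and lets $\epsilon^{2s-1}\to 0$, while you invoke Tonelli and the divergence of $\int_0^1 t^{-2s}\,dt$ to force $v-u\tr{\partial\Omega_F}=0$ a.e.; these are equivalent.
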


\begin{proof}
	\begin{align*}
		&\int_{\R^{N-1}} \abs{v(x')-u\tr{\partial\Omega_F}(x')}^2 dx'
		\le 2 \epsilon^{2s} \frac{1}{\epsilon}\int_0^\epsilon\int_{\R^{N-1}} \frac{\abs{v(x') - u(x',F(x')+x_N)}^2}{\abs{x_N}^{2s}} \\
		&+  2 \epsilon^{2s} \frac{1}{\epsilon}\int_0^\epsilon\int_{\R^{N-1}} \frac{\abs{u(x',F(x')+x_N)-u\tr{\partial\Omega_F}(x')}^2}{\abs{x_N}^{2s}} dx' dx_N\\
		&\le 2 \epsilon^{2s-1} \left(\norm{\frac{u(x',x_N)- v(x')}{\abs{x_N- F(x')}^s}}^2_{L^2(\Omega_F)} + \norm{\frac{u(x',x_N)- u\tr{\partial\Omega_F}(x')}{\abs{x_N- F(x')}^s}}^2_{L^2(\Omega_F)}\right);
	\end{align*}
	converging to $0$ as $\epsilon \to 0$ hence $v=u\tr{\partial\Omega_F}$.
\end{proof}

\begin{corollary}\label{cor:A1.4}
Let $u\in W^{s,2}(\Omega_F)$ and $v\in W^{s,2}(\R^N \setminus \Omega_F)$ for $s>\frac{1}{2}$ satisfying $u\tr{\partial\Omega_F} =v\tr{\partial\Omega_F}$ then 
\begin{equation}\label{eq:A1.4}
	U(x)=\begin{cases}
		u(x), &\text{ if } x \in \Omega_F\\
		v(x), &\text{ if } x \in \R^N\setminus \Omega_F
	\end{cases}
\end{equation}
defines an element in $W^{s,2}(\R^N)$ satisfying
\begin{equation}\label{eq:A1.5}
	\llfloor U\rrfloor_{s,\R^N} \le C \left(\llfloor u \rrfloor_{s,\Omega_F} + \llfloor v \rrfloor_{s,\R^N \setminus \Omega_F}\right)
\end{equation}
\end{corollary}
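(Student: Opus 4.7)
The plan is to reduce the statement to the half-space case and then estimate the off-diagonal piece of the Gagliardo double integral using the common trace. First, the bi-Lipschitz change of variables $\Phi(x',x_N)=(x',x_N-F(x'))$ sends $\Omega_F$ to $\R^N_+$ and $\R^N\setminus\Omega_F$ to $\R^N_-$; by observation (i) in section \ref{sec:fractional_sobolev_spaces}, it distorts $\llfloor\cdot\rrfloor_{s,\,\cdot\,}$ by a factor depending only on $\mathrm{Lip}(F)$. So we may assume $F=0$. Write
\[
\llfloor U\rrfloor_{s,\R^N}^2=\llfloor u\rrfloor_{s,\R^N_+}^2+\llfloor v\rrfloor_{s,\R^N_-}^2+2\,I,\qquad I=\iint_{\R^N_+\times\R^N_-}\frac{\abs{u(x)-v(y)}^2}{\abs{x-y}^{N+2s}}\,dx\,dy,
\]
so everything reduces to estimating $I$.

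Second, let $w=u\tr{\partial\R^N_+}=v\tr{\partial\R^N_+}\in W^{s-1/2,2}(\R^{N-1})$, which is well defined by Corollary \ref{cor:A1.1}. For $x=(x',x_N)\in\R^N_+$ and $y=(y',y_N)\in\R^N_-$, apply
\[
\abs{u(x)-v(y)}^2\le 3\abs{u(x)-w(x')}^2+3\abs{w(x')-w(y')}^2+3\abs{w(y')-v(y)}^2,
\]
so that $I\le 3(I_1+I_2+I_3)$ with the obvious labelling. For $I_1$, a scaling computation yields
\[
\int_{\R^N_-}\frac{dy}{\abs{x-y}^{N+2s}}=\int_{-\infty}^0\!\!\int_{\R^{N-1}}\frac{dy'\,dy_N}{(\abs{x'-y'}^2+(x_N-y_N)^2)^{(N+2s)/2}}\le C\,x_N^{-2s},
\]
after which Lemma \ref{lem:A1.2} gives $I_1\le C\int_{\R^N_+}\frac{\abs{u(x)-w(x')}^2}{x_N^{2s}}\,dx\le C\llfloor u\rrfloor_{s,\R^N_+}^2$; the term $I_3$ is handled symmetrically using $v$.

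Third, for $I_2$ the explicit computation (substitute $t=x_N-y_N>0$, so the slice $\{(x_N,y_N):x_N>0,y_N<0,x_N-y_N=t\}$ has length $t$, then rescale by $\abs{x'-y'}$) gives
\[
\int_0^\infty\!\!\int_{-\infty}^0\frac{dy_N\,dx_N}{(\abs{x'-y'}^2+(x_N-y_N)^2)^{(N+2s)/2}}=C\abs{x'-y'}^{-(N-2+2s)},
\]
hence $I_2=C\llfloor w\rrfloor_{s-1/2,\R^{N-1}}^2$ since $N-2+2s=(N-1)+2(s-\tfrac12)$. Combined, $I\le C(\llfloor u\rrfloor_{s,\R^N_+}^2+\llfloor v\rrfloor_{s,\R^N_-}^2+\llfloor w\rrfloor_{s-1/2,\R^{N-1}}^2)$.

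The main obstacle is the last ingredient: bounding $\llfloor w\rrfloor_{s-1/2,\R^{N-1}}^2$ by $\llfloor u\rrfloor_{s,\R^N_+}^2$ (or $\llfloor v\rrfloor_{s,\R^N_-}^2$) \emph{without} picking up an $L^2$ norm on the right. Corollary \ref{cor:A1.1} as stated provides continuity of the trace operator in the full $W^{s,2}$ norm; the homogeneous version I need can be extracted in two equivalent ways. Either inspect the Fourier proof: even-reflection extension gives $\tilde u\in H^s(\R^N)$ with $\int(\abs{\xi'}^2+\xi_N^2)^s\abs{\hat{\tilde u}}^2\,d\xi\le C\llfloor u\rrfloor_{s,\R^N_+}^2$, then Cauchy--Schwarz on $\hat w(\xi')=\int\hat{\tilde u}(\xi',\xi_N)\,d\xi_N$ with weight $(\abs{\xi'}^2+\xi_N^2)^{-s}$ yields $\abs{\xi'}^{2s-1}\abs{\hat w(\xi')}^2\le C\int(\abs{\xi'}^2+\xi_N^2)^s\abs{\hat{\tilde u}}^2\,d\xi_N$. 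Or apply the inhomogeneous trace inequality to the dilations $u_\lambda(x)=u(\lambda x)$: both seminorms scale as $\lambda^{s-N/2}$ while $\|u\|_{L^2}$ scales as $\lambda^{-N/2}$, so letting $\lambda\to\infty$ kills the $L^2$ term. Either route closes the argument and produces the claimed bound \eqref{eq:A1.5}.
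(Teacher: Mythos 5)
Your argument is essentially the paper's own proof: the same reduction to $F=0$ via the bilipschitz graph map, the same three-way splitting of the cross term $\iint_{\R^N_+\times\R^N_-}$ through the common trace, and the same two kernel computations producing the weight $x_N^{-2s}$ (handled by Lemma \ref{lem:A1.2}) and the weight $\abs{x'-y'}^{-(N-2+2s)}$ (producing the $W^{s-\frac{1}{2},2}(\R^{N-1})$ seminorm of the trace). The one point where you are more careful than the paper — observing that the cited continuity of the trace operator is an inhomogeneous estimate while \eqref{eq:A1.5} needs a bound by the Gagliardo seminorm alone, and supplying the homogeneous version either by the dilation argument or by the Fourier/Cauchy--Schwarz computation — is correct and in fact repairs a small imprecision in the paper's own proof.
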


\begin{proof}
As before using the bilipschitz mapping $(x',x_N) \mapsto (x', x_N - F(x'))$ one has only to consider the case $F=0$; then
\begin{align*}
	&\norm{U}^2_{L^2(\R^N)} = \norm{u}^2_{L^2(\R^N_+)}+ \norm{v}^2_{L^2(\R^N_-)}\\
	&\int_{\R^N \times \R^N} \frac{\abs{U(x)-U(y)}^2}{\abs{x-y}^{N+2s}} dy dx
	= 2 \int_{\R_+^N \times \R_-^N} \frac{\abs{u(x)-v(y)}^2}{\abs{x-y}^{N+2s}} dy dx\\&+\int_{\R_+^N \times \R_+^N} \frac{\abs{u(x)-u(y)}^2}{\abs{x-y}^{N+2s}} dy dx +\int_{\R_-^N \times \R_-^N} \frac{\abs{v(x)-v(y)}^2}{\abs{x-y}^{N+2s}} dy dx.
\end{align*}
The first two summands are obviously bounded and the third is bounded because
\begin{align}\nonumber
	&\int_{\R_+^N \times \R_-^N} \frac{\abs{u(x)-v(y)}^2}{\abs{x-y}^{N+2s}} dy dx\le\\ 
	& 3 \int_{\R_+^N \times \R_-^N} \frac{\abs{u\tr{\partial\Omega_F}(x')-v\tr{\partial\Omega_F}(y')}^2}{\abs{x-y}^{N+2s}} dy dx \label{eq:int1}\\
	&+ 3 \int_{\R_+^N \times \R_-^N} \frac{\abs{u(x)- u\tr{\partial\Omega_F}(x')}^2}{\abs{x-y}^{N+2s}} dy dx + 3 \int_{\R_+^N \times \R_-^N} \frac{\abs{v\tr{\partial\Omega_F}(y')-v(y)}^2}{\abs{x-y}^{N+2s}} dy dx.\label{eq:int2}
\end{align}
For the first integral, \eqref{eq:int1}, we have 
\begin{align*}
&\int_{\R_+^N \times \R_-^N} \frac{\abs{u\tr{\partial\Omega_F}(x')-v\tr{\partial\Omega_F}(y')}^2}{\abs{x-y}^{N+2s}} dy dx\\
\le C_1 &\int_{\R^{N-1}\times \R^{N-1}} \frac{\abs{u\tr{\partial\Omega_F}(x')-v\tr{\partial\Omega_F}(y')}^2}{\abs{x'-y'}^{N-2+2s}} dy' dx' \le C \llfloor u \rrfloor^2_{s,\R^N_+},
\end{align*}
where we used firstly 
\[
\int_{\R_+\times \R_-} \frac{1}{\abs{x-y}^{N+2s}} dx_N dy_N =\int_{\R_+\times \R_+} \frac{(1+ (t+\tau)^2)^{-\frac{N}{2}-s}}{\abs{x'-y'}^{N-2 + 2s}} d\tau dt = \frac{C_1}{\abs{x-y}^{N-2 + 2s}}
\]
by means of the change of variables $x_N= \abs{x'-y'}t, y_N=-\abs{x'-y'}\tau$ and then $u\tr{\partial\Omega_F}=v\tr{\partial\Omega_F}$ together with the continuity of the trace operator $\tr{\partial\Omega_F}: W^{s,2}(\R^N_+) \to W^{s-\frac{1}{2},2}(\R^{N-1})$, compare \cite[lemma 16.1, lemma 16.3]{Tartar}.\\
For the second and third integral, \eqref{eq:int2}, we proceed equivalently. For instance for the the second
\[
\int_{\R_+^N \times \R_-^N} \frac{\abs{u(x)- u\tr{\partial \R^N_+}(x')}^2}{\abs{x-y}^{N+2s}} dy dx \le C_2 \int_{\R_+^N} \frac{\abs{u(x',x_N)- u\tr{\partial\Omega_F}(x')}^2}{\abs{x_N}^{2s}} dx \le C \llfloor u \rrfloor_{s,\R^N_+}^2
\]
where we used
\[
\int_{\R^N_-} \frac{1}{\abs{x-y}^{N+2s}} dy = x_N^{-2s} \int_{\R^N_+} \frac{1}{\abs{z+e_N}^{N+2s}} dz = x_N^{-2s} C_2.
\]
by means of the change of variables $(y',y_N)=(x'-x_N z', -x_Nz_N)$, $x_N>0$ and afterwards we apply lemma \ref{lem:A1.2}.\\
The constants $C_1,C_2$ are indeed finite since $(t+\tau)^2 \ge t^2 +\tau^2$
\begin{align*}
	C_1 \le \int_0^\infty \int_0^{\frac{\pi}{2}} \frac{r dr d\theta}{(1+r^2)^{\frac{N}{2}+s}}=\frac{\pi}{2N-4+4s}\\
	C_2 \le \int_{\R^N\setminus B_1(-e_N)}\frac{1}{\abs{z+e_N}^{N+2s}} dz = \frac{N\omega_N}{2s}.
\end{align*}
\end{proof}

A further nice consequence is the following characterisation of $W^{s,2}_0(\Omega)$, defined as the closure of $C^\infty_c(\Omega)$ in $W^{s,2}(\R^N)$. The "classical" case, $s=1$, is considered in \cite[Lemma 13.6]{Tartar}.

\begin{corollary}\label{cor:A1.5}
If $F$ is Lipschitz continuous and $s>\frac{1}{2}$ then $W_0^{s,2}(\Omega_F)$ is the subspace of $u \in W^{s,2}(\Omega_F)$ satisfying $u\tr{\partial\Omega_F}=0$.
\end{corollary}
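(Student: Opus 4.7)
The plan is to prove the two inclusions separately. The containment $W^{s,2}_0(\Omega_F) \subseteq \{u \in W^{s,2}(\Omega_F) : u\tr{\partial \Omega_F}=0\}$ is the easy direction: every $\varphi \in C^\infty_c(\Omega_F)$ trivially satisfies $\varphi\tr{\partial \Omega_F}=0$, and the trace operator $\tr{\partial \Omega_F}\colon W^{s,2}(\Omega_F) \to W^{s-\frac{1}{2},2}(\partial \Omega_F)$ is continuous for $s>\frac{1}{2}$ (Corollary \ref{cor:A1.1} combined with the bilipschitz flattening of $\partial \Omega_F$), so the trace-zero condition passes to $W^{s,2}(\R^N)$-limits of $C^\infty_c(\Omega_F)$ functions.

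For the reverse inclusion, let $u \in W^{s,2}(\Omega_F)$ with $u\tr{\partial \Omega_F}=0$. First apply Corollary \ref{cor:A1.4} with $v\equiv 0 \in W^{s,2}(\R^N\setminus \Omega_F)$ to produce an extension $U \in W^{s,2}(\R^N)$ with $U=u$ on $\Omega_F$ and $U=0$ on $\R^N\setminus\Omega_F$. The task is then to approximate $U$ in $W^{s,2}(\R^N)$ by functions in $C^\infty_c(\Omega_F)$. This will be done by the standard three-step scheme: truncate, translate into the interior, and mollify.

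Step 1 (truncation): pick cutoffs $\eta_R \in C^\infty_c(B_{2R})$ with $\eta_R\equiv 1$ on $B_R$ and $\abs{\grad \eta_R} \le C/R$. The product estimate established earlier in this section (the bound on $\llfloor f u \rrfloor_{s,\Omega}$ in terms of $\llfloor u \rrfloor_{s,\Omega}$ and $\norm{u}_{L^2(\Omega)}$ applied to $1-\eta_R$) shows $\eta_R U \to U$ in $W^{s,2}(\R^N)$ as $R\to \infty$. The function $\eta_R U$ has compact support contained in $\overline{\Omega_F}$.

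Step 2 (translation) and Step 3 (mollification): for $\tau>0$ set $U_{R,\tau}(x)=(\eta_R U)(x-\tau e_N)$. Since translations are continuous on $W^{s,2}(\R^N)$, $U_{R,\tau}\to \eta_R U$ in $W^{s,2}(\R^N)$ as $\tau\to 0$. The support of $U_{R,\tau}$ lies in $\{(x',x_N):x_N\ge F(x')+\tau\}\cap B_{2R+\tau}$. Writing $L=\mathrm{Lip}(F)$, for any boundary point $(y',F(y'))$ and any $(x',x_N)$ in this support, $x_N-F(y')\ge \tau-L\abs{x'-y'}$; splitting into the cases $\abs{x'-y'}\gtrless \tau/(2L)$ yields $\mathrm{dist}(\mathrm{supp}\,U_{R,\tau},\partial\Omega_F)\ge \tau/(2\max(L,1))$. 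Therefore, convolving with a standard mollifier $\rho_\epsilon$ of radius $\epsilon<\tau/(2\max(L,1))$ yields $U_{R,\tau}\ast \rho_\epsilon \in C^\infty_c(\Omega_F)$, and $U_{R,\tau}\ast\rho_\epsilon \to U_{R,\tau}$ in $W^{s,2}(\R^N)$ as $\epsilon \to 0$. A diagonal extraction along $R\to\infty$, $\tau\to 0$, $\epsilon\to 0$ produces a sequence in $C^\infty_c(\Omega_F)$ converging to $U$ in $W^{s,2}(\R^N)$, proving $U\in W^{s,2}_0(\Omega_F)$.

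The main obstacle is keeping control of the translation step: one needs the Lipschitz graph geometry of $\partial \Omega_F$ to guarantee that translating the support by $\tau e_N$ actually places it at distance of order $\tau$ from $\partial\Omega_F$, uniformly, so that subsequent mollification keeps the approximants inside $\Omega_F$. This is precisely where the Lipschitz assumption on $F$ is used.
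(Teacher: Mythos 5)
Your proof is correct and follows essentially the same route as the paper: the easy inclusion via continuity of the trace operator, and the reverse inclusion by extending $u$ by zero using Corollary \ref{cor:A1.4}, then truncating, translating upward by $\tau e_N$ (which the Lipschitz graph geometry guarantees pushes the support a distance of order $\tau$ into $\Omega_F$), and mollifying. The only cosmetic difference is that the paper merges the cutoff and the translation into a single sequence $U(x',x_N-\tfrac1n)\theta(\tfrac xn)$, whereas you treat the three limits separately and make the distance estimate explicit.
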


\begin{proof}
If $u\in W^{s,2}_0(\Omega_F)$ there exists a sequence $u_n \in C^\infty_c(\Omega_F)$ s.t. $u_n \to u$ in $W^{s,2}(\Omega_F)$; as $\tr{\partial\Omega_F}$ is a continuous operator on $W^{s,2}(\Omega_F)$ we have $0=u_n\tr{\partial\Omega_F} \to u\tr{\partial\Omega_F}$ in $L^2(\R^{N-1})$.\\
We may extend $u$ by $0$ outside of $\Omega_F$ and denote the extension by $U$. The corollary above shows that $U \in W^{s,2}(\R^N)$. One chooses $0\le \theta \le 1 \in C^\infty_c(\R^N)$ s.t. $\theta(x)=1$ for $\abs{x}<1$. One approaches $U$ by the sequence $u_n(x',x_N)= U(x, x_N-\frac{1}{n}) \theta(\frac{x}{n}) \in W^{s,2}(\R^N)$. $u_n$ converges to $U$ by Lebesgue dominated convergence. The support of these $u_n$ is compactly supported within $\Omega_F$. Finally regularise $u_n$ by convolution.  
\end{proof}

Using interpolation theory there is an elegant way to obtain a statement on compact embeddings:
\begin{lemma}\label{lem:A1.6}
If $\Omega\subset \R^N$ and bounded, then the injection of $W_0^{s,2}(\Omega)$ into $L^2(\Omega)$ is compact.
\end{lemma}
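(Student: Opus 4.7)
The plan is to deduce the lemma from the classical $W^{1,2}$ Rellich theorem combined with an abstract compactness principle for real interpolation, exactly as the opening sentence of the lemma suggests. First I would invoke the classical Rellich--Kondrachov theorem: since $\Omega$ is bounded, the inclusion $\iota:W^{1,2}_0(\Omega)\hookrightarrow L^2(\Omega)$ is compact. No boundary regularity of $\Omega$ is needed here, because every element of $W^{1,2}_0(\Omega)$ extends by zero to an element of $W^{1,2}(\R^N)$ with support in $\overline{\Omega}$, and Rellich on a ball $B_R\supset\Omega$ applies directly.

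Next I would apply the Lions--Peetre one-sided compactness theorem for real interpolation: if a linear operator $T$ is bounded on both endpoints of a Banach couple $(E_0,E_1)$ into a Banach space $F$ and is \emph{compact} from one of them to $F$, then $T:(E_0,E_1)_{\theta,p}\to F$ is compact for every $0<\theta<1$ and $1\le p<\infty$. Taking $E_0=W^{1,2}_0(\Omega)$, $E_1=F=L^2(\Omega)$ and $T$ equal to the inclusion, this yields at once that
\[
\bigl(W^{1,2}_0(\Omega),\,L^2(\Omega)\bigr)_{1-s,2}\ \hookrightarrow\ L^2(\Omega)
\]
is compact.

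It only remains to identify this interpolation space with $W^{s,2}_0(\Omega)$. By Subsection~\ref{sub:general_facts} one has $(W^{1,2}(\R^N),L^2(\R^N))_{1-s,2}=W^{s,2}(\R^N)$; since extension by zero defines a bounded linear map both $W^{1,2}_0(\Omega)\to W^{1,2}(\R^N)$ and $L^2(\Omega)\to L^2(\R^N)$, the functoriality of real interpolation embeds $(W^{1,2}_0(\Omega),L^2(\Omega))_{1-s,2}$ continuously into $W^{s,2}(\R^N)$ with image supported in $\overline{\Omega}$. Density of $C_c^\infty(\Omega)$ in $W^{1,2}_0(\Omega)$ propagates to density in the interpolation space, hence identifies it with $W^{s,2}_0(\Omega)=\overline{C_c^\infty(\Omega)}^{W^{s,2}(\R^N)}$.

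The main obstacle is precisely this last identification, which is genuinely delicate at the borderline $s=\tfrac12$ because of the distinction with the Lions--Magenes space $H^{1/2}_{00}(\Omega)$. As an alternative route that bypasses the identification altogether, one can argue directly via Fr\'echet--Kolmogorov: the translation estimate $\|\tau_hu-u\|_{L^2(\R^N)}\le C|h|^s\|u\|_{W^{s,2}(\R^N)}$, valid on $W^{s,2}(\R^N)$ (proved in one line by Plancherel using $\min(|h\xi|^2,1)\le|h\xi|^{2s}$), together with the uniform containment of supports in the bounded set $\overline{\Omega}$, furnishes precompactness in $L^2(\Omega)$ of any bounded subset of $W^{s,2}_0(\Omega)$.
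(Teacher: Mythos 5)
Your second route (Fr\'echet--Kolmogorov) is correct and complete, and it is genuinely different from the paper's argument, so let me compare. The paper does not pass through an abstract compactness-of-interpolation theorem at all: it writes $u_n=v_{n,\epsilon}+w_{n,\epsilon}$ with $v_{n,\epsilon}=\rho_\epsilon\star u_n$, interpolates the \emph{operator} $\mathbf{1}-\rho_\epsilon\star$ between its bounds $L^2\to L^2$ (norm $2$) and $W^{1,2}\to L^2$ (norm $A\epsilon$) to get $\norm{u-\rho_\epsilon\star u}_{L^2}\le C\epsilon^s\norm{u}_{W^{s,2}(\R^N)}$, and then applies Arzel\`a--Ascoli to the mollified family, which is uniformly Lipschitz with uniformly compact support. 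This is in effect a hands-on proof of the Riesz--Kolmogorov criterion in this special case; your Plancherel estimate $\norm{\tau_hu-u}_{L^2}\le C\abs{h}^s\norm{u}_{H^s}$ plus the uniform support containment extracts exactly the same equicontinuity-in-mean, but gets it from the Fourier characterisation of $H^s(\R^N)$ rather than from interpolating the mollification operator, and then quotes the compactness criterion as a black box. Both arguments use only boundedness of $\Omega$, consistent with the lemma's hypotheses. The trade-off: the paper's version is self-contained modulo the interpolation identity $(W^{1,2},L^2)_{1-s,2}=W^{s,2}$ on $\R^N$ already set up in the appendix, while yours is shorter but imports Fr\'echet--Kolmogorov.

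On your first route: you correctly flag that identifying $\bigl(W^{1,2}_0(\Omega),L^2(\Omega)\bigr)_{1-s,2}$ with $W^{s,2}_0(\Omega)$ is the sticking point, but note that the inclusion you actually establish goes the wrong way for the compactness conclusion. Functoriality of interpolation plus density of $C^\infty_c(\Omega)$ gives a continuous injection of the interpolation space \emph{into} $W^{s,2}_0(\Omega)$; to transfer compactness from the interpolation space to $W^{s,2}_0(\Omega)$ you would need the reverse continuous inclusion $W^{s,2}_0(\Omega)\hookrightarrow\bigl(W^{1,2}_0(\Omega),L^2(\Omega)\bigr)_{1-s,2}$, which is the hard direction and fails at $s=\tfrac12$ (the $H^{1/2}_{00}$ phenomenon) and is delicate for rough $\Omega$ in general. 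Since the lemma assumes no regularity of $\partial\Omega$, that route should be regarded as unavailable here; your fallback argument is the one to keep.
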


\begin{proof}
We have to show that for a bounded sequence $u_n \in W_0^{s,2}(\Omega)$, there is a subsequence converging strongly in $L^2(\Omega)$. To do so it is sufficient to check that for every $\epsilon >0 $ there is a compact subset $K_\epsilon$ of $L^2(\Omega)$ s.t. we can decompose $u_n= v_{n,\epsilon}+ w_{n,\epsilon}$ with $\norm{w_{n,\epsilon}}_{L^2(\Omega)} \le \epsilon C$ and $v_{n,\epsilon} \in K_\epsilon $ for all $n$.\\
Firstly we may extend each $u_n$ by $0$ outside of $\Omega$. For a special smoothing sequence $\rho_\epsilon(x)= \frac{1}{\epsilon^N}\rho_0(\frac{x}{\epsilon})$ with $\rho_0$ radial we can consider the linear operators $u \mapsto u - \rho_\epsilon \star u $. For them we clearly have
\begin{align*}
	\norm{ u - \rho_\epsilon \star u}_{L^2(\R^N)} &\le 2 \norm{u}_{L^2(\R^N)}\\
	\norm{ u - \rho_\epsilon \star u}_{L^2(\R^N)} &\le \int_{\R^N} \rho_\epsilon(y) \norm{u(\cdot) - u(\cdot - y)}_{L^2(\R^N)} dy\\
	& \le \norm{\grad(u)}_{L^2(\R^N)} \int_{\R^N} \abs{y} \rho_\epsilon(y) dy \le A\epsilon \norm{\grad(u)}_{L^2(\R^N)}. 
\end{align*}
$(\mathbf{1}- \rho_\epsilon \star)$ extends to a continuous linear operator on $W^{s,2}(\R^N)$. It therefore satisfies $\norm{u-\rho_\epsilon \star u}_{L^2(\R^N)} \le 2^{1-s} A^s \epsilon^s \norm{u}_{W^{s,2}(\R^N)}$. The choice $w_{n,\epsilon}= u_n-\rho_\epsilon\star u_n$ has $\norm{w_{n,\epsilon}}_{L^2(\R^N)} \le C \epsilon^s$ for all $n$ and since $\norm{\frac{\partial \rho_\epsilon\star u_n}{\partial x_j}}_{L^\infty(\R^N)} \le \norm{\frac{\partial \rho_\epsilon}{\partial x_j}}_{L^2(\R^N)} \norm{u_n}_{L^2(\R^N)}$, the sequence $v_{n,\epsilon}$ stays in a bounded set of Lipschitz functions and keeps their support in a fixed compact set of $\R^N$. The Arzel\'a-Ascoli theorem provides a subsequence converging strongly in $L^\infty$ and hence $L^2$, concluding the statement.
\end{proof}
The existence of a continuous linear extension operator $E: W^{s,2}(\Omega) \to W^{s,2}(\R^N)$ for Lipschitz regular domains extends the result to bounded domains i.e. the injection of $W^{s,2}(\Omega)$ into $L^2(\Omega)$ is compact for $\Omega \subset \R^N$ bounded and Lipschitz regular. 

As usual the compact embedding can be used to prove Poincar\'e inequalities:

\begin{lemma}\label{lem:A1.7}
For a bounded, Lipschitz regular domain $\Omega \subset \R^N$ and $0<s\le 1$ there is a constant $C_1$ s.t. for each $u\in W^{s,2}(\Omega)$
\begin{equation}\label{eq:A1.6}
	\norm{u - \fint_{\Omega} u }_{L^2(\Omega)} \le C_1 \llfloor u \rrfloor_{s,\Omega};
\end{equation}
for $\frac{1}{2}< s \le 1$ there is a constant $C_2$ s.t. for each $u \in W^{s,2}(\Omega)$
\begin{equation}\label{eq:A1.7}
	\norm{u - \fint_{\partial \Omega} u\tr{\partial\Omega} }_{L^2(\Omega)} \le C_2 \llfloor u \rrfloor_{s,\Omega}.
\end{equation}
\end{lemma}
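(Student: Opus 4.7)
Both inequalities follow from the same contradiction scheme, driven by the compact embedding $W^{s,2}(\Omega)\hookrightarrow L^2(\Omega)$ of Lemma \ref{lem:A1.6}. The latter was stated there for $W^{s,2}_0$; for the full space $W^{s,2}(\Omega)$ on a bounded Lipschitz domain it follows by post-composing with the continuous linear extension operator $E\colon W^{s,2}(\Omega)\to W^{s,2}(\R^N)$ discussed earlier in this appendix.

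To prove \eqref{eq:A1.6} assume, for contradiction, that no such $C_1$ exists. Then there is a sequence $u_n\in W^{s,2}(\Omega)$ satisfying $\fint_\Omega u_n=0$, $\norm{u_n}_{L^2(\Omega)}=1$ and $\llfloor u_n\rrfloor_{s,\Omega}\to 0$. The sequence is bounded in the Hilbert space $W^{s,2}(\Omega)$, so along a subsequence $u_n\rightharpoonup u$ weakly in $W^{s,2}(\Omega)$ and $u_n\to u$ strongly in $L^2(\Omega)$; after passing to a further subsequence we may assume convergence a.e. Applying Fatou's lemma to the double integral defining $\llfloor\cdot\rrfloor_{s,\Omega}$ when $s<1$, and weak lower semicontinuity of the $L^2$-norm of the gradient when $s=1$, yields $\llfloor u\rrfloor_{s,\Omega}=0$. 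For $s<1$ this forces $u(x)=u(y)$ for a.e.\ $(x,y)\in\Omega\times\Omega$, hence $u$ is a.e.\ constant; for $s=1$ the same conclusion follows from connectedness of $\Omega$. The zero-mean condition survives the $L^2$-limit, so the constant must be $0$, contradicting $\norm{u}_{L^2(\Omega)}=1$.

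For \eqref{eq:A1.7} we run the same argument on the shifted sequence $v_n:=u_n-\fint_{\partial\Omega} u_n\tr{\partial\Omega}$, assuming $\norm{v_n}_{L^2(\Omega)}=1$ and $\llfloor v_n\rrfloor_{s,\Omega}\to 0$; the existence of weak/strong limits and the a.e.\ constancy of the limit $v$ are obtained exactly as above. The new (and main) ingredient is that the vanishing boundary mean must pass to the limit. This is where the hypothesis $s>\tfrac{1}{2}$ is used: the trace operator $\tr{\partial\Omega}\colon W^{s,2}(\Omega)\to W^{s-1/2,2}(\partial\Omega)\hookrightarrow L^2(\partial\Omega)$ is continuous and linear, so $v_n\rightharpoonup v$ in $W^{s,2}(\Omega)$ implies $v_n\tr{\partial\Omega}\rightharpoonup v\tr{\partial\Omega}$ weakly in $L^2(\partial\Omega)$. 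Testing against the constant $1$, a continuous linear functional on $L^2(\partial\Omega)$, gives $\fint_{\partial\Omega} v\tr{\partial\Omega}=\lim_n \fint_{\partial\Omega} v_n\tr{\partial\Omega}=0$. Since the constant function $v$ has trace equal to the same constant, $v\equiv 0$, again contradicting $\norm{v}_{L^2(\Omega)}=1$.

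The routine steps are the compactness/extraction and the lower semicontinuity of $\llfloor\cdot\rrfloor_{s,\Omega}$. The only delicate point is passing the boundary mean through the limit in the second inequality, which is exactly why the bound on the Gagliardo seminorm must be strong enough (i.e. $s>\tfrac{1}{2}$) to support a continuous trace.
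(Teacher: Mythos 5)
Your proposal is correct and follows essentially the same route as the paper: a contradiction/normalization argument powered by the compact embedding $W^{s,2}(\Omega)\hookrightarrow L^2(\Omega)$ (extended from the $W^{s,2}_0$ case via the extension operator, just as the paper notes), lower semicontinuity of the Gagliardo seminorm to force the limit to be constant, and continuity of the trace operator for $s>\tfrac12$ to pass the vanishing boundary mean to the limit. The only cosmetic difference is that you justify the semicontinuity via Fatou and pass the boundary mean through weak convergence of traces, whereas the paper asserts strong $W^{s,2}$ convergence and applies the trace directly; both are sound.
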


\begin{proof}
Both proofs are along the same lines. For the second we need the continuity of the trace operator $\tr{\partial\Omega}$ and so $s>\frac{1}{2}$. Nonetheless we will only present the second case and it will be obvious how to argue in the first. We argue by contradiction; so we assume that there exists a sequence $u_k \in W^{s,2}(\Omega)$ with 
\[
	\norm{u_k - \fint_{\partial \Omega} u_k\tr{\partial\Omega}}_{L^2(\Omega)} > k \llfloor u_k \rrfloor_{s,\Omega}.
\]
Normalising via
\[
	v_k= \frac{ u_k - \fint_{\partial \Omega} u_k\tr{\partial\Omega}}{\norm{u_k - \fint_{\partial \Omega} u_k\tr{\partial\Omega}}_{L^2(\Omega)}} \text{ for all } k
\]
we may assume that $\norm{v_k}_{L^2(\Omega)}=1$, $\fint_{\partial \Omega}v_k\tr{\partial\Omega}=0$ and by assumption $\llfloor v_k \rrfloor_{s,\Omega}< \frac{1}{k}$ for all $k$. In particular the sequence stays in  a fixed bounded set of $W^{s,2}(\Omega)$. We may pass to a subsequence $v_{k'}$ converging strongly in $L^{2}(\Omega)$ to a function $v \in L^2(\Omega)$, due to the just obtained compact embedding of $W^{s,2}(\Omega)$ into $L^2(\Omega)$. $v$ needs to be constant since $\llfloor v_k \rrfloor_{s,\Omega}< \frac{1}{k}$. Thus $v_{k'} \to v$ strongly in  $W^{s,2}(\Omega)$. The continuity of the trace operator provides
\[
	\fint_{\partial \Omega} v\tr{\partial\Omega} = \lim_{k' \to \infty} \fint_{\partial \Omega} v_{k'}\tr{\partial\Omega}=0.
\]
This contradicts $\norm{v}_{L^2(\Omega)}=1$ because $v = const.$ implies $v\tr{\partial\Omega}= const. = 0$.
\end{proof}

For our purpose a particular version of corollary \ref{cor:A1.4} is needed:
\begin{corollary}\label{cor:A1.8}
To any given $-1<a<1$ and $\frac{1}{2} <s \le 1$ there is a constant $C>$ with the property, that if $u \in W^{s,2}(\Sp^{N-1}\cap \{ x_N> a\}), v \in W^{s,2}(\Sp^{N-1}\cap \{x_N<a\})$ with $u\tr{\Sp^{N-1}\cap \{ x_N= a\}}=v\tr{\Sp^{N-1}\cap \{ x_N= a\}}$ then 
\begin{equation}\label{eq:A1.8}
	U(x)=\begin{cases}
		u(x), &\text{ if } x \in \Sp^{N-1}, x_N>a\\
		v(x), &\text{ if } x \in \Sp^{N-1}, x_N<a
	\end{cases}
\end{equation}
defines an element in $W^{s,2}(\Sp^{N-1})$ satisfying
\begin{equation}\label{eq:A1.9}
	\llfloor U\rrfloor_{s,\Sp^{N-1}} \le C \left( \llfloor u\rrfloor_{s,\Sp^{N-1}\cap \{ x_N> a\}} + \llfloor v\rrfloor_{s,\Sp^{N-1}\cap \{ x_N< a\}}\right)
\end{equation}
\end{corollary}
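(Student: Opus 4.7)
My plan is to reduce Corollary \ref{cor:A1.8} to the flat case Corollary \ref{cor:A1.4} by localising through bilipschitz charts and a partition of unity, using the invariance and localisation properties of $W^{s,2}$ discussed in Section \ref{sub:general_facts}.

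I would first cover $\Sp^{N-1}$ by finitely many open sets $V_1, \dotsc, V_L$ together with bilipschitz charts $\phi_i \colon V_i \to W_i$ onto bounded open subsets $W_i \subset \R^{N-1}$, arranged so that for every index $i$ with $V_i \cap \Sigma_a \neq \emptyset$, where $\Sigma_a := \Sp^{N-1} \cap \{x_N = a\}$, there is a Lipschitz $F_i \colon \R^{N-2} \to \R$ such that $\phi_i(V_i \cap \{x_N > a\}) = W_i \cap \{y_{N-1} > F_i(y')\}$ and $\phi_i(V_i \cap \{x_N < a\}) = W_i \cap \{y_{N-1} < F_i(y')\}$. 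This is straightforward from the cylindrical decomposition $\Sp^{N-1} \setminus \{\pm e_N\} = \{(\sqrt{1-t^2}\,\omega, t) \colon \omega \in \Sp^{N-2},\, \abs{t} < 1\}$, in which $\Sigma_a$ simply becomes the flat slice $\{t = a\}$; near the poles $\pm e_N$ I pick charts that do not meet $\Sigma_a$ at all, which is possible because $\abs{a} < 1$. I then fix a Lipschitz partition of unity $\{\theta_i\}$ subordinate to $\{V_i\}$.

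For each $i$ with $V_i \cap \Sigma_a \neq \emptyset$, I pull back via $\phi_i^{-1}$ and apply Corollary \ref{cor:A1.4}: the function $(\theta_i U) \circ \phi_i^{-1}$, extended by zero, agrees with $(\theta_i u) \circ \phi_i^{-1}$ on $\{y_{N-1} > F_i(y')\}$ and with $(\theta_i v) \circ \phi_i^{-1}$ on $\{y_{N-1} < F_i(y')\}$, and the two traces coincide on the graph of $F_i$ because $u = v$ on $\Sigma_a$; I truncate/extend $F_i$ linearly outside $W_i$ so that it becomes a globally Lipschitz function on $\R^{N-2}$. Corollary \ref{cor:A1.4} then gives $(\theta_i U) \circ \phi_i^{-1} \in W^{s,2}(\R^{N-1})$ with norm bounded by the sum of the $W^{s,2}$ norms of the two pieces. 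For indices $i$ with $V_i \cap \Sigma_a = \emptyset$ the conclusion is trivial since $U$ equals $u$ or $v$ on $V_i$, and multiplication by the Lipschitz cut-off $\theta_i$ preserves $W^{s,2}$ by the estimate recorded after the discussion of \eqref{eq:A1.1}.

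Finally, I reassemble: by bilipschitz invariance (discussion (i) below \eqref{eq:A1.1}) each $\theta_i U$ lies in $W^{s,2}(V_i)$ with an estimate in terms of $\llfloor u \rrfloor_{s,\Sp^{N-1}\cap\{x_N>a\}}$ and $\llfloor v \rrfloor_{s,\Sp^{N-1}\cap\{x_N<a\}}$, and then the localisation identity \eqref{eq:A1.1}, instantiated through the bounded operator $T$ defined in discussion (ii), yields $U = \sum_i \theta_i U \in W^{s,2}(\Sp^{N-1})$ together with the desired bound \eqref{eq:A1.9}; the constant $C$ depends on $a$ (through the Lipschitz constants of the $\phi_i$ and the $F_i$), on $s$ and on $N$. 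The only mildly delicate point is the chart construction in Step~1, but the cylindrical coordinates on $\Sp^{N-1}$ make it immediate for $-1 < a < 1$; everything else is a direct transcription of arguments already present in Section \ref{sub:general_facts} and Corollary \ref{cor:A1.4}.
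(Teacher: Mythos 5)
Your localisation strategy is the same as the paper's: cover $\Sp^{N-1}$ by charts adapted to the interface $\Sigma_a$, apply Corollary \ref{cor:A1.4} in each chart, and reassemble with a Lipschitz partition of unity. The chart construction via cylindrical coordinates is fine and, if anything, more explicit than what the paper writes.

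There is, however, a genuine gap at the very end, and it is not the chart construction you flag as the ``only mildly delicate point.'' The reassembly step cannot deliver the pure seminorm bound \eqref{eq:A1.9} as you claim. Multiplication by the cut-offs $\theta_i$ is controlled only by the estimate recorded after \eqref{eq:A1.1}, namely $\llfloor \theta_i w \rrfloor_{s,\Omega} \le C_{\theta_i}\bigl( \llfloor w \rrfloor_{s,\Omega_1} + \norm{w}_{L^2(\Omega_1)}\bigr)$, so your argument produces
\[
	\llfloor U \rrfloor_{s,\Sp^{N-1}} \le C \left( \llfloor u\rrfloor_{s,\Sp^{N-1}\cap \{ x_N> a\}} + \llfloor v\rrfloor_{s,\Sp^{N-1}\cap \{ x_N< a\}} + \norm{u}_{L^2} + \norm{v}_{L^2}\right),
\]
which is strictly weaker than \eqref{eq:A1.9}: the $L^2$ terms are not controlled by the Gagliardo seminorms (constants have vanishing seminorm). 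The paper closes exactly this gap by first applying the construction to the normalised functions $\tilde{u} = u - \fint u\tr{\Sigma_a}$ and $\tilde{v} = v - \fint v\tr{\Sigma_a}$ (legitimate because the two traces, hence their means, coincide, and the seminorms and the glued function's seminorm are unchanged by subtracting a common constant), and then invoking the Poincar\'e inequality \eqref{eq:A1.7} to bound $\norm{\tilde{u}}_{L^2}$ and $\norm{\tilde{v}}_{L^2}$ by the respective seminorms. You need to add this normalisation step; with it, your proof is complete and coincides with the paper's.
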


\begin{proof}
We can apply corollary \ref{cor:A1.4} locally using a partition of unity $\{\theta_i\}_{i=1}^L$ subordinate to a coordinated atlas $(U_i, \varphi_i)_{i=1, \dotsc, L}$. More detailed, we may choose a smooth atlas $(U_i, \varphi_i)_{i=1, \dotsc, L}$ with the additional property that every chart $\varphi_i\colon U_i \subset \Sp^{N-1} \to V_i \subset \R^{N-1}$ satisfies $\varphi_i( U_i \cap \{ x_N \ge a \} ) = V_i \cap \{y_{N-1} \ge a \}$. We may now apply corollary \ref{cor:A1.4} to each pair $u \vert_{U_i} \circ \varphi_i^{-1}, v \vert_{U_i} \circ \varphi_i^{-1}$ and obtain functions $U_i \in W^{s, 2}(V_i)$. Using a subordinated partition of unity $\{\theta_i\}_{i=1}^L$, the function $U(x)= \sum_{i=1}^L \theta_i(x) U_i\circ\varphi_i(x)$ agrees by construction with $u$ on $S^+=\Sp^{N-1}\cap \{ x_N> a\}$ and with $v$ on $S^-=\Sp^{N-1}\cap \{ x_N< a\}$. Furthermore it satisfies for a constant $C>0$
\[
	\llfloor U \rrfloor_{s,\Sp^{N-1}} \le \norm{U}_{W^{s,2}(\Sp^{N-1})} \le C \left( \norm{u}_{W^{s,2}(S^+)}+ \norm{v}_{W^{s,2}(S^-)} \right).
\]
because every $U_i$ does. 
To pass to the desired inequality \eqref{eq:A1.9} we proceed as follows: Given $u,v$ satisfying the assumption, we can apply the above construction to 
\[ \tilde{u} = u - \fint_{\partial S^+} u\tr{\partial S^+},  \;\tilde{v} = v - \fint_{\partial S^-} v\tr{\partial S^-}, \]
because $\tilde{u}$, $\tilde{v}$ still satisfy the assumptions as a consequence of $ u\tr{\partial S^+} = v\tr{\partial S^{-}}$. We obtain $\tilde{U}$ and $U$ with $\tilde{U} = U - \fint_{\partial S^+} u\tr{\partial S^+}$. We can now conclude \eqref{eq:A1.9} by applying the Poincar\'e inequality \eqref{eq:A1.7}, since
\begin{align*}
	\llfloor \tilde{U}\rrfloor_{s,\Sp^{N-1}} &= 	\llfloor U\rrfloor_{s,\Sp^{N-1}}\\
	\norm{\tilde{u}}_{W^{s,2}(S^+)} &= \norm{u - \fint_{\partial S^+} u\tr{\partial S^+} }_{L^2(S^+)} + \llfloor u \rrfloor_{s,S^+} \le C \llfloor u \rrfloor_{s,S^+}\\
	\norm{\tilde{v}}_{W^{s,2}(S^-)} &= \norm{v - \fint_{\partial S^-} v\tr{\partial S^+} }_{L^2(S^-)} + \llfloor v \rrfloor_{s,S^-} \le C \llfloor v \rrfloor_{s,S^-}.
\end{align*}
\end{proof}

\subsection{Interpolation for fractional Sobolev functions} 
\label{sub:interpolation_for_fractional_sobolev_functions}
Commonly one can use a version of the Luckhaus' lemma to interpolate between two functions on the sphere. If an $L^\infty$-estimate is not needed it states:\\

To any $0<\epsilon<\frac{1}{2}$ and $u,v \in W^{1,2}(\Sp^{N-1})$ there is $w \in W^{1,2}(B_1\setminus B_{(1-\epsilon)})$ with $w(x)=u(x)$ and $w((1-\epsilon)x)=v(x)$ for all $x \in \Sp^{N-1}$, satisfying
\begin{equation}\label{eq:A2.1}
\int_{B_1\setminus B_{1-\epsilon}} \abs{Dw}^2 \le 2 \epsilon \int_{\Sp^{N-1}} \abs{D_\tau u}^2 +\abs{D_\tau v}^2 + \frac{1}{\epsilon} \int_{\Sp^{N-1}} \abs{u-v}^2
\end{equation}
Define a linear interpolation on the cylinder $\Sp^{N-1} \times [0,\epsilon]$ by
\[
	\tilde{w}(y,t)=\left(1- \frac{t}{\epsilon}\right) u(y) + \left(\frac{t}{\epsilon}\right) v(y) \text{ for } y \in \Sp^{N-1}, t \in [0,\epsilon]
\]
and then making use of polar coordinates $x=ry, r\in [1-\epsilon, 1], y \in \Sp^{N-1}$ the annulus $A_{1, 1-\epsilon}= B_1\setminus B_{1-\epsilon}$ is close to the cylinder i.e.
\[
	w(ry)=\tilde{w}(y, 1-r) \text{ for } r \in [1-\epsilon, 1], y \in \Sp^{N-1} \text{ i.e. } ry \in A_{1,1-\epsilon}.
\]
One checks that $w$ defined in that way satisfies \eqref{eq:A2.1}.

Our extension of this result to "boundary" functions in a fractional Sobolev space is:
\begin{lemma}\label{lem:A2.1}
Let $\frac{1}{2}<s<1$ and $\epsilon >0 $ be given then there exists $R_\epsilon>0$ with the property: for any $R_\epsilon\le R < 1$ there is $C=C(\epsilon, R)$ s.t. given $u, v \in W^{s,2}(\Sp^{N-1})$ one can find $w \in W^{1,2}(A_{1,R})$ on the annulus $A_{1,R}=B_1\setminus B_R$ with $w(x)=u(x)$ and $w(Rx)=v(x)$ for $x \in \Sp^{N-1}$ that satisfies 
\begin{equation}\label{eq:A2.2}
	\int_{A_{1,R}} \abs{Dw}^2 \le \epsilon  \left( \llfloor u \rrfloor^2_{s,\Sp^{N-1}} + \llfloor v \rrfloor^2_{s,\Sp^{N-1}} \right) + C \norm{u-v}^2_{\Sp^{N-1}}.
\end{equation}
\end{lemma}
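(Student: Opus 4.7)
The plan is to reduce to the classical Luckhaus-style inequality \eqref{eq:A2.1}, which requires $W^{1,2}$ boundary data, by replacing $u$ and $v$ with harmonic extensions evaluated slightly inside $B_1$ and absorbing the resulting error in two thin transition layers near the inner and outer spheres of $A_{1,R}$.  The core technical input is a set of sharp estimates for the harmonic extension $h_u$ of $u\in W^{s,2}(\Sp^{N-1})$ into $B_1$. Writing $u=\sum_k a_k Y_k$ in spherical harmonics of degree $k$ so that $h_u(ry)=\sum_k r^k a_k Y_k(y)$, and using the equivalence $\llfloor u\rrfloor^2_{s,\Sp^{N-1}}\approx \sum_k k^{2s}\abs{a_k}^2$ (the higher-dimensional analogue of the $\Sp^1$ Fourier characterisation that is verified inside the proof of Lemma \ref{lem_B:3.5}), splitting the frequency sum at $k\sim 1/\delta$ yields, for $0<\delta<\tfrac12$,
\begin{align*}
\int_{A_{1,1-\delta}}\abs{Dh_u}^2 &\le C\,\delta^{2s-1}\,\llfloor u\rrfloor^2_{s,\Sp^{N-1}},\\
\int_{\Sp^{N-1}}\abs{D_\tau h_u((1-\delta)y)}^2\,dy &\le C\,\delta^{2s-2}\,\llfloor u\rrfloor^2_{s,\Sp^{N-1}},\\
\norm{h_u((1-\delta)\cdot)-u}^2_{L^2(\Sp^{N-1})} &\le C\,\delta^{2s}\,\llfloor u\rrfloor^2_{s,\Sp^{N-1}}.
\end{align*}
The elementary inputs are $(1-(1-\delta)^k)^2\le(k\delta)^{2s}$ (check the two cases $k\delta\le 1$ and $k\delta>1$) and $k^2(1-\delta)^{2k}\le C\delta^{2s-2} k^{2s}$ (use exponential decay of $(1-\delta)^{2k}$ for $k\delta\ge 1$).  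Analogous bounds hold for $h_v$.

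With these estimates in hand, set $\delta:=(1-R)/3$ and partition $A_{1,R}$ into three sub-annuli.  On the outer piece $A_{1,1-\delta}$ put $w:=h_u$, so $w\tr{\partial B_1}=u$ and $w\tr{\partial B_{1-\delta}}=u^\star:=h_u((1-\delta)\cdot)\in W^{1,2}(\Sp^{N-1})$.  On the inner piece $A_{R,R+\delta}$ use the uniformly bilipschitz map $T(ry):=(2-r/R)y$ from $A_{R,R+\delta}$ onto $A_{1-\delta/R,1}$ and set $w:=h_v\circ T$; then $w\tr{\partial B_R}=v$ and $w\tr{\partial B_{R+\delta}}=v^\star:=h_v((1-\delta/R)\cdot)$.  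On the middle piece $A_{1-\delta,R+\delta}$, which has width exactly $\delta$, apply the classical linear Luckhaus interpolation \eqref{eq:A2.1} (in its obvious adaptation to an annulus of inner radius $\approx 1$ and width $\delta$) between the two functions $u^\star,v^\star\in W^{1,2}(\Sp^{N-1})$.

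The first of the harmonic-extension estimates (and the same estimate for $h_v$ transported through the uniformly bilipschitz $T$) controls $\int\abs{Dw}^2$ on the two transition layers by $C\delta^{2s-1}(\llfloor u\rrfloor^2_{s,\Sp^{N-1}}+\llfloor v\rrfloor^2_{s,\Sp^{N-1}})$.  On the middle annulus the Luckhaus bound \eqref{eq:A2.1} gives
\[
\int\abs{Dw}^2\le C\delta\bigl(\llfloor u^\star\rrfloor^2_{1,\Sp^{N-1}}+\llfloor v^\star\rrfloor^2_{1,\Sp^{N-1}}\bigr)+C\delta^{-1}\norm{u^\star-v^\star}^2_{L^2(\Sp^{N-1})};
\]
the tangential-derivative estimate bounds the first summand by $C\delta^{2s-1}(\llfloor u\rrfloor^2_s+\llfloor v\rrfloor^2_s)$, while the triangle inequality combined with the $L^2$ estimate yields $\norm{u^\star-v^\star}^2_{L^2}\le 3\norm{u-v}^2_{L^2}+C\delta^{2s}(\llfloor u\rrfloor^2_s+\llfloor v\rrfloor^2_s)$, so the remaining term contributes at most $C\delta^{-1}\norm{u-v}^2_{L^2}+C\delta^{2s-1}(\llfloor u\rrfloor^2_s+\llfloor v\rrfloor^2_s)$.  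Summing the three pieces produces
\[
\int_{A_{1,R}}\abs{Dw}^2\le C\delta^{2s-1}\bigl(\llfloor u\rrfloor^2_{s,\Sp^{N-1}}+\llfloor v\rrfloor^2_{s,\Sp^{N-1}}\bigr)+C\delta^{-1}\norm{u-v}^2_{L^2(\Sp^{N-1})}.
\]
Choosing $R_\epsilon\in(0,1)$ so that $C((1-R_\epsilon)/3)^{2s-1}\le\epsilon$, the claim \eqref{eq:A2.2} follows for every $R\ge R_\epsilon$ with $C(\epsilon,R)=3C/(1-R)$.

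The delicate step is the first paragraph: verifying the harmonic-extension estimates in terms of the Gagliardo seminorm in arbitrary dimension $N$.  One may either establish the spherical-harmonic equivalence $\llfloor u\rrfloor^2_s\approx\sum_k k^{2s}\abs{a_k}^2$ directly (a routine extension of the $\Sp^1$ computation reproduced inside Lemma \ref{lem_B:3.5}, using the $\Sp^{N-1}$-Laplace eigenvalues $k(k+N-2)$), or obtain the bounds abstractly via real interpolation between the $L^2$- and $H^1$-boundary estimates for the Poisson integral.  The hypothesis $s>\tfrac12$ enters critically: it is precisely what makes $\delta^{2s-1}\to 0$ as $\delta\to 0$, which is what allows the harmonic-extension approximation error on the two transition layers to be absorbed into the small prefactor $\epsilon$ of \eqref{eq:A2.2}.
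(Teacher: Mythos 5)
Your construction is correct, and it takes a genuinely different route from the paper. The paper solves the two--point Dirichlet problem on $A_{1,R}$ exactly: it decomposes $u=\sum_m p_m$, $v=\sum_m q_m$ into spherical harmonics, writes the solution frequency by frequency as $P_m=\tilde p_m+K[\tilde q_m]$ (harmonic polynomial plus Kelvin transform), computes each energy via Green's formula and the Euler identity, and then splits the frequency range at a radius--dependent threshold $m_R$, using one algebraic estimate for low frequencies (which produces the $C\norm{p_m-q_m}^2$ term) and another for high frequencies (absorbed into $\epsilon\, m^{2s}$ via the monotonicity analysis of the auxiliary functions $f,\tilde f$). You instead build $w$ from three layers: harmonic extensions of $u$ and $v$ in two thin collars of width $\delta\sim 1-R$, glued by the classical $W^{1,2}$ Luckhaus interpolation between the mollified traces $u^\star,v^\star$. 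Your three collar estimates are correct (I checked the elementary inequalities behind $\delta^{2s-1}$, $\delta^{2s-2}$ and $\delta^{2s}$ against the exact formula $\int_{B_1\setminus B_\rho}\abs{D(r^kY_k)}^2=k(1-\rho^{2k+N-2})\norm{Y_k}^2$ and the eigenvalue $k(k+N-2)$), and the bookkeeping $C\delta^{2s-1}(\llfloor u\rrfloor_s^2+\llfloor v\rrfloor_s^2)+C\delta^{-1}\norm{u-v}^2_{L^2}$ does yield the lemma with $R_\epsilon$ chosen so that $C((1-R_\epsilon)/3)^{2s-1}\le\epsilon$; the bound only improves as $R$ increases past $R_\epsilon$, and $C(\epsilon,R)$ is allowed to blow up as $R\to1$. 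What your approach buys is an explicit quantitative rate $\delta^{2s-1}$ in place of the paper's implicit threshold $m_R$, and it isolates cleanly why $s>\frac12$ is needed; what the paper's approach buys is that $P$ is the actual energy minimizer with the given traces, which makes the orthogonality of the frequency blocks exact rather than approximate.

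Two small points you should make explicit. First, the frequency characterisation you invoke is an equivalence of \emph{complete} norms, $\norm{u}^2_{L^2}+\llfloor u\rrfloor^2_{s,\Sp^{N-1}}\approx\sum_k(1+k^{2s})\abs{a_k}^2$ (Lemma \ref{lem:A2.6} of the paper, proved by interpolating weighted direct sums); to pass to the seminorm version $\sum_{k\ge1}k^{2s}\abs{a_k}^2\le C\llfloor u\rrfloor^2_{s,\Sp^{N-1}}$, which is what all three of your collar estimates ultimately require on their right--hand sides, you must first subtract the mean and apply the Poincar\'e inequality \eqref{eq:A1.6} --- exactly the normalisation the paper performs at the end of its own proof. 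Since your whole construction is equivariant under adding a common constant to $u$ and $v$, this costs nothing, but it is not automatic from ``$\approx$''. Second, the adaptation of \eqref{eq:A2.1} to the middle annulus $A_{1-\delta,R+\delta}$ of width $\delta$ and outer radius $1-\delta$ bounded away from $0$ is a rescaling with uniformly controlled constants precisely because $R\ge R_\epsilon$; a one--line remark to that effect closes the argument.
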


Our proof uses heavily the theory of homogenous harmonic polynomials. This is not a surprise since they build, together with their Kelvin transforms, a natural basis for solving the Dirichlet problem on an annulus. As a reference for classical results one may consult \cite[chapter 5]{Axler}.\\
We will use the same notation introduced there: 
\begin{itemize}
\item $\p_m(\R^N)$ denotes the complex vector space of all homogeneous polynomials on $\R^N$ of degree $m$;
\item $\h_m(\R^N)\subset \p_m(\R^N)$ the subspace of all harmonic homogeneous polynomials of degree $m$.
\end{itemize}
We want to emphazise that we do not equip $\p_m(\R^N)$ and $\h_m(\R^N)$ with specific norms or inner products.\\ 
Furthermore we need the Kelvin transform for a map $u: \Omega \subset \R^N \setminus \{0\}$
\begin{equation}\label{eq:A2.3}
	K[u]= \abs{x}^{2-N} u\left(\frac{x}{\abs{x}^2}\right) \text{ for } x \in \Omega^*=\left\{ x \, : \, \frac{x}{\abs{x}^2} \in \Omega\right\}.
\end{equation}
A key feature of the Kelvin transform is $\Delta( K[u] )= K[\abs{x}^4\Delta u]$, compare \cite[Proposition 4.6]{Axler}. Hence the Kelvin transform is a homeomorphism on harmonic functions, \cite[Theorem 4.7]{Axler}. Furthermore for $p \in \p_m(\R^N)$ we have the simple formula $K[p](x)= \frac{p(x)}{\abs{x}^{N+2m -2}}$. $K[p]$ is therefore homogeneous of degree $2-N-m$.\\

The proof of lemma \ref{lem:A2.1} splits into two parts.\\
In the first we characterise $W^{s,2}(\Sp^{N-1})$ using a Fourier decomposition into harmonic homogeneous polynomials. In the second we use this characterisation to estimate the solution of the Dirichlet problem on the annulus $A_{1,R} = B_1\setminus B_R$.

Recall the classical theorem, e.g. \cite[Theorem 5.7]{Axler}
\begin{theorem}\label{theo:A2.2}
Every $p \in \p_m(\R^N)$ can be uniquely written in the form 
\[
	p=p_m + \abs{x}^2p_{m-2} + \dotsb + \abs{x}^{2k}p_{m-2k},
\]
where $k= \lfloor \frac{m}{2} \rfloor $ and each $p_n \in \h_n(\R^N)$.
\end{theorem}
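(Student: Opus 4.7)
The plan is to establish, by induction on $m$, the vector-space decomposition
\[
	\p_m(\R^N) = \h_m(\R^N) \,\oplus\, \abs{x}^2\, \p_{m-2}(\R^N),
\]
from which Theorem \ref{theo:A2.2} follows immediately: any $p\in \p_m$ writes as $p = p_m + \abs{x}^2 q$ with $p_m \in \h_m$ and $q \in \p_{m-2}$, and applying the inductive hypothesis to $q$ and collecting the powers of $\abs{x}^2$ gives the asserted expansion. The base cases $m=0,1$ are trivial, as every polynomial of degree at most $1$ is harmonic.

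To obtain this decomposition, I would equip $\p_m(\R^N)$ with the Fischer inner product: for $p(x)=\sum_{\abs{\alpha}=m} a_\alpha x^\alpha$ write $p(\partial)=\sum_{\abs{\alpha}=m}a_\alpha \partial^\alpha$, and set
\[
	\langle p,\, q \rangle_F \;:=\; \bigl[p(\partial)\, \bar{q}\bigr](0) \;=\; \sum_{\abs{\alpha}=m} \alpha!\, a_\alpha \bar{b}_\alpha.
\]
This is positive-definite. A direct computation on the monomial basis shows that multiplication by $\abs{x}^2$ and the Laplacian are adjoint with respect to $\langle\cdot,\cdot\rangle_F$; indeed, since $\abs{x}^2=\sum_j x_j^2$ corresponds under the symbol map to $\sum_j \partial_j^2 = \Delta$, one obtains
\[
	\langle \abs{x}^2 r,\, q\rangle_F = \langle r,\, \Delta q\rangle_F \qquad \forall\, r\in \p_{m-2},\; q\in\p_m.
\]
Consequently $\bigl(\abs{x}^2 \p_{m-2}\bigr)^{\perp} = \ker\bigl(\Delta \colon \p_m\to\p_{m-2}\bigr) = \h_m$, which yields the desired orthogonal direct sum $\p_m = \h_m \oplus \abs{x}^2 \p_{m-2}$.

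Uniqueness then comes for free by reading the representation inside this direct sum. If $\sum_{j=0}^{k} \abs{x}^{2j}\, p_{m-2j} = 0$ with each $p_{m-2j}\in\h_{m-2j}$, then the $\h_m$-component forces $p_m = 0$, and the residual identity $\sum_{j=1}^{k} \abs{x}^{2(j-1)}\, p_{m-2j} = 0$ in $\p_{m-2}$ is closed by the inductive hypothesis.

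The one point that genuinely requires verification is the adjointness of $M_{\abs{x}^2}$ and $\Delta$ with respect to $\langle\cdot,\cdot\rangle_F$; it is a short combinatorial check on pairs of monomials $x^\alpha, x^\beta$ using $\partial^\gamma x^\beta \tr{0} = \beta!\, \delta_{\gamma\beta}$ and summing over $j=1,\dots,N$. An equally clean alternative, if one prefers to avoid the Fischer product, is to use the $L^2(\Sp^{N-1})$ inner product together with Green's identity to prove the orthogonality $\h_m \perp \abs{x}^2 \p_{m-2}$, and then close by the dimension count $\dim \h_m = \dim \p_m - \dim \p_{m-2}$. Neither route presents a serious obstacle.
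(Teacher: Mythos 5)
Your argument is correct and complete. Note that the paper does not prove Theorem \ref{theo:A2.2} at all: it is stated as a classical fact with a pointer to \cite[Theorem 5.7]{Axler}, so there is no in-paper proof to compare against. Measured against the standard reference, your route differs slightly but pleasantly: Axler--Bourdon--Ramey obtain the key splitting $\p_m(\R^N)=\h_m(\R^N)\oplus\abs{x}^2\p_{m-2}(\R^N)$ by showing that $q\mapsto \Delta(\abs{x}^2 q)$ is an injective, hence bijective, endomorphism of $\p_{m-2}(\R^N)$, whereas you get the same splitting as an orthogonal decomposition for the Fischer inner product via the adjointness $\langle \abs{x}^2 r, q\rangle_F=\langle r,\Delta q\rangle_F$. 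Your version has the advantage of delivering orthogonality (and hence uniqueness) in one stroke and of making the identity $(\abs{x}^2 r)(\partial)=r(\partial)\circ\Delta$ do all the work; the only points worth writing out in full are the positive-definiteness of $\langle\cdot,\cdot\rangle_F$, the fact that $\Delta\bar q=\overline{\Delta q}$ (real coefficients) so the adjointness is genuinely sesquilinear-compatible, and, in the uniqueness step, the trivial observation that $\abs{x}^2$ is not a zero divisor so that $\abs{x}^2 f=0$ forces $f=0$ before the inductive hypothesis is invoked. The alternative you sketch at the end ($L^2(\Sp^{N-1})$ orthogonality plus the dimension count $\dim\h_m=\dim\p_m-\dim\p_{m-2}$) is also viable but is not free: that dimension formula itself requires the surjectivity of $\Delta\colon\p_m\to\p_{m-2}$, which is essentially what the Fischer argument proves, so your primary route is the self-contained one.
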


\begin{lemma}\label{lem:A2.3}
If $p\in \h_m(\R^N)$ and $q$ is a polynomial with strictly less degree then
\begin{equation}\label{eq:A2.4}
	\int_{\Sp^{N-1}} pq = 0 = \int_{\Sp^{N-1}} D_\tau p \cdot D_\tau q
\end{equation}
($ D_\tau p\cdot D_\tau q = Dp \cdot Dq - \frac{\partial p}{\partial r}\frac{\partial q}{\partial r}=\sum_{i=1}^N\frac{\partial p}{\partial x_i} \frac{\partial q}{\partial x_i} - \frac{\partial p}{\partial r}\frac{\partial q}{\partial r}$)\\
If $p,q \in \h_m(\R^N)$ then
\begin{align}\label{eq:A2.5}
	m ( N-2+2m) \int_{\Sp^{N-1}} pq &= \int_{\Sp^{N-1}} Dp\cdot Dq \\\nonumber
	&= \int_{\Sp^{N-1}} D_\tau p\cdot D_\tau q + m^2 \int_{\Sp^{N-1}} \frac{\partial p}{\partial r}\frac{\partial q}{\partial r}.
\end{align}
\end{lemma}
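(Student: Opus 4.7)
The plan is to combine three classical ingredients --- Green's identity on the unit ball, the Laplace--Beltrami eigenvalue equation for solid spherical harmonics, and Euler's homogeneity relation on $\Sp^{N-1}$. I would start by establishing $\int_{\Sp^{N-1}} pq = 0$ for $p\in\h_m(\R^N)$ and $q$ of strictly smaller degree. First I would handle the case $q\in\h_n(\R^N)$ with $n<m$ by applying Green's second identity on $B_1$ to the harmonic pair $(p,q)$: the interior term vanishes and the boundary term becomes $\int_{\Sp^{N-1}}(p\,\partial_r q - q\,\partial_r p)$. Euler's relation $\partial_r p = mp$ and $\partial_r q = nq$ on $\Sp^{N-1}$ turns this into $(n-m)\int_{\Sp^{N-1}}pq$, forcing $\int_{\Sp^{N-1}}pq=0$. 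I would then extend to an arbitrary polynomial $q$ of degree $<m$ by decomposing $q=\sum_{j<m}q_j$ into homogeneous pieces and invoking Theorem \ref{theo:A2.2} to split each $q_j = \sum_k \abs{x}^{2k} h_{j-2k}$ with $h_{j-2k}\in\h_{j-2k}(\R^N)$; since $\abs{x}^{2k}\equiv 1$ on $\Sp^{N-1}$, everything reduces to the harmonic case already handled.

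For the tangential-derivative half of (i), I would derive the eigenvalue equation
\[
\Delta_{\Sp^{N-1}} (p\tr{\Sp^{N-1}}) = -m(m+N-2)\, p\tr{\Sp^{N-1}} \qquad \text{for } p\in\h_m(\R^N).
\]
This comes from the polar decomposition $\Delta = \partial_r^2 + \tfrac{N-1}{r}\partial_r + \tfrac{1}{r^2}\Delta_{\Sp^{N-1}}$ applied to $p(x) = r^m Y(\theta)$ with $Y = p\tr{\Sp^{N-1}}$: harmonicity of $p$ forces $[m(m-1) + m(N-1)]Y + \Delta_{\Sp^{N-1}} Y = 0$. Integration by parts on the closed manifold $\Sp^{N-1}$ then yields
\[
\int_{\Sp^{N-1}} D_\tau p\cdot D_\tau q = -\int_{\Sp^{N-1}}(\Delta_{\Sp^{N-1}}p)\, q = m(m+N-2)\int_{\Sp^{N-1}} pq.
\]
When $\deg q < m$, the right-hand side vanishes by the orthogonality established in the preceding paragraph, giving the second identity in (i); when $p,q\in\h_m(\R^N)$, the same computation provides the tangential contribution to (ii).

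For (ii), the pointwise decomposition $Dp\cdot Dq = D_\tau p\cdot D_\tau q + \partial_r p\,\partial_r q$ integrated over $\Sp^{N-1}$ yields the second displayed equality in the lemma directly. Euler's relation once more gives $\partial_r p = mp$ and $\partial_r q = mq$ on $\Sp^{N-1}$, so the radial term contributes $m^2\int_{\Sp^{N-1}}pq$, and combining with the tangential contribution $m(m+N-2)\int_{\Sp^{N-1}}pq$ just obtained produces the prefactor $m(m+N-2)+m^2 = m(N-2+2m)$ claimed in the first equality.

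I do not anticipate a serious obstacle: every ingredient is classical. The only point requiring care is the reduction step in (i) for general $q$ of degree $<m$, where one must unwind Theorem \ref{theo:A2.2} on each homogeneous piece to reduce to the pure harmonic case; all other steps are direct applications of Green's identity, the polar decomposition of $\Delta$, and Euler's homogeneity relation.
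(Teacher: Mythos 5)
Your argument is correct, and for the $L^2$ orthogonality $\int_{\Sp^{N-1}}pq=0$ it coincides with the paper's proof (Green's identity on $B_1$ plus Euler's relation, then reduction of a general $q$ of lower degree to harmonic pieces via Theorem \ref{theo:A2.2}). For the tangential-derivative identities, however, you take a genuinely different route. The paper never introduces the Laplace--Beltrami operator: for the second identity in \eqref{eq:A2.4} it adds $nm\int pq=0$ to rewrite $\int D_\tau p\cdot D_\tau q$ as $\sum_i\int_{\Sp^{N-1}}\partial_i p\,\partial_i q$ and then uses that $\partial_i p\in\h_{m-1}(\R^N)$, $\partial_i q\in\h_{n-1}(\R^N)$ are orthogonal by the part just proved; for \eqref{eq:A2.5} it applies Euler's formula to the degree-$2m$ function $pq$, Green's identity, and the homogeneity of $Dp\cdot Dq$ (degree $2m-2$) to pass between $\int_{B_1}$ and $\int_{\Sp^{N-1}}$. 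You instead derive the eigenvalue equation $\Delta_{\Sp^{N-1}}\bigl(p\tr{\Sp^{N-1}}\bigr)=-m(m+N-2)\,p\tr{\Sp^{N-1}}$ from the polar form of $\Delta$ and integrate by parts on the closed manifold $\Sp^{N-1}$, which delivers both the vanishing in \eqref{eq:A2.4} and the tangential contribution $m(m+N-2)\int pq$ in \eqref{eq:A2.5} in one stroke, the radial term $m^2\int pq$ again coming from Euler. Your version leans on a standard, slightly heavier piece of machinery (the intrinsic Laplacian on the sphere and the identification of $D_\tau p\cdot D_\tau q$ with $\nabla_{\Sp^{N-1}}p\cdot\nabla_{\Sp^{N-1}}q$) but is more unified; the paper's stays entirely within ambient-space calculus on $\R^N$ and only uses facts it has already set up. Both are complete.
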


\begin{proof}
By linearity and the decomposition of theorem \ref{theo:A2.2} we may assume that $q\in \h_n(\R^N)$ for some $n<m$. Recall that if $u\in C^1$ is homogenous of degree $\lambda$, it satisfies the Euler formula $\abs{x} \frac{\partial u}{ \partial r}(x)= Du(x)\cdot x = \lambda u(x)$. Furthermore observe that $\frac{\partial p}{\partial x_i} \in \h_{m-1}(\R^N)$ and $\frac{\partial q}{\partial x_i} \in \h_{n-1}(\R^N)$ for any $i=1, \dotsc, N$. Hence
\begin{align*}
	n \int_{\Sp^{N-1}} pq &= \int_{\Sp^{N-1}} p \frac{\partial q}{\partial r} = \int_{\Sp^{N-1}}  \frac{\partial p}{\partial r} q + \int_{B_1} p \Delta q- \Delta p \,q\\
	&=m \int_{\Sp^{N-1}} pq;
\end{align*}
\begin{align*}
	\int_{\Sp^{N-1}} D_\tau p\cdot D_\tau q &= \int_{\Sp^{N-1}} D_\tau p\cdot D_\tau q + n m \,pq = \int_{\Sp^{N-1}} D_\tau p\cdot D_\tau q + \frac{\partial p}{\partial r}\frac{\partial q}{\partial r}\\
	&=\sum_{i=1}^N \int_{\Sp^{N-1}} \frac{\partial p}{\partial x_i} \frac{\partial q}{\partial x_i} =0;
\end{align*}
where we applied the (just obtained) orthogonality of $\h_m(\R^N)$ to $\h_n(\R^N)$ for $m\neq n$.\\
To show \eqref{eq:A2.5} observe that $pq$ is homogenous of degree $2m$ hence
\begin{align*}
	& m(N-2+2m) \int_{\Sp^{N-1}} pq = \frac{1}{2}(N-2+2m) \int_{\Sp^{N-1}} \frac{\partial (pq)}{\partial r}\\
	&= (N-2+2m) \int_{B_1} Dp\cdot Dq = (N-2+2m) \int_0^1 \int_{\Sp^{N-1}} (Dp \cdot Dq)(rx) r^{N-1} dr \\
	& = (N-2+2m) \int_{0}^1 r^{2m-2 + N-1} dr \int_{\Sp^{N-1}} Dp \cdot Dq =\int_{\Sp^{N-1}} Dp \cdot Dq \\
	&= \int_{\Sp^{N-1}} D_\tau p\cdot D_\tau q + \frac{\partial p}{\partial r}\frac{\partial q}{\partial r} = \int_{\Sp^{N-1}} D_\tau p\cdot D_\tau q + m^2 \int_{\Sp^{N-1}} pq.
\end{align*}
\end{proof}

On the base of some Hilbert space theory we recover the following classical result and a small extension, compare e.g. \cite[Theorem 5.12]{Axler}:
\begin{theorem}\label{theo:A2.4}
\begin{align}\label{eq:A2.6}
	L^2(\Sp^{N-1}) = \bigoplus_{m=0}^\infty \h_m(\R^N) \\ \nonumber
	W^{1,2}(\Sp^{N-1}) = \bigoplus_{m=0}^\infty \h_m(\R^N)
\end{align}
\end{theorem}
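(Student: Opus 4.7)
The plan is to deduce both decompositions from the orthogonality already contained in Lemma \ref{lem:A2.3} together with a Stone--Weierstrass density argument, promoted from $L^2$ to $W^{1,2}$ by means of the Parseval identity associated with the eigenvalues $m(N-2+m)$ of the spherical Laplacian.

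I would first record that the direct sums are \emph{orthogonal} in both Hilbert spaces: for $m\neq n$ and $p\in\h_m(\R^N)$, $q\in\h_n(\R^N)$, the first identity in \eqref{eq:A2.4} gives $L^2(\Sp^{N-1})$-orthogonality, while the second identity in \eqref{eq:A2.4} combined with \eqref{eq:A2.5} gives $\int_{\Sp^{N-1}}pq + D_\tau p\cdot D_\tau q = 0$, i.e.\ $W^{1,2}(\Sp^{N-1})$-orthogonality. Hence it remains to prove that $\bigoplus_m \h_m(\R^N)$ is dense in each of $L^2(\Sp^{N-1})$ and $W^{1,2}(\Sp^{N-1})$.

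For the $L^2$ density, the key observation is that, since $|x|^2=1$ on $\Sp^{N-1}$, Theorem \ref{theo:A2.2} implies that every polynomial restricted to $\Sp^{N-1}$ already belongs to $\bigoplus_m \h_m(\R^N)$. The algebra of polynomials restricted to $\Sp^{N-1}$ separates points, contains the constants and is closed under complex conjugation; Stone--Weierstrass therefore gives its density in $C(\Sp^{N-1})$, which is itself dense in $L^2(\Sp^{N-1})$. Combined with orthogonality this yields $L^2(\Sp^{N-1}) = \bigoplus_m \h_m(\R^N)$ and, for any $f\in L^2(\Sp^{N-1})$, an expansion $f=\sum_m f_m$ with $f_m$ the $L^2$-orthogonal projection of $f$ onto $\h_m(\R^N)$.

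For the $W^{1,2}$ case, the formula \eqref{eq:A2.5} says precisely that for $p\in \h_m(\R^N)$
\begin{equation*}
\|p\|_{W^{1,2}(\Sp^{N-1})}^2 = \bigl(1+m(N-2+2m)\bigr)\,\|p\|_{L^2(\Sp^{N-1})}^2.
\end{equation*}
Hence, given $f\in W^{1,2}(\Sp^{N-1})$ with $L^2$-expansion $f=\sum_m f_m$, orthogonality shows that the partial sums $S_Kf=\sum_{m\le K} f_m$ are Cauchy in $W^{1,2}$ as soon as $\sum_m (1+m(N-2+2m))\|f_m\|_{L^2}^2 < \infty$, and the $W^{1,2}$ limit must agree with $f$ since it does in $L^2$. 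To establish this Bessel-type bound I would first prove it for $g\in C^\infty(\Sp^{N-1})$: the $f_m$'s are eigenfunctions of the Laplace--Beltrami operator $\Delta_{\Sp^{N-1}}$ with eigenvalue $-m(N-2+m)$, so iterating $\Delta_{\Sp^{N-1}}$ and using $\Delta_{\Sp^{N-1}}^k g\in L^2$ forces rapid decay of $\|g_m\|_{L^2}$ in $m$; consequently the series converges in $C^\infty$ (hence in $W^{1,2}$), and Parseval gives $\|g\|_{W^{1,2}}^2 = \sum_m (1+m(N-2+2m))\|g_m\|_{L^2}^2$. Approximating a general $f\in W^{1,2}(\Sp^{N-1})$ by smooth $g_k\to f$ in $W^{1,2}$, the projections onto $\h_m$ satisfy $(g_k)_m\to f_m$ in $L^2$, and Fatou applied to the partial sums transfers the Parseval inequality to $f$, which closes the argument.

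The main obstacle is the smooth step: one needs that for $g\in C^\infty(\Sp^{N-1})$ the spherical harmonic expansion converges in $C^\infty$. This is standard spectral theory for the elliptic self-adjoint operator $\Delta_{\Sp^{N-1}}$, but to keep the appendix self-contained an alternative would be to extend $f$ to the harmonic function $u(r\omega)=\sum_m r^m f_m(\omega)$ on $B_1$ and read off the decomposition from the identity $\int_{B_1}|Du|^2 = \sum_m \frac{m(N-2+2m)}{N-2+2m}\|f_m\|_{L^2}^2$, bypassing any general spectral machinery.
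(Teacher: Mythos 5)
Your proposal is correct and follows the same basic route as the paper's proof: mutual orthogonality of the spaces $\h_m(\R^N)$ with respect to both inner products, obtained from \eqref{eq:A2.4}, plus density of restrictions of polynomials to the sphere via Theorem \ref{theo:A2.2} and Stone--Weierstrass. Where you genuinely diverge is in the $W^{1,2}$ half: the paper disposes of it with the single remark that polynomials are dense in $L^2(\Sp^{N-1})\supset W^{1,2}(\Sp^{N-1})$, leaving the upgrade from $L^2$-density to density in the $W^{1,2}$ topology implicit, whereas you supply that upgrade explicitly through the eigenfunction relation for the spherical Laplacian and a Bessel--Parseval bound. That extra step is genuinely needed, and your smooth-approximation argument for it is sound; a shorter variant avoiding the $C^\infty$-convergence of the expansion is to observe that if $f\in W^{1,2}(\Sp^{N-1})$ is $\langle\cdot,\cdot\rangle_1$-orthogonal to every $\h_m(\R^N)$, then integrating by parts against $p\in\h_m(\R^N)$ gives $(1+m(N-2+m))\int_{\Sp^{N-1}}fp=0$ for all $m$ and $p$, whence $f=0$ by the $L^2$ statement. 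Two small corrections. First, the norm identity should read $\norm{p}^2_{W^{1,2}(\Sp^{N-1})}=\bigl(1+m(N-2+m)\bigr)\norm{p}^2_{L^2(\Sp^{N-1})}$: the factor $m(N-2+2m)$ in \eqref{eq:A2.5} multiplies the \emph{full} ambient gradient $\int_{\Sp^{N-1}}\abs{Dp}^2$, and subtracting the radial contribution $m^2\norm{p}^2$ leaves $m(N-2+m)$, consistent with the Laplace--Beltrami eigenvalue you quote two lines later. Second, the harmonic-extension alternative you sketch at the end controls only $\sum_m m\norm{f_m}^2$ (an $H^{1/2}$-type quantity), not the $\sum_m m^2\norm{f_m}^2$ required for the $W^{1,2}$ decomposition, so it does not actually bypass the spectral step for this theorem.
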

We are here a bit imprecise in the chosen notation. As a direct sum of vector space both direct sums are the same, but we consider them with different topologies. Furthermore to be precise the equality should be understood restricting each element of the righthand side to the sphere, $\Sp^{N-1}$. In the first case we equip each $\h_m(\R^N)$, with the $L^2$ inner product on the sphere, $\langle p, q \rangle = \int_{\Sp^{N-1}} pq$. $\h_m(\R^N)$ with this topology is a Hilbert subspace of $L^2(\Sp^{N-1})$. In the second equality we equip $\h_m(\Sp^{N-1}$ with the inner product of $W^{1,2}(\Sp^{N-1})$,  $\langle p, q \rangle_1 = \int_{\Sp^{N-1}} pq + \int_{\Sp^{N-1}} D_\tau p \cdot D_\tau q$. With this topology $\h_m(\R^N)$ is a Hilbert subspace of $W^{1,2}(\Sp^{N-1})$.

\begin{proof}
The finite dimensional linear subspaces $\h_m(\R^N), \h_n(\R^N)$ are orthogonal with respect to both inner products $\langle \cdot, \cdot \rangle$, $\langle \cdot, \cdot \rangle_1$ for $m \neq n$. This is a consequence of \eqref{eq:A2.4}. \\
Finally the restriction of polynomials to the sphere are dense in $L^2(\Sp^{N-1})\supset W^{1,2}(\Sp^{N-1})$ due to the Stone-Weierstrass theorem. This proves the theorem since the right hand side is dense in the left. 
\end{proof}

Combining \eqref{eq:A2.5} together with theorem \ref{theo:A2.4} shows that every $u \in L^2(\Sp^{N-1})$ has a unique decomposition $u= \sum_{m=0}^\infty p_m$ with $p_m \in \h_m(\R^N)$ and 
\begin{equation}\label{eq:A2.7}
	\norm{u}^2 = \sum_{m=0}^\infty \norm{p_m}^2.
\end{equation}
Furthermore $u$ is an element of $W^{1,2}(\Sp^{N-1})$ if and only if
\begin{equation}\label{eq:A2.8}
	\infty> \int_{\Sp^{N-1}} \abs{D_\tau u}^2 = \sum_{m=0}^\infty \int_{\Sp^{N-1}} \abs{D_\tau p_m}^2 = \sum_{m=0}^{\infty} m^2 \left(1 + \frac{N-1}{m}\right) \, \norm{p_m}^2.
\end{equation}

This suggests an extension for defining Sobolev spaces on $\Sp^{N-1}$ with noninteger order.
\begin{definition}\label{def:A2.1}
	For a real $s\ge 0$ 
	\begin{equation}\label{eq:A2.9}
		H^s(\Sp^{N-1})=\left\{ u= \sum_{m=0}^\infty p_m \in L^2(\Sp^{N-1})\colon \sum_{m=0}^\infty m^{2s} \norm{p_m}^2 < \infty \right\}.
	\end{equation}
\end{definition}
Now \eqref{eq:A2.8} reads:
\begin{corollary}\label{cor:A2.5}
\begin{equation}\label{eq:A2.10}
	H^1(\Sp^{N-1})=W^{1,2}(\Sp^{N-1}).
\end{equation}
\end{corollary}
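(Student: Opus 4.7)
The plan is to unwind both definitions to a condition on the Fourier coefficients $p_m$ in the decomposition $u = \sum_{m=0}^\infty p_m$ of Theorem \ref{theo:A2.4}, and then compare the two resulting series. By definition \ref{def:A2.1}, $u \in H^1(\Sp^{N-1})$ if and only if $u \in L^2(\Sp^{N-1})$ and $\sum_{m=0}^\infty m^2 \norm{p_m}^2 < \infty$. On the other hand, \eqref{eq:A2.8} already established that $u \in W^{1,2}(\Sp^{N-1})$ is characterised by $u \in L^2(\Sp^{N-1})$ together with $\sum_{m=0}^\infty m^2 \bigl(1 + \frac{N-1}{m}\bigr)\norm{p_m}^2 < \infty$.

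Thus everything reduces to showing that the two series
\[
\sum_{m=0}^\infty m^2 \norm{p_m}^2 \quad\text{and}\quad \sum_{m=0}^\infty m^2 \Bigl(1 + \frac{N-1}{m}\Bigr)\norm{p_m}^2 = \sum_{m=0}^\infty \bigl(m^2 + (N-1)m\bigr)\norm{p_m}^2
\]
converge simultaneously. This is immediate from an elementary two-sided bound: the $m=0$ term contributes $0$ in both sums, while for $m \ge 1$ one has $1 \le 1 + \tfrac{N-1}{m} \le N$, so
\[
m^2 \norm{p_m}^2 \le m^2 \Bigl(1+\tfrac{N-1}{m}\Bigr) \norm{p_m}^2 \le N\, m^2 \norm{p_m}^2.
\]
Summing over $m \ge 1$ proves that the two series are comparable, hence one converges if and only if the other does. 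Combined with the identical $L^2$ requirement this gives the set equality $H^1(\Sp^{N-1}) = W^{1,2}(\Sp^{N-1})$, and the chain of inequalities shows that the corresponding Hilbert norms are equivalent. I do not foresee any real obstacle here; the only content is noting that the factor $1 + (N-1)/m$ appearing in \eqref{eq:A2.8} is bounded between $1$ and $N$ on the relevant range $m\ge 1$.
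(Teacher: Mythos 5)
Your proposal is correct and is essentially the paper's own argument: the paper states the corollary as an immediate rereading of \eqref{eq:A2.8} together with Definition \ref{def:A2.1}, and your explicit two-sided bound $1 \le 1 + \tfrac{N-1}{m} \le N$ for $m \ge 1$ (with the $m=0$ term vanishing in both series) is exactly the comparison that makes this rereading rigorous. No gap.
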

As a consequence of corollary \ref{cor:A2.5} we will see that \eqref{eq:A2.9} provides an equivalent characterisation of the fractional Sobolev spaces:
\begin{lemma}\label{lem:A2.6}
	\begin{equation}\label{eq:A2.11}
		H^s(\Sp^{N-1})=W^{s,2}(\Sp^{N-1})= \left(W^{1,2}(\Sp^{N-1}), L^2(\Sp^{N-1})\right)_{1-s,2}
	\end{equation}
\end{lemma}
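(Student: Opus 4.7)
My plan is to split the chain of identities into two independent pieces. The first, $W^{s,2}(\Sp^{N-1})=(W^{1,2}(\Sp^{N-1}),L^2(\Sp^{N-1}))_{1-s,2}$, is a direct application of the localization machinery already spelled out for Lipschitz Euclidean domains in subsection \ref{sub:general_facts}. Covering $\Sp^{N-1}$ by a finite smooth atlas $(U_i,\varphi_i)_{i=1}^L$ with a subordinate partition of unity $\{\theta_i\}$, the restriction map $R:u\mapsto(u\circ\varphi_i^{-1})_i$ and synthesis map $T:(v_i)_i\mapsto\sum_i \theta_i\,(v_i\circ\varphi_i)$ are continuous between $W^{1,2}(\Sp^{N-1})$ and the finite tensor product $\bigotimes_i W^{1,2}(\varphi_i(U_i))$, and likewise between the $L^2$ spaces, with $T\circ R=\mathbf{1}$. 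The tensor-product interpolation identity \eqref{eq:A1.1} combined with the Euclidean identity $W^{s,2}(V)=(W^{1,2}(V),L^2(V))_{1-s,2}$ for Lipschitz $V$ then yields the claim.

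The core of the proof is the remaining spectral identity $H^s(\Sp^{N-1})=(W^{1,2}(\Sp^{N-1}),L^2(\Sp^{N-1}))_{1-s,2}$, which I would prove by an explicit computation of the $K$-functional. The input is the orthogonal decomposition $u=\sum_{m\ge 0}p_m$, $p_m\in\h_m(\R^N)$, from Theorem \ref{theo:A2.4}: writing $\lambda_m=m(m+N-2)$, formula \eqref{eq:A2.5} shows that this decomposition \emph{simultaneously} diagonalizes the $L^2$ and $W^{1,2}$ inner products, with $\norm{u}^2_{L^2}=\sum\norm{p_m}^2$ and $\norm{u}^2_{W^{1,2}}\asymp\sum(1+\lambda_m)\norm{p_m}^2$. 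Consequently the minimization defining $K(t,u)$ decouples mode by mode; within each finite-dimensional $\h_m$, the optimal split $p_m=v_m+w_m$ is the proportional one $v_m=\alpha_m(t)\,p_m$ with $\alpha_m(t)=t^2/(t^2+1+\lambda_m)$, yielding
\[
K(t,u)^2 \;\asymp\; \sum_{m\ge 0}\frac{t^2(1+\lambda_m)}{t^2+1+\lambda_m}\,\norm{p_m}^2.
\]
Fubini together with the substitution $t=\sqrt{1+\lambda_m}\,\tau$ then gives
\[
\int_0^\infty t^{-2(1-s)-1}K(t,u)^2\,dt \;\asymp\; C_s\sum_{m\ge 0}(1+\lambda_m)^s\norm{p_m}^2 \;\asymp\; \sum_{m\ge 0}(1+m^{2s})\norm{p_m}^2,
\]
where $C_s=\int_0^\infty \tau^{2s-1}(\tau^2+1)^{-1}\,d\tau$ is finite for $0<s<1$. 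The right-hand side is precisely the defining norm of $H^s(\Sp^{N-1})$ in \eqref{eq:A2.9}, so the second identity follows, and combining with the first gives the full chain \eqref{eq:A2.11}.

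The only nontrivial point is the lower bound $K(t,u)^2 \gtrsim \sum_m t^2(1+\lambda_m)/(t^2+1+\lambda_m)\,\norm{p_m}^2$: I need to rule out that some cross-mode mixing in an arbitrary decomposition $u=v+w$ beats the mode-by-mode optimum. This is a direct consequence of the simultaneous orthogonal diagonalization of both norms on $\{\h_m\}_{m\ge 0}$, since for any splitting $v=\sum v_m$, $w=\sum w_m$ with $v_m,w_m\in\h_m$ and $v_m+w_m=p_m$, the sums $\norm{v}^2_{W^{1,2}}$ and $\norm{w}^2_{L^2}$ already decouple over $m$ by orthogonality. Beyond this, the argument is routine bookkeeping (in particular, the equivalence between $(a+tb)^2$ and $a^2+t^2b^2$ used in reducing the $K$-functional to the quadratic form above), and I do not anticipate a genuine analytic obstruction.
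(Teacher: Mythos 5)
Your proof is correct and follows essentially the same route as the paper: the middle identity is taken from the localization machinery of the appendix, and the spectral identity is obtained by computing the quadratic variant of the $K$-functional mode by mode, using the simultaneous orthogonal diagonalization of the $L^2$ and $W^{1,2}$ inner products from Theorem \ref{theo:A2.4} and \eqref{eq:A2.5}, followed by the same rescaling substitution in $t$. The only presentational difference is that the paper packages this computation as an abstract interpolation lemma for weighted direct sums of Hilbert spaces (Lemma \ref{lem:A2.7}) and then applies it with weights $1$ and $1+m^2$, whereas you carry out the same minimization and integral directly.
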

We postpone the proof after the next lemma.\\
Identifying interpolation spaces between $W^{1,2}(\Sp^{N-1})$ and $L^2(\Sp^{N-1})$ is now the same question as interpolating between some direct sums of Hilbert spaces with weights. This can be settled easily in a more general setting. Our presentation follows the $L^2$ equivalent of L. Tartar in  \cite[chapter 23]{Tartar}.\\
We consider the situation of a direct sum of Hilbert spaces:
\begin{equation}\label{eq:A2.12}
	H=\bigoplus_{m=0}^\infty H_m
\end{equation}
\begin{lemma}\label{lem:A2.7}
	For a sequence of positive numbers $w=\{w_m\}_{m=0}^\infty$, let 
	\begin{equation}\label{eq:A2.13}
		E(w)=\left\{ a=(a_m)_{m} \in H \colon \sum_{m=0}^\infty w_m \norm{a_m}^2 < \infty \right\} \text{ with } \norm{a}^2_w = \sum_{m=0}^\infty w_m \norm{a_m}^2.
	\end{equation}
	If $w(0)=\{w_m(0)\}_m, w(1)=\{w_m(1)\}$ are two such sequences, then for $0<\theta <1$ one has
	\begin{equation}\label{eq:A2.14}
		\left(E(w(0)),E(w(1))\right)_{\theta,2} = E(w(\theta)) \text{ where } w_m(\theta)=w_m(0)^{1-\theta}w_m(1)^\theta.
	\end{equation}
\end{lemma}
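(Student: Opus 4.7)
The proof plan is to compute the real-interpolation $K$-functional for the pair $(E(w(0)), E(w(1)))$ explicitly, essentially coordinate-wise, using the Hilbert-space structure of each $H_m$. Since computing $K(t,a)$ directly involves an infimum of a sum of two norms (not squared), the first step I would take is to replace $K$ by the equivalent quadratic functional
\begin{equation*}
\tilde K(t,a)^2 \;=\; \inf_{a=a_0+a_1}\Bigl(\|a_0\|_{w(0)}^2 + t^2\|a_1\|_{w(1)}^2\Bigr),
\end{equation*}
noting that $\tfrac{1}{\sqrt 2}K(t,a)\le \tilde K(t,a)\le K(t,a)$, so the resulting interpolation norm $\|t^{-\theta}\tilde K(t,a)\|_{L^2(\R_+;dt/t)}$ is equivalent to the one defining $(E(w(0)),E(w(1)))_{\theta,2}$.

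Next I would exploit the product structure. Since the weighted norms on $E(w(i))$ are just weighted Hilbert-space sums, the infimum defining $\tilde K(t,a)^2$ decouples: for each $m$ we minimise $w_m(0)\|a_{0,m}\|^2+t^2w_m(1)\|a_{1,m}\|^2$ over decompositions $a_{0,m}+a_{1,m}=a_m$ in $H_m$. A routine parallelogram-style argument (or Lagrange multipliers) shows the optimum is attained when both summands are scalar multiples of $a_m$, with $a_{0,m}=\tfrac{t^2w_m(1)}{w_m(0)+t^2w_m(1)}a_m$, yielding
\begin{equation*}
\tilde K(t,a)^2 \;=\; \sum_{m=0}^\infty \frac{t^2\,w_m(0)\,w_m(1)}{w_m(0)+t^2w_m(1)}\,\|a_m\|^2.
\end{equation*}

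Now I would integrate against $t^{-2\theta}\,dt/t$. Term by term, Tonelli's theorem (everything is non-negative) gives
\begin{equation*}
\int_0^\infty t^{-2\theta}\tilde K(t,a)^2\,\frac{dt}{t} \;=\; \sum_{m=0}^\infty \|a_m\|^2\int_0^\infty \frac{t^{1-2\theta}\,w_m(0)\,w_m(1)}{w_m(0)+t^2w_m(1)}\,dt.
\end{equation*}
For each $m$ the substitution $s=t^2w_m(1)/w_m(0)$ reduces the integral to $\tfrac{1}{2}w_m(0)^{1-\theta}w_m(1)^{\theta}\int_0^\infty\frac{s^{-\theta}}{1+s}\,ds$, and the last integral equals $\pi/\sin(\pi\theta)$, finite precisely because $0<\theta<1$. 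Hence the interpolation norm squared equals (up to the dimensional constant $\tfrac{\pi}{2\sin(\pi\theta)}$) the norm $\sum_m w_m(\theta)\|a_m\|^2$ with $w_m(\theta)=w_m(0)^{1-\theta}w_m(1)^\theta$, which is exactly the norm of $E(w(\theta))$. Equivalence of $K$ and $\tilde K$ then yields \eqref{eq:A2.14}.

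The only non-trivial step is the coordinate-wise optimisation, but since we work in a Hilbert space the minimiser is explicit; everything else is Fubini plus a Beta-integral identity. A mild subtlety is that to interchange sum and integral one needs the bound $\tfrac{t^2w_m(0)w_m(1)}{w_m(0)+t^2w_m(1)}\le \min(w_m(0)t^2, w_m(1))$ to ensure finiteness when $a\in E(w(0))\cap E(w(1))$; for general $a$ in the interpolation space positivity suffices. The same argument works symmetrically in $w(0)\leftrightarrow w(1)$, which reassuringly reproduces the symmetry $w_m(\theta)=w_m(1)^\theta w_m(0)^{1-\theta}$.
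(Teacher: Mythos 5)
Your proposal is correct and follows essentially the same route as the paper: replace $K$ by the equivalent quadratic functional, compute the coordinate-wise minimiser explicitly in each Hilbert summand $H_m$, interchange sum and integral by positivity, and evaluate the resulting Beta-type integral to produce the constant $\tfrac{\pi}{2\sin(\pi\theta)}$ times $\sum_m w_m(\theta)\norm{a_m}^2$. No gaps.
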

\begin{proof}
We use a variant of the $K$-functional, namely 
\[
	K_2(t,a)= \inf_{a=b+c} \left( \norm{b}^2_{w(0)} + t^2 \norm{c}^2_{w(1)}\right)^{\frac{1}{2}};
\]
hence $K_2(t,a) \le K(t,a) \le \sqrt{2} K_2(t,a)$. Now for $a=\sum_{m} a_m$ we have $K_2(t,a)^2= \inf_{a_m=b_m+c_m} \sum_{m=0}^2 w_m(0) \norm{b_m}^2_{w(0)}+ t^2 w_m(1) \norm{c_m}^2_{w(1)}$. We can calculate $K_2(t,a)$ explicitly, because one is led to choose $b_m= \lambda_m a_m + d_m$ with $d_m \in H_m\cap span(a_m)^\perp$. Then $c_m= (1-\lambda_m) a_m - d_m$ and so $\norm{b_m}^2= \lambda_m^2 \norm{a_m}^2 + \norm{d_m}$, $\norm{c_m}^2= (1-\lambda_m)^2 \norm{a_m}^2 + \norm{d_m}$. Hence $d_m =0$ and one is led to choose for $b_m$ the value $\lambda_m$ that minimises $w_m(0)\lambda_m^2 \norm{a_m}^2+ t^2 w_m(1) (1-\lambda_m)^2 \norm{a_m}^2$. One finds
\[
	\lambda_m = \frac{t^2 w_m(1)}{w_m(0)+ t^2 w_m(1)} \text{ and } 1-\lambda_m = \frac{w_m(0)}{w_m(0)+ t^2 w_m(1)};
\]
so $K_2(t,a)$ is computed explicitly by
\[
	K_2(t,a)^2 = \sum_{m=0}^\infty \norm{a_m}^2 t^2 \frac{w_m(0)w_m(1)}{w_m(0)+ t^2 w_m(1)}.
\]
Finally Lebesgue's monotone convergence theorem provides
\[
	\norm{t^{-\theta}K_2(t,a)}^2_{L^2(\R_+, \frac{dt}{t})}= \sum_{m=0}^\infty \norm{a_m}^2  \int_{0}^\infty t^{2(1-\theta)} \frac{w_m(0)w_m(1)}{w_m(0)+ t^2 w_m(1)} \frac{dt}{t},
\]
making the change of variables $t= \sqrt{\frac{w_m(0)}{w_m(1)}} s$, one finds
\[
	\int_{0}^\infty t^{2(1-\theta)} \frac{w_m(0)w_m(1)}{w_m(0)+ t^2 w_m(1)} \frac{dt}{t}= w_m(0)^{1-\theta} w_m(1)^\theta \int_{0}^\infty \frac{s^{1-2\theta}}{1+s^2} ds.
\]
Since $C= \int_{0}^\infty \frac{s^{1-2\theta}}{1+s^2} ds = \frac{\pi}{2\sin(\pi \theta)}$, this gives
\[
	\norm{t^{-\theta}K_2(t,a)}^2_{L^2(\R_+, \frac{dt}{t})}= C \sum_{m=0}^\infty w_m(\theta) \norm{a_m}^2.
\]
\end{proof}
\begin{proof}[Proof of lemma \ref{lem:A2.6}]
There is unique decomposition $L^2(\Sp^{N-1}) \to \bigoplus_m \h_m(\R^N)$  with $u\mapsto \{ p_m \}_m $ and $u=\sum_{m} p_m$ as seen in theorem \ref{theo:A2.4}.  This map is an isometry between $L^2(\Sp^{N-1})$ and $H^0(\Sp^{N-1})$ and continuously linear between $W^{1,2}(\Sp^{N-1})$ and $H^1(\Sp^{N-1})$. Thus lemma \ref{lem:A2.7} showed that the decomposition is a linear homeomorphism between
\[
	W^{s,2}(\Sp^{N-1})= \left(W^{1,2}(\Sp^{N-1}), L^2(\Sp^{N-1})\right)_{1-s,2}
\]
and 
\[
\left(H^1(\Sp^{N-1}), H^0(\Sp^{N-1})\right)_{1-s,2}= H^s(\Sp^{N-1});	
\]
that is the statement of lemma \ref{lem:A2.6}.
\end{proof}

Now we come to the second part estimating the energy of the solution to the Dirichlet problem on $A_{1,R}=B_1\setminus B_R$ for a fixed $0<R<1$. We start with estimating them for polynomials and after that we will use these estimates to conclude it for general functions.\\

Consider the following Dirichlet problem:\\
Let $p,q \in \h_m(\R^N)$ be given, and let $P: A_{1,R} \to \R$ be the unique solution of
\begin{equation}\label{eq:A2.15}
	\begin{cases}
		\Delta P = 0 , &\text{ on  } A_{1,R}\\
		P(x)=p(x) \text{ and } P(Rx)= q(x) &\text{ for all } x \in \Sp^{N-1}
	\end{cases}	
\end{equation}

\begin{lemma}\label{lem:A2.8}
Let $p,q$ be two given constants, i.e. $p,q \in \h_0(\R^N)$, then there are $\tilde{p},\tilde{q} \in \h_0(\R^N)$ s.t. the solution $P$ of \eqref{eq:A2.15} is
\begin{equation}\label{eq:A2.16}
P(x) = \begin{cases}
	\tilde{p}+ \tilde{q} \ln(r), &\text{ if } N=2\\
	\tilde{p}+ \frac{\tilde{q}}{\abs{x}^{N-2}}, &\text{ if } N>2;
\end{cases} 	
\end{equation}
furthermore we have the estimate
\begin{equation}\label{eq:A2.17}
	\int_{A_{1,R}} \abs{DP}^2 = \begin{cases}
		\frac{2\pi}{-\ln(R)} \abs{p-q}^2, &\text{ if } N=2\\
		\frac{N(N-2)\omega_N}{R^{2-N}-1} \abs{p-q}^2, &\text{ if } N>2;
	\end{cases}
\end{equation}
\end{lemma}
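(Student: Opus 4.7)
The plan is to exploit radial symmetry of the boundary data. Since $p,q\in\h_0(\R^N)$ are constants, the boundary values of the Dirichlet problem \eqref{eq:A2.15} are invariant under rotations fixing the origin. By uniqueness of the solution to the Dirichlet problem on $A_{1,R}$, the solution $P$ must therefore be invariant under the same rotations, i.e.\ $P(x)=f(\abs{x})$ for some $f\colon [R,1]\to \R$. The equation $\Delta P=0$ reduces to the ODE $f''(r)+\frac{N-1}{r}f'(r)=0$, whose general solution is
\[
	f(r)=\tilde{p}+\tilde{q}\ln(r)\quad (N=2),\qquad f(r)=\tilde{p}+\tilde{q}\,r^{2-N}\quad (N>2),
\]
with $\tilde{p},\tilde{q}$ determined by the two boundary conditions $f(1)=p$, $f(R)=q$.

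Solving these linear conditions gives $\tilde{q}=(q-p)/\ln(R)$ for $N=2$ and $\tilde{q}=(p-q)/(1-R^{2-N})$ for $N>2$, which yields the claimed form \eqref{eq:A2.16}. For the energy, I would compute $\abs{DP}(x)=\abs{f'(\abs{x})}$ (since $DP=f'(r)x/\abs{x}$) and pass to polar coordinates:
\[
	\int_{A_{1,R}}\abs{DP}^2 = N\omega_N \int_R^1 f'(r)^2\, r^{N-1}\,dr.
\]
In both cases $f'(r)^2 r^{N-1}$ has a clean antiderivative: for $N=2$ we get $(\tilde{q})^2/r$ giving $(p-q)^2/\ln(R)^2 \cdot \ln(1/R)$, and for $N>2$ we get $(N-2)^2\tilde{q}^2 r^{1-N}$, whose integral over $[R,1]$ equals $(N-2)\tilde{q}^2(R^{2-N}-1)$.

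Substituting the explicit values of $\tilde{q}$ from above then produces
\[
	\int_{A_{1,R}}\abs{DP}^2=\frac{2\pi}{-\ln(R)}\abs{p-q}^2 \quad(N=2),\qquad \int_{A_{1,R}}\abs{DP}^2=\frac{N(N-2)\omega_N}{R^{2-N}-1}\abs{p-q}^2\quad (N>2),
\]
which is \eqref{eq:A2.17}. There is no genuine obstacle here; the only points requiring care are the invocation of uniqueness to reduce to a radial function and keeping track of signs when $R<1$ (so $\ln(R)<0$ and $R^{2-N}>1$), ensuring both expressions are manifestly nonnegative.
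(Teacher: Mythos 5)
Your proof is correct and follows essentially the same route as the paper: both write down the explicit radial harmonic ansatz ($\tilde{p}+\tilde{q}\ln r$ for $N=2$, $\tilde{p}+\tilde{q}\abs{x}^{2-N}$ for $N>2$), determine $\tilde{p},\tilde{q}$ from the two boundary conditions, and then compute the Dirichlet energy. The only immaterial difference is that the paper evaluates the energy via Green's formula as the boundary integral $\int_{\partial A_{1,R}} P\,\frac{\partial P}{\partial \nu}$, whereas you integrate $\abs{f'(r)}^2 r^{N-1}$ directly in polar coordinates; both yield \eqref{eq:A2.17}.
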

\begin{proof}
It is a standard calculation that $\ln(r)$ for $N=2$ and $\abs{x}^{2-N}$ for $N>2$ are harmonic on $\R^N\setminus \{0\}$, hence the $P(x)=P(r)$ defined by \eqref{eq:A2.16} are harmonic. The boundary conditions in \eqref{eq:A2.15} translate to 
\begin{align*}
	P(1)&=p \text{ hence } \tilde{p}=p \text{ for } N=2 \text{ and } \tilde{p}+\tilde{q}=p \text{ for } N>2\\
	P(R)&=q \text{ hence } \tilde{p}+ \tilde{q} \ln(R)=q \text{ for } N=2 \text{ and } \tilde{p}+\frac{\tilde{q}}{R^{2-N}}=q \text{ for } N>2.
\end{align*}
In the case of $N=2$ one solves for $\tilde{q}= \frac{q-p}{\ln(R)}$, in the case of $N>3$ for $\tilde{q}= \frac{q-p}{R^{2-N}-1}$.\\
Apply Green's formula on the annulus and then insert the boundary conditions in the second to obtain:
\begin{align}\label{eq:A2.18}
	\int_{A_{1,R}} \abs{DP}^2 = \int_{\partial A_{1,R}} P\frac{\partial P}{\partial \nu} &=\int_{\Sp^{N-1}} P(x) \frac{\partial P}{\partial r}(x) - \int_{\partial B_R} P(x) \frac{\partial P}{\partial r}(x) \\\nonumber
	&=\int_{\Sp^{N-1}} p(x)\frac{\partial P}{\partial r}(x) - \int_{\partial B_R} q(R^{-1}x) \frac{\partial P}{\partial r}(x).
\end{align}
For $N=2$, $\frac{\partial P}{\partial r}(r)= \frac{\tilde{q}}{r}$ otherwise $\frac{\partial P}{\partial r}(r)= \frac{(2-N)\tilde{q}}{r^{N-1}}$, hence in two dimensions we found
\[
	\int_{\partial A_{1,R}} P \frac{\partial P}{\partial \nu} = 2\pi \left( p\frac{\partial P}{\partial r}(1)- q\frac{\partial P}{\partial r}(R)R\right)= \frac{2\pi}{-\ln{R}} \abs{p-q}^2;
\]
in higher dimensions
\[
	\int_{\partial A_{1,R}} P \frac{\partial P}{\partial \nu} = N\omega_N\left( p\frac{\partial P}{\partial r}(1)- q\frac{\partial P}{\partial r}(R)R^{N-1}\right)= \frac{N(N-2)\omega_N}{R^{2-N}-1} \abs{p-q}^2.
\]
\end{proof}

For the estimates in the case $m\ge 1$ we introduce two functions:
\begin{align}\label{eq:A2.19}
	f(t)&= \frac{\cosh((N-1)t)-1}{\sinh(t)}\\\nonumber
	\tilde{f}(t)&=\frac{t}{t-t^{-1}}. 
\end{align}

\begin{lemma}\label{lem:A2.9}
	Let $p,q \in \h_m(\R^N)$, $m>0$, be given. Then there are $\tilde{p},\tilde{q} \in \h_m(\R^N)$ s.t. that the solution to \eqref{eq:A2.15} has the form
	\begin{equation}\label{eq:A2.20}
	P(x) = \tilde{p}(x) + K[\tilde{q}](x) = \tilde{p}(x) + \frac{\tilde{q}(x)}{\abs{x}^{N+2m-2}};	
	\end{equation}
	furthermore we can estimate the energy either by
	\begin{equation}\label{eq:A2.21}
		\int_{A_{1,R}} \abs{DP}^2 - \frac{2m + N -2}{ R^{-m-N+2}-R^m }\, \norm{p-q}^2  \le f(\ln(R^{-m})) \, m \left(\norm{p}^2 +\norm{q}^2\right);
	\end{equation}
	or by
	\begin{equation}\label{eq:A2.22}
		\int_{A_{1,R}} \abs{DP}^2 \le 4N \tilde{f}(R^{-m}) \,m\left(\norm{p}^2 +\norm{q}^2\right).
	\end{equation}
\end{lemma}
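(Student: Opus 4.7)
The plan is to follow the same blueprint as Lemma \ref{lem:A2.8}: set up an explicit harmonic ansatz depending on two $\h_m$-valued coefficients, fix those coefficients via the boundary conditions, evaluate the Dirichlet energy through Green's identity, and estimate the resulting quadratic form in $p$ and $q$. For $m\ge 1$ the natural two-dimensional space of harmonic functions on $A_{1,R}$ whose boundary traces lie in $\h_m$ is spanned by an element of $\h_m$ (homogeneous of degree $m$) and its Kelvin transform (harmonic and homogeneous of degree $2-N-m$). I would therefore look for $P$ in the form $P(x)=\tilde p(x)+K[\tilde q](x)=\tilde p(x)+\tilde q(x)/\abs{x}^{N+2m-2}$ with $\tilde p,\tilde q\in\h_m(\R^N)$ to be determined. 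Using the homogeneity of the two pieces, the boundary conditions $P(\omega)=p(\omega)$ on $\Sp^{N-1}$ and $P(R\omega)=q(\omega)$ reduce to the $2\times 2$ linear system $\tilde p+\tilde q=p$, $R^m\tilde p+R^{2-N-m}\tilde q=q$; its determinant $R^{2-N-m}-R^m$ is strictly positive for $R<1,\,m\ge 1$, so the system has the unique solution $\tilde p=(R^{2-N-m}p-q)/(R^{2-N-m}-R^m)$, $\tilde q=(q-R^mp)/(R^{2-N-m}-R^m)$, which gives both existence and \eqref{eq:A2.20}.

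For the energy, Green's identity combined with Euler's formula ($r\partial_r\tilde p=m\tilde p$ and $r\partial_r K[\tilde q]=(2-N-m)K[\tilde q]$) together with the change of variables $y=R\omega$ on $\partial B_R$ yields after a short calculation
\[
\int_{A_{1,R}}\abs{DP}^2 = m\langle p,\tilde p\rangle +(2-N-m)\langle p,\tilde q\rangle -mR^{N+m-2}\langle q,\tilde p\rangle -(2-N-m)R^{-m}\langle q,\tilde q\rangle,
\]
where $\langle\cdot,\cdot\rangle$ denotes the $L^2(\Sp^{N-1})$-inner product. Substituting the explicit formulas for $\tilde p,\tilde q$ and collecting terms, the coefficient of $\langle p,q\rangle$ turns out to equal exactly $-2(N+2m-2)/(R^{2-N-m}-R^m)$, i.e.\ precisely the cross term inside $(N+2m-2)/(R^{2-N-m}-R^m)\,\norm{p-q}^2$. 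Subtracting that quantity therefore cancels the off-diagonal contribution entirely and leaves
\[
\int_{A_{1,R}}\abs{DP}^2-\tfrac{2m+N-2}{R^{2-N-m}-R^m}\norm{p-q}^2 = \frac{A(R)\norm{p}^2+B(R)\norm{q}^2}{R^{2-N-m}-R^m},
\]
with $A(R)=m(R^{2-N-m}-1)+(N+m-2)(R^m-1)$ and $B(R)=A(R^{-1})=m(R^{N+m-2}-1)+(N+m-2)(R^{-m}-1)$. A Jensen-type argument applied to the convex function $a\mapsto R^a$ with weights $m/(N+2m-2),(N+m-2)/(N+2m-2)$ shows $A,B\ge 0$.

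It remains to prove the two scalar inequalities. For \eqref{eq:A2.21} one needs $\max\{A(R),B(R)\}\le m(R^{2-N-m}-R^m)\,f(-m\ln R)$; writing $t=-\ln R>0$, this becomes a purely hyperbolic statement that reduces, via the identity $\cosh((N-1)mt)-1=2\sinh^2((N-1)mt/2)$ and the factorisation $R^{2-N-m}-R^m=e^{-mt}(e^{(N+2m-2)t}-1)$, to elementary monotonicity of $\sinh$. I expect this hyperbolic bookkeeping to be the only real computational obstacle. For the coarser estimate \eqref{eq:A2.22} one can dispense with the cancellation of the cross term and work directly with the explicit energy formula: applying $\abs{\langle p,q\rangle}\le\tfrac{1}{2}(\norm{p}^2+\norm{q}^2)$ to the off-diagonal piece bounds $\int_{A_{1,R}}\abs{DP}^2$ by a sum of diagonal coefficients with common denominator $R^{2-N-m}-R^m$; then, using $1-R^{2m}=R^m(R^{-m}-R^m)$ together with the elementary bounds $R^{2-N-m}+1\le 2R^{2-N-m}$ and $R^m+1\le 2$, the coefficients of $\norm{p}^2$ and $\norm{q}^2$ collapse to $4Nm/(1-R^{2m})=4Nm\tilde f(R^{-m})$, as required.
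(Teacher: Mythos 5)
Your proposal is correct and follows essentially the same route as the paper's proof: the same ansatz $\tilde p + K[\tilde q]$ with the same $2\times 2$ system, the same Green's/Euler-formula evaluation of the energy (your cross-term coefficient $-2(N+2m-2)/(R^{2-N-m}-R^m)$ and diagonal terms $A(R),B(R)$ match the paper's exactly), and the same two-pronged estimation — cancellation via $\norm{p-q}^2$ for \eqref{eq:A2.21} and the crude bound $-2\langle p,q\rangle\le\norm p^2+\norm q^2$ for \eqref{eq:A2.22}. The only cosmetic difference is that the paper handles the "hyperbolic bookkeeping" via the Bernoulli-type inequality $a(y-1)\le y^a-1$ rather than your Jensen/monotone-$\sinh$ phrasing, which amounts to the same convexity fact.
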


\begin{proof}
The Kelvin transform maps harmonic polynomials $\tilde{q} \in \h_m(\R^N)$ to harmonic functions on $\R^N\setminus\{0\}$, homogeneous of degree $2-N-m$. Hence $P$ defined by \eqref{eq:A2.20} is harmonic on $\R^N \setminus \{0\}$. The boundary conditions impose $\tilde{p}(x)+ \tilde{q}(x)= p(x)$ and $R^{m}\tilde{p}(x)+ R^{2-N-m} \tilde{q}(x)=q(x)$. Solving this for $\tilde{p}$ and $\tilde{q}$ gives
\[
	\tilde{p}(x)= \frac{R^{2-N-m} p(x)-q(x)}{R^{2-N-m}-R^m} \text{ and } \tilde{q}(x) = \frac{q(x)- R^m p(x)}{R^{2-N-m}-R^m}.
\]
As before we can use the Euler formula for homogenous function $u$ of degree $\lambda$, $r \frac{\partial u(x)}{\partial r}= \lambda u(x)$, to simplify the integrals and inserting $P(x)=p(x), P(Rx)=q(x)$ for all $x \in \Sp^{N-1}$ we obtain
\begin{align*}
	&\int_{\partial A_{1,R}} P \frac{\partial P}{\partial \nu} = \int_{\Sp^{N-1}} p(x) DP(x)\cdot x - R^{N-2} \int_{\Sp^{N-1}} q(x) DP(Rx)\cdot Rx\\
	&= \int_{\Sp^{N-1}} p(x)\left(m \tilde{p}(x)+ (2-N-m) \tilde{q}(x) \right) \\
	&\quad- R^{N-2} \int_{\Sp^{N-1}} q(x)\left(m R^m \tilde{p}(x)+ (2-N-m)R^{2-N-m} \tilde{q}(x) \right)\\
	&=\frac{m}{R^{2-N-m}-R^m} \Bigl(\left[ R^{2-N-m} + \left(1+ \frac{N-2}{m}\right) R^m \right] \norm{p}^2 \\
	&\quad+ \left[ R^{m+N-2} + \left(1+ \frac{N-2}{m}\right) R^{-m} \right] \norm{q}^2 - \left(2+ \frac{N-2}{m}\right) 2 \langle p, q\rangle \Bigr).
\end{align*}
To obtain the first estimate \eqref{eq:A2.21}, subtract $\frac{2m + N -2}{ R^{-m-N+2}-R^m }\, \norm{p-q}^2$ from the integral above and use $-2\langle p, q \rangle = \norm{p-q}^2 - \norm{p}^2 - \norm{q}^2$, which gives
\begin{align*}
- &\left(2+ \frac{N-2}{m}\right) 2 \langle p, q\rangle = \frac{1}{m}(2m + N-2) \norm{p-q}^2\\&
 - \norm{p}^2- (1+\frac{N-2}{m}) \norm{p}^2 - \norm{q}^2- (1+\frac{N-2}{m}) \norm{q}^2.
\end{align*}
We then conclude
\begin{align*}
	&\int_{A_{1,R}} \abs{DP}^2 - \frac{2m + N -2}{ R^{-m-N+2}-R^m }\, \norm{p-q}^2 =\\
	&\frac{m}{R^{-m-N+2}-R^m} \left(\left(R^{-m-N+2} -1\right)+ \left(1+ \frac{N-2}{m}\right)\left(R^m-1\right)\right) \norm{p}^2\\
	&+\frac{m}{R^{-m-N+2}-R^m} \left(\left(R^{m+N-2} -1\right)+ \left(1+ \frac{N-2}{m}\right)\left(R^{-m}-1\right)\right) \norm{q}^2.
\end{align*}
One easily checks that the function $g(y)=(y^a-1)-a(y-1)$ ( defined for $y >0 $ and $a>1$ ) attains its minimum at $y=1$: $g(1)=0$ i.e. $a(y-1)\le y^a -1$. In our case that gives $\left( 1+ \frac{N-2}{m} \right) \left(R^m-1\right) \le \left(R^{m+N-2}-1\right)$ and $\left(1+ \frac{N-2}{m}\right)\left(R^{-m}-1\right)\le \left(R^{-m-N+2}-1\right)$. Hence we can simplify to 
\begin{align*}
	&\int_{A_{1,R}} \abs{DP}^2 - \frac{2m + N -2}{ R^{-m-N+2}-R^m }\, \norm{p-q}^2\\
	& \le m \frac{R^{2-N-m}+R^{m+N-2}-2}{R^{2-N-m}-R^m}\left(\norm{p}^2+\norm{q}^2\right)\le m f(\ln(R^{-m})) \left(\norm{p}^2 + \norm{q}^2\right);
\end{align*}
where we used 
\[
	\frac{R^{2-N-m}+R^{m+N-2}-2}{R^{2-N-m}-R^m} \le \frac{\cosh\left(\left(1+\frac{N-2}{m}\right)\ln(R^{-m})\right)-1}{\sinh(\ln(R^{-m}))} \le f(\ln(R^{-m}))
\]
with $R^{2-N-m}-R^m\ge R^{-m}-R^m$.\\

To deduce \eqref{eq:A2.22}, we estimate quite brutally $-2\langle p, q \rangle \le \norm{p}^2 + \norm{q}^2$. As coefficient in front of $\norm{p}^2$ we get
\begin{align*}
	&\frac{R^{2-N-m}+\left(1+ \frac{N-2}{m}\right)R^m +\left(2+ \frac{N-2}{m}\right)}{R^{2-N-m}-R^m}\\
	&\le \frac{2\left(2+ \frac{N-2}{m}\right)R^{-m}}{R^{-m}-R^{m+N-2}} \le 4N \frac{R^{-m}}{R^{-m}- R^m}.
\end{align*}
In the last inequality we used that $R^{-m}-R^{m+N-2}\ge \frac{1}{2}(R^{-m}- R^m)$. This can be checked as follows: $y\in]0,1] \mapsto (y^{-1}-y^a)- \frac{1}{2}(y^{-1}-y)$ for $a\ge 1$ is nonincreasing and vanishes for $y=1$; the inequality follows inserting $y=R^m$ and $a=1+\frac{N-2}{m}$.\\
The coefficient in front of $\norm{q}^2$ is
\begin{align*}
	&\frac{R^{m+N-2}+\left(1+ \frac{N-2}{m}\right)R^{-m} +\left(2+ \frac{N-2}{m}\right)}{R^{2-N-m}-R^m}\\
	&\le \frac{2\left(2+ \frac{N-2}{m}\right)R^{-m}}{{R^{2-N-m}-R^m}}\le 4N \frac{R^{-m}}{R^{-m}- R^m}.
\end{align*}
This completes the proof.
\end{proof}
To conclude the interpolation theorem we need shortly to analyse the behaviour of the two functions $f$ and $ \tilde{f}$ in \eqref{eq:A2.19}.

\begin{lemma}\label{lem:A2.10}
$f$ is monotone increasing, hence $f(\ln(R^{-m}))$ is increasing in $m$ and decreasing in $R\in]0,1]$. Furthermore we have $\lim_{y \searrow 0} f(y)=0$;\\
$\tilde{f}$ is monotone decreasing, hence for $\delta>0$,  $m^{-2\delta} \tilde{f}(R^{-m})$ is decreasing in $m$ and increasing in $R\in ]0,1]$. Furthermore we have $m^{-2\delta}\tilde{f}(e^{m^{-\delta}})\le \frac{2}{m^\delta} \to 0$ as $m \to \infty$. \\
\end{lemma}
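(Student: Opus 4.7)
\textbf{Proof proposal for Lemma \ref{lem:A2.10}.} The statement is an elementary calculus exercise in two parts; my plan is to dispatch each function with a direct differentiation argument, then derive the claims about $f(\ln(R^{-m}))$ and $m^{-2\delta}\tilde f(R^{-m})$ by composition.

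For $f$, the plan is to reduce the monotonicity of $t \mapsto f(t)$ on $[0,\infty)$ to the positivity of the numerator of $f'(t) \sinh^{2}(t)$, namely
\[
h(t) := (N-1)\sinh((N-1)t)\sinh(t) - (\cosh((N-1)t)-1)\cosh(t).
\]
A direct computation gives $h(0)=0$ and $h'(t) = \sinh(t)\bigl[((N-1)^{2}-1)\cosh((N-1)t) + 1\bigr]$, which is non-negative for $N \ge 2$ and $t \ge 0$; hence $h \ge 0$ and $f$ is non-decreasing. Because $\ln(R^{-1}) \ge 0$ for $R\in\,]0,1]$, the map $(m,R) \mapsto m\ln(R^{-1})$ is non-decreasing in $m$ and non-increasing in $R$, so the composition claim on $f(\ln(R^{-m}))$ follows. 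The limit $f(t)\to 0$ as $t\searrow 0$ is immediate from the Taylor expansions $\cosh((N-1)t)-1 = \frac{(N-1)^{2}}{2}t^{2}+O(t^{4})$ and $\sinh(t)=t+O(t^{3})$.

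For $\tilde f$, the plan is to rewrite
\[
\tilde f(t) = \frac{t}{t - t^{-1}} = \frac{t^{2}}{t^{2}-1} = 1 + \frac{1}{t^{2}-1},
\]
which is manifestly monotone decreasing on $(1,\infty)$. Substituting $t = R^{-m}$ gives $\tilde f(R^{-m}) = (1 - R^{2m})^{-1}$, so
\[
m^{-2\delta}\tilde f(R^{-m}) = \frac{m^{-2\delta}}{1 - R^{2m}}.
\]
Both monotonicity claims are read off directly from this formula: as $m$ increases the numerator $m^{-2\delta}$ decreases while the denominator $1-R^{2m}$ increases (since $R^{2m}$ decreases); as $R$ increases in $\,]0,1]$, $R^{2m}$ increases so $1-R^{2m}$ decreases and the quotient grows.

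For the final decay bound, I would apply the identity above with $R = e^{-1/m^{\delta}}$ to obtain
\[
m^{-2\delta}\tilde f(e^{m^{-\delta}}) = \frac{m^{-2\delta}}{1 - e^{-2 m^{-\delta}}}.
\]
The remaining step is the elementary inequality $1 - e^{-x} \ge x/2$ for $0 \le x \le 1$, which is verified by checking $\phi(x) = 1-e^{-x}-x/2$ satisfies $\phi(0)=0$ and $\phi'(x) = e^{-x} - 1/2 \ge 0$ on $[0,\ln 2]$ while $\phi(1) = 1/2 - 1/e > 0$. Applied with $x = 2m^{-\delta} \le 1$ (valid once $m \ge 2^{1/\delta}$; the finitely many smaller $m$ are handled by a trivial direct check), this yields $1 - e^{-2m^{-\delta}} \ge m^{-\delta}$ and hence $m^{-2\delta}\tilde f(e^{m^{-\delta}}) \le m^{-\delta} \le 2 m^{-\delta}$, which tends to $0$ as $m\to\infty$. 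No step here looks like a genuine obstacle; the only mild subtlety is keeping track of which elementary inequality delivers the explicit constant $2$, for which the two-regime analysis above suffices.
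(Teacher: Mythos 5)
Your proof is correct, and on the first half it takes a slightly different (and arguably cleaner) route than the paper. Both arguments reduce the monotonicity of $f$ to the non-negativity of the numerator $h(t)=(N-1)\sinh((N-1)t)\sinh(t)-(\cosh((N-1)t)-1)\cosh(t)$ of $f'\sinh^2$; the paper treats this as a two-parameter family $g(a,y)$, checks $g(1,y)=\cosh(y)-1>0$ and then shows $\partial g/\partial a\ge 0$ to conclude $g(N-1,y)\ge g(1,y)$, whereas you differentiate once in $t$ and obtain the clean factorization $h'(t)=\sinh(t)\bigl[((N-1)^2-1)\cosh((N-1)t)+1\bigr]\ge 0$ together with $h(0)=0$. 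Your computation is correct and avoids the parameter differentiation entirely; the paper's version buys nothing extra here. For $\tilde f$ the two arguments are essentially the same; your identity $\tilde f(t)=1+(t^2-1)^{-1}$ is equivalent to the paper's sign computation of $\tilde f'$.

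One small remark on the final bound. Your two-regime treatment of $m^{-2\delta}\tilde f(e^{m^{-\delta}})\le 2m^{-\delta}$ does close: for $m<2^{1/\delta}$ the ``direct check'' amounts to the inequality $1-e^{-y}\ge y/4$ on $(1,2]$, which holds by the same $\phi$-type argument, so the constant $2$ is recovered uniformly. But the paper's one-line route is slicker and you may prefer it: writing $\tilde f(e^{x})=e^{x}/(2\sinh x)$ and using $\sinh x\ge x$ gives
\[
m^{-2\delta}\tilde f(e^{m^{-\delta}})=\frac{e^{m^{-\delta}}}{2m^{2\delta}\sinh(m^{-\delta})}\le \frac{e^{m^{-\delta}}}{2m^{\delta}}\le \frac{e}{2m^{\delta}}\le \frac{2}{m^{\delta}}
\]
for all $m\ge 1$ in a single stroke, with no case split.
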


\begin{proof}
$f'$ is given by
\[
	f'(y) 
	=\frac{g(N-1,y)}{\sinh^2(y)};
\]
where we introduced the function 
\[
	g(a,y)=a\sinh(ay)\sinh(y)-\cosh(y)(\cosh(ay)-1) \text{ for } a\ge 1, y >0
\]
$f'$ is strictly positive because firstly we have $g(1,y)= \sinh^2(y)- \cosh^2(y)+\cosh(y)=\cosh(y)-1> 0$ for $y>1$ and secondly
\begin{align*}
	\frac{\partial g}{\partial a}(a,y)&=\sinh(ay)\sinh(y)+ ay \cosh(ay)\sinh(y)- y \cosh(y)\sinh(ay)\\
	&\ge \sinh(ay)\sinh(y) + y (\cosh(ay)\sinh(y)-\cosh(y)\sinh(ay))\\
	&= \sinh(ay)\sinh(y) - y \sinh((a-1)y)\\
	&\ge y (\sinh(ay)-\sinh((a-1)y))\ge 0.
\end{align*}
We used the addition theorem and $\sinh(y)\ge y$ for $y\ge 0$. Therefore we found $g((N-1),y)\ge g(1,y)>0$.
Using L'Hospital's rule we have \[\lim_{y \searrow 0} f(y)= \frac{(N-1)\sinh((N-1)0)}{\cosh(0)}=0.\]\\
$\tilde{f}'(y)=\frac{-2y^{-1}}{(y-y^{-1})^2}< 0$, hence $\tilde{f}$ is monotone decreasing and so is $m \mapsto m^{-2\delta}$.
Finally the conclusions on the behaviour of $f(\ln(R^{-m}))$ and $m^{-2\delta}\tilde{f}(R^{-m})$ follow because for $0<R\le 1$ we have $m \mapsto \ln(R^{-m})$ is monotone increasing and $R\in ]0,1] \mapsto \ln(R^{-m})$ monotone decreasing. The last estimate just follows from $\sinh(y)\ge y$:
\[
	m^{-2\delta}\tilde{f}(e^{m^{-\delta}})= \frac{e^{m^{-\delta}}}{2m^{2\delta}\sinh(m^{-\delta})} \le \frac{2}{m^\delta}.
\]
\end{proof}

Now we are able to prove the interpolation lemma $\ref{lem:A2.1}$:
\begin{proof}[Proof of Lemma \ref{lem:A2.1}]
Recall that $\epsilon>0$ and $1>s>\frac{1}{2}$ are given. Fix $\delta=s-\frac{1}{2} >0 $.\\
Lemma \ref{lem:A2.6} stated that $W^{s,2}(\Sp^{N-1})=H^s(\Sp^{N-1})$ and each element of $H^s(\Sp^{N-1})$, a subset of the vector space $\bigoplus_{m=0}^\infty \h_m(\R^N)$. Therefore it is sufficient to proof \eqref{eq:A2.2} under the additional assumption that for some finite large $M$ we have $u=\sum_{m=0}^M p_m, v=\sum_{m=0}^M q_m$ for $p_m, q_m \in \h_m(\R^N)$. But we have to ensure that the constant in \eqref{eq:A2.2} is independent of $M$.\\
Firstly observe, that if $P_m, P_n$ are the solutions to \eqref{eq:A2.15} corresponding to pairs $p_m, q_m \in \h_m(\R^N)$, $p_n,q_n \in \h_n(\R^N)$ constructed in the preparatory lemmas \ref{lem:A2.8}, \ref{lem:A2.9}. Hence we deduce (as in the proofs to lemma \ref{lem:A2.8}, \ref{lem:A2.9}, using the Euler formula)
\begin{align*}
\int_{\partial A_{1,R}} P_n \frac{\partial P_m}{\partial \nu} =& \int_{\Sp^{N-1}} p_n(x) DP_m(x)\cdot x - R^{N-2} \int_{\Sp^{N-1}} q_n(x) DP_m(Rx)\cdot Rx\\
=& m \langle p_n, \tilde{p}_m \rangle + (2-N-m) \langle p_n, \tilde{q}_m \rangle\\
& - R^{N-2} \left( m R^m \langle q_n, \tilde{p}_m \rangle + (2-N-m)R^{2-N-m}\langle q_n, \tilde{q}_m \rangle\right)\\=&0
\end{align*}
due to the orthogonality \eqref{eq:A2.4}. 
To every $0\le m \le M$ let $P_m$ be the solution of \eqref{eq:A2.15} to the pair $p_m, q_m \in \h_m(\R^N)$ given by the decompositions $u=\sum_{m=0}^M p_m, v=\sum_{m=0}^M q_m$. For $P=\sum_{m=0}^M P_m$ we have just shown that 
\[
	\int_{A_{1,R}} \abs{DP}^2 = \int_{\partial A_{1,R}} P \frac{\partial P}{\partial \nu} = \sum_{m=0}^M \int_{\partial A_{1,R}} P_m \frac{\partial P_m}{\partial \nu}.
\]

Let us define $R_\epsilon = e^{-m_\epsilon^{-1-\delta}}$ for some sufficiently large $m_\epsilon> 1$ with the property that $f(y)<\epsilon$ for $0<y<(m_\epsilon -1)^{-\delta}$ and $4N \frac{2}{m_\epsilon^\delta}<\epsilon$. Such an $m_\epsilon$ exists as a consequence of lemma \ref{lem:A2.10}.\\
Finally for any $R_\epsilon \le R <1$ we may fix $m_R\ge m_\epsilon$ s.t. $e^{-(m_R-1)^{-1 - \delta}} < R\le e^{-m_R^{-1 - \delta}}$. Using the results of lemma \ref{lem:A2.10} we conclude for $m \ge m_R$ 
\begin{align*}
	m^{-2\delta} \tilde{f}(R^{-m}) \le m_R^{-2\delta} \tilde{f}(R^{-m_R})\le m_R^{-2\delta} \tilde{f}((e^{-m_R^{-1-\delta}})^{-m_R}) < \frac{\epsilon}{4N}.
\end{align*}
And for $m <m_R$ i.e. $m\le m_R-1$ we deduce
\begin{align*}
	f(\ln(R^{-m})) &\le f(\ln(R^{-(m_R-1)})) \\
	&\le f(-(m_R-1)\ln (e^{-(m_R-1)^{-1 - \delta}}))= f((m_R-1)^{-\delta})< \epsilon.
\end{align*}\\
Finally we fix the constant $C=C(\epsilon, R)$ to be the maximum of the constants of lemma \ref{lem:A2.8} i.e. $\frac{2\pi}{\ln(R)}$ for $N=2$, $\frac{N(N-2)\omega_N}{R^{2-N}-1}$ for $N>2$ and the one of \eqref{eq:A2.21} i.e. $\frac{2m + N -2}{ R^{-m-N+2}-R^m }$ for $m \le m_R$.\\
We have shown that 
\[
	\int_{\partial A_{1,R}} P_m \frac{\partial P_m}{\partial \nu} \le \epsilon \, m^{2s} \left( \norm{p_m}^2 + \norm{q_m}^2\right) \text{ for } m\ge m_R;
\]
and 
\[
	\int_{\partial A_{1,R}} P_m \frac{\partial P_m}{\partial \nu} \le \epsilon m \left( \norm{p_m}^2 + \norm{q_m}^2\right) + C \norm{p_m-q_m}^2 \text{ for } m< m_R.
\]
This proves a first version of the interpolation since we found
\begin{align*}
	\int_{A_{1,R}} \abs{DP}^2 &= \sum_{m=0}^{m_R-1} \int_{\partial A_{1,R}} P_m \frac{\partial P_m}{\partial \nu} + \sum_{m=m_R}^M \int_{\partial A_{1,R}} P_m \frac{\partial P_m}{\partial \nu}\\
	&\le \epsilon \sum_{m=0}^M m^{2s} \left( \norm{p_m}^2 + \norm{q_m}^2\right)  + C \sum_{m=0}^{m_R-1} \norm{p_m-q_m}^2\\
	&\le  \epsilon \sum_{m=0}^\infty m^{2s} \left( \norm{p_m}^2 + \norm{q_m}^2\right)  + C \sum_{m=0}^{\infty} \norm{p_m-q_m}^2;
\end{align*}
the right hand side is independent of $M$, so that we can pass to the limit as $M \to \infty$. Although $\sum_{m=1}^\infty m^{2s} \norm{p_m}^2$ does not contain the $0$th. order lemma, \ref{lem:A2.6} provides only equivalence for complete norms. Choosing $\epsilon>0$ ( a priory smaller, if necessary, to absorb the constants) we got, for any admissible $W^{s,2}$-norm:
\begin{equation*}
	\int_{A_{1,R}} \abs{Dw}^2 \le \epsilon \left( \norm{u}^2_{W^{s,2}(\Sp^{N-1})}+\norm{v}^2_{W^{s,2}(\Sp^{N-1})} \right) + C \norm{u-v}^2_{\Sp^{N-1}}.
\end{equation*}
To pass actually to \eqref{eq:A2.2} we can use a small oberservation and the Poincar\'e inequality \eqref{eq:A1.6}. Let $u,v \in W^{s,2}(\Sp^{N-1})$ be given, apply the so far obtained interpolation to 
$\tilde{u}= u - \frac{1}{2} (\fint_{\Sp^{N-1}} u + \fint_{\Sp^{N-1}} v)$ and $\tilde{v}= v - \frac{1}{2} (\fint_{\Sp^{N-1}} u + \fint_{\Sp^{N-1}} v)$ providing $\tilde{w} \in W^{1,2}(A_{1,R})$. $\tilde{w}= w + \frac{1}{2} (\fint_{\Sp^{N-1}} u + \fint_{\Sp^{N-1}} v)$ has the desired properties because 
\begin{align*}
	\norm{\tilde{u}}^{2}_{W^{s,2}(\Sp^{N-1})} &= \norm{\tilde{u}}^2_{L^2(\Sp^{N-1})} + \llfloor \tilde{u} \rrfloor^2_{s,\Sp^{N-1}}\\
	&=\norm{\tilde{u}}^2_{L^2(\Sp^{N-1})} + \llfloor u \rrfloor^2_{s,\Sp^{N-1}}
\end{align*}
and by the Poincar\'e inequality \eqref{eq:A1.6} and $2\tilde{u}=(u -\fint_{\Sp^{N-1}}u) + (v -\fint_{\Sp^{N-1}} v) + (u-v)$
\begin{align*}
	2\norm{\tilde{u}}_{L^2(\Sp^{N-1})} 
	&\le  C \left(\llfloor u \rrfloor_{s,\Sp^{N-1}} + \llfloor v \rrfloor_{s,\Sp^{N-1}} \right)+ \norm{u-v}_{L^2(\Sp^{N-1})}.
\end{align*}
We argue similarly for $\tilde{v}$. In conclusion we obtained
\begin{align*}
	\int_{A_{1,R}} \abs{Dw}^2 = \int_{A_{1,R}} \abs{D\tilde{w}}^2 &\le \epsilon \left( \norm{\tilde{u}}^2_{W^{s,2}(\Sp^{N-1})}+\norm{\tilde{v}}^2_{W^{s,2}(\Sp^{N-1})} \right) + C \norm{\tilde{u}-\tilde{v}}^2_{\Sp^{N-1}}\\
	&\le C\epsilon  \left( \llfloor u \rrfloor^2_{s,\Sp^{N-1}} + \llfloor v \rrfloor^2_{s,\Sp^{N-1}} \right) + C \norm{u-v}^2_{\Sp^{N-1}}.
\end{align*}
\end{proof}



\section{$Q$-valued functions} 
\label{sec:_q_valued_functions}

\subsection{Fractional Sobolev spaces for $Q$-valued functions} 
\label{sub:fractional_sobolev_spcaes_for_q_valued_functions}
As before we restrict ourself to $0< s \le 1$. Since $\A_Q(\R^n)$ fails to be a linear space, $L^2(\Omega, \A_Q(\R^n))$ is not a Banach space. Hence we are not in a setting for classical interpolation methods. Nonetheless there are two ways to define $W^{s,2}(\Omega,\A_Q(\R^n))$ in a natural way:
\begin{itemize}
	\item[(a)] using Almgren's bilipschitz embedding $\boldsymbol{\xi}: \A_Q(\R^n) \to \R^m$, theorem \ref{theo_I:1.101},
	\[
		W^{s,2}(\Omega,\A_Q(\R^n))=\{ u \in L^2(\Omega,\A_Q(\R^n)) \colon \boldsymbol{\xi} \circ u \in W^{s,2}(\Omega, \R^{m}) \};
	\]
	\item[(b)]  using the Gagliardo norm
	\[
		W^{s,2}(\Omega)=\{ u \in L^2(\Omega, \A_Q(\R^n)) \colon \llfloor u \rrfloor^2_{s,\Omega}=\int_{\Omega \times \Omega} \frac{\G(u(x),u(y))^2}{\abs{x-y}^{N+2s}} \, dxdy < \infty \}.
	\]
\end{itemize}
The equivalence of both definitions follows from the bilipschitz property of $\boldsymbol{\xi}$ i.e $c \abs{\boldsymbol{\xi}\circ u (x) - \boldsymbol{\xi}\circ u(y)} \le \G(u(x),u(y)) \le \abs{\boldsymbol{\xi}\circ u (x) - \boldsymbol{\xi}\circ u(y)}$ for some $c=c(n,Q)$. This implies
\begin{equation}\label{eq:A2.101}
	c \llfloor \boldsymbol{\xi} \circ u \rrfloor^2_{s,\Omega} \le \llfloor u \rrfloor^2_{s,\Omega} \le \llfloor \boldsymbol{\xi} \circ u \rrfloor^2_{s,\Omega}.
\end{equation}
We had seen that all definitions of $W^{s,2}(\Omega, \R^m)$ are equivalent in case of a Lipschitz regular domain $\Omega\subset \R^N$. \\
Combining the definition of $W^{s,2}(\Omega,\A_Q(\R^n))$ as suggested in (a) with \eqref{eq:A2.101} we obtain nearly all statements for single valued functions as well for multiple valued functions. For the sake of completeness we state them now for $Q$-valued functions:

\begin{corollary}\label{cor:A2.101}
To any given $-1<a<1$ and $\frac{1}{2} <s \le 1$ there is a constant $C>$ with the property, that if $u \in W^{s,2}(\Sp^{N-1}\cap \{ x_N> a\}, \A_Q(\R^n)), v \in W^{s,2}(\Sp^{N-1}\cap \{x_N<a\}, \A_Q(\R^n))$ with $u\tr{\Sp^{N-1}\cap \{ x_N= a\}}=v\tr{\Sp^{N-1}\cap \{ x_N= a\}}$ then 
\begin{equation}\label{eq:A2.102}
	U(x)=\begin{cases}
		u(x), &\text{ if } x \in \Sp^{N-1}, x_N>a\\
		v(x), &\text{ if } x \in \Sp^{N-1}, x_N<a
	\end{cases}
\end{equation}
defines an element in $W^{s,2}(\Sp^{N-1},\A_Q(\R^n))$ satisfying
\begin{equation}\label{eq:A2.103}
	\llfloor U\rrfloor_{s,\Sp^{N-1}} \le C \left( \llfloor u\rrfloor_{s,\Sp^{N-1}\cap \{ x_N> a\}} + \llfloor v\rrfloor_{s,\Sp^{N-1}\cap \{ x_N< a\}}\right)
\end{equation}
\end{corollary}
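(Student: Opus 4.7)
The strategy is to reduce everything to the single-valued Corollary \ref{cor:A1.8} by composing with Almgren's bilipschitz embedding $\boldsymbol{\xi}\colon \A_Q(\R^n) \to \R^m$ from Theorem \ref{theo_I:1.101}. Recall that the $Q$-valued space $W^{s,2}(\Omega, \A_Q(\R^n))$ was defined (definition (a) at the start of subsection \ref{sub:fractional_sobolev_spcaes_for_q_valued_functions}) so that $u$ lies in it precisely when $\boldsymbol{\xi}\circ u \in W^{s,2}(\Omega, \R^m)$, and the equivalence of Gagliardo seminorms \eqref{eq:A2.101} makes this identification quantitative up to the dimensional constant $c=c(n,Q)$ from Theorem \ref{theo_I:1.101}.

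The first step is to pass to the single-valued setting. Define $S^+=\Sp^{N-1}\cap\{x_N>a\}$ and $S^-=\Sp^{N-1}\cap\{x_N<a\}$, and consider the vector-valued functions $\boldsymbol{\xi}\circ u \in W^{s,2}(S^+,\R^m)$ and $\boldsymbol{\xi}\circ v \in W^{s,2}(S^-,\R^m)$; the hypothesis $u\tr{\Sp^{N-1}\cap\{x_N=a\}} = v\tr{\Sp^{N-1}\cap\{x_N=a\}}$ together with the continuity of the trace operator and of $\boldsymbol{\xi}$ gives the matching of traces for the composed functions. Since Corollary \ref{cor:A1.8} was established for scalar-valued functions, I would apply it componentwise to each coordinate of $\boldsymbol{\xi}\circ u$ and $\boldsymbol{\xi}\circ v$ — the proof of \ref{cor:A1.8} relies only on the local-chart construction via corollary \ref{cor:A1.4} and the Poincaré inequality \eqref{eq:A1.7}, both of which extend trivially to $\R^m$-valued maps. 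This produces a function $W \in W^{s,2}(\Sp^{N-1},\R^m)$ agreeing with $\boldsymbol{\xi}\circ u$ on $S^+$ and with $\boldsymbol{\xi}\circ v$ on $S^-$, together with the estimate
\[
    \llfloor W \rrfloor_{s,\Sp^{N-1}} \le C\bigl( \llfloor \boldsymbol{\xi}\circ u \rrfloor_{s,S^+} + \llfloor \boldsymbol{\xi}\circ v \rrfloor_{s,S^-}\bigr).
\]

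The second step is to identify $W$ with $\boldsymbol{\xi}\circ U$, which is immediate from the pointwise definitions, so $U \in W^{s,2}(\Sp^{N-1},\A_Q(\R^n))$ by the defining characterization (a). Applying the two-sided comparison \eqref{eq:A2.101} on both sides of the inequality above finally yields
\[
    \llfloor U \rrfloor_{s,\Sp^{N-1}} \le \llfloor \boldsymbol{\xi}\circ U \rrfloor_{s,\Sp^{N-1}} \le C\bigl( \llfloor \boldsymbol{\xi}\circ u \rrfloor_{s,S^+} + \llfloor \boldsymbol{\xi}\circ v \rrfloor_{s,S^-}\bigr) \le \frac{C}{\sqrt{c}}\bigl(\llfloor u\rrfloor_{s,S^+} + \llfloor v\rrfloor_{s,S^-}\bigr),
\]
which is \eqref{eq:A2.103} after absorbing $c$ into $C$.

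I do not anticipate any serious obstacle: the only delicate point is verifying that Corollary \ref{cor:A1.8} generalizes to $\R^m$-valued targets, which is routine since the entire construction in its proof (chart-by-chart application of \ref{cor:A1.4}, gluing via a partition of unity, and the Poincaré normalisation) applies componentwise, and the Poincaré inequality \eqref{eq:A1.7} holds for each coordinate with the same constant. The passage through $\boldsymbol{\xi}$ is the natural device because the image $\boldsymbol{\xi}(\A_Q(\R^n))$ need not be linear or convex, but that causes no trouble here since we never need to invert $\boldsymbol{\xi}$ — the glued function $W$ is automatically of the form $\boldsymbol{\xi}\circ U$ by construction, not merely close to such a form.
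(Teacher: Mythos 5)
Your proposal is correct and is exactly the argument the paper intends: the paper states Corollary \ref{cor:A2.101} without a separate proof, remarking only that the single-valued statements carry over to $Q$-valued functions by combining definition (a) of $W^{s,2}(\Omega,\A_Q(\R^n))$ with the seminorm equivalence \eqref{eq:A2.101}, which is precisely your reduction via $\boldsymbol{\xi}$, the componentwise application of Corollary \ref{cor:A1.8}, and the observation that the glued function is literally $\boldsymbol{\xi}\circ U$.
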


\begin{lemma}\label{lem:A2.102}
Let $\frac{1}{2}<s\le 1$ and $\epsilon >0 $ be given then there exists $R_\epsilon>0$ with the property: for any $R_\epsilon\le R < 1$ there is $C=C(\epsilon, R, n, Q)$ s.t. given $u, v \in W^{s,2}(\Sp^{N-1},\A_Q(\R^n))$ one can find $w \in W^{1,2}(A_{1,R}, \A_Q(\R^n))$ on the annulus $A_{1,R}=B_1\setminus B_R$ with $w(x)=u(x)$ and $w(Rx)=v(x)$ for $x \in \Sp^{N-1}$ that satisfies 
\begin{equation}\label{eq:A2.104}
	\int_{A_{1,R}} \abs{Dw}^2 \le \epsilon  \left( \llfloor u \rrfloor^2_{s,\Sp^{N-1}} + \llfloor v \rrfloor^2_{s,\Sp^{N-1}} \right) + C \norm{\G(u,v)}^2_{\Sp^{N-1}}.
\end{equation}
\end{lemma}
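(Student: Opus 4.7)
My plan is to reduce to the single-valued interpolation Lemma \ref{lem:A2.1} by exploiting Almgren's bilipschitz embedding $\boldsymbol{\xi}\colon \A_Q(\R^n) \to \R^m$ and its Lipschitz retraction $\boldsymbol{\rho}\colon \R^m \to \A_Q(\R^n)$ from Theorem \ref{theo_I:1.101}. First I would set $\tilde{u} = \boldsymbol{\xi}\circ u$ and $\tilde{v} = \boldsymbol{\xi}\circ v$. Since $\boldsymbol{\xi}$ is $1$-Lipschitz we have $\tilde{u},\tilde{v} \in W^{s,2}(\Sp^{N-1},\R^m)$ with the pointwise inequalities
\[
\llfloor \tilde{u}\rrfloor_{s,\Sp^{N-1}} \le \llfloor u\rrfloor_{s,\Sp^{N-1}},\quad \llfloor \tilde{v}\rrfloor_{s,\Sp^{N-1}} \le \llfloor v\rrfloor_{s,\Sp^{N-1}},\quad |\tilde{u}-\tilde{v}| \le \G(u,v)\text{ a.e.}
\]
(this is exactly \eqref{eq:A2.101} and its pointwise analogue).

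Next I would apply Lemma \ref{lem:A2.1} coordinatewise. The statement there is for scalar functions, but the proof only solves a Dirichlet problem on the annulus, so applying it to each component $\tilde{u}_j, \tilde{v}_j$ ($j=1,\dots,m$) and summing gives, for any $\epsilon_1>0$ and $R\ge R_{\epsilon_1}$, a vector-valued $\tilde{w}\in W^{1,2}(A_{1,R},\R^m)$ with $\tilde{w}(x)=\tilde{u}(x)$, $\tilde{w}(Rx)=\tilde{v}(x)$ for $x\in\Sp^{N-1}$, satisfying
\[
\int_{A_{1,R}} |D\tilde{w}|^2 \le \epsilon_1\bigl(\llfloor \tilde{u}\rrfloor_{s,\Sp^{N-1}}^2 + \llfloor \tilde{v}\rrfloor_{s,\Sp^{N-1}}^2\bigr) + C(\epsilon_1,R)\,\|\tilde{u}-\tilde{v}\|_{L^2(\Sp^{N-1})}^2;
\]
no extra factor of $m$ appears because both the Gagliardo and $L^2$ norms split additively across coordinates.

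Finally I would push the interpolant back by setting $w:=\boldsymbol{\rho}\circ\tilde{w}$. Because $\boldsymbol{\rho}$ is globally Lipschitz with constant $L=L(Q,n)$ and retracts onto $\boldsymbol{\xi}(\A_Q(\R^n))$, we have $w\in W^{1,2}(A_{1,R},\A_Q(\R^n))$ with $w(x)=\boldsymbol{\rho}(\boldsymbol{\xi}(u(x)))=u(x)$ and $w(Rx)=v(x)$ on $\Sp^{N-1}$. Using the identity $|Dw|=|D(\boldsymbol{\xi}\circ w)|$ stated after Theorem \ref{theo_I:1.101}, together with the chain rule $|D(\boldsymbol{\xi}\circ w)|\le \mathrm{Lip}(\boldsymbol{\xi}\circ\boldsymbol{\rho})|D\tilde{w}| \le L|D\tilde{w}|$, the two estimates above combine into
\[
\int_{A_{1,R}} |Dw|^2 \le L^2\epsilon_1\bigl(\llfloor u\rrfloor_{s,\Sp^{N-1}}^2 + \llfloor v\rrfloor_{s,\Sp^{N-1}}^2\bigr) + L^2 C(\epsilon_1,R)\,\|\G(u,v)\|_{L^2(\Sp^{N-1})}^2.
\]
Choosing $\epsilon_1=\epsilon/L^2$ and taking $R_\epsilon:=R_{\epsilon_1}$ yields \eqref{eq:A2.104}. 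There is no real obstacle: the argument is a routine transfer via the embedding-retraction pair, and all the analytic content is already contained in Lemma \ref{lem:A2.1}. The only point requiring minor care is verifying that the radius $R_{\epsilon_1}$ in the scalar lemma can be chosen uniformly for all $m$ components, which is automatic since it depends only on $\epsilon_1$ and $s$.
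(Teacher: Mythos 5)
Your argument is correct and is essentially the paper's own proof: the paper likewise applies Lemma \ref{lem:A2.1} (coordinatewise) to $\boldsymbol{\xi}\circ u$ and $\boldsymbol{\xi}\circ v$ and then composes the resulting interpolant with the retraction $\boldsymbol{\rho}$, absorbing the Lipschitz constants into $\epsilon$ exactly as you do. Your write-up just makes explicit the bookkeeping (the rescaling $\epsilon_1=\epsilon/L^2$ and the additivity of the norms over components) that the paper leaves as ``up to a constant.''
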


\begin{proof}
	For $s=1$ we set $R_\epsilon = 1-\epsilon$. We obtain $\tilde{w} \in W^{1,2}(A_{1,R}, \R^m)$ applying observation \eqref{eq:A2.1} to $\boldsymbol{\xi}\circ u$, $\boldsymbol{\xi}\circ v$ with $\epsilon'= 1-R$, $R_\epsilon < R< 1$. We obtain $\tilde{w} \in W^{1,2}(A_{1,R}, \R^m)$. The retraction $w=\boldsymbol{\rho}\circ \tilde{w} \in W^{1,2}(A_{1,R}, \A_Q(\R^n))$ then has up to a constant the desired properties.\\
	For $\frac{1}{2}<s<1$ we proceed similarly. Firstly apply lemma \ref{lem:A2.1} to  $\boldsymbol{\xi}\circ u$, $\boldsymbol{\xi}\circ v$ that gives $\tilde{w} \in W^{1,2}(A_{1,R}, \R^m)$. As before the retraction $w=\boldsymbol{\rho}\circ \tilde{w} \in W^{1,2}(A_{1,R}, \A_Q(\R^n))$ fulfils up to a constant the desired properties.
\end{proof}

\subsection{Concentration compactness for $Q$-valued functions}\label{sec:concentration_compactness_for_q_valued_functions}
Let $\Omega \subset \R^N$ be given, then there is a concentration compactness lemma for sequences $u(k) \in W^{1,2}(\Omega,\A_Q(\R^n))$ with uniformly bounded energy.

\begin{lemma}\label{lem_I:A1.1}
Given a sequence $u(k) \in W^{1,2}(\Omega, \A_Q(R^n))$ and a sequence of means $T(k) \in \A_Q(\R^n)$ with
\[
	\limsup_{k \to \infty} \int_{\Omega} \abs{Du(k)}^2 \le \infty \text{ and } \int_{\Omega} \G(u(k),T(k))^2 \le C \int_{\Omega} \abs{Du(k)}^2
\]
for a subsequence, not relabelled, we can find:
\begin{itemize}
	\item[(i)] maps $b_l \in W^{1,2}(\Omega, \A_{Q_l}(\R^n))$ for $l=1, \dotsc , J$, $\sum_{l=1}^L Q_l =Q$;\\
	\item[(ii)] a splitting $T(k) = T_1(k) + \dotsm + T_L(k) $ with $T_l(k) \in \A_{Q_l}(\R^n)$ and
	\begin{itemize}
		\item $\limsup_{k} diam(spt(T_l(k)))< \infty$ for all $l=1, \dotsc, L$\\
		\item $\lim_{k \to \infty} \dist(spt(T_l(k)), spt(T_{m}(k)))= \infty$ for $l\neq m$;
	\end{itemize}
	\item[(iii)] a sequence $t_l(k) \in spt(T_l(k))$ such that $\G(u(k), b(k)) \to 0$ in $L^2$ with $b(k)= \sum_{l=1}^L  (b_l \oplus t_l(k))$.
\end{itemize}
Moreover, the following two additional properties hold:
\begin{itemize}
	\item[(a)] if $\Omega' \subset \Omega$ is open and $A_k$ is a sequence of measurable sets with $\abs{A_k} \to 0$, then
	\[
		\liminf_{k \to \infty} \int_{\Omega'\setminus A_k} \abs{Du(k)}^2 - \int_{\Omega'} \abs{Db(k)}^2 \ge 0.
	\]
	\item[(b)] $\liminf_{k \to \infty} \int_{\Omega} \left(\abs{Du(k)}^2 -\abs{Db(k)}^2\right) = 0$ if and only if\\ $\liminf_{k \to \infty} \int_{\Omega} \bigl(\abs{Du(k)} - \abs{Db(k)})^2 = 0$.
\end{itemize}
\end{lemma}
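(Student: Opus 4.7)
The plan is to follow De Lellis--Spadaro's strategy in \cite[Lemma 3.2]{Lellis Lp}, using Almgren's biLipschitz embedding $\boldsymbol{\xi}$ from Theorem \ref{theo_I:1.101} to transfer weak compactness questions to the linear ambient space $W^{1,2}(\Omega, \R^m)$ where Rellich--Kondrachov applies. The only way compactness can fail is for the means $T(k)$ to escape to infinity; this will be organized by first clustering the support of $T(k)$ into groups at bounded mutual distance, then splitting $u(k)$ accordingly, translating each piece back, and finally extracting weak limits.

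First, I will write $T(k) = \sum_{i=1}^Q \llbracket t_i(k) \rrbracket$ via a measurable selection, and pass to a subsequence along which each of the finitely many distances $\abs{t_i(k) - t_j(k)}$ either stays bounded or tends to $+\infty$. The relation ``$i \sim j$ iff $\abs{t_i(k) - t_j(k)}$ is bounded'' is transitive by the triangle inequality, so it is an equivalence relation on $\{1, \dotsc, Q\}$; call its classes $I_1, \dotsc, I_L$, with cardinalities $Q_1, \dotsc, Q_L$, set $T_l(k) = \sum_{i \in I_l} \llbracket t_i(k) \rrbracket$, and pick any $t_l(k) = t_{i_l}(k)$ with $i_l \in I_l$. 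This gives (ii) directly.

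Next, for a.e.\ $x \in \Omega$ I will select an optimal permutation $\sigma(x) \in \mathcal{P}_Q$ realizing $\G(u(k)(x), T(k))$, and make this choice measurable (e.g.\ by taking the lexicographically least among all optimal permutations). Partitioning the sheets of $u(k)$ via $\sigma$ according to $\{I_l\}$ yields a decomposition $u(k) = \sum_{l=1}^L u_l(k)$ with $u_l(k) \in W^{1,2}(\Omega, \A_{Q_l}(\R^n))$, $\abs{Du(k)}^2 = \sum_l \abs{Du_l(k)}^2$, and $\G(u(k), T(k))^2 = \sum_l \G(u_l(k), T_l(k))^2$. The translated maps $v_l(k) := u_l(k) \oplus (-t_l(k))$ then have energy bounded by $\int \abs{Du(k)}^2$ and, by the hypothesis together with the uniform bound on $\mathrm{diam}(spt(T_l(k)))$, bounded $L^2$-norm. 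Composing with $\boldsymbol{\xi}$ produces a bounded sequence in $W^{1,2}(\Omega, \R^m)$; by Rellich--Kondrachov a subsequence will converge weakly in $W^{1,2}$ and strongly in $L^2$ to some limit, which, since $\boldsymbol{\xi}(\A_{Q_l}(\R^n))$ is closed and $\boldsymbol{\rho}$ is a Lipschitz retraction, must be of the form $\boldsymbol{\xi} \circ b_l$ for some $b_l \in W^{1,2}(\Omega, \A_{Q_l}(\R^n))$. Strong $L^2$-convergence and the biLipschitz property then give $\G(v_l(k), b_l) \to 0$ in $L^2$; translating back by $t_l(k)$ and invoking the cluster separation yields (iii).

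Finally, properties (a) and (b) should reduce to standard lower-semicontinuity statements under the weak $W^{1,2}$-convergence of $\boldsymbol{\xi} \circ v_l(k)$, using $\abs{Du(k)} = \abs{D(\boldsymbol{\xi} \circ u(k))}$ a.e.\ and the energy-additivity of the splitting; for (b) one couples lower semicontinuity with the familiar observation that weak plus norm convergence is strong convergence in the Hilbert space $W^{1,2}(\Omega, \R^m)$. The main obstacle will be the decomposition step: producing a measurable pointwise splitting $u(k) = \sum_l u_l(k)$ such that each $u_l(k)$ actually lies in $W^{1,2}(\Omega, \A_{Q_l}(\R^n))$ and the Dirichlet energies add additively. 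The cluster separation $\dist(spt(T_l(k)), spt(T_m(k))) \to \infty$ forces the optimal matching $\sigma(x)$ to respect the partition $\{I_l\}$ except on a set whose measure is controlled by the $L^2$ bound on $\G(u(k), T(k))$ and hence vanishes in the limit; showing that this happens uniformly enough to preserve the additive structure of the energy in the limit is the delicate point on which the whole argument turns.
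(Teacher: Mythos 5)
Your overall architecture (cluster the supports of $T(k)$, split $u(k)$ accordingly, translate, embed via $\boldsymbol{\xi}$, apply Rellich--Kondrachov) is the right one, and your clustering of $\{t_i(k)\}$ by the equivalence relation ``bounded pairwise distance'' does produce the splitting in (ii). But there is a genuine gap exactly where you locate the ``delicate point'', and the mechanism you propose does not close it. Splitting the sheets of $u(k)(x)$ by a measurable selection of an optimal permutation matching $u(k)(x)$ to $T(k)$ does \emph{not} produce maps $u_l(k)$ in $W^{1,2}(\Omega,\A_{Q_l}(\R^n))$: on the region where a sheet of $u(k)$ passes continuously from a neighbourhood of one cluster to a neighbourhood of another, the optimal matching switches, and the resulting piece $u_l(k)$ acquires a jump discontinuity across a hypersurface of size comparable to $\dist(spt(T_l(k)),spt(T_m(k)))$, which is incompatible with membership in $W^{1,2}$. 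Without $u_l(k)\in W^{1,2}$ you cannot even form $v_l(k)=u_l(k)\oplus(-t_l(k))$ as a bounded sequence in $W^{1,2}$, so the Rellich--Kondrachov step and the additivity $\abs{Du(k)}^2=\sum_l\abs{Du_l(k)}^2$ both collapse. Controlling the \emph{measure} of the bad set (which your $L^2$ bound on $\G(u(k),T(k))$ does give) is not enough; the problem is the regularity of the pieces, not the size of the exceptional set.

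The paper resolves this differently: in the case $\mathrm{diam}(spt(T(k)))\to\infty$ it first replaces $T(k)$ by a well-separated $S(k)$ with $spt(S(k))\subset spt(T(k))$ and $\mathrm{sep}(S(k))\gtrsim \mathrm{diam}(spt(T(k)))$ (Lemma \ref{lem_I:A1.2}), and then composes $u(k)$ with the $1$-Lipschitz retraction $\boldsymbol{\vartheta}_k$ onto $\overline{B_{\sigma_k/5}(S(k))}$ of Lemma \ref{lem_I:A1.3}. Because a Lipschitz retraction preserves the Sobolev class and its target splits canonically as a product of balls in $\A_{Q_j}(\R^n)$ around the separated points, the splitting of $\boldsymbol{\vartheta}_k\circ u(k)$ into Sobolev pieces is automatic; the price is that $u(k)$ is modified on the set $\{\G(u(k),T(k))\ge\sigma_k/10\}$, whose measure and whose contribution to $\G(u(k),\boldsymbol{\vartheta}_k\circ u(k))$ in $L^2$ are shown to vanish via the Sobolev embedding. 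One then gets an energy \emph{inequality} $\sum_j\abs{Dv_j(k)}^2\le\abs{Du(k)}^2$ with equality off the small bad set, and concludes by induction on $Q$ applied to each piece. If you want to salvage your one-step clustering, you must replace the optimal-permutation selection by some such Lipschitz truncation or retraction device; as written, the decomposition step fails.
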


Before we give the proof we recall the definition of the separation $sep(T)$ of a $Q$-point $T=\sum_{i=1}^Q \llbracket t_i \rrbracket \in \A_Q(\R^n)$.
\[
sep(T)=\begin{cases}
0, &\text{ if } T=Q\llbracket t \rrbracket \\
\min_{t_i \neq t_j} \abs{t_i - t_j}, &\text{ otherwise }.
\end{cases}\]

The following results are of essential use in the context of the separation and needed for the proof of the concentration compactness lemma. The first gives a kind of relation between $diam(spt(T))$ and $sep(T)$, see \cite[lemma 3.8]{Lellis}; the second gives a retraction $\boldsymbol{\vartheta}=\boldsymbol{\vartheta}_T$ based on $sep(T)$, see \cite[lemma 3.7]{Lellis}

\begin{lemma}\label{lem_I:A1.2}
To every $\epsilon>0$ there exists $\beta=\beta(\epsilon, Q)>0$ with the property that to any $T \in \A_Q(\R^n)$ there exists $S=S(T) \in \A_Q(\R^n)$ with
\[ spt(S) \subset spt(T), \quad \G(T,S) < \epsilon\, sep(S) \text{ and } \beta \, diam(spt(T)) < sep(S). \]
(For example $\beta=\epsilon^Q \, 3^{4-Q^2}$ works.)
\end{lemma}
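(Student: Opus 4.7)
The plan is to construct $S$ by grouping points of $T$ into tight clusters, with the scale of the clustering picked by a pigeonhole argument. Write $T = \sum_{i=1}^Q \llbracket t_i \rrbracket$ and set $d := diam(spt(T))$; the degenerate case $d = 0$ (so $T = Q\llbracket t\rrbracket$) is trivial, so I assume $d > 0$. For any scale $r > 0$, I would define an $r$-merging $S_r \in \A_Q(\R^n)$ by forming the graph on $\{1,\dots,Q\}$ with edges between pairs $\{i,j\}$ satisfying $\abs{t_i - t_j} \le r$, taking its connected components $C_1,\dots,C_{N(r)}$, picking in each a representative $\tau_\alpha \in \{t_i : i \in C_\alpha\}$, and setting $S_r := \sum_\alpha \abs{C_\alpha}\llbracket \tau_\alpha \rrbracket$. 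By construction $spt(S_r) \subset spt(T)$.

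First I would record two elementary estimates for $S_r$. Since any two members of a cluster $C_\alpha$ are joined by a chain of at most $\abs{C_\alpha} - 1 \le Q - 1$ edges each of length $\le r$, the Euclidean diameter of $C_\alpha$ is $\le (Q-1)r$; summing over the $Q$ contributions in $\G(T, S_r)^2$ gives $\G(T, S_r) \le \sqrt{Q}(Q-1) r$. If in addition no pair $(t_i, t_j)$ has distance in the half-open interval $(r, r']$ for some $r' > r$, then the vertex partitions into components at scales $r$ and $r'$ coincide, so representatives of distinct clusters must be at distance strictly greater than $r'$, i.e.\ $sep(S_r) > r'$.

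The main step is pigeonhole on scales. The function $N : (0, d] \to \{1, \dots, Q\}$ is non-increasing and takes at most $Q$ distinct values, so on any $Q+1$ scales $r_0 < r_1 < \dots < r_Q$ at least two consecutive ones must give the same value of $N$. I fix a ratio $q > 1$ with $\sqrt{Q}(Q-1) < \epsilon\, q$ (e.g.\ $q := 2\sqrt{Q}(Q-1)/\epsilon$) and set $r_j := d/q^{Q-j}$, so that $r_Q = d$. I then pick $j_* \in \{0,\dots,Q-1\}$ with $N(r_{j_*}) = N(r_{j_*+1})$ and set $S := S_{r_{j_*}}$. The first estimate gives $\G(T,S) \le \sqrt{Q}(Q-1)\, r_{j_*} < \epsilon\, q\, r_{j_*}$, and the second gives $sep(S) > r_{j_*+1} = q\, r_{j_*}$, whence $\G(T,S) < \epsilon \cdot sep(S)$. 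Moreover $sep(S) > r_{j_*+1} \ge r_1 = d/q^{Q-1}$, so $\beta := q^{-(Q-1)}$ suffices, yielding a constant of order $\epsilon^{Q-1}$ times a $Q$-dependent factor, consistent with the scaling in the parenthetical remark of the statement.

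The argument is purely combinatorial and presents no serious obstacle. The only delicate bookkeeping is keeping track of the strict inequalities throughout and verifying that equal cluster \emph{counts} force identical vertex \emph{partitions} (not merely equinumerous ones), which follows from the observation that enlarging $r$ can only merge connected components in the graph, never split them; hence if the count is preserved, so are the components as subsets of $\{1,\dots,Q\}$.
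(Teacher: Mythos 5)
Your argument is correct, and it takes a genuinely different route from the source of this statement: the paper itself offers no proof, quoting the lemma from De Lellis--Spadaro \cite[Lemma 3.8]{Lellis}, where it is obtained by an induction on $Q$ that successively collapses nearby points of $T$ and tracks the degradation of the constant at each step (whence the shape $\epsilon^Q\,3^{4-Q^2}$ of the sample constant). You replace the induction by a single pigeonhole over the $Q+1$ geometric scales $r_j=d/q^{Q-j}$: the number of components of the proximity graph is non-increasing in the scale and takes at most $Q$ values, so two consecutive scales give the same partition, and the multiplicative gap $q$ between them converts the cluster-diameter bound $\G(T,S_r)\le\sqrt{Q}(Q-1)\,r$ into $\G(T,S)<\epsilon\, sep(S)$. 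This is a clean, non-inductive alternative, and it even yields a better dependence on $\epsilon$ (order $\epsilon^{Q-1}$ rather than $\epsilon^{Q}$). The one soundness point you flag --- that equal component \emph{counts} at nested scales force equal \emph{partitions}, because increasing the scale can only merge components --- is exactly the right thing to check, and your justification is correct; note that what you actually use downstream is this equality of partitions (points in distinct clusters are non-adjacent at the larger scale, hence at distance $>r_{j_*+1}$), not the literal hypothesis "no pair has distance in $(r,r']$" under which you first state the separation estimate.

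Two details to tighten, neither of which is a real gap. First, your sample choice $q=2\sqrt{Q}(Q-1)/\epsilon$ need not exceed $1$ when $\epsilon$ is large; take $q=\max\{Q,\,2\sqrt{Q}(Q-1)/\epsilon\}$. Imposing $q>Q-1$ has the further benefit of excluding the case where the selected scale yields a single cluster: if the proximity graph at scale $r\le d/q$ were connected, a chain of at most $Q-1$ edges would give $d\le (Q-1)d/q<d$, a contradiction, so $sep(S)$ is always a minimum over a non-empty set of pairwise distinct representatives. Second, both your dismissal of the degenerate case $d=0$ and the truth of the lemma itself require the convention $sep(Q\llbracket t\rrbracket)=+\infty$, as in De Lellis--Spadaro; with the value $0$ written in this paper's definition of $sep$, the strict inequalities $\G(T,S)<\epsilon\, sep(S)$ and $\beta\, diam(spt(T))<sep(S)$ fail for $T=Q\llbracket t\rrbracket$, so that definition should be read as a typo rather than as an obstacle to your argument.
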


\begin{lemma}\label{lem_I:A1.3}
To a given $T\in \A_Q(\R^n$ and $0< 4s < sep(T)$ there exists a $1-$Lipschitz retraction 
\[\boldsymbol{\vartheta}=\boldsymbol{\vartheta}_T: \A_Q(\R^n) \to \overline{B_s(T)}=\{ S \in \A_Q(T) \colon \G(S,T) \le s \} \]
with the property that
\begin{itemize}
\item[(i)] $\boldsymbol{\vartheta}(S)=S$ if $\G(S,T) \le s$;\\
\item[(ii)] $\G(\boldsymbol{\vartheta}(S_1), \boldsymbol{\vartheta}(S_2))<\G(S_1,S_2)$ if $\G(S_1,T)> s$.
\end{itemize}

\end{lemma}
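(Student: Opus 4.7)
The idea is to construct $\boldsymbol{\vartheta}_T$ as the radial (nearest-point) projection onto $\overline{B_s(T)}$, exploiting the product structure that $\overline{B_s(T)}$ inherits from the hypothesis $\mathrm{sep}(T) > 4s$. Write $T = \sum_{i=1}^L k_i \llbracket \tau_i \rrbracket$ with distinct points $\tau_1,\ldots,\tau_L$ and multiplicities $k_i \ge 1$ summing to $Q$, so that $|\tau_i - \tau_j| > 4s$ for $i \neq j$.

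\textbf{Step 1 (Product structure of $\overline{B_s(T)}$).} The closed balls $\overline{B_s(\tau_i)} \subset \R^n$ are pairwise at distance strictly greater than $2s$. I will show first that any $S \in \overline{B_s(T)}$ splits uniquely as $S = \sum_{i=1}^L S_i$ with $S_i \in \A_{k_i}(\overline{B_s(\tau_i)})$, and that
\[
\G(S,T)^2 \;=\; \sum_{i=1}^L \G\bigl(S_i,\, k_i \llbracket \tau_i \rrbracket\bigr)^2.
\]
Indeed, for the optimal matching realizing $\G(S,T)$, each component of $S$ paired with $\tau_i$ must lie in $\overline{B_s(\tau_i)}$ (otherwise the matching cost would exceed $s^2$), and by the separation $> 4s$ no other pairing can be optimal.

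\textbf{Step 2 (Definition of $\boldsymbol{\vartheta}_T$).} For arbitrary $S = \sum_{j=1}^Q \llbracket s_j \rrbracket \in \A_Q(\R^n)$, choose any permutation $\sigma$ achieving $\G(S,T)^2 = \sum_j |s_j - t_{\sigma(j)}|^2$, and set
\[
\alpha(S) := \min\Bigl\{\,1,\; \tfrac{s}{\G(S,T)}\,\Bigr\}, \qquad \boldsymbol{\vartheta}_T(S) := \sum_{j=1}^Q \bigl\llbracket\, t_{\sigma(j)} + \alpha(S)\bigl(s_j - t_{\sigma(j)}\bigr)\, \bigr\rrbracket.
\]
Independence of the choice of minimizing $\sigma$ is immediate since any two such permutations produce the same multiset. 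Property (i) is clear: if $\G(S,T) \le s$ then $\alpha(S) = 1$ and $\boldsymbol{\vartheta}_T(S) = S$. To see that $\boldsymbol{\vartheta}_T(S) \in \overline{B_s(T)}$, compute
\[
\G\bigl(\boldsymbol{\vartheta}_T(S), T\bigr)^2 \;\le\; \sum_{j=1}^Q \alpha(S)^2 |s_j - t_{\sigma(j)}|^2 \;=\; \alpha(S)^2\,\G(S,T)^2 \;\le\; s^2.
\]

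\textbf{Step 3 (The 1-Lipschitz / strict contraction property --- main obstacle).} Given $S^1, S^2 \in \A_Q(\R^n)$, fix optimal matchings $\sigma_1, \sigma_2$ of the $S^i$'s to $T$ and a permutation $\pi$ realizing $\G(S^1, S^2)^2 = \sum_j |s^1_j - s^2_{\pi(j)}|^2$. The crucial combinatorial step is to align these three permutations. Using $\mathrm{sep}(T) > 4s$ together with the product splitting of Step~1, applied to the $T$-components of those $s^i_j$ that are close to $T$, I will show that $\sigma_1$ and $\sigma_2 \circ \pi$ can be taken to coincide after relabeling the indices of $S^1$ (the separation forces any optimal matching of $S^i$ to $T$ to respect the cells around each $\tau_i$, up to the ambiguity within a single cell, which is precisely the ambiguity available to us in choosing $\sigma_1, \sigma_2$, and $\pi$). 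Once the matchings are aligned, the distance bound reduces to the componentwise estimate
\[
\G\bigl(\boldsymbol{\vartheta}_T(S^1), \boldsymbol{\vartheta}_T(S^2)\bigr)^2 \;\le\; \sum_{j=1}^Q \bigl|\, \alpha_1 v^1_j - \alpha_2 v^2_j\,\bigr|^2,
\]
where $v^i_j := s^i_j - t_{\sigma_1(j)}$ (after relabeling) and $\alpha_i = \alpha(S^i)$. The right-hand side is bounded by $\sum_j |v^1_j - v^2_j|^2 = \G(S^1,S^2)^2$ by the standard convexity argument for the Euclidean radial projection $v \mapsto \min\{1, s/|v|\}\,v$ (applied on the vector $(v^1_1,\dots,v^1_Q) \in \R^{nQ}$ with norm $\G(S^1,T)$ and analogously for $v^2$). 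The same convexity argument gives strict inequality as soon as at least one $\alpha_i < 1$, i.e.\ as soon as $\G(S^i, T) > s$ --- provided $S^1 \neq S^2$, this yields (ii).

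\textbf{Main obstacle.} The genuine difficulty is entirely in Step~3: the optimal matching $\sigma = \sigma(S)$ can jump discontinuously with $S$, so one cannot argue componentwise in a naive way. The alignment of $\sigma_1, \sigma_2$ and $\pi$ via the product structure of $\overline{B_{s}(T)}$ (Step~1) is what makes the Euclidean projection argument go through, and this is where the hypothesis $4s < \mathrm{sep}(T)$ is used in an essential way.
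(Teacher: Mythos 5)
First, a remark on the comparison you asked for implicitly: the paper does not prove this lemma at all — it is quoted from \cite[Lemma 3.7]{Lellis} — so your argument has to stand entirely on its own. It does not, and the failure is exactly at the point you flag as the ``main obstacle'', but it cannot be repaired by the alignment you propose. The map of Step~2 is not well defined and, worse, not continuous: the optimal matching of $S$ to $T$ jumps discontinuously in $S$ once $S$ is far from $T$, and the cell structure of Step~1 — which is what you invoke to align $\sigma_1$, $\sigma_2$ and $\pi$ — is only available when the points of $S$ actually lie in the cells around the $\tau_i$ (this is forced when $\G(S,T)<2s$), not for arbitrary $S$. Concretely, take $n=Q=2$, $T=\llbracket(-3,0)\rrbracket+\llbracket(3,0)\rrbracket$ (so $\operatorname{sep}(T)=6$), $s=1$, and $S^\epsilon=\llbracket(\epsilon,1)\rrbracket+\llbracket(-\epsilon,-1)\rrbracket$. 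For $\epsilon>0$ the unique optimal matching sends $(\epsilon,1)$ to $(3,0)$; for $\epsilon<0$ it sends $(\epsilon,1)$ to $(-3,0)$. Your formula therefore produces, as $\epsilon\to0^{\pm}$, the two distinct limits $\{(3-3\alpha,\alpha),(-3+3\alpha,-\alpha)\}$ and $\{(-3+3\alpha,\alpha),(3-3\alpha,-\alpha)\}$ with $\alpha=1/\sqrt{20}$, whose $\G$-distance equals $2\sqrt{2}\,\alpha>0$, while $\G(S^{\epsilon},S^{-\epsilon})\le 2\sqrt{2}\,\abs{\epsilon}\to0$. So $\boldsymbol{\vartheta}_T$ as defined in Step~2 jumps across the locus where the matching is ambiguous, and no choice of $\sigma$ on that locus can help, since both branches are forced on open sets. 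In other words, your map is the geodesic retraction towards $T$, and geodesics in $\A_Q(\R^n)$ branch; note that in the example $\G(S^0,T)=\sqrt{20}>2s$, which is precisely the regime where the hypothesis $4s<\operatorname{sep}(T)$ gives no rigidity.

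Step~1 itself and the convexity estimate for the radial projection in $\R^{nQ}$ are correct; what is wrong is their claimed domain of validity. The separation hypothesis rigidifies the matching only on $\{\G(\cdot,T)<2s\}$: there every optimal matching must send each point of $S$ into the ball $B_{2s}(\tau_i)$ of a uniquely determined $\tau_i$ (these balls are disjoint since $\operatorname{sep}(T)>4s$), the splitting $S=\sum_i S_i$ is canonical and stable, and on that region your Steps~2--3 do go through. A correct proof must therefore confine the radial contraction to $\{\G(\cdot,T)<2s\}$ and treat the complement separately (collapsing it towards $T$), choosing the contraction modulus so that the two regimes match continuously; verifying the sharp Lipschitz constant $1$ and the strict inequality in (ii) across that interface is the actual content of \cite[Lemma 3.7]{Lellis}, to which you should either appeal or whose construction you should reproduce. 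As written, Steps~2 and~3 are false for general $S$.
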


\begin{proof}[Proof of lemma \ref{lem_I:A1.1}]
We distinguish two cases. The second will be handled by induction on the first.\\

\emph{Case 1 and basis of the induction: $\liminf_{k \to \infty} diam(spt(T(k)))< \infty$\\ ( $diam(spt(T(k)))=0$ for $Q=1$):}\\
Passing to an appropriate subsequence, not relabelled $diam(spt(T(k)))< C$ for all $k$. Set $L=1$, and as splitting keep the sequence itself i.e. $T(k) =T_1(k)$. To every $k$ fix a $t_1(k) \in spt (T(k))$.\\
Hence we have
\begin{align*}
	&\limsup_k \int_{\Omega} \abs{ u(k) \oplus  (-t_{1}(k))}^2 = \limsup_k \int_{\Omega} \G(u(k), Q \llbracket t_1(k) \rrbracket )^2\\
	 \le &\limsup_k 2 \int_{\Omega} \G(u(k), T(k))^2 + 2 \abs{\Omega} \G(T(k), Q\llbracket t_1(k) \rrbracket)^2 < \infty.
\end{align*}
Hence passing to an appropriate subsequence there is $b=b_1 \in W^{1,2}(\Omega, \A_Q(\R^n))$ with $u(k)\oplus(-t_1(k)) \to b$ in $L^2$. This proves (i),(ii),(iii), since $\G(u(k)\oplus- t_1(k), b)= \G(u(k), b\oplus t_1(k)) = \G(u(k), b(k))$. Furthermore, the established properties imply that $\boldsymbol{\xi} \circ u(k) \rightharpoonup \boldsymbol{\xi}\circ b(k)$ in $W^{1,2}(\Omega, \R^m)$. The additional property (a) follows, because $\mathbf{1}_{\Omega'\setminus A_k} \to \mathbf{1}_{\Omega'}$ in $L^2(\Omega)$ and so $\mathbf{1}_{\Omega'\setminus A_k} D\boldsymbol{\xi} \circ u(k) \rightharpoonup \mathbf{1}_{\Omega'}D\boldsymbol{\xi}\circ b(k)$. Property (b) holds because $L^2(\Omega)$ is an Hilbert space. Therefore we have, that $f_k = D \boldsymbol{\xi} \circ u(k) \to f= D\boldsymbol{\xi}\circ b(k)$ in $L^2(\Omega)$ if and only if $f_k \rightharpoonup f$ and $\norm{f_k}^2_{L^2(\Omega)} \to \norm{f}^2_{L^2(\Omega)}$; compare $\liminf_{k} \norm{f_k-f}^2 = \liminf_{k} \norm{f_k}^2 + \norm{f}^2 - 2 \langle f_k, f \rangle = \liminf_{k} \norm{f_k}^2 - \norm{f}^2$. \\

\emph{Case 2 and the induction step: $\liminf_k diam(spt(T(k)))= +\infty$}\\
Suppose the lemma holds for $Q'<Q$. To every $T(k)$ pick $S(k) \in \A_Q(\R^n)$ using \ref{lem_I:A1.2} s.t. for $S(k)=\sum_{j=1}^{J(k)} Q_j(k) \llbracket s_j(k) \rrbracket \in \A_Q(\R^n)$ set $\sigma_k = sep(S(k))$, then $\beta(\frac{1}{10},Q)\, diam(spt(T(k)))< \sigma_k$ and $\G(T(k),S(k)) < \frac{\sigma_k}{10}$. Passing to an appropriate subsequence, not relabelled, we may further assume that $J(k)>1$ and$Q_j(k)$ do not depend on $k$. Fix the associated 1-Lipschitz retractions of \ref{lem_I:A1.3} $\boldsymbol{\vartheta}_k: \A_Q(\R^N) \to \overline{B_{\frac{1}{5} s(S(k))}( S(k))}$ i.e. $\mathcal{H}^0\left( spt(\boldsymbol{\vartheta}_k(T))\cap B_{\frac{\sigma_k}{5}(s_j)}\right)=Q_j$ for all $T \in \A_Q(\R^n)$ and $j=1, \dotsc, J$. Hence these retractions $\boldsymbol{\vartheta}_k$ defines new sequences $v_j(k)$ in $W^{1,2}(\Omega, \A_{Q_j}(\R^n))$ and a splitting of $T(k)$:
\begin{align*}
	&\boldsymbol{\vartheta}_k \circ u(k) = v_1(k)+ \dotsb v_J(k) \text{ with } v_j(k) \in B_{\frac{\sigma_k}{5}}(s_j); \\
	T(k)=&\boldsymbol{\vartheta}_k \circ T(k) = T_1(k)+ \dotsb + T_J(k) \text{ with } T_j(k) \in B_{\frac{\sigma_k}{5}}(s_j)
\end{align*}
Each sequence $v_j(k)$, $j=1, \dotsc, J$ satisfies itself the assumptions of the lemma, because $\boldsymbol{\vartheta_k}$ is a retraction and so
\begin{align}
	\sum_{j=1}^J \abs{Dv_j(k)}^2 &= \abs{D\boldsymbol{\vartheta_k} \circ u(k)}^2 \le \abs{Du(k)}^2\label{eq_I:A1.1}\\
	\sum_{j=1}^J\G(v_j(k), T_j(k))^2 &= \G(\boldsymbol{\vartheta}_k\circ u(k), \boldsymbol{\vartheta}_k \circ T(k))^2 \le \G(u(k),T(k))^2.\label{eq_I:A1.2}
\end{align}
Furthermore we record some properties:\\
Defining  $A_k=\{x\,:\, \boldsymbol{\vartheta}_k \circ u(k)(x) \neq u(k)(x) \} = \{ x\, :\, \G(u(k),S(k)) > \frac{\sigma_k}{5} \} \subset \{x\,:\, \G(u(k),T(k)) \ge \frac{\sigma_k}{10} \}= B_k$ (subsets of $\Omega$) we have
\begin{itemize}
\item[(1.)] $\abs{B_k} \to 0$ as $k \to \infty$, because
\begin{align*}
\abs{B_k} &\le \left( \frac{10}{\sigma_k)}\right)^{2^*} \int_{B_k} \G(u(k),T(k))^{2^*}\\
&\le \left( \frac{10}{\sigma_k}\right)^{2^*} C \left(\int_{\Omega} \abs{Du(k)}^2\right)^{\frac{2^*}{2}} \to 0;
\end{align*}
\item[(2.)] $\G(u(k), \boldsymbol{\vartheta}_k \circ u(k)) \to 0$ in $L^2$ as $k \to \infty$, since
\begin{align*}
	&\int_\Omega \G(u(k),\boldsymbol{\vartheta}_k\circ u(k))^2 = \int_{A_k} \G(u(k), \boldsymbol{\vartheta}_k \circ u(k))^2\\
	&\le 2 \int_{B_k} \G(v_k, T(k))^2 + \G(\boldsymbol{\vartheta}_k\circ u(k), \boldsymbol{\vartheta}_k \circ T(k))^2\\
	&\le 4 \left(\frac{10}{\sigma_k}\right)^{2^*-2} \int_{B_k} \G(u(k), T(k))^{2^*}\\
	&\le \frac{C}{\sigma_k^{2^*-2}} \left(\int_{\Omega} \abs{Du(k)}^2\right)^{\frac{2^*}{2}} \to 0;
\end{align*}
\item[(3.)] $\dist(spt(T_i), spt(T_j)) \ge \sigma_k - 2 \G(S(k),T(k)) \ge \frac{4}{5} \sigma_k \to +\infty$ for any $i \neq j$ as $k \to \infty$;
\item[(4.)] $\abs{\abs{Du(k)} - \abs{D\boldsymbol{\vartheta}_k\circ u(k)}} \to 0$ in $L^2$ as $k \to \infty$, because $\abs{B_k} \to 0$, $\abs{D\boldsymbol{\vartheta}_k\circ u(k)}\le \abs{Du(k)}$, $D \boldsymbol{\vartheta}_k \circ u(k) = D u(k)$ on $\Omega\setminus B_k$ and
\begin{align*}
	&\int_{\Omega} \left( \abs{Du(k)} - \abs{D\boldsymbol{\vartheta}_k \circ u(k)}\right)^2 \le \int_{\Omega} \abs{Du(k)}^2- \abs{D\boldsymbol{\vartheta}_k \circ u(k)}^2\\
	 &= \int_{B_k} \abs{Du(k)}^2- \abs{D\boldsymbol{\vartheta}_k \circ u(k)}^2 \le \int_{B_k} \abs{Du(k)}^2\to 0.
\end{align*}
\end{itemize}
Due to the induction hypothesis the lemma holds for each sequence $v_j(k)$ i.e. we can find $b_{j,l} \in W^{1,2}(\Omega, \A_{Q_{j,l}}(\R^n))$, with $\sum_{l=1}^{L_j} Q_{j,l}=Q_j$, a splitting $T_j(k)= T_{j,1}(k) + \dotsb + T_{j,L_j}(k)$ together with sequences $t_{j,l}(k) \in spt(T_{j,l}(k))$ satisfying the conditions (i), (ii), (iii). Furthermore the additional properties (a),(b) hold. Set $L= \sum_{j=1}^J L_j$, $K_j=\sum_{i=1}^{j-1} L_i$ and relabel $b_{K_j+l}=b_{j,l}$, $T_{K_j+l}(k)=T_{j,l}(k)$, $t_{K_j+l}(k)=t_{j,l}(k)$ and $Q_{K_j+l}=Q_{j,l}$ for $j \in \{1, \dotsc, J\}$ and $l \in \{1, \dotsc, L_j\}$. The induction hypothesis on the lemma states that the obtained sequences $b_l$, $T_l(k)$, $t_l(k)$ for $l=1, \dotsc, L$ satisfy
\begin{itemize}
	\item[(i)] $b_l \in W^{1,2}(\Omega, \A_{Q_l}(\R^n))$ for $l=1, \dotsc , L$ and $\sum_{l=1}^{L} Q_l =Q$;\\
	\item[(ii)] $T(k) = T_1(k) + \dotsb + T_{L}(k)$, $t_l(k) \in spt( T_l(k))$ and
	\begin{itemize}
		\item $\limsup_{k} diam(spt(T_l(k)))< \infty$ for all $l=1, \dotsc , L$\\
		\item $\lim_{k \to \infty} \dist(spt(T_l(k)), spt(T_{m}))= \infty$ for $l\neq m$ for any $K_j < l < m \le K_{j+1}$, $j=1, \dotsc, J$
	\end{itemize}
	\item[(iii)] $\G(v_j(k), b_j(k)) \to 0$ in $L^2$ with $b_j(k)= \sum_{l=K_j+1}^{K_{j+1}} (b_l \oplus t_l(k)) $ for each $j$.
\end{itemize}
Moreover, the following two additional properties hold for each $j$:
\begin{itemize}
	\item[(a)] if $\Omega' \subset \Omega$ is open and $A_k$ is a sequence of measurable sets with $\abs{A_k} \to 0$, then
	\[
		\liminf_{k \to \infty} \int_{\Omega'\setminus A_k} \abs{Dv_j(k)}^2 - \int_{\Omega'} \abs{Db_j(k)} \ge 0.
	\]
	\item[(b)] $\liminf_{k \to \infty} \int_{\Omega} \left(\abs{Dv_j(k)}^2 -\abs{Db_j(k)}^2\right) = 0$ if and only if\\ ${\liminf_{k \to \infty} \int_{\Omega} \bigl(\abs{Dv_j(k)} - \abs{Db_j(k)})^2 = 0}$.
\end{itemize}
Due to properties (1) to (4) we may sum in $j$ and replace $\sum_{j=1}^J v_j(k)$ by $u(k)$. This completes the proof. 
\end{proof}

\subsection{Dirichlet minimizers on cylinders, Remark \ref{rem_B:3.1}} 
\label{sub:dirichlet_minimizers_on_cylinders_remark_rem_B:3.1}
As announced in Remark \ref{rem_B:3.1} we present the proof given in \cite{Lellis} to the following observation.
\begin{lemma}\label{lem:A6.1}
$u(x)\in W^{1,2}(\Omega, \A_Q(\R^n))$ and $U(x,t)=u(x)$ is Dirichlet minimizing on $\Omega \times \R$ then $u$ itself is minimizing in $\Omega$
\end{lemma}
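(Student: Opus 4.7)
The plan is to argue by contradiction: if $u$ were not minimizing on $\Omega$, I would build a compactly supported perturbation $V$ of $U$ on $\Omega \times \R$ that strictly lowers the Dirichlet energy on $\{U \neq V\}$, contradicting the assumption on $U$.

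So suppose a competitor $v \in W^{1,2}(\Omega, \A_Q(\R^n))$ exists with $\overline{\{u \neq v\}}$ compactly contained in some $K \subset \Omega$ and $\delta := \int_\Omega \abs{Du}^2 - \int_\Omega \abs{Dv}^2 > 0$. For a large parameter $L > 1$ (to be fixed at the end), pick a Lipschitz cut-off $\phi \colon \R \to [0,1]$ with $\phi \equiv 1$ on $[-L, L]$, $\phi \equiv 0$ outside $(-L-1, L+1)$, and $\abs{\phi'} \le 2$. To overcome the lack of linear structure of $\A_Q(\R^n)$, I would interpolate between $u$ and $v$ in the $t$-variable through Almgren's bi-Lipschitz embedding $\boldsymbol{\xi}$ and its Lipschitz retraction $\boldsymbol{\rho}$ from Theorem \ref{theo_I:1.101}: set
\[
\tilde V(x,t) := \phi(t)\, \boldsymbol{\xi}(v(x)) + (1-\phi(t))\, \boldsymbol{\xi}(u(x)), \qquad V := \boldsymbol{\rho} \circ \tilde V.
\]
Because $\boldsymbol{\rho} \circ \boldsymbol{\xi} = \mathrm{id}$ on $\A_Q(\R^n)$, one has $V(\cdot, t) = v$ for $\abs{t} \le L$, $V(\cdot, t) = u$ for $\abs{t} \ge L+1$, and $V = U$ wherever $u = v$; in particular $\overline{\{U \neq V\}} \subset K \times [-L-1, L+1]$ is compact in $\Omega \times \R$, so $V$ is an admissible compact perturbation.

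It remains to compare the energies on $\{U \neq V\}$. On the core slab, the saving is exactly
\[
\int_{K \times [-L,L]} \bigl( \abs{DU}^2 - \abs{DV}^2 \bigr) = 2L\, \delta.
\]
On each of the two transition layers $K \times [-L-1, -L]$ and $K \times [L, L+1]$, convexity of $a \mapsto \abs{a}^2$ together with the $1$-Lipschitz bound $\abs{\boldsymbol{\xi}(v) - \boldsymbol{\xi}(u)} \le \G(u, v)$ yields
\[
\abs{D\tilde V}^2 \le \phi\, \abs{D\boldsymbol{\xi}(v)}^2 + (1-\phi)\, \abs{D\boldsymbol{\xi}(u)}^2 + 4\, \G(u, v)^2,
\]
and a further factor $\mathrm{Lip}(\boldsymbol{\rho})^2 = C(Q,n)$ promotes this to a bound on $\abs{DV}^2$. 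Integrating over the transition layers gives a total transition cost $C_1$ depending only on $\int_K \abs{Du}^2$, $\int_K \abs{Dv}^2$ and $\int_K \G(u, v)^2$, but crucially independent of $L$. Hence
\[
\int_{\{U \neq V\}} \bigl( \abs{DU}^2 - \abs{DV}^2 \bigr) \ge 2L\, \delta - C_1,
\]
which is strictly positive once $L > C_1/(2\delta)$, contradicting the minimality of $U$. The only genuine obstacle is the absence of a convex combination operation on $\A_Q(\R^n)$ in which to interpolate $u$ and $v$ directly along $t$; the $\boldsymbol{\xi}$-$\boldsymbol{\rho}$ detour is precisely what sidesteps this and keeps the Luckhaus-type transition cost bounded by a constant independent of $L$, so the linear-in-$L$ gain $2L\delta$ wins for $L$ large.
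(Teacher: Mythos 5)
Your argument is correct and is essentially the paper's proof in contrapositive form: both rest on a slab competitor that equals $v$ on $\Omega\times[-L,L]$, pays an $L$-independent transition cost on two unit layers, and wins because the bulk comparison scales linearly in $L$ (the paper argues directly, dividing by $2(L+1)$ and letting $L\to\infty$, and merely ``fixes an interpolation $w$'' on $\Omega\times[0,1]$ where you construct it explicitly via $\boldsymbol{\xi}$ and $\boldsymbol{\rho}$ — a legitimate concrete realization of that step). The one point you should repair is the competitor class: you assume $\overline{\{u\neq v\}}$ is compactly contained in $\Omega$, whereas the lemma, read against Definition \ref{def_I:1.102} and \eqref{eq_I:1.102}, requires minimality against every $v$ with $\G(u,v)\in W^{1,2}_0(\Omega)$, and for $Q$-valued maps one cannot reduce the latter to interior-compactly-supported perturbations by a cutoff, since $\A_Q(\R^n)$ has no linear structure. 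Fortunately your construction needs no modification: if $u\tr{\partial\Omega}=v\tr{\partial\Omega}$ then $\tilde V\tr{\partial\Omega\times\R}=\boldsymbol{\xi}\circ u\tr{\partial\Omega}$, hence $V=\boldsymbol{\rho}\circ\tilde V$ agrees with $U$ on the lateral boundary, and $\overline{\{U\neq V\}}\subset\overline{\Omega}\times[-L-1,L+1]$ is still compact for bounded $\Omega$, so $V$ remains an admissible compact perturbation and the same estimate closes the argument for arbitrary same-trace competitors.
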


\begin{proof}
Given an arbitrary competitor $v(x)\in W^{1,2}(\Omega, \A_Q(\R^n))$ to $u$ i.e. $u\tr{\partial \Omega}=v\tr{\partial \Omega}$ on $\partial \Omega$. We fix an interpolation $w\in W^{1,2}(\Omega \times [0,1], \A_Q(\R^n))$ satisfying $w(x,0)=u(x)$, $w(x,1)=v(x)$ for all $x\in \Omega$ and $w(x,t)=u\tr{\partial \Omega}(x)=v\tr{\partial \Omega}(x)$ on $\partial \Omega \times [0,1]$.
\[
	V(x,t)=\begin{cases}
		w(x,L+1 - t) &\text{ if } L \le t \le L+1\\
		v(x) &\text{ if } -L \le t \le L\\
		w(x, L+1+t) &\text{ if } -L-1 \le t \le -L.
	\end{cases}
\]
defines an admissible competitor to $U$. Hence the minimality of $U$ ensures
\begin{align*}
	2(L+1) \int_{\Omega} \abs{Du}^2 &= \int_{\Omega \times [-L-1, L+1]} \abs{DU}^2\\
	 &\le \int_{\Omega \times [-L-1, L+1]} \abs{DV}^2 = 2L \int_{\Omega} \abs{Dv}^2 + 2 \int_{\Omega \times [0,1]} \abs{Dw}^2.
\end{align*}
This is equivalent to
\[
	\int_{\Omega} \abs{Du}^2 \le \left(1- \frac{1}{L+1}\right) \int_{\Omega} \abs{Dv}^2 + \frac{1}{L+1} \int_{\Omega \times [0,1]} \abs{Dw}^2.
\]
for all $L \ge 0$, proving the minimality of $u$.
\end{proof}

\subsection{$W^{s,p}$-selection for $s>\frac{1}{p}$} 
\label{sub:selection}
The proof of this lemma is due to Camillo De Lellis, but has not been published so far. 

\begin{lemma}\label{lem:A2.401}
	Let $s>\frac{1}{p}$, $Q\in \N$ be given, then for $u \in W^{s,p}([0,1], \A_Q(\R^n))$ we can find $v=(v_1, \dots, v_Q):[0,1]\to (\R^n)^Q$ with the property that 
	\begin{itemize}
		\item[(i)] \[
			[v(t)]=\sum_{i=1}^Q \llbracket v_i(t) \rrbracket = u(t) \text{ for all } t \in [0,1];
		\] 
		\item[(ii)] $v \in W^{s',p}([0,1],(\R^n)^Q)$ for any $s'<s$ i.e. there is a positive constant $C$ depending on $Q$ and $p,s,s'$ s.t.
		\[
			\int_{[0,1]\times [0,1]} \frac{ \abs{v(x)-v(y)}^p}{\abs{x-y}^{1+ps'}} dxdy \le C \int_{[0,1]\times [0,1]} \frac{\G(u(x),u(y))^p}{\abs{x-y}^{1+ps}} dxdy
		\]
	\end{itemize}
\end{lemma}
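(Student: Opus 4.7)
The plan proceeds by induction on $Q$ in three steps: (a) extract a H\"older continuous representative of $u$; (b) build a measurable selection $v$ via a ``collision-set'' splitting; and (c) estimate the Gagliardo $(s',p)$-seminorm of $v$ by that of $u$ at order $s$, with a constant that blows up as $s'\uparrow s$.

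\textbf{Steps (a) and (b).} Since $sp>1$ and the domain is one-dimensional, the Morrey-type embedding $W^{s,p}([0,1],\R^m)\hookrightarrow C^{0,s-1/p}$ applied to $\boldsymbol{\xi}\circ u$, combined with the bi-Lipschitz character of $\boldsymbol{\xi}$ from Theorem~\ref{theo_I:1.101}, yields a H\"older representative of $u$ with exponent $\alpha=s-1/p$. For $Q=1$ take $v_1=u$. For $Q\ge 2$ let
\begin{equation*}
T=\{t\in[0,1]:s(u(t))=0\}=\{t:u(t)=Q\llbracket p(t)\rrbracket\};
\end{equation*}
by the isometry $\G(u(x),u(y))=\sqrt Q\,|p(x)-p(y)|$ on $T\times T$, the map $p$ inherits the $W^{s,p}$-regularity of $u$, and I set $v|_T=(p,\ldots,p)$. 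The complement $[0,1]\setminus T$ is a countable disjoint union of open intervals $I_k=(a_k,b_k)$; on each $I_k$ the cluster structure of $u(t)$ is locally constant away from partial-collision points, and where stable, the retraction of Lemma~\ref{lem_I:A1.3} splits $u=u_1^{(k)}\oplus\cdots\oplus u_{J_k}^{(k)}$ with $u_j^{(k)}\in W^{s,p}$ and $Q_j^{(k)}<Q$. Partial collisions inside $I_k$ are handled by recursion, which terminates after at most $Q$ levels since each level strictly decreases the ambient multiplicity. Applying the inductive hypothesis to each $u_j^{(k)}$ and matching the sub-selections continuously to $(p,\ldots,p)$ at the endpoints $a_k,b_k\in T$ (using $u_j^{(k)}(t)\to Q_j^{(k)}\llbracket p(a_k)\rrbracket$ as $t\to a_k$), one obtains a global measurable $v=(v_1,\ldots,v_Q)$ with $[v]=u$.

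\textbf{Step (c).} Decompose $\{x<y\}\subset[0,1]^2$ into (I) pairs both in $T$ or in the same splitting component, and (II) ``crossing'' pairs whose segment meets $T$. On (I), if $(x,y)\in T\times T$ then $|v(x)-v(y)|^p=Q^{p/2}|p(x)-p(y)|^p=\G(u(x),u(y))^p$; otherwise the inductive hypothesis on each $u_j^{(k)}$ together with the elementary inequality $|v(x)-v(y)|^p\le C_{p,Q}\sum_j|v_j^{(k)}(x)-v_j^{(k)}(y)|^p$ bounds the contribution over each $I_k$ of length $L_k$ by $C_0 L_k^{(s-s')p}[u]_{s,p,I_k}^p$; summing over $k$, using $L_k\le 1$ and the super-additivity of the Gagliardo seminorm on disjoint sub-intervals, controls (I) by $C_0[u]_{s,p,[0,1]}^p$. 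On (II), pick $t_*=t_*(x,y)\in[x,y]\cap T$ (measurably, e.g.\ as the leftmost such point, well-defined since $T$ is closed); since $v(t_*)$ has all entries equal to $p(t_*)$,
\begin{equation*}
|v_i(x)-p(t_*)|^2\le\sum_j|v_j(x)-p(t_*)|^2=\G(u(x),u(t_*))^2,
\end{equation*}
so $|v(x)-v(y)|^p\le 2^p Q^{p/2}(\G(u(x),u(t_*))^p+\G(u(t_*),u(y))^p)$. The scale-invariant one-dimensional Sobolev embedding
\begin{equation*}
\G(u(x),u(t_*))^p\le C_{s,p}\,|x-t_*|^{sp-1}\iint_{[x,y]^2}\frac{\G(u(a),u(b))^p}{|a-b|^{1+sp}}\,da\,db,
\end{equation*}
the analogous bound for the $y$-term, and Fubini in $(x,y)$ (using $|x-t_*|\le|x-y|$) bound the contribution of (II) by $C(s-s')^{-1}[u]_{s,p}^p$.

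\textbf{Main obstacle.} The hardest point is the nested splitting of $[0,1]\setminus T$: the number of components $I_k$ may be infinite, their lengths may accumulate at $0$, and within each $I_k$ further partial collisions generate a recursive sub-structure. What makes the induction close is the scaling factor $L_k^{(s-s')p}\le 1$, which absorbs the length-dependence of the inductive constant after rescaling each $I_k$ to unit length, and the super-additivity of the Gagliardo seminorm on disjoint sub-intervals, yielding a total estimate independent of the number of sub-components. The blow-up of the constant as $s'\uparrow s$ is unavoidable and comes from the $(s-s')^{-1}$ factor arising in the $(x,y)$-integration on region (II).
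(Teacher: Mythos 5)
Your analytic half --- the treatment of the ``crossing'' pairs in region (II) --- is sound and is in fact exactly the estimate the paper proves as \eqref{eq:A2.401}: the scale-invariant Morrey bound $\max_{\sigma,\tau\in[a,b]}\G(u(\sigma),u(\tau))\le C\,(b-a)^{s-1/p}\llfloor u\rrfloor_{s,p,[a,b]}$ followed by Fubini in $(x,y)$, with the constant degenerating like $(s-s')^{-1}$. The gap is in the selection half, i.e.\ step (b) and region (I). On a component $I_k$ of $[0,1]\setminus T$ the function $u$ is still genuinely $Q$-valued, and the decomposition $u=u_1^{(k)}\oplus\dots\oplus u_{J_k}^{(k)}$ into lower-multiplicity pieces that your induction needs does not come with the regularity you assert. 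Already for $Q=2$: on $I_k$ the two sheets $\gamma_1,\gamma_2$ never coincide, but they may come arbitrarily close and exchange roles between two parameters $x<y$ (take $\gamma_1(t)=e^{i\pi t}$, $\gamma_2(t)=-e^{i\pi t}$ in $\R^2$: then $\G(u(x),u(y))$ can vanish while $\abs{\gamma_1(x)-\gamma_1(y)}=2$). Hence $\abs{u_j^{(k)}(x)-u_j^{(k)}(y)}$ is controlled only by $\max_{\sigma,\tau\in[x,y]}\G(u(\sigma),u(\tau))$, not by $\G(u(x),u(y))$, so the pieces are in general \emph{not} in $W^{s,p}(I_k)$ with seminorm bounded by $\llfloor u\rrfloor_{s,p,I_k}$ --- they suffer exactly the same loss from $s$ to $s'<s$ as the final selection, and your induction at fixed order $s$ does not close. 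The claim that the recursion inside $I_k$ ``strictly decreases the ambient multiplicity'' at each partial-collision point is likewise unjustified: at such points $u$ is still $Q$-valued, and the retraction of Lemma \ref{lem_I:A1.3} splits $u$ only where a cluster separation is quantitatively bounded below.

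The paper avoids all of this by quoting the regular-selection theorem \cite[Theorem 1.1]{Lellis select}, whose output for one-dimensional domains is precisely the pointwise oscillation bound $\abs{v(x)-v(y)}\le C_Q\max_{\sigma,\tau\in[x,y]}\G(u(\sigma),u(\tau))$ for a continuous selection $v$. Once that bound is available, no decomposition of $[0,1]$ into $T$ and the intervals $I_k$ is needed: your region-(II) computation, applied to all pairs $(x,y)$ simultaneously, is exactly \eqref{eq:A2.403} and finishes the proof in one stroke. If you want a self-contained argument, the honest route is to first prove that oscillation bound for the selection --- which is where the genuine combinatorial work lives --- rather than to posit a quantitative $W^{s,p}$ splitting on each $I_k$.
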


\begin{proof}
The lemma is a consequence of the results on regular selections of multivalued functions, \cite[theorem 1.1]{Lellis select}, and the following estimate
\begin{equation}\label{eq:A2.401}
	\int_{0\le x\le y \le 1} \frac{\max_{\sigma, \tau \in [x,y]}\abs{f(\sigma)-f(\tau)}^p}{\abs{x-y}^{1+ps'}}\;dxdy \le C \int_{0\le \sigma \le \tau\le 1} \frac{\abs{f(\sigma) - f(\tau)}^p}{\abs{\sigma-\tau}^{1+ps}} \;d\sigma d\tau
\end{equation}
for a constant $C$ depending only on $p, s'<s$. \\
We start with proving \eqref{eq:A2.401}. $W^{s,p}([0,1])\subset C^{0, s-\frac{1}{p}}([0,1])$ for $ps>1$ i.e. for any $\sigma, \tau \in [0,1]$ 
\begin{equation}\label{eq:A2.402}
\abs{f(\sigma)- f(\tau)} \le C \llfloor f \rrfloor_{s,p,[0,1]}
\end{equation}
where we used the abbreviation $\llfloor f \rrfloor_{s,p,[a,b]}^p = \int_{[a,b]\times [a,b]} \frac{\abs{f(x)-f(y)}^p}{\abs{x-y}^{1+ps}}\,dxdy$. This holds by standard theory. Or it may be concluded from lemma \ref{lem_B:2.101}. To do so extend $f$ to $\tilde{f} \in W^{s,p}([-1,3], \R^n)$ by 
\[
	\tilde{f}= \begin{cases}
		f(-t), &\text{ if }-1<t <0\\
		f(t), &\text{ if } 0<t<1\\
		f(1-t), &\text{ if } 1<t<2.
	\end{cases}
\]
The means $\tilde{f}(x,r)= \fint_{x-r}^{x+r} \tilde{f}$ are well-defined for all $x \in [0,1]$ and $r<1$. \eqref{eq:A2.402} for $\tilde{f}$  in the case of $p=2$ agrees with \eqref{eq:142} in lemma \ref{lem_B:2.101} since \eqref{eq:141} is satisfied with $\beta=\frac{1}{2}$; for general $p$ the calculations have to be adapted classically. We conclude: for all $\sigma, \tau \in [0,1]$
\[
	\abs{f(\sigma)-f(\tau)}=\abs{\tilde{f}(\sigma)-\tilde{f}(\tau)} \le C \llfloor \tilde{f} \rrfloor_{s,p,[-1,2]} \le C \llfloor f \rrfloor_{s,p,[0,1]}.
\]

For any $f \in W^{s,p}([a,b],\R^n)$ we may applying \eqref{eq:A2.402} to the rescaled function $f_{a,\rho}(t)= f(a+\rho t)$ with $\rho=b-a$:
\begin{align*}
	&\max_{x,y \in [a,b]} \abs{f(x)- f(y)} =\max_{\sigma, \tau \in [0,1]} \abs{f_{a,\rho}(\sigma)-f_{a,\rho}(\tau)} \le C \llfloor f_{a,\rho} \rrfloor_{s,p,[0,1]}\\
	&=C \rho^{s-\frac{1}{p}} \llfloor f \rrfloor_{s,p,[a,b]} = C (b-a)^{s-\frac{1}{p}} \llfloor f \rrfloor_{s,p,[a,b]}.
\end{align*}
Inserting this in the left hand side of \eqref{eq:A2.401} gives
\begin{align*}
		&\int_{0\le x\le y \le 1} \frac{\max_{\sigma, \tau \in [x,y]}\abs{f(\sigma)-f(\tau)}^p}{\abs{x-y}^{1+ps'}}\;dxdy \\
		&\le C \int_{0\le x \le y \le 1} \frac{(y-x)^{ps-1}}{(y-x)^{1+ps}} \int_{x \le \sigma \le \tau \le 1} \frac{\abs{f(\sigma)-f(\tau)}^p}{(\tau-\sigma)^{1+ps}} \; d\tau d\sigma\; dx dy\\
		&\le C \int_{0\le \sigma \le \tau \le 1} \left( \int_0^\sigma \int_{\tau}^1 (y-x)^{p(s-s')-2} dy dx\right) \frac{\abs{f(\sigma)-f(\tau)}^p}{(\tau-\sigma)^{1+ps}} \; d\tau d\sigma\\
		&\le C \int_{0\le \sigma \le \tau \le 1} \frac{\abs{f(\sigma)-f(\tau)}^p}{(\tau-\sigma)^{1+ps}} \; d\tau d\sigma.
\end{align*}
The constant $C$ is determined by
\[
	\int_0^\sigma \int_{\tau}^1 (y-x)^{\delta-2} dy dx\le \int_0^\sigma\int_{\sigma}^1 (y-x)^{\delta-2} dy dx \le \begin{cases}
		\frac{1-2^{1-\delta}}{\delta (\delta-1)}, &\text{ if } \delta=p(s-s')\neq 1 \\
		\ln(2), &\text{ if } \delta=p(s-s')=1
	\end{cases}.
\]
Making use of Almgren's bilipschtiz embedding $\boldsymbol{\xi}$ we deduce that \eqref{eq:A2.401} holds as well for multivalued functions i.e. for any $u \in W^{s,p}([0,1], \A_Q(\R^n))$
\begin{equation}\label{eq:A2.403}
		\int_{0\le x\le y \le 1} \frac{\max_{\sigma, \tau \in [x,y]}\G(u(\sigma),u(\tau))^p}{\abs{x-y}^{1+ps'}}\;dxdy \le C \int_{0\le \sigma \le \tau\le 1} \frac{\G(u(\sigma), u(\tau))^p}{\abs{\sigma-\tau}^{1+ps}} \;d\sigma d\tau.
\end{equation}

We observed $W^{s,p}([0,1], \A_Q(\R^n)) \subset C^{0,s-\frac{1}{p}}([0,1], \A_Q(\R^n))$, so that we may apply the theory of regular selections developed in \cite{Lellis select}. Especially we use the proof of \cite[theorem 1.1]{Lellis select}. For a given $u \in W^{s,p}([0,1],\A_Q(\R^n)$ we can find $v=(v_1, \dotsc, v_Q):[0,1] \to (\R^n)^Q$ continuous with the property that $[v(t)]=\sum_{i=1}^Q \llbracket v_i(t) \rrbracket = u(t)$ on $[0,1]$ and there is a constant $C_Q>0$ s.t. for any $0\le x \le y \le 1$
\[
	\abs{v(x)-v(y)} \le C_Q \max_{\sigma, \tau \in [x,y]} \G(u(\sigma),u(\tau)).
\]
Combining this with \eqref{eq:A2.403} gives the remaining part (ii) of the lemma.
\end{proof}



\section{Construction of bilipschitz maps between $B_{1+}$ and $\Omega_F\cap B_1$} 
\label{sec:construction_of_bilipschitz_maps_between_b_1_and_omega_fcap_b_1_}
Before showing the general situation, $\Omega_F\cap B_1$ with $\Omega_F=\{ (x',x_N)  \in \R^N \colon x_N > F(x') \}$, $F\in C^1(\R^{N-1})$, we consider the similar case of a bilipschitz map between $B_1$ and the upper half ball $B_{1+}=B_1 \cap \{x_N > 0 \} $ that preserves "radial" homogeneity.\\

It is of interest for us to preserve "radial" homogeneity in the context of constructing competitors. We want to make use of the interpolation lemma on annuli, lemma \ref{lem:A2.1}. We cannot use a generic bilipschitz map between $B_1$ and $B_{1+}$, because in general it is not true that if $G: U \to V$ is bilipschitz and $\psi_k: U \to U$ a sequence of diffeomorphisms that satisfy $\psi_k \to id$ then $G \circ \psi_k \circ G^{-1} \to 1$ with $Lip(G \circ \psi_k \circ G^{-1} ) \to 1$ as $k \to \infty$.

\begin{lemma}\label{lem:A4.1}
There is a bilipschitz map $G:\overline{B_1} \to \overline{B_{1+}}$ that preserves "radial" homogeneity in the sense that
\[
	G \circ \frac{1}{R} \circ G^{-1}(y)= \left(1- \frac{1}{R} \right) c + \frac{1}{R} y;
\]
where $c= \frac{e_N}{2}=\left(0, \dotsc, 0, \frac{1}{2}\right)$ and $0<R$. 
\end{lemma}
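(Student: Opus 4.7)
The idea is to build $G$ as a ``radial'' map from $\overline{B_1}$ (parametrized radially from $0$) to $\overline{B_{1+}}$ (parametrized radially from the basepoint $c=e_N/2$), so that the condition $G\circ\psi_R^{(0)} = \psi_R^{(c)}\circ G$ (where $\psi_R^{(0)}$ is the $1/R$-dilation at $0$ and $\psi_R^{(c)}$ the $1/R$-dilation at $c$) holds by construction. Concretely, I will set
\[
G(x) = c + \rho\!\left(\tfrac{x}{|x|}\right)\, x \qquad \text{for } x\in \overline{B_1}\setminus\{0\},\qquad G(0)=c,
\]
where $\rho:S^{N-1}\to(0,\infty)$ is the radial distance from $c$ to $\partial B_{1+}$ along direction $\omega$. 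The defining identity $G(x/R)=c+\rho(x/|x|)\,x/R=c+(G(x)-c)/R$ is immediate, which is exactly the required conjugation.

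The first step is to compute $\rho$ explicitly. The ray $c+t\omega$ hits either the spherical part $S^{N-1}_+$ or the flat disk $\Gamma_0$. Solving $|c+t\omega|=1$ yields $t=\tfrac{-\omega_N+\sqrt{\omega_N^2+3}}{2}$, while solving $\tfrac{1}{2}+t\omega_N=0$ yields $t=-\tfrac{1}{2\omega_N}$. The two regimes meet exactly when $\omega_N=-\tfrac{1}{\sqrt{5}}$, at which point both formulas give $\tfrac{\sqrt{5}}{2}$. Thus
\[
\rho(\omega) \;=\; \begin{cases} \tfrac{-\omega_N+\sqrt{\omega_N^2+3}}{2}, & \omega_N\ge -1/\sqrt{5},\\[1mm] -\tfrac{1}{2\omega_N}, & \omega_N\le -1/\sqrt{5}, \end{cases}
\]
which is continuous, piecewise real-analytic and globally Lipschitz on $S^{N-1}$, with $\tfrac{1}{2}\le \rho(\omega)\le \tfrac{\sqrt{5}}{2}$ (the minimum $\tfrac12$ being attained at $\omega=\pm e_N$).

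The second step is to check that $G$ is a bilipschitz homeomorphism onto $\overline{B_{1+}}$. Writing $x=t\omega$ with $t=|x|\in[0,1]$ gives $G(t\omega)=c+t\,\rho(\omega)\,\omega$, so as $t$ varies in $[0,1]$ and $\omega$ over $S^{N-1}$ we sweep out precisely the closed half ball $\overline{B_{1+}}$ (the boundary $\{t=1\}$ sweeping exactly $\partial B_{1+}=S^{N-1}_+\cup\Gamma_0$), and the inverse is explicitly $G^{-1}(y)=(y-c)/\rho\!\left(\tfrac{y-c}{|y-c|}\right)$. Bi-Lipschitz bounds follow from the two-sided bound on $\rho$ together with the Lipschitz continuity of $\omega\mapsto\rho(\omega)$ on $S^{N-1}$: using the product rule, $|G(x)-G(x')|\le |\rho(x/|x|)|\,|x-x'|+|x'|\,|\rho(x/|x|)-\rho(x'/|x'|)|$ and similarly for $G^{-1}$, and the standard estimate $|x/|x|-x'/|x'||\le 2|x-x'|/\max(|x|,|x'|)$ controls the angular difference.

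The main (mild) obstacle is precisely the last point: the Lipschitz constant of $\rho$ is fine globally, but the factor $|x|^{-1}$ appearing when estimating $|x/|x|-x'/|x'||$ is compensated by the factor $|x'|$ in front of the difference of $\rho$'s. Because $\rho$ is only Lipschitz (not $C^1$) along the ridge $\{\omega_N=-1/\sqrt{5}\}$, I will not try to compute $DG$; instead, the bi-Lipschitz estimate is obtained directly from the two displayed inequalities above, giving a dimensional Lipschitz constant for both $G$ and $G^{-1}$. The required identity $G\circ\tfrac{1}{R}\circ G^{-1}(y)=\bigl(1-\tfrac{1}{R}\bigr)c+\tfrac{1}{R}y$ then follows from the construction and a one-line computation.
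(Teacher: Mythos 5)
Your construction coincides with the paper's: the same radial ansatz $G(x)=c+s(\widehat{x})\,x$ with the identical explicit formula for the boundary-distance function (your $\rho$ is the paper's $s$, with the same two branches meeting at $\omega_N=-\tfrac{1}{\sqrt{5}}$ with common value $\tfrac{\sqrt 5}{2}$, the same bounds $\tfrac12\le s\le\tfrac{\sqrt5}{2}$, the same explicit inverse, and the conjugation identity following immediately from the ansatz). The only, immaterial, difference is in verifying bilipschitzness: the paper computes $DG$ and $DG^{-1}$ almost everywhere and bounds $\abs{s'}<3$, whereas you avoid differentiating across the ridge and use a direct two-point estimate via $\bigl|\tfrac{x}{\abs{x}}-\tfrac{x'}{\abs{x'}}\bigr|\le \tfrac{2\abs{x-x'}}{\max(\abs{x},\abs{x'})}$; both arguments are correct.
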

\begin{proof}
We make the ansatz $G(x)=c + s(\widehat{x}) x$ for a piecewise $C^1$ function $s: \Sp^{N-1} \to \partial B_{1+}$ with bounded derivative, where $\widehat{x}= \frac{x}{\abs{x}}$. The constrains $\abs{c+s(x)x}^2=1$ for $x\in \Sp^{N-1}\cap \{x_N \ge a\}$ and $\langle e_N, c+s(x)x \rangle =0$ for $x \in \Sp^{N-1}\cap \{x_N \le a\}$ for some $-1<a<0$ determine $s$ and $a$ uniquely to $a= -\frac{1}{\sqrt{5}}$ and
\[
	s(x)=s(x_N)=\begin{cases}
		\frac{1}{2}\left( -x_N + \sqrt{x_N^2 +3}\right), &\text{ if }x_N \ge -\frac{1}{\sqrt{5}}\\
		-\frac{1}{2x_N}, &\text{ if }x_N \le -\frac{1}{\sqrt{5}}.
	\end{cases}
\]
The derivative is
\[
	s'(x_N)=\begin{cases}
		-\frac{1}{2}\left( 1 -\frac{x_N}{\sqrt{x_N^2 +3}}\right), &\text{ if }x_N > -\frac{1}{\sqrt{5}}\\
		\frac{1}{2x_N^2}, &\text{ if }x_N < -\frac{1}{\sqrt{5}};
	\end{cases}
\]
So we may check the bounds $\abs{s'}<3$ and $\frac{1}{2} \le s(x_N) \le \frac{\sqrt{5}}{2}$. Furthermore we got $\grad s(x)=\grad_{\Sp^{N-1}} s(x)= s'(x_N) (\mathbf{1}- x\otimes x)e_N$.\\
The inverse is explicitly given by $G^{-1}(y)= \frac{1}{s(\widehat{y-c})} \,(y-c)$. We got that $G$ and $G^{-1}$ are almost everywhere $C^1$ with derivatives
\begin{align*}
	DG(x)&= s(\widehat{x}) \,\mathbf{1} + \widehat{x} \otimes \grad s(\widehat{x})\\
	DG^{-1}(y)&= \frac{1}{s(\widehat{y-c})}\,\mathbf{1} - \widehat{y-c} \otimes\frac{\grad s(\widehat{y-c})}{s^2(\widehat{y-c})}.
\end{align*}
The "radial" homogeneity follows i.e. $G\circ \frac{1}{R} \circ G^{-1}(y)= G(\frac{1}{s(\widehat{y-c})} \,\frac{y-c}{R})= \left(1- \frac{1}{R} \right) c + \frac{1}{R} y$. Therefore $DG\circ \frac{1}{R} \circ G^{-1} = \frac{1}{R} \, \mathbf{1}$ converging to $\mathbf{1}$ as $R \to 1$.
\end{proof}

\begin{lemma}\label{lem:A4.2}
For any $F\in C^1(\R^{N-1})$ that satisfies $F(0)=0, \grad F(0)=0$ and $\norm{\grad{F}}_\infty < \frac{1}{4}$ there exists a $C^1$-diffeomorphism 
\[
	G_F: \overline{B_{1+}} \to \overline{ \Omega_F \cap B_1} 
\]
with bounds $\norm{DG_F - \mathbf{1}}_\infty, \norm{DG^{-1}_F - \mathbf{1}}_\infty < 10 \norm{\grad{F}}_\infty$.\\
Furthermore if $F_k$ is a sequence of admissible maps with $F_k \to F$ in $C^1$ then $G_{F_k} \to G_F$in $C^1$.
\end{lemma}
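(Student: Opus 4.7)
The plan is to build $G_F$ as a composition of two $C^1$-diffeomorphisms, each close to the identity. First, the vertical shear
\[
\Psi_F(y',y_N) := (y',\, y_N - F(y'))
\]
is a global $C^1$-diffeomorphism of $\R^N$ with $D\Psi_F = \mathbf{1} - e_N\otimes(\nabla F(y'),0)$, so $\|D\Psi_F-\mathbf{1}\|_\infty = \|D\Psi_F^{-1}-\mathbf{1}\|_\infty = \|\nabla F\|_\infty$. It straightens the graph boundary: $\Psi_F(\Gamma_F) \subset \{z_N = 0\}$, and
\[
U_F := \Psi_F(\overline{\Omega_F \cap B_1}) = \{z : z_N \ge 0,\ |z'|^2 + (z_N + F(z'))^2 \le 1\}
\]
is star-shaped with respect to $c := \tfrac{1}{2}e_N$ as soon as $\|\nabla F\|_\infty < \tfrac{1}{4}$ (using $F(0)=0$). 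The second piece $H_F:\overline{B_{1+}} \to \overline{U_F}$ is then built by the radial-from-$c$ construction of Lemma~\ref{lem:A4.1}: for each $v \in S^{N-1}$ let $T(v)$ and $T_F(v)$ be the exit times of the ray $t \mapsto c + tv$ from $\overline{B_{1+}}$ and $\overline{U_F}$, and set $H_F(c + tv) := c + t\,(T_F(v)/T(v))\,v$. Finally, $G_F := \Psi_F^{-1} \circ H_F$.

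For the derivative bounds, $T$ is piecewise explicit, as in Lemma~\ref{lem:A4.1}: $T(v) = \tfrac{1}{2}(-v_N + \sqrt{v_N^2 + 3})$ when the ray from $c$ exits through the sphere, and $T(v) = -1/(2v_N)$ when it exits through $\Gamma_0$. The same analysis applies to $T_F$, with the spherical equation replaced by $|z'|^2 + (z_N + F(z'))^2 = 1$ at $z = c + tv$; the implicit function theorem, applied uniformly on compact subsets of $S^{N-1}$ avoiding the corner-ray circles, yields $|T_F - T| + |\nabla_v(T_F - T)| = O(\|\nabla F\|_\infty)$. Since $T \ge \tfrac{1}{2}$, this gives $|T_F/T - 1| + |\nabla_v(T_F/T)| = O(\|\nabla F\|_\infty)$; writing $DH_F - \mathbf{1}$ in the $(t,v)$ polar decomposition in terms of these quantities alone yields $\|DH_F - \mathbf{1}\|_\infty \le C\|\nabla F\|_\infty$ on each sector. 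Composing with $\Psi_F^{-1}$ gives $\|DG_F - \mathbf{1}\|_\infty \le C\|\nabla F\|_\infty$; shrinking the admissible range of $\|\nabla F\|_\infty$ further absorbs the dimensional constant into the desired $10\|\nabla F\|_\infty$, and the inverse $G_F^{-1}$ satisfies the analogous bound with $T/T_F$ in place of $T_F/T$. Continuous dependence $G_{F_k} \to G_F$ in $C^1$ as $F_k \to F$ in $C^1$ follows because each ingredient (the shear $\Psi_F$ and the implicit-function-theorem solution $T_F$) depends $C^1$-continuously on $F$.

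The main obstacle is the behaviour across the "corner rays" — the circle of directions $v \in S^{N-1}$ along which the ray from $c$ hits the codimension-two corner of $\partial B_{1+}$ (respectively of $\partial U_F$). Along these the two piecewise formulas for $T$ (and for $T_F$) match in value but generically not in tangential derivative, so the radial map $H_F$ is strictly only piecewise $C^1$, as was already the case for the map constructed in Lemma~\ref{lem:A4.1}. The bound $\|DG_F - \mathbf{1}\|_\infty \le 10\|\nabla F\|_\infty$ is therefore to be read sector-by-sector (equivalently, in the essential-supremum sense), which is what actually enters in the applications, where $G_F$ is used only to compare energies and Gagliardo norms of rescalings on $B_{1+}$ and $\Omega_F \cap B_1$.
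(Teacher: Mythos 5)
Your route---straighten the boundary with the shear $\Psi_F$, then map $B_{1+}$ onto the star-shaped region $U_F$ by comparing exit times of rays issued from the interior point $c=\tfrac{e_N}{2}$---is genuinely different from the paper's. The paper sets $G_F(x)=\psi(s(\widehat x)\,x)$ with $\psi(x',x_N)=(x',x_N+F(x'))$, i.e.\ it scales radially \emph{from the origin} and only then shears. Because the origin is the vertex of $B_{1+}$ viewed as the cone over $\overline{\Sp^{N-1}_+}$, the exit time of a ray from the origin out of $B_{1+}$ is identically $1$, and the scaling factor is determined by the single equation $\abs{\psi(s(y)y)}^2=1$, solved by one application of the implicit function theorem on the whole closed hemisphere: no case distinction, no corner circle. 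This buys two things your construction loses. First, regularity: the paper's map is $C^1$ up to and across the equator (its only defect is at the single point $0$), whereas your $H_F$ has a derivative jump along the entire cone over the corner directions; you flag this, and it is indeed tolerable since only bilipschitz bounds are used downstream. Second, and more seriously, boundary correspondence: the paper's map sends $\Gamma_0$ onto $\Gamma_F$ and $\Sp^{N-1}_+$ onto $\Omega_F\cap\Sp^{N-1}$ exactly, which is precisely what Corollary \ref{cor_B:3.6} and the proof of Proposition \ref{prop_B:2.3} use (comparison of $\llfloor v(k)\rrfloor_{s,\Gamma_0}$ with $\llfloor u(k)\rrfloor_{s,\Gamma_{F(k)}}$, of tangential energies on the two spherical caps, and the transport of boundary data for competitors).

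The concrete gap is this second point. The corner of $U_F$ is $\{z_N=0,\ \abs{z'}^2+F(z')^2=1\}$, which does not coincide with the corner $\{z_N=0,\ \abs{z'}=1\}$ of $B_{1+}$; hence the set of directions whose ray from $c$ exits $U_F$ through the flat face differs from the set exiting $B_{1+}$ through $\Gamma_0$, and in a thin annulus near the corner your $H_F$ sends points of $\Gamma_0$ to the curved part of $\partial U_F$, so that $G_F(\Gamma_0)\neq\Gamma_F$. This does not contradict the literal statement of the lemma, but it defeats the purpose for which the lemma is invoked, so it must be repaired---e.g.\ by an additional deformation of the direction field near the corner, or most simply by adopting the origin-centred scaling, for which the flat and curved boundary pieces match automatically. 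Separately, the step ``shrinking the admissible range of $\norm{\grad F}_\infty$ absorbs the dimensional constant into $10\norm{\grad F}_\infty$'' is not valid: a linear bound $C\norm{\grad F}_\infty$ with $C>10$ cannot be improved to $10\norm{\grad F}_\infty$ by restricting $\norm{\grad F}_\infty$. This is harmless for the applications (only $\norm{DG_F-\mathbf 1}_\infty\le C\norm{\grad F}_\infty\to 0$ is ever used, and the paper's own derivation of the constant $10$ is itself loose), but the claim as written is false. Your remaining ingredients---star-shapedness of $U_F$ for small $\norm{\grad F}_\infty$, the uniform implicit-function-theorem estimates on $T_F-T$ away from the corner, and the $C^1$ dependence on $F$---are sound.
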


\begin{proof}
Let $F$ be fixed, then $\psi: (x',x_N) \mapsto (x', x_N+ F(x'))$ is a $C^1$-diffeomorphism between $\R^N_+$ and $\Omega_F$. Its inverse is $\psi^{-1}(x',x_N)=(x', x_N-F(x'))$. We make again an ansatz for $G=G_F$. Set $G(x)=\psi(s(\widehat{x})\,x)$ where $s: \Sp^{N-1} \to \R_+$ satisfies $\psi(s(y)\,y) \in \Omega_F \cap \Sp^{N-1}$ for all $y \in \Sp^{N-1}_+$. The inverse for such a $G$ is $G^{-1}(x)= \frac{1}{s(\widehat{\psi^{-1}(x)})}\; \psi^{-1}(x)$.\\
As a consequence of the implicit function theorem applied to the level set at $1$ of the auxiliary function 
\[
	h(y,s)=\abs{\psi(s\,y)}^2,
\]
$s \in C^1(\Sp^{N-1}_+, \R_+)$ has the desired properties. Note that $s(e_N)=1$ because $h(e_N,1)=1$.\\

\emph{Existence:} to every $y \in \Sp^{N-1}_+$ there exists $s(y)\in \R_+$ s.t. $h(y,s(y))=1$ and $1- \norm{\grad{F}}_\infty \le \frac{1}{s} \le 1+ \norm{\grad{F}}_\infty$, because
\begin{align*}
	h(y,s)&= s^2 \,\abs{y+ \frac{ F(s\, y')}{s}\,e_N}^2\\
	&\le s^2 \left(1+\norm{\grad{F}}_\infty\right)^2 < 1 \text{ if } s < \frac{1}{1+ \norm{\grad{F}}_\infty}\\
	&\ge s^2 \left(1-\norm{\grad{F}}_\infty\right)^2 > 1 \text{ if } s > \frac{1}{1- \norm{\grad{F}}_\infty}.
\end{align*}\\

\emph{$C^1_{loc}$ homeomorphism:} every tuple $(y_0,s_0)$ with $h(y_0,s_0)=1$ has a neighbourhood $U\times I$ in $\Sp^{N-1}_+ \times \R_+$ and a $C^1$ map $s: U \to I$, $C^1$ with $h(y, s(y))=1$ on $U$. This follows from the implicit function theorem, because at $x_0=s_0 \, y_0$ 
\begin{align*}
	\frac{1}{2} s \frac{\partial h}{\partial s}&=1- \langle \psi(x_0), \psi(x_0) -  d\psi(x_0)x_0 \rangle\\
	 &= 1 - \psi_N(x_0)\left( F(x_0')- \langle \grad F(x'_0), x_0' \rangle\right) \ge 1- 2 \norm{\grad{F}}_\infty \ge \frac{1}{2}.
\end{align*}\\

\emph{Uniqueness/ well-definition: } this is a consequence of $\frac{\partial h}{\partial s} >0$ for each such tuple $(y_0,s_0)$, so there cannot be two $s_1<s_2$ with $h(y_0,s_1)=1=h(y_0,s_2)$.\\

\emph{Bounds on $\grad s=\grad_{\Sp^{N-1}}s$:} Fix any generic $\tau \in T_{y}\Sp^{N-1}$ and so $ 0 = \left(D_\tau h+ \frac{\partial h}{\partial s}\, D_\tau s\right)(y,s(y))$. Furthermore writing $x=s(y)y$ we have 
\[\frac{1}{2s} D_\tau h(y,s)=\frac{1}{s} \langle \psi(x), d\psi(x)s\tau \rangle = \tau_N F(x') + \psi_N(x') \langle \grad F(x'), \tau' \rangle,\]
that gives 
\[\abs{\frac{1}{2s} D_\tau h(y,s)} \le \sqrt{2} \norm{\grad{F}}_\infty.\]
We conclude
\[
	\abs{D_\tau s(y)} = s^2\frac{\abs{\frac{1}{2s} D_\tau h}}{\abs{\frac{1}{2}s \frac{\partial h}{\partial s}}} \le 3 s^2 \norm{\grad{F}}_\infty \le 16 \norm{\grad{F}}_\infty.
\]\\

\emph{Bounds on $DG,DG^{-1}$:} One calculates explicitly that
\begin{align*}
	DG(x)&=d\psi(s(\widehat{x})x)\left(s(\widehat{x}) \mathbf{1} + \widehat{x}\otimes \grad s(\widehat{x}) \right)\\
	&= s(\widehat{x})\mathbf{1} + \widehat{x}\otimes \grad s (\widehat{x})  + \left(e_N \otimes \grad F\right) \left(s(\widehat{x}) \mathbf{1} + \widehat{x}\otimes \grad s (\widehat{x}) \right).
\end{align*}
As we have seen $\abs{s(\widehat{x})-1} \le \frac{\norm{\grad{F}}_\infty}{1-\norm{\grad{F}}_\infty}$. Combining all obtained bounds one can conclude $\norm{DG(x)- \mathbf{1}}_\infty \le 10 \norm{\grad{F}}_\infty$. $DG^{-1}$ is given explicitly by
\begin{align*}
	DG^{-1}(x)&=\frac{1}{s(\widehat{\psi^{-1}(x)})} d\psi^{-1}(x) - \widehat{\psi^{-1}(x)} \otimes \frac{\grad s(\widehat{\psi^{-1}(x)})}{s^2(\widehat{\psi^{-1}(x)})}\\
	&= \frac{1}{s(\widehat{\psi^{-1}(x)})}\mathbf{1} -\frac{1}{s(\widehat{\psi^{-1}(x)})} e_N \otimes \grad{F} -  \widehat{\psi^{-1}(x)} \otimes \frac{\grad s(\widehat{\psi^{-1}(x)})}{s^2(\widehat{\psi^{-1}(x)})}.
\end{align*}
Combing as before all obtained bounds especially $\abs{\frac{1}{s(\widehat{\psi^{-1}(x)})}- 1} \le \norm{\grad F}_\infty$ one can get $\norm{DG^{-1}(x)- \mathbf{1}}_\infty \le 6 \norm{\grad{F}}_\infty$.\\

The convergence statement follows as a consequence of the implicit function theorem, because $F_k \to F$ in $C^1$ then implies $s_{F_k} \to s_{F_k}$ in $C^1$.
\end{proof}


\end{appendix}


\begin{thebibliography}{99}
\bibitem{Brothers}
\textit{Some open problems in geometric measure theory and its applications
 suggested by participants of the 1984 AMS summer institute.}, edited by J. E. Brothers, Proc. Sympos. Pure Math. \textbf{44}, Amer. Math. Soc., Providence, RI,  1986

\bibitem{Almgren}
 F.J.~Almgren, \textit{Almgren's big regularity paper. Q-valued functions minimizing Dirichlet's integral and the regularity of area-minimizing rectifiable currents up to codimension 2.}, World Scientific Monograph Series in Mathematics, 1. World Scientific Publishing Co., Inc., River Edge, NJ,  (2000). xvi+955 pp. ISBN: 981-02-4108-9
%
%

\bibitem{Axler}
S.~Axler, P.~Bourdon, W.~Ramey, \textit{ Harmonic function theory. Second edition.}, Graduate Texts in Mathematics, 137. Springer-Verlag, New York,  (2001). xii+259 pp. ISBN: 0-387-95218-7

\bibitem{dePauw}
P.~Bouafia, T.~De Pauw, J.~Goblet, \textit{Existence of p harmonic mutliple valued maps into a separable Hilbert space}, preprint




%
%
%
%
%
 
\bibitem{Giusti}
E.~Giusti, \textit{Direct methods in the calculus of variations }, World Scientific Publishing Co., Inc., River Edge, NJ,  (2003). viii+403 pp. ISBN: 981-238-043-4
 
%
 
\bibitem{Lellis select}
C.~De Lellis, C.R.~Grisanti, P.~Tilli, \textit{Regular selections for multiple-valued functions}, Ann. Mat. Pura Appl. (4)  \textbf{183}  (2004),  no. 1, 79--95.

\bibitem{Lellis09}
C.~De Lellis,M.~Focardi,E.~Spadaro, \textit{Lower semicontinuous functionals for Almgren's multiple valued functions}, Ann. Acad. Sci. Fenn. Math.  \textbf{36}  (2011),  no. 2, 393--410.

\bibitem{Lellis}
C.~De Lellis, E.N.~Spadaro, \textit{Q-valued functions revisited}, Memoirs of the AMS \textbf{211} (2011), no. 991


\bibitem{Lellis Lp}
 C.~De Lellis, E.~Spadaro, \textit{Regularity of area-minimizing currents I: Gradient $L^p$ estimates.}, to appear
 
\bibitem{Lellis note}
 C.~De Lellis, \textit{Almgren's Q-valued functions revisited}, Proceedings of the International Congress of Mathematicians. Volume III, (2010), 1910--1933, Hindustan Book Agency, New Delhi

\bibitem{Goblet06},
J.~Goblet,\textit{A selection theory for multiple-valued functions in the sense of Almgren}, Ann. Acad. Sci. Fenn. Math.  \textbf{31}  (2006),  no. 2, 297--314.

\bibitem{Goblet09}
J.~Goblet, \textit{Lipschitz extension of multiple Banach-valued functions in the sense of Almgren}, Houston J. Math. \textbf{35}  (2009),  no. 1, 223--231.

\bibitem{GobletZhu08}
J.~Goblet, W.~Zhu, \textit{Regularity of Dirichlet nearly minimizing multiple-valued functions}, J. Geom. Anal.  \textbf{18}  (2008),  no. 3, 765--794

%


\bibitem{Mattila83}
P.~Mattila, \textit{Lower semicontinuity, existence and regularity theorems for elliptic variational integrals of multiple valued functions}, Trans. Amer. Math. Soc.  \textbf{280}  (1983),  no. 2, 589--610.


%
%
%
%
%
\bibitem{Spadaro}
E.~Spadaro, \textit{Complex varieties and higher integrability of Dir-minimizing Q-valued functions}, Manuscripta Math. \textbf{132}  (2010),  no. 3-4, 415--429.
 
\bibitem{Tartar}
L.~Tartar, \textit{An introduction to Sobolev spaces and interpolation spaces.} Lecture Notes of the Unione Matematica Italiana, 3. Springer, Berlin; UMI, Bologna,  (2007). xxvi+218 pp. ISBN: 978-3-540-71482-8; 3-540-71482-0

 
\bibitem{Zhu}
W.~Zhu, \textit{Two-dimensional multiple-valued Dirichlet minimizing functions}, Comm. Partial Differential Equations \textbf{33} (2008), 1847 -1861

\bibitem{Zhu06An}
W.~Zhu, \textit{Analysis on Metric Space Q}, arXiv (2006)

\bibitem{Zhu06Fre}
W.~Zhu, \textit{A Theorem on Frequency Function for Multiple-Valued Dirichlet Minimizing Functions}, arXiv (2006)

\bibitem{Zhu06reg}
W.~Zhu, \textit{A regularity theory for multiple-valued Dirichlet minimizing maps}, arXiv (2006)

\bibitem{Zhu06flow}
W.~Zhu, \textit{An Energy Reducing Flow for Multiple-Valued Functions}, arXiv (2006)
\end{thebibliography}
\end{document}